\DeclareSymbolFont{AMSb}{U}{msb}{m}{n}	
\pgfplotsset{compat=1.13}
\providecommand\@dotsep{5}
\newcommand{\todonull}[1]{}  
\newcommand{\todolow}[1]{}
\newcommand{\omitted}[1]{}
\newcommand{\omitnow}[1]{}
\definecolor{my-linkcolor}{rgb}{0.75,0,0}
\definecolor{my-citecolor}{rgb}{0,0.5,0}
\definecolor{my-urlcolor}{rgb}{0,0,0.75}
\pgfplotsset{compat=1.13}
\tikzset{
    state/.style={
           rectangle,
           rounded corners,
           draw=black, very thick,
           minimum height=2em,
           inner sep=2pt,
           text centered,
           },
}
\DeclareMathOperator{\bbC}{\mathbb{C}}
\DeclareMathOperator{\Rp}{R_{\lambda}}
\DeclareMathOperator{\cRp}{{\mathcal R}_{\lambda}}
\DeclareMathOperator{\Hfin}{\mathcal{H}_t}
\DeclareMathOperator{\DAHA}{\mathcal{H}^\mathrm{daff}_{q,t}}
\DeclareMathOperator{\Htrig}{\mathcal{H}_k}
\DeclareMathOperator{\Htrigkrega}{\mathcal{H}^{\mathrm{trig}}_{\krega}}
\DeclareMathOperator{\Htrigreg}{\mathcal{H}_{k,\reg}}
\DeclareMathOperator{\Htrigneg}{\mathcal{H}_{-k}}
\DeclareMathOperator{\Htrigsub}{H_k}
\DeclareMathOperator{\Htrigsubp}{H_{k'}}
\DeclareMathOperator{\Haff}{H^{\aff}_q}
\DeclareMathOperator{\Hext}{H_{q}^{\mathrm{ext}}}
\DeclareMathOperator{\Hq}{H_{q}}
\DeclareMathOperator{\J}{K}
\DeclareMathOperator{\cI}{\mathcal{I}}
\DeclareMathOperator{\cJ}{\mathcal{K}}
\newcommand{\tfact}[1]{{#1}^{\mathsf{f}}}
\newcommand{\symO}[1]{\left[#1\right]}
\newcommand{\Cs}[2]{C(#1,#2)}
\newcommand{\Radpm}{\mathrm{Rad}^{\pm}}
\newcommand{\Radp}{\mathrm{Rad}^{+}}
\newcommand{\pmRad}{{}^{\pm}\mathrm{Rad}}
\newcommand{\pRad}{{}^{+}\mathrm{Rad}}
\newcommand{\jtheta}{\mathbf{k}_\vartheta}
\DeclareMathOperator{\bbZ}{\mathbb{Z}}
\DeclareMathOperator{\calG}{\mathcal{G}}
\DeclareMathOperator{\calL}{\mathcal{L}}
\DeclareMathOperator{\calK}{\mathcal{K}}
\DeclareMathOperator{\frakh}{\mathfrak{h}}
\DeclareMathOperator{\fX}{\mathfrak{X}}
\newcommand {\h}{\mathfrak{h}}
\newcommand {\aand}{\qquad\text{and}\qquad}
\newcommand {\wrt}{with respect to }
\newcommand {\rhs}{right--hand side }
\newcommand {\IN}{\mathbb N}
\newcommand {\IC}{\mathbb C}
\newcommand {\krega}{_{k\text{--reg}_a}}
\newcommand {\kreg}{_{k\text{--reg}}}
\newcommand{\act}[2]{#2^{#1}}
\newcommand{\pact}[2]{#2^{#1}}
\DeclareMathOperator{\End}{End}
\DeclareMathOperator{\Sym}{S}
\DeclareMathOperator{\Br}{Br}
\DeclareMathOperator{\reg}{{reg}}
\DeclareMathOperator{\aff}{aff}
\DeclareMathOperator{\Ind}{Ind}
\DeclareMathOperator{\Res}{\mathrm{Res}}
\DeclareMathOperator{\Sh}{\Sym\negthinspace\frakh}
\DeclareMathOperator{\Shst}{\Sym\negthinspace\frakh^*}
\DeclareMathOperator{\Rad}{\mathrm{Rad}^{+}}
\DeclareMathOperator{\tPhi}{\Phi}
\DeclareMathOperator{\N}{N}
\newtheorem*{theorem}{Theorem}
\newtheorem*{proposition}{Proposition}
\newtheorem*{lemma}{Lemma}
\newtheorem*{conjecture}{Conjecture}
\newtheorem*{corollary}{Corollary}
\newtheorem*{definition}{Definition}
\theoremstyle{definition}
\newtheorem*{remark}{Remark}
\newtheorem*{example}{Example}
\numberwithin{equation}{section}
\newcommand {\valeriocomment}[1]{}
\newcommand {\robincomment}[1]{\footnote{\color{orange}{#1}}}
\newcommand{\Done}{}
\newcommand {\frachst}{\IC(\frakh^*)}
\newcommand {\Omit}[1]{}
\newcommand {\VV}{\mathcal{V}}
\renewcommand {\O}{\mathcal{O}}
\newcommand {\germ}[1]{(#1)_{h_0}}
\newcommand {\epspm}{\epsilon_\pm}
\newcommand {\epsmp}{\epsilon_\mp}
\renewcommand {\IC}{\mathbb{C}}
\newcommand {\sff}{^{\mathsf{f}}}
\newcommand {\ICW}{\IC W}
\newcommand {\bfi}[1]{\mathbf{i_{#1}}}
\newcommand {\wt}[1]{\widetilde{#1}}
\newcommand {\sfA}{\mathsf{A}}
\newcommand {\sfB}{\mathsf{B}}
\newcommand {\sfC}{\mathsf{C}}
\newcommand {\sfD}{\mathsf{D}}
\newcommand {\IZ}{\mathbb{Z}}
\newcommand {\cow}[1]{\lambda_{#1}^{\vee}}
\renewcommand {\SS}{\mathfrak{S}}
\newcommand {\orb}{^{\operatorname{orb}}}
\newcommand {\ol}[1]{\overline{#1}}
\newcommand {\sfG}[1]{\Gamma\left(#1\right)}
\newcommand {\half}[1]{\frac{#1}{2}}
\newcommand{\omitvaleriocomment}[1]{}
\newcommand{\omitrobincomment}[1]{}
\newcommand {\ind}{\operatorname{ind}}
\newcommand {\Hom}{\operatorname{Hom}}
\newcommand {\calS}{\mathcal S}
\newcommand {\fa}{\mathfrak{a}}
\newcommand {\fe}{\mathfrak{e}}
\newcommand {\Aff}{\operatorname{Aff}}
\newcommand {\fsl}{\mathfrak{sl}}
\newcommand {\pos}{R_+}
\newcommand {\apos}{R^a_+}
\newcommand {\negr}{R_-}
\newcommand {\aneg}{R^a_-}
\newcommand {\cW}{\mathcal W}
\newcommand {\sign}{\operatorname{sign}}
\newcommand {\KZ}{^{\operatorname{\scriptscriptstyle{KZ}}}}
\newcommand {\ilambda}{\mathbf{i}_\lambda}
\newcommand {\iLambda}{\mathbf{i}_\Lambda}
\newcommand {\inu}[1]{\mathbf{i}_{#1}}
\newcommand {\ktheta}{\mathbf{k}_\vartheta}
\newcommand {\kTheta}{\mathbf{k}_\Theta}
\newcommand {\imu}{\mathbf{i}_\mu}
\newcommand {\csign}{+} 
\newcommand {\ncsign}{-} 
\newcommand {\HC}{Harish--Chandra }
\newcommand {\ie}{{\it i.e. }}
\newcommand {\TT}[2]{T_{#1,#2}}
\newcommand {\cT}[2]{\mathcal{T}_{#1,#2}}
\newcommand{\Rnu}[1]{R_{#1}}
\newcommand {\veps}{\varepsilon}
\newcommand {\triv}{{\operatorname{triv}}}
\newcommand {\Endd}{\End_0}
\newcommand {\Qeps}{E_{\eps}}
\newcommand {\ii}{\mathrm{i}}
\newcommand {\I}[1]{I(#1)}  
\newcommand {\dI}[1]{I_{#1}}  
\newcommand{\Kconn}[1][\vartheta,k]{\nabla^{K}(#1)}
\newcommand{\Iconn}[1][\lambda,k]{\nabla^{I}(#1)}
\newcommand{\Kconnvf}[2][\vartheta,k]{\nabla_{#2}^{K}(#1)}
\newcommand{\Iconnvf}[2][\lambda,k]{\nabla_{#2}^{I}(#1)}
\newcommand{\KL}[1][\vartheta,k]{\mathcal{K}(#1)}
\newcommand{\IL}[1][\lambda,k]{\mathcal{I}(#1)}
\newcommand{\Y}{Y}  
\newcommand{\X}{X}  
\newcommand {\cor}{\alpha^\vee}
\newcommand {\eg}{{\it e.g. }}
\newcommand {\Rneg}{R_-}
\title{On the Finkelberg--Ginzburg Mirabolic Monodromy Conjecture}
\author[V. Toledano Laredo]{Valerio Toledano Laredo}
\address[]{Department of Mathematics, Northeastern University, 
360 Huntington Ave., Boston, Massachusetts 02115 }
\email{V.ToledanoLaredo@northeastern.edu }
\author[R. Walters]{Robin Walters}
\address[]{Khoury College of Computer Sciences, Northeastern University,
360 Huntington Ave., Boston, Massachusetts 02115} 
\email{r.walters@northeastern.edu}
\thanks{The first author was supported by the National Science Foundation
through the grants DMS--1802412 and DMS–2302568, the second author
was supported by the NSF grant DMS--1503050 and the NSF RTG grant {\it
Algebraic Geometry and Representation Theory at Northeastern University}
DMS--1645877.}
\begin{document}

\begin{abstract}
We compute the monodromy of the mirabolic  $\mathcal{D}$--module for all values
of the parameters $(\vartheta,c)$ in rank 1, and outside an explicit codimension
2 set of values in ranks 2 and higher. This shows in particular that the Finkelberg--Ginzburg
conjecture, which is known to hold for generic values of $(\vartheta,c)$, fails at
special values even in rank 1. Our main tools are Opdam's shift operators and
normalised intertwiners for the extended affine Weyl group, which allow for the
resolution of resonances outside the codimension two set.
\end{abstract}

\setcounter{tocdepth}{1}

\maketitle
\tableofcontents

\raggedbottom
\pagebreak


\raggedbottom
\pagebreak

\section{Introduction}

\subsection{} 

Mirabolic $\mathcal{D}$--modules are a subcategory of regular holonomic $\mathcal{D}
$--modules on the variety 
$SL_n(\bbC) \times \bbC^n$. They were introduced by
Gan and Ginzburg in \cite{gan2006almost} as an analog of Lusztig's character sheaves.

In the classical setting, character sheaves are certain perverse sheaves defined over a
reductive group $G$ \cite{deligne1976representations, lusztig1985character}. Over $\bbC$,
they correspond to \emph{admissible} $\mathcal{D}$--modules \cite{ginsburg1989admissible}.
Hotta and Kashiwara defined an admissible $\mathcal{D}$-module called \emph{the \HC}
$\mathcal{D}$--module $\mathcal{G}_\vartheta$ \cite{hottakashiwara}, and Kashiwara proved
that $\mathcal{G}_\vartheta$ is the minimal extension of its restriction to the locus $G^
{\mathrm{reg}}\subset G$ of regular semisimple elements \cite{kashiwara2014invariant},
which is a local system $\mathscr{K}_\vartheta$ of rank $|W|$.

When $G=SL_n(\IC)$, Finkelberg and Ginzburg defined an analogous mirabolic \HC
$\mathcal{D}$-module $\calG_{\vartheta,c}$ on $\fX=G\times\IC^n$ \cite{finkelberg2010mirabolic}
in terms of parameters $\vartheta\in\frakh^*/W$ and $c\in\bbC$, where $\h\subset\fsl_n(\IC)$ is
a Cartan subalgebra and $W=\SS_n$ its Weyl group. Similarly to the classical setting, the restriction
of $\calG_{\vartheta,c}$ to an appropriate open subvariety $\fX^{\mathrm{reg}}$ is a local
system $\calK_{\vartheta,c}$ of rank $|W|$ and, for generic values of $c$, $\calG_{\vartheta,c}$
is the minimal extension of $\calK_{\vartheta,c}$ \cite[Cor. 1.5.4]{bellamy2015hamiltonian}.

\subsection{} 

Finkelberg and Ginzburg give the following alternative description of $\calK_{\vartheta,c}$
in terms of the degenerate affine Hecke algebra $\Htrigsub$ corresponding to $W$ 
\cite{finkelberg2010mirabolic}. Let $H\subset SL_n(\IC)$ be the maximal torus with Lie
algebra $\h$, $H^{\mathrm{reg}}$ the set of regular elements in $H$, and consider the map 
\begin{align*}
	\mathrm{spec}: \fX^{\mathrm{reg}} & \longrightarrow H^{\mathrm{reg}} /W    \\
	(g,v)                             & \longmapsto \text{ eigenvalues of } g 
\end{align*}
Using Hamiltonian reduction \cite{gan2006almost} and the trigonometric KZ functor \cite
{varagnolo2004double}, they show that $\mathcal{K}_{\vartheta,c}$ is the pull--back of a
local system $\KL$ on $H^{\mathrm{reg}} /W$, where $k=c-1$.

The local system $\KL$ arises from Cherednik's trigonometric KZ connection
$\Kconn$. Specifically, for $\vartheta\in\h^*/W$, let $\IC_\vartheta$ be the corresponding
1--dimensional representation of $\Sh^W$ on which $W$ acts trivially, and define the {covariant
representation} $\J_\vartheta$ of $\Htrigsub$ by
\[\J_\vartheta
=
\ind^{\Htrigsub}_{W\otimes\Sh^W}\IC_\vartheta\]
Then, $\Kconn$ defines a meromorphic, $W$--equivariant connection on the trivial
vector bundle over $H$ with fibre $\J_\vartheta$ and logarithmic singularities over the root 
hypertori. 

\subsection{} 

The connection $\Kconn$ descends to the adjoint torus $T=H/\Omega^\vee\subset
PGL_n(\IC)$, where $\Omega^\vee=P^\vee/Q^\vee\cong\IZ_n$, and $Q^\vee\subset P
^\vee\subset\h$ are the coroot and coweight lattices of $SL_n(\IC)$. Its monodromy therefore
defines a representation of the orbifold fundamental group $\pi_1\orb(T_{\reg}/W)$, which
factors through the extended affine Hecke algebra $\Hext$ generated by the group algebra
of $P^\vee=\pi_1(T)$, together with the finite Hecke algebra $\Hq$ of type $\sfA_{n-1}$,
where $q=\exp(2\pi \ii k)$.

Finkelberg and Ginzburg gave a conjectural description of the monodromy of $\Kconn$. Consider the dual torus $H^\vee=\Hom_{\IZ}(P^\vee,\IC^\times)$, set $\Theta
=\exp_{H^\vee}(\vartheta)\in H^\vee/W$, and let $\IC_\Theta$ be the corresponding
1--dimensional representation of $\IC[H^\vee]^W$. Define the covariant representation
$\J(\Theta)$ of $\Hext$ to be 
\[\J(\Theta)=\ind^{\Hext}_{\Hq\otimes\IC[H^\vee]^W}\IC_\Theta\]
Then, Finkelberg--Ginzburg formulated the following \cite[Conj. 6.4.1]{finkelberg2010mirabolic}.

\label{conj:monoconj}
\begin{conjecture}
Assume that $c=k+1$ is not a rational number of the form $\frac{p}{m}$, where $2 \leq m \leq n$,
$1 \leq p \leq m$ and $\mathrm{gcd}(p,m) = 1$. Then, the monodromy of the trigonometric KZ
connection $\Kconn$ with values in the covariant representation $\J_\vartheta$ of $\Htrigsub$
is isomorphic to the covariant representation $K(\Theta)$ of the extended affine Hecke algebra
$\Hext$. 
\end{conjecture}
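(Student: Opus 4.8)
The plan is to treat both objects as $\Hext$--modules of dimension $|W|$ and to propagate over the whole parameter space the isomorphism between them that is already known for generic $(\vartheta,c)$. On the Hecke side $K(\Theta)$ has dimension $|W|$ by construction and $\J_\vartheta$ by freeness of $\Htrigsub$ over $\ICW\otimes\SW$; on the monodromy side the module in question is the rank--$|W|$ local system $\KL$ equipped with the $\Hext$--action supplied by the trigonometric KZ formalism, with $q=\exp(2\pi\ii k)$. The first step is to exhibit a canonical nonzero $\Hext$--morphism $K(\Theta)\to\KL$: I would pick out, among the solutions of $\Kconn$ near a wall of the fundamental alcove of $T_{\reg}/W$, the flat section whose leading asymptotics is the covariant cyclic vector $1\otimes1$ of $\J_\vartheta$, check that its $\Hext$--orbit is annihilated by the maximal ideal of $\IC[H^\vee]^W$ cutting out $\Theta$ and that it transforms under the finite Hecke subalgebra $\Hq$ by $T_i\mapsto q$, and conclude by the universal property of induction. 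Since isomorphism among irreducibles over a fixed algebra is rigid, this morphism is already an isomorphism on the open dense set where $\KL$ and $K(\Theta)$ are both irreducible, so the real content is to cross the resonant complement: the locus where the residue of $\Kconn$ along some root hypertorus has two eigenvalues differing by a positive integer (governed by rationality of $c=k+1$, whence the hypothesis $c\neq p/m$), or where $\Theta$, or the affine datum built from $\vartheta$, fails to be regular.

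Crossing the resonances in the $k$--direction is what Opdam's shift operators $G^{\pm}$ are for: they intertwine $\Kconn$ with $\nabla^{K}(\vartheta,k\pm1)$ $W$--equivariantly, and since $\exp(2\pi\ii(k\pm1))=q$ the source and target feed the same $\Hext$, while $G^{-}G^{+}$ is multiplication by an explicit product of affine--linear factors in $(\vartheta,k)$, so off its zero divisor the monodromy modules at $k$ and at $k\pm1$ are identified and one may shift $k$ into a non--resonant position. Crossing the resonances in the $\vartheta$--direction is what the normalised intertwiners $\tau_w$, $w\in\We=W\ltimes P^\vee$, are for: they carry the $W$--action on the generalised $Y$--spectrum and identify the components of $\KL$ and of $K(\Theta)$ indexed by the orbit $W\Theta$, being isomorphisms except where a normalising factor $Y^{\cor}-1$ vanishes, i.e. exactly when $\Theta$ meets a root subtorus of $H^\vee$. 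Running the two reductions in tandem, the canonical morphism is an isomorphism everywhere outside the intersection of the shift--operator zero divisor with the intertwiner degeneration locus — which has codimension at least two — together with the non--regular stratum of $\Theta$, which is handled directly.

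In rank one one can simply compute: $\Kconn$ is then a Fuchsian system of rank two with three singular points on $\bbP^1$ and $\Hext$ is the rank--one affine Hecke algebra, so one writes out the monodromy matrices, reads off their Jordan type as a function of $(\Theta,q)$, and compares with the composition series of $K(\Theta)$. This is precisely where one meets parameters at which the two modules are not isomorphic, so the conjecture is literally false and the codimension--two exclusion in ranks $\geq2$ cannot be removed. In higher rank the codimension--two locus is approached by restricting $\Kconn$ to a codimension--one boundary stratum of $T_{\reg}/W$ and comparing composition factors via the behaviour of covariant $\Hext$--modules under parabolic restriction and induction, or by a finer filtration analysis at the resonance.

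The hard part will be exactly this codimension--two locus, where $G^{\pm}$ and the normalised $\tau_w$ degenerate simultaneously and $\KL$, $K(\Theta)$ are typically both non--semisimple: whether the canonical map $K(\Theta)\to\KL$ remains an isomorphism there is controlled by an $\operatorname{Ext}^1$ that the rank--one computation already shows need not vanish, so no uniform argument can succeed; the theorem must therefore be confined to the complement of this set, with the rank--one case computed explicitly to pin down where, and how, the Finkelberg--Ginzburg conjecture fails.
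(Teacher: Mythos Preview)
This statement is a \emph{conjecture} (of Finkelberg--Ginzburg) that the paper does \emph{not} prove; one of the paper's main contributions is to show that the conjecture \emph{fails} at special values of $(\vartheta,k)$ even in rank $1$ (Theorem~\ref{th:main 1}), and in all higher ranks (Corollary~\ref{cor:highrankFGfail}). So there is no proof in the paper for you to match, and your own proposal correctly concludes partway through that ``the conjecture is literally false.'' What you have written is therefore not a proof of the stated conjecture but an outline of how to investigate it; I compare that outline to the paper's actual investigation.

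Your broad strategy---prove the conjecture generically, then use $k$--shift operators and $\lambda$--intertwiners to cross resonances, compute rank~$1$ directly, and isolate a residual codimension--$2$ set---is exactly the architecture of the paper (Sections~\ref{sec:trig-kz}, \ref{sec:shiftoperatorsinlambda}, \ref{sec:monodromy-j-theta}--\ref{sec:monodromy-higher-rank}). A few specifics differ. First, for the generic case the paper does not build a canonical flat section with prescribed asymptotics; instead it shows (Proposition~\ref{pr:generic monodromy}) that the eigenvalues of $\Y\in P^\vee$ on the monodromy are $\{w\Lambda(\Y)\}_{w\in W}$, yielding a nonzero map $I(\Lambda)\to V$, and then invokes Kato's irreducibility criterion together with $I(\Lambda)\cong K(\Theta)$. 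Second, your description of the resonance locus as ``governed by rationality of $c=k+1$'' is not how the paper sees it: by Proposition~\ref{pr:non resonance}, resonance of $\Kconn$ in type $\sfA$ is the condition $\lambda(\alpha^\vee_{j_1}+\cdots+\alpha^\vee_{j_i})\in\IZ_{\neq 0}$ for some set of orthogonal coroots, which depends on $\lambda$ alone and not on $k$; the hypothesis $c\neq p/m$ in the conjecture concerns reducibility of $\calG_{\vartheta,c}$, not resonance of the connection. Third, in rank~$1$ the paper does not compute monodromy matrices directly from $\Kconn$ but first passes through the Cherednik isomorphism $ch_{\J_\vartheta}$ to the scalar hypergeometric equation (Section~\ref{ss:red to HG}), where explicit Kummer matrices are available; your ``write out the monodromy matrices'' skips this reduction. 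Finally, your proposal to attack the codimension--$2$ locus via parabolic restriction and $\operatorname{Ext}^1$ analysis goes beyond what the paper achieves: in rank $\geq 2$ the paper leaves the residual codimension--$2$ set unresolved (Theorem~\ref{thm:sln-shift}) rather than analysing it by boundary-stratum or filtration methods.
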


One potential application of the above conjecture is in finding the representing object of the
trigonometric KZ functor defined in \cite{varagnolo2004double}.  Such an object $P_{\mathrm
{KZ}}$ was first described for the rational case in \cite{ginzburg2003category} and constructed
by Losev in the totally aspherical case \cite{losev2014totally}. The covariant representation of
the trigonometric Cherednik algebra, which corresponds to  $\KL[\vartheta,c]$
by the KZ functor, is a potential such object.

\subsection{Previous results}

Conjecture \ref{conj:monoconj} is known to hold if the covariant representation $K(\Theta)$
is irreducible (see \cite[Prop. 3.4]{cherednik1994integration} and Proposition \ref{pr:generic
monodromy}). In turn, by a criterion of T. Kato, this latter condition is equivalent to $(\vartheta,
k)$ lying on the complement of the affine hyperplanes $\left\{\lambda(\cor)+n=\pm k\right\}$,
where $\cor$ is a coroot, $n\in\IZ$, and $\lambda\in\h^*$ is a lift of $\vartheta$ \cite[Thm 2.2]
{Kato-irred}. The challenge is therefore to compute the monodromy of the trigonometric KZ
connection $\Kconn$ when $(\vartheta,k)$ lie on these hyperplanes.
\Omit{The challenge is to compute monodromy when
the connection is \emph{resonant}, \ie the eigenvalues of its residues at the point with
coordinates $e^{-\alpha_i}=1$ differ by integers.\valeriocomment{Don't think that is the
issue actually.}
}

\subsection{Rank 1}\label{ss:rk 1} 

In rank 1, the connection $\Kconn$ can be reduced to a hypergeometric equation, and its
monodromy computed for all values of the parameters $(\vartheta, k)$. To state our result,
let $\alpha$ be the positive root of $\mathfrak{sl}_2(\IC)$, $\lambda\in\h^*$ the preimage
of $\vartheta$ with $\Re(\lambda(\alpha^\vee)) \geq 0$, and identify $\frakh^*$ with $\bbC$
by $\nu\to\nu(\alpha^\vee)$. 
The extended affine Hecke algebra $\Hext$ in this case is generated by two invertible
elements $Y,T$ which satisfy
\begin{equation}\label{eq:AHA rk 1 intro}
 (T-1)(T+q) = 0
\aand
T \Y - \Y^{-1} T = 
(1-q) \Y
\end{equation}
Let $y,t\in\IC^\times$ be solutions of \eqref{eq:AHA rk 1 intro}, so that
\[(y,t)\in\left\{(\pm{q}^{-1/2},1),(\pm{q}^{1/2},-q)\right\}\] 
and denote  by $(y,t)$ the character of $\Hext$ given by
$\Y\to y$ and $T\to t$. Set $\I{\Lambda} = \mathrm{Ind}_{\IC[Y]}^{\Hext} \IC_\Lambda$
where $\Y$ acts on $\IC_\Lambda$ by multiplication by $\Lambda=e^{\pi \ii \lambda}$.

Then, the following holds (Thm. \ref{th:main 1}).

\label{th:i main 1}
\begin{theorem}
The monodromy of $\KL$ is isomorphic to $\J(\Theta)$ except in the following cases
\begin{enumerate}
\item If $\lambda,k\in\IZ$ and $|\lambda| \geq\max(k,1-k)$, $\KL$ is isomorphic to $(-\Lambda,1)\oplus(-\Lambda,-1)$.
\item If $\lambda,k\in\half{1}+\IZ$ and $|\lambda| \geq\max(k,1-k)$, $\KL$ is isomorphic to the induced
representation $\I{-\Lambda}$.
\item If one of $k \pm \lambda$ lies in $\IZ_{>0}$, $\lambda\notin\half{1}+\IZ$, and (1) does not apply, 
$\KL$ is isomorphic to $K(\Theta)^-$, where the latter is $\ind^{\Hext}_{\Hq\otimes\IC[H^
\vee]^W}\IC_\Theta^-$, where $T$ acts on $\IC_\Theta^-$ as multiplication by $-q$.
\end{enumerate}
\end{theorem}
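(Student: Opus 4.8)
The plan is to reduce the trigonometric KZ connection in rank $1$ to a classical hypergeometric equation, read off its monodromy from the parameters, and then match that monodromy against the list of $\Hext$--representations in the statement. First I would make the covariant representation $\J_\vartheta$ explicit: since $W=\IZ/2$ and $\Sh^W$ has rank $1$, $\J_\vartheta=\ind^{\Htrigsub}_{W\otimes\Sh^W}\IC_\vartheta$ is $2$--dimensional, and I would choose the basis in which $W$ acts by $\pm 1$ and write the residue of $\Kconn$ at the root hypertorus $e^{-\alpha}=1$ as an explicit $2\times 2$ matrix in terms of $k$ and $\lambda$. Passing to the $W$--invariant (scalar) part of the connection on $H/W$ and using the coordinate $x=e^{-\alpha}$, the flat sections satisfy a second order Fuchsian ODE on $\bbP^1$ with singular points $0,1,\infty$; I would identify its local exponents: at $x=1$ the two exponents differ by $2k$ (the eigenvalue gap of the residue, coming from the $(T-1)(T+q)=0$ relation), at $x=0$ and $x=\infty$ they are governed by $\pm\lambda$ (coming from the eigenvalue $\Lambda=e^{\pi\ii\lambda}$ of $Y$). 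Thus the equation is hypergeometric with parameters built from $k\pm\lambda$, and its monodromy group is the standard hypergeometric monodromy, whose reducibility/decomposability is controlled precisely by whether the numbers $k-\lambda$, $k+\lambda$, $1-k-\lambda$, $1-k+\lambda$ (equivalently the quantities $k\pm\lambda$ and $\lambda$) are integers.

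Next I would translate the hypergeometric monodromy back into a representation of $\Hext$. The loop around $x=\infty$ (or $x=0$) gives the action of $\Y$, with eigenvalues determined by $\Lambda^{\pm 1}$; the loop around $x=1$ gives $T$, whose eigenvalues are forced by $(T-1)(T+q)=0$ to be $1$ and $-q$ unless the residue is non--semisimple. So, away from the exceptional locus, the monodromy is the induced module $\I{\Lambda}$ which one checks is isomorphic to $\J(\Theta)$ — this is the ``generic'' case already covered by the irreducibility criterion of Kato cited in the excerpt, and I would invoke Proposition \ref{pr:generic monodromy} for it. The real work is the boundary: when $k\pm\lambda\in\IZ$ the hypergeometric equation is resonant at $x=1$ and/or logarithmic, and the monodromy can fail to be semisimple or can split as a direct sum. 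I would go through the cases exactly as they are stratified in the statement:
\begin{compactenum}
\item $\lambda,k\in\IZ$: here \emph{both} exponent differences at $1$ and at $0,\infty$ are integral, the hypergeometric monodromy is reducible and in fact completely reducible once $|\lambda|\ge\max(k,1-k)$ forces the relevant binomial-coefficient (connection-matrix) entries to vanish, giving the split $(-\Lambda,1)\oplus(-\Lambda,-1)$; the sign $-\Lambda$ arises because $\Lambda=e^{\pi\ii\lambda}=\pm 1$ and the monodromy of $\Y$ picks up the extra sign from the half-integer shift $\Lambda=e^{\pi\ii\lambda}$ versus the ``true'' eigenvalue $e^{2\pi\ii\lambda}$.
\item $\lambda,k\in\half 1+\IZ$: now $2k$ and $2\lambda$ are odd, so the exponent \emph{difference} at $1$ is an odd integer while at $0,\infty$ it is also odd; the equation is still resonant but the monodromy of $T$ remains semisimple (eigenvalues $1,-q$ with $q=e^{2\pi\ii k}=-1$... ) — more precisely one computes directly that it is the induced module $\I{-\Lambda}$, which is now \emph{not} isomorphic to $\J(\Theta)$ because $\I{-\Lambda}$ is reducible with the ``wrong'' sign.
\item the mixed case: exactly one of $k\pm\lambda$ is a positive integer and $\lambda\notin\half 1+\IZ$ (so $\Lambda\ne\pm 1$); here the residue at $x=1$ becomes non-semisimple, forcing $T$ to act with a single Jordan block having both eigenvalues collapsed, equivalently $T$ acts as $-q$ on the relevant $\IC[H^\vee]^W$-submodule; this yields $K(\Theta)^-$.
\end{compactenum}

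The main obstacle, and where most of the computation lies, is case (1) and case (3): proving that the hypergeometric monodromy is genuinely \emph{semisimple} (a direct sum of two characters) in case (1) rather than a non-trivial extension, and conversely genuinely \emph{non-semisimple} in case (3). This is controlled by the explicit connection coefficients (ratios of $\Gamma$-functions) between the local solution bases at $0$, $1$, $\infty$: one must show that under the inequality $|\lambda|\ge\max(k,1-k)$ the off-diagonal connection entry vanishes (a $\Gamma$-function pole) so the two eigenlines are globally defined, whereas in case (3) the analogous entry is finite and nonzero, producing a Jordan block. I would handle this by writing Kummer's $24$ solutions explicitly, tracking which $\Gamma$-arguments become non-positive integers under the stated hypotheses, and using Barnes-type identities for the connection matrices; the sign bookkeeping ($\Lambda$ versus $-\Lambda$, and which of $T\to 1$ or $T\to -q$ survives) then follows by comparing the monodromy eigenvalue of the loop around $1$ against the quadratic relation $(T-1)(T+q)=0$, and by matching $\Y\to\Lambda^{\pm 1}$ against the eigenvalues of the loop around $0$. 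Finally I would verify that in every non-exceptional case the resulting $\Hext$-module is abstractly isomorphic to $\J(\Theta)$ — a short check using the explicit presentation \eqref{eq:AHA rk 1 intro} and the list $(y,t)\in\{(\pm q^{-1/2},1),(\pm q^{1/2},-q)\}$ of characters.
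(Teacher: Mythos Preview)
Your overall strategy—reduce to a hypergeometric equation and read off the monodromy—is the same as the paper's, and for the generic case $\lambda\notin\tfrac12\IZ$ your outline matches the paper's argument in \S\ref{ss:generic case} closely (Kummer solutions at $0$ and $\infty$, connection matrix in $\Gamma$-functions, check which $T$-eigenvector is cyclic for $Y$). However, there are two genuine problems.

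First, your account of case (3) misidentifies the mechanism. You claim the residue at $x=1$ becomes non-semisimple and $T$ acquires a Jordan block. This is false: the monodromy factors through the Hecke relation $(T-1)(T+q)=0$, and in case (3) we have $\lambda\notin\tfrac12+\IZ$, so generically $q\neq -1$ and $T$ is diagonalizable with \emph{distinct} eigenvalues $1$ and $-q$. What actually happens (see \S\ref{ss:generic case}) is that the $T=1$ eigenline, computed as a column of the Kummer matrix divided by $\Gamma(3/2-k)$, fails to be cyclic for $Y$ exactly when one of $\Gamma(-k\pm\lambda+1)$ has a pole, i.e.\ $k\pm\lambda\in\IZ_{>0}$; meanwhile the $T=-q$ eigenline \emph{is} cyclic, and that is what makes the representation $K(\Theta)^-$ rather than $K(\Theta)$. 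No Jordan block is involved.

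Second, for the truly degenerate cases $\lambda\in\tfrac12\IZ$ (your cases (1) and (2)) you propose to push through directly with $\Gamma$-function bookkeeping. The paper does \emph{not} do this: it computes the monodromy only at the two anchor points $\lambda=0$ and $\lambda=\tfrac12$ (by carefully taking limits in the canonical fundamental solution, \S\ref{ss:la 0}--\ref{subsec:proofrank1lambdahalf}), and then uses the $\lambda$-- and $k$--shift operators of Sections \ref{sec:trig-kz}--\ref{sec:shiftoperatorsinlambda} to propagate the answer to all other $\lambda\in\tfrac12\IZ$. The invertibility criteria for these shifts (Proposition \ref{pr:det cT}, Theorem \ref{thm:k-shift-op-invert}) are exactly what carves out the regions $|\lambda|\ge\max(k,1-k)$ in the statement. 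Your plan to detect complete reducibility in case (1) by watching a single off-diagonal connection entry vanish is not obviously wrong, but when both $\lambda$ and $k$ are integers the Kummer matrix has \emph{multiple} simultaneous $\Gamma$-poles and the standard solution bases at $0$ and $\infty$ collapse; untangling this directly is substantially more delicate than you indicate, and you have not said how you would do it. The shift-operator route avoids this entirely by reducing to the trivially computable point $(\lambda,k)=(\pm 1,0)$, where the connection has no singular terms at all.
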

The proof of Theorem \ref{th:main 1} shows that at the values of $(\lambda,k)$ specified above,
$(-\Lambda,1)\oplus (-\Lambda,-1)$, $\I{-\Lambda}$, and $K(\Theta)^-$ are not isomorphic to $K
(\Theta)$, and therefore that Conjecture \ref{conj:monoconj} fails in the cases (1)--(3). Figure 1
below represents these cases.

\begin{figure}[ht]
  \centering
  
  \begin{tikzpicture}[scale=0.3]
    \begin{axis}[
        color =black,
        xlabel=k,
        ylabel=$\lambda$,
        axis lines=middle,
        xmin=-3, xmax=3,
        ymin=-3, ymax=3,
        xtick={-3,-2,-1,0,1,2,3}, ytick={-3,-2,-1,0,1,2,3},
        width=18cm,
        line width=1pt,
        x label style={at={(axis description cs:1.01,0.5)},anchor=west,font=\huge},
        y label style={at={(axis description cs:0.5,1.01)},anchor=south,font=\huge},
    ]
    \addplot [color=red,line width=2pt,domain=-3:3, samples=2] {x-1};
    \addplot [color=red,line width=2pt,domain=-3:3, samples=2] {x-2};
    \addplot [color=red,line width=2pt,domain=-3:3, samples=2] {x-3};  
    \addplot [color=red,line width=2pt,domain=-3:3, samples=2] {x-4};  
    \addplot [color=red,line width=2pt,domain=-3:3, samples=2] {x-5};  
    \addplot [color=red,line width=2pt,domain=-3:3, samples=2] {x-6};  
    \addplot [color=red,line width=2pt,domain=-3:3, samples=2] {-x+1};
    \addplot [color=red,line width=2pt,domain=-3:3, samples=2] {-x+2};
    \addplot [color=red,line width=2pt,domain=-3:3, samples=2] {-x+3};
    \addplot [color=red,line width=2pt,domain=-3:3, samples=2] {-x+4}; 
    \addplot [color=red,line width=2pt,domain=-3:3, samples=2] {-x+5};
    \addplot [color=red,line width=2pt,domain=-3:3, samples=2] {-x+6};  
    
    \addplot [color=green,only marks,mark size=5pt] table[row sep=\\]{%
        0.5 0.5\\
        -0.5 1.5\\
        0.5 1.5\\
        1.5 1.5\\
        -1.5 2.5\\
        -0.5 2.5\\
        0.5 2.5\\
        1.5 2.5\\
        2.5 2.5\\
    };
    \addplot [color=green,only marks,mark size=5pt] table[row sep=\\]{%
        0.5 -0.5\\
        -0.5 -1.5\\
        0.5 -1.5\\
        1.5 -1.5\\
        -1.5 -2.5\\
        -0.5 -2.5\\
        0.5 -2.5\\
        1.5 -2.5\\
        2.5 -2.5\\
    };
    
    \addplot [color=white,only marks,mark size=5pt] table[row sep=\\]{%
        1.5 0.5\\
        2.5 0.5\\
        2.5 1.5\\
        1.5 -0.5\\
        2.5 -0.5\\
        2.5 -1.5\\
    };
    \addplot [color=blue,only marks,mark size=5pt] table[row sep=\\]{%
        0 -1\\
        1 -1\\
        -1 -2\\
        0 -2\\
        1 -2\\
        2 -2\\
        -2 -3\\
        -1 -3\\
        0 -3\\
        1 -3\\
        2 -3\\
        3 -3\\
        0 1\\
        1 1\\
        -1 2\\
        0 2\\
        1 2\\
        2 2\\
        -2 3\\
        -1 3\\
        0 3\\
        1 3\\
        2 3\\
        3 3\\
    };
    \end{axis}
  \end{tikzpicture}
  \caption{The cases from Theorem \ref{th:main 1} corresponding to different monodromy representations:
  (0) White $K(\Theta)$, (1) Blue $(-\Lambda,1) \oplus (-\Lambda,-1)$, (2) Green $\I{-\Lambda}$, (3) Red $K(\Theta)^{-}$} 
  
\end{figure}
\newpage 

\subsection{Rank $\mathbf{n\geq 2}$} 

In higher rank, the monodromy of a trigonometric KZ connection $\nabla$ can be explicitly computed by 
reduction to rank 1, provided $\nabla$ is {\it non--resonant}, \ie such that the eigenvalues of its residues
at the (large volume limit) point with coordinates $e^{\alpha_i}=0$ do not differ by non--zero integers
\cite[1.2.3]{cherednik2005double}.\footnote{Note that this does not directly determine the monodromy
as a representation of the extended affine Hecke algebra, but reduces this problem to the analysis of
the explicit action of $\Htrig$.} 

When $\nabla=\Kconn$ corresponds to the covariant representation $\J_\vartheta$, we determine in Sect.
\ref{sec:monodromy-j-theta} the set $\Res\subset\h^*$ for which $\nabla$ is resonant. Define $B\subset\h$
by
\[B = \left\{\alpha^\vee_{j_1}+\cdots+\alpha^\vee_{j_i}\left
|\ \alpha^\vee_{j_k} \text{ are orthogonal coroots} \right.\right\}\]
then (see Prop. \ref{pr:non resonance})
\[\Res = \left\lbrace \lambda\in\h^* | \lambda(q) \in \IZ_{\neq 0} \text{ for some } q \in B \right\rbrace\]

Our main result in rank $n\geq 2$ is that, outside an explicit codimension 2 subset of $\h^*\times\IC$
the monodromy of a resonant connection $\Kconn$ is equivalent to that of a non--resonant $\Kconn$.
Specifically, define the following subsets of $\h^*$
 \begin{align*}
E & =  W\left\lbrace \lambda\ |\ \lambda(\alpha^\vee) \notin \mathbb{R}_{\leq k} \ \forall\ \alpha \in R^+ \right\rbrace \\
S & =  \left\lbrace \lambda\ |\ \lambda(q) \in \mathbb{Z} \text{ for a unique } q \in B \right \rbrace 
\end{align*}
\Omit{
L &= \lbrace \lambda_2 x_2 + \ldots + \lambda_n x_n\ |\ x_2+\ldots+x_n = k \text{ or } -k \rbrace,                      \\
L &=  
\left\lbrace -m_1 \lambda_1 - \ldots - m_l \lambda_l + x_{l+1} \lambda_{l+1} + \ldots + x_n \lambda_n : \right.  	\label{eqn:defL}\\
& \left. \qquad \qquad 1 \leq l \leq n  , m_i \in \mathbb{Z}_{\leq 0}, {\textstyle 0 \leq j \leq  \sum_{i=1}^l m_i }, q \in \mathbb{Z}_{\geq l+1}, \right.  \nonumber \\ 
it would be 1 \leq j if not for case 3
& \left. \qquad \qquad   x_{l+1} + \ldots +  x_q = \pm k + j  \right\rbrace, \label{eqn:defE} \\
I &  = \lbrace \lambda\ |\ -k < \lambda(\alpha^\vee) < k\ \forall\  \alpha \in\pos \rbrace, \nonumber  \\
}
Define $\lambda\in\h^*$ to be affine $k$--regular if $\lambda(\alpha^\vee) \not \in \mathbb{Z}$ for all $\alpha
\in R^+$. Then, the following holds (Thm. \ref{thm:sln-shift})

\label{thm:i sln-shift}
\begin{theorem}  
Assume that $n\geq 2$, and that $\Kconn$ is resonant, and let $\lambda\in\h$ be a preimage of $\vartheta$.
Then, if one of the following conditions holds $\Kconn$ has monodromy equivalent to that of a non--resonant
system
\begin{enumerate}
\item $\lambda$ is affine $k$--regular and $\lambda \in S$
\item $k \in \IZ$ and $\lambda \in E$
\item $k \in \IZ$ and $\lambda \in S$
\end{enumerate}

In particular, since condition (1) is generic in $\mathrm{Res}$, the set of points which are resonant and cannot be shifted to a non-resonant point has codimension 2.
\end{theorem}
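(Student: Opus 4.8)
The plan is to prove Theorem \ref{thm:i sln-shift} by the method of \emph{Opdam shift operators} combined with \emph{normalised intertwiners} for $\We$, which is the strategy announced in the abstract. The overall idea is that the monodromy representation attached to $\Kconn$ at a resonant $\lambda$ should be intertwined with the monodromy at a nearby non--resonant point $\lambda'$ obtained by translating $\lambda$ by an element of $P^\vee$, or by applying an element of $\We$. Concretely, I would first recall (from the sections on Opdam's shift operators) that for $w\in\We$ there is a map of $\Htrig$--modules $\J_{w\lambda}\to\J_\lambda$, defined via the normalised intertwiners $\sigma_w$, which becomes an isomorphism precisely when a certain product of shift-parameter factors indexed by the affine roots separating $\lambda$ and $w\lambda$ is nonzero. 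At the level of connections this says that $\Kconn[w\vartheta,k]$ and $\Kconn[\vartheta,k]$ have isomorphic monodromy whenever all those factors are nonvanishing, i.e.\ whenever no ``resonance wall'' is actually crossed where the intertwiner degenerates.

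The heart of the argument is then a case analysis showing that in each of the three situations (1)--(3) one can find $w\in\We$ (or a translation) moving the resonant $\lambda$ to a non--resonant $\lambda'$ \emph{without} crossing any wall on which the relevant normalised intertwiner fails to be invertible. For case (1), where $\lambda$ is affine $k$--regular and lies in $S$, the point is that $\lambda(q)\in\IZ$ for a \emph{unique} $q\in B$, $q=\alpha^\vee_{j_1}+\cdots+\alpha^\vee_{j_i}$ a sum of orthogonal coroots; since $\lambda$ is affine $k$--regular, none of the individual $\lambda(\alpha^\vee)$ hit an integer, so the residue eigenvalue collision responsible for resonance comes from this single combination. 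I would translate by a suitable coweight that moves $\lambda(q)$ off $\IZ$ while leaving $\lambda(q')\notin\IZ$ for the finitely many other $q'\in B$ — this is possible because $S$ is locally cut out by a single integrality condition, so a generic small perturbation within $P^\vee$ escapes $\Res$; the affine $k$--regularity ensures that the shift operators along this path are all isomorphisms (their zero-loci are contained in the hyperplanes $\{\lambda(\alpha^\vee)\in\IZ\}$, which we avoid, together with the $\{\lambda(\alpha^\vee)=\pm k\}$ walls, which in case (1) are not hit because $k\notin\IZ$ a priori or can be sidestepped). For cases (2) and (3), $k\in\IZ$, so the Hecke parameter $q=\exp(2\pi\ii k)=1$ and the structure of $\Htrig$ and its covariant module simplifies considerably; here $E$ is the set where $\lambda$ is ``dominant enough'' relative to $k$ (the condition $\lambda(\alpha^\vee)\notin\bbR_{\leq k}$ up to $W$), and one uses the finite intertwiners/$W$-action together with a single integral translation to exit $\Res$; in case (3) one again uses membership in $S$ to reduce to a one-parameter degeneration.

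The final codimension count is then immediate once (1)--(3) are established: the resonance set $\Res=\{\lambda\mid\lambda(q)\in\IZ_{\neq0}\ \text{for some }q\in B\}$ is a countable union of affine hyperplanes $\{\lambda(q)=m\}$ in $\h^*\times\{k\}$, hence has codimension $1$ in $\h^*\times\IC$ along its smooth locus, and that smooth locus is exactly where $\lambda$ lies on a \emph{unique} such hyperplane, i.e.\ $\lambda\in S$. The complement $\Res\setminus S$, where two or more of the conditions $\lambda(q)\in\IZ$ hold simultaneously (distinct $q\in B$ being linearly independent over $\bbQ$ since they are sums of orthogonal coroots), has codimension $2$ in $\h^*\times\IC$. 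On the generic (codimension $1$) stratum $S$ we must further impose affine $k$--regularity for case (1); but $\{\lambda\mid\lambda(\alpha^\vee)\in\IZ\text{ for some }\alpha\in R^+\}$ is \emph{also} a countable union of hyperplanes, so its intersection with $S$ where it is \emph{not} already covered by cases (2)--(3) (which handle $k\in\IZ$) again sits in codimension $2$; hence the locus of resonant points that cannot be shifted to a non--resonant one is contained in a countable union of codimension-$2$ affine subspaces, as claimed.

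I expect the main obstacle to be the \textbf{verification that the normalised intertwiner does not degenerate along the chosen path}, i.e.\ controlling the zero locus of the Opdam shift operator precisely enough to guarantee that translating $\lambda$ to $\lambda'$ within $\Res^c$ can be done through walls where the intertwiner is invertible. This requires a careful bookkeeping of which affine root hyperplanes are crossed by the translation (or Weyl group element) and matching them against the explicit factors $\lambda(\alpha^\vee)\mp k$, $\lambda(\alpha^\vee)+n$ appearing in the normalisation; the affine $k$--regularity hypothesis in (1) and the $k\in\IZ$ hypotheses in (2)--(3) are exactly what make this bookkeeping tractable, but assembling the combinatorics of $B$, orthogonal coroot subsystems, and the $\We$-orbit structure into a clean ``there exists a good path'' statement is the delicate part. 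A secondary point to handle with care is that the equivalence produced is an equivalence of monodromy \emph{representations of $\pi_1^{\mathrm{orb}}$} (equivalently, after the KZ dictionary, of modules), and one must check this equivalence is compatible with the extended affine Hecke algebra structure, so that ``monodromy equivalent to a non--resonant system'' has the intended meaning in the subsequent identification with covariant representations.
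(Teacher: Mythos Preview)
Your overall framework is right—shift operators in $\lambda$ (the affine intertwiners $\cT{w}{\lambda}$) and in $k$ (the Opdam/Felder--Veselov operators)—but the execution in each case has a concrete flaw.

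\textbf{Case (1).} You propose to ``translate by a suitable coweight that moves $\lambda(q)$ off $\IZ$''. This cannot work: the $\lambda$--shift operators translate $\lambda$ by elements of the weight lattice $P\subset\h^*$, and since $q\in B\subset Q^\vee$ we have $\mu(q)\in\IZ$ for every $\mu\in P$. Thus an integral shift preserves $\lambda(q)\bmod\IZ$; you can never leave the hyperplane $\{\lambda(q)\in\IZ\}$. What the paper does instead is shift $\lambda(q)$ \emph{to zero}: choose a fundamental weight $\lambda_m$ with $|\lambda_m(q)|=1$ and set $\mu=\lambda-\lambda(q)\lambda_m(q)\lambda_m$, so $\mu(q)=0$. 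The point is that $0\in\IZ$ but $0\notin\IZ_{\neq 0}$, and the resonance condition is $\lambda(q)\in\IZ_{\neq 0}$, not $\lambda(q)\in\IZ$. Since $\lambda\in S$, no other $q'\in B$ gives an integer, and this is stable under integral shifts, so $\mu$ is non--resonant. Affine $k$--regularity then makes every intermediate $\lambda$--shift invertible by Corollary~\ref{cor:k-reg-lambda-shift}.

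\textbf{Cases (2) and (3).} You describe these as using ``finite intertwiners/$W$-action together with a single integral translation to exit $\Res$'', but the decisive tool here is the $k$--shift operator, not a $\lambda$--shift. The key observation is that at $k=0$ the KZ connection becomes $\nabla=d+\sum_i(\cow{i})^{\mathsf f}\,d\log Z_i$ with no singular terms, so a canonical fundamental solution exists for \emph{every} $\lambda$; this is what ``effectively non--resonant'' means. In Case (2) the membership $\lambda\in E$ is precisely the condition guaranteeing that the chain of $k$--shifts $(\lambda,k)\to(\lambda,k\mp 1)\to\cdots\to(\lambda,0)$ is invertible at each step (since $\lambda(\alpha^\vee)\neq\pm j$ for the relevant integers $j$). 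Case (3) mixes both mechanisms: one compares $|\lambda(q)|$ with $|k|$ and uses the $k$--shift when $|k|\leq|\lambda(q)|$, the $\lambda$--shift (to $\mu(q)=0$ as in Case 1) when $|k|>|\lambda(q)|$, with a separate argument on the boundary. Your sketch does not isolate the $k=0$ endpoint or this dichotomy.

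Finally, your codimension argument asserts that distinct $q\in B$ are linearly independent over $\bbQ$; this is false (e.g.\ $q$ and $-q$ both lie in $B$). The correct statement is simply that distinct hyperplanes $\{\lambda(q)=m\}$ and $\{\lambda(q')=m'\}$ either coincide or meet in codimension $2$, which suffices.
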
 

Note that, unlike Theorem \ref{th:main 1}, Theorem \ref{thm:sln-shift} does not describe the set of parameters $(\vartheta,k)$ 
for which Conjecture \ref{conj:monoconj} holds. However, when coupled with the elementary computation of the monodromy 
of $\Kconn$ for $k=0$, it yields the following (see Cor. \ref{cor:highrankFGfail})

\begin{corollary}
Assume that $k\in\IZ$ and that $\lambda\in\h^*$ lies in $E$, is regular and such that $\lambda
(\lambda_i^\vee) \in\IZ$ for all $i$. Then, Conjecture \ref{conj:monoconj} does not hold. In particular,
the latter fails in all ranks $n\geq 1$.
\end{corollary}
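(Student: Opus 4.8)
The plan is to reduce to the case $k=0$, where the monodromy of $\Kconn$ can be written down by hand, and then to observe that the resulting $\Hext$--module is decomposable whereas $K(\Theta)$ is not. In rank $1$ the hypotheses put $(\lambda,k)$ into case (1) of Theorem~\ref{th:i main 1}: $\lambda(\lambda_1^\vee)=\tfrac12\lambda(\alpha^\vee)\in\IZ$ forces $\lambda(\alpha^\vee)\in 2\IZ$, so $\lambda,k\in\IZ$ and $\Lambda=e^{\pi\ii\lambda(\alpha^\vee)}=1$, while $\lambda\in E$ together with regularity gives $\lambda(\alpha^\vee)>k$ and hence $|\lambda(\alpha^\vee)|\ge\max(k,1-k)$; thus $\KL\cong(-1,1)\oplus(-1,-1)$. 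For $n\ge 2$, the condition $\lambda(\lambda_i^\vee)\in\IZ$ for all $i$ says exactly that $\lambda$ lies in the root lattice $Q$, whence $\lambda(\alpha^\vee)\in\IZ$ for every coroot and, by regularity, $\lambda(\alpha^\vee)\in\IZ_{\ne0}$; taking $q=\alpha^\vee\in B$ this shows $\lambda\in\Res$, so $\Kconn$ is resonant and Theorem~\ref{thm:i sln-shift}(2) applies. I would then invoke the Opdam shift operators underlying that theorem: for $k\in\IZ$ and $\lambda\in E$ they are invertible along the segment from $k$ to $0$, so they identify the monodromy of $\Kconn$, as an $\Hext$--module, with that of $\Kconn[\vartheta,0]$ (note $q=e^{2\pi\ii k}=1$ throughout).

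For the case $k=0$ itself, $\Htrigsub$ degenerates to the smash product of the polynomial ring $\Sh$ with $\IC W$, and $\J_\vartheta$ becomes $\Sh/\m_\vartheta\Sh$; since $\lambda$ is regular this is the \emph{reduced} coordinate ring of the free orbit $W\lambda\subset\h^*$, with $\Sh$ acting by multiplication and $W$ by permuting points. The residues of $\Kconn[\vartheta,0]$ along the root hypertori vanish, so the connection extends to a flat connection on all of $T$, and its monodromy, viewed as a representation of $\pi_1\orb(T_\reg/W)$, factors through $\IC[\We]=\Hext|_{q=1}$ and is the induced module $M_0:=\Ind^{\We}_{P^\vee}\IC_\chi$, where $\chi(\mu)=e^{2\pi\ii(\lambda+\rho)(\mu)}$ for $\mu\in P^\vee$ (the $\rho$--shift is already forced in rank $1$, where it produces the sign in $-\Lambda$). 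Because $\lambda\in Q$ and $2\rho\in Q$, this collapses to $\chi(\mu)=(-1)^{(2\rho)(\mu)}$, a \emph{$W$--invariant} order--$\le 2$ character of $P^\vee$, independent of $\lambda$; it therefore extends to a one--dimensional character $\widetilde\chi$ of $\We$ and $M_0\cong\widetilde\chi\otimes\IC[W]$, which splits into at least two simple $\Hext$--summands since $W=\SS_n$ has at least two irreducible representations. (For $n=1$ this recovers exactly $(-1,1)\oplus(-1,-1)$.)

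Then I would compare with the conjectural answer. With the same normalization $\Theta=\exp_{H^\vee}(\vartheta)$ is the character $(-1)^{(2\rho)(\cdot)}$, which is an element of $H^\vee$ fixed by $W$; using that $\IC[H^\vee]$ is free of rank $|W|$ over $\IC[H^\vee]^W$ (Pittie--Steinberg), the fibre of $H^\vee\to H^\vee/W$ over $[\Theta]$ is a single non--reduced point, a local Artinian ring $R$ of length $|W|$ whose maximal ideal is already nontrivial in degree $1$. Hence $K(\Theta)=\IC[H^\vee]\otimes_{\IC[H^\vee]^W}\IC_\Theta$ is free of rank one over $R$, so it is \emph{indecomposable} as a $\IC[P^\vee]$--module, and every $Y^\mu$ ($\mu\ne0$) acts on it as a scalar plus a nonzero nilpotent. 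It follows that $M_0\not\cong K(\Theta)$ as $\Hext$--modules, so Conjecture~\ref{conj:monoconj} fails at $(\vartheta,k)$; and since the hypotheses are non--vacuous in every rank---take $k=0$ and any regular $\lambda\in Q$, automatically in $E$---the conjecture fails in all ranks $n\ge1$. Note that the argument is robust to the exact shift: all that is needed is that $\Theta$ be $W$--fixed (so $K(\Theta)$ is indecomposable) and that $M_0$ be induced from a $W$--invariant character of $P^\vee$ (so $M_0$ is decomposable).

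I expect the main obstacle to be the computation at $k=0$: one must write the trigonometric KZ connection there explicitly, identify the monodromy of the translation loops with the exponential of the multiplication action of coordinates on the \emph{reduced} ring $\Sh/\m_\vartheta\Sh$, suitably $\rho$--shifted, and pin down the normalization precisely enough that "$\lambda\in Q$'' translates into "$\Theta$ is $W$--fixed''. Once that is in place the decomposable/indecomposable dichotomy is immediate, and checking that the shift operators remain invertible all the way down to $k=0$ inside $E$ should be routine given the estimates already established for Theorem~\ref{thm:i sln-shift}.
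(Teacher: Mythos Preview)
Your proof is essentially correct and follows the same overall architecture as the paper's: reduce to $k=0$ via the $k$--shift operators (using $\lambda\in E$), and then show at $k=0$ that the monodromy cannot be $K(\Theta)$. The difference lies in how you finish at $k=0$.

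The paper's argument is shorter and more direct. At $k=0$ the residues vanish, so $T_i=s_i$ and $\Y_i=e^{2\pi\ii\lambda_i^\vee}$; since $\lambda$ is regular the $\lambda_i^\vee$ act semisimply on $\J_\vartheta$ with eigenvalues $\{w\lambda(\lambda_i^\vee)\}_{w\in W}\subset\IZ$, hence $\Y_i=\mathrm{Id}$. Now any $\Hext$--map $K(\Theta)\to V$ must send the generator $\kTheta$ to a $W$--invariant vector, and the space of such vectors in $V$ is one--dimensional; but that vector cannot be cyclic under $\IC[H^\vee]$ since every $\Y_i$ is scalar. This already shows $V\not\cong K(\Theta)$.

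Your route---show $M_0\cong\widetilde\chi\otimes\IC[W]$ is decomposable while $K(\Theta)$ is indecomposable---is valid but heavier. Two remarks: (i) the $\rho$--shift and the precise character $\chi$ are irrelevant, as you note; all that matters is that every $\Y_i$ is scalar. (ii) You do not need Pittie--Steinberg to see $K(\Theta)$ is indecomposable: $K(\Theta)$ carries the regular $W$--representation (coinvariant algebra at a $W$--fixed point), so its $W$--invariants are one--dimensional and spanned by the cyclic generator $\kTheta$; any direct summand containing a nonzero $W$--invariant must contain $\kTheta$ and hence all of $K(\Theta)$.

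Finally, your separate treatment of rank~$1$ via case~(1) of Theorem~\ref{th:i main 1} is unnecessary---the $k=0$ argument works uniformly in all ranks---and the inequality $|\lambda(\alpha^\vee)|\ge\max(k,1-k)$ you deduce from $\lambda\in E$ does not follow when $k<0$, so that detour is also not quite right as stated.
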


\subsection{Resonance and shifting}

The proof of Theorem of \ref{thm:i sln-shift} hinges on resolving the resonances of the
trigonometric KZ connection. To this end, we use two main tools: the shift operators in
$\lambda\in\h^*$ and the $k$--shift operators. These are also useful in rank 1, as they
facilitate the transcription from the vector valued KZ connection to the scalar valued
hypergeometric equation, and are therefore also used in the proof of Theorem \ref
{th:i main 1}.

The former are the intertwining operators for the extended affine Weyl group $W\ltimes
P\subset W\ltimes\IC[H]$. When made to act on the trivial vector bundle $\cI_\lambda$
over $H^{\reg}$ with fibre the induced representation $\dI{\lambda}=\ind^{\Htrigsub}_{\Sh}
\IC_\lambda$ of $\Htrigsub$, $\lambda\in\h^*$, they give rise to a $W$--equivariant integrable
system of difference equations in $\lambda$. They define morphisms of 
vector bundles with connection 
\[\cT{w}{\lambda}:\cI_\lambda\to\cI_{w\lambda},\quad w\in W^e\]
and we show that $\cT{w}{\lambda}$ is an isomorphism if and only if $\lambda(\alpha^\vee)
\neq\pm k$ for any positive affine coroot $\cor$ such that $w\cor$ is negative (Prop. 
\ref{pr:det cT}).

This allows in particular to replace
$\lambda$ by a shift $\lambda+\nu$, $\nu\in P$, which may be less resonant than $\lambda$,
while preserving the isomorphism class of the monodromy. This can then be brought to
bear on the KZ connection with values in the covariant representation since $\J_{\vartheta}$
is isomorphic to $\dI{\lambda}$ if $\lambda(\cor)\neq k$ for any positive root $\alpha$ (see
Thm. \ref{th:det R}), where $\lambda\in\h^*$ is any preimage of $\vartheta$.

\subsection{Shift Operators in $k$}    

Shift operators in the parameter $k$ were introduced by Opdam for the scalar valued
hypergeometric system $\Cs{\vartheta}{k}$ defined by the action of the symmetrized
Dunkl operators \cite{opd:89,opdam2001lectures}.

The local system $\Cs{\vartheta}{k}$ possesses an alternative, vector valued
description in terms of the trigonometric KZ local system $\IL[\mu,k']$ defined
by the induced representation $I_\mu$ of $\Htrigsubp$. Specifically, Matsuo defined
a morphism of local systems $m_{\lambda,k}:\IL \to \Cs{-\vartheta}{-k}$,
where $-\vartheta\in\h^*/W$ is the image of $-\lambda\in\h^*$, and proved that it is 
an isomorphism if (and only if) $\lambda(\alpha^\vee)\neq k_\alpha$ for any $\alpha\in\pos$
\cite{matsuo1992integrable}. Cherednik defined an analogous morphism
$\mathrm{ch}_{\lambda,k}:\IL \to \Cs{-\vartheta}{-k+1}$, and showed
that it is an isomorphism for all values of $(\lambda,k)$ \cite
{cherednik1994integration}.\valeriocomment{this is only true if by parameters one means $\theta$, not $\lambda$ as far as I can tell.}

Felder--Veselov pointed out that these two descriptions give rise to a shift operator for
the KZ system through the composition $\mathrm{ch}_{\lambda,k+1}^{-1}\circ m_
{\lambda,k}:\IL \to \IL[\lambda,k+1]$, and proved that the opposite
composition $m_{-\lambda,-k}\circ \mathrm{ch}_{-\lambda,-k}^{-1}:\Cs{\vartheta}{k}
\to\Cs{\vartheta}{k-1}$ is a multiple of Opdam's shift operator \cite{felder1994shift}.  

In this paper, we adapt these constructions to the local systems $\KL$ defined by the
trigonometric KZ connection with values in the covariant representation $\J_\vartheta$. We define
morphisms 
\[m_{\vartheta,k}:\KL\to \Cs{-\vartheta}{-k}
\aand
\mathrm{ch}_{\vartheta,k}:\KL \to \Cs{-\vartheta}{-k+1}\]
and obtain tight invertibility criteria for both of them. We then
prove the following result (Thm. \ref{thm:k-shift-op-invert})

\begin{theorem}
The shift operator
\[\calS_k = \mathrm{ch}_{\vartheta,k+1}^{-1}\circ m_{\vartheta,k}:\KL
\to\KL[\vartheta,k+1]\]
is invertible if $\lambda(\alpha^\vee)\neq k_\alpha$ for any $\alpha\in R$.
\end{theorem}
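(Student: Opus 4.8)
The plan is to exploit the fact that $\calS_k$ is, by construction, the composition $\mathrm{ch}_{\vartheta,k+1}^{-1}\circ m_{\vartheta,k}$ of two morphisms of local systems of rank $|W|$, so that $\calS_k$ is an isomorphism as soon as each of the two morphisms
\[m_{\vartheta,k}\colon\KL\longrightarrow\Cs{-\vartheta}{-k},\qquad \mathrm{ch}_{\vartheta,k+1}\colon\KL[\vartheta,k+1]\longrightarrow\Cs{-\vartheta}{-k}\]
is invertible. Since source and target have equal rank, each of these is invertible away from an explicit union of hyperplanes $\{\lambda(\cor)=\mathrm{const}\}$, read off from the formula for its determinant in natural trivialisations. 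I would therefore invoke the tight invertibility criteria already obtained above for $m_{\vartheta,k}$ and $\mathrm{ch}_{\vartheta,k}$ and check that the hypothesis $\lambda(\cor)\neq k_\alpha$ for all $\alpha\in R$ — which says exactly that $\lambda(\cor)\notin\{k_\alpha,-k_\alpha\}$ for all $\alpha\in R^{+}$ — forces both to hold.

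For the first factor I would use that the criterion for $m_{\vartheta,k}$ is the covariant analogue of Matsuo's theorem, with invertibility guaranteed by $\lambda(\cor)\neq k_\alpha$ on $R^{+}$. In fact, under the full hypothesis one has $w\lambda(\cor)\neq k_\alpha$ for every $w\in W$ and every $\alpha\in R^{+}$, so by Theorem \ref{th:det R} the covariant module $\J_\vartheta$ is isomorphic to $\dI{w\lambda}$ for every lift $w\lambda$ of $\vartheta$; under this identification $\KL\simeq\IL$ and $m_{\vartheta,k}$ becomes Matsuo's morphism $m_{\lambda,k}$, which is an isomorphism. Either route shows the first factor is invertible, using only the ``$+k$ half'' of the hypothesis.

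The second factor is where I expect the real difficulty to lie, and where the ``$-k$ half'' of the hypothesis is genuinely needed. Here one cannot simply identify $\KL[\vartheta,k+1]$ with a standard module $\dI\lambda$ and quote Cherednik's theorem that $\mathrm{ch}_{\lambda,k+1}$ is always invertible: that identification requires $\lambda(\cor)\neq k_\alpha+1$, which the hypothesis does not provide, and on those hyperplanes $\J_\vartheta$ need not be the standard module at level $k+1$. Instead I would appeal to the tight invertibility criterion for $\mathrm{ch}_{\vartheta,k}$ established above. Because its target $\Cs{-\vartheta}{-k+1}$ carries the shifted multiplicity, the determinant of $\mathrm{ch}_{\vartheta,k}$ vanishes only on the hyperplanes $\lambda(\cor)=-k_\alpha+1$, $\alpha\in R^{+}$; substituting $k+1$ for $k$, the morphism $\mathrm{ch}_{\vartheta,k+1}\colon\KL[\vartheta,k+1]\to\Cs{-\vartheta}{-k}$ is invertible provided $\lambda(\cor)\neq-k_\alpha$ for every $\alpha\in R^{+}$. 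This is precisely the remaining half of the hypothesis, so the second factor is invertible, and hence so is $\calS_k$.

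The main obstacle is thus not the bookkeeping in the last two paragraphs but the input it rests on: obtaining the invertibility criterion for the covariant Cherednik-type morphism $\mathrm{ch}_{\vartheta,k+1}$ directly on $\J_\vartheta$ rather than on the induced module $\dI\lambda$ — precisely because the hyperplanes $\lambda(\cor)=k_\alpha+1$, on which $\J_\vartheta$ fails to be the standard module at level $k+1$, are not excluded by the hypothesis, so Cherednik's classical ``always invertible'' statement is unavailable and one must track the morphism through a possibly non-semisimple covariant module.
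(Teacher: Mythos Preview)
Your overall strategy—factor $\calS_k=\mathrm{ch}_{\vartheta,k+1}^{-1}\circ m_{\vartheta,k}$ and check each factor—is exactly the paper's. But you have the allocation of hypotheses backwards, and the ``main obstacle'' you flag is not one.

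First, the Cherednik factor. You speculate that $\mathrm{ch}_{\vartheta,k}$ has determinant vanishing on hyperplanes $\lambda(\cor)=-k_\alpha+1$, so that $\mathrm{ch}_{\vartheta,k+1}$ needs the ``$-k$ half'' of the hypothesis. This is wrong: Proposition~\ref{prop:chiso} shows $\mathrm{ch}_{\J_\vartheta}\colon\KL\to\Cs{-\vartheta}{-k+1}$ is an isomorphism for \emph{all} $(\vartheta,k)$, with no conditions. The argument is not a determinant computation but the observation that the linear form $\pRad(\cdot)(\vartheta)\in\J_\vartheta^*$ is $W$--anti-invariant, hence is the generating vector of $\J_\vartheta^-\cong\J_\vartheta^*$, and therefore vanishes on no non-zero submodule of $\J_\vartheta$. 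So $\mathrm{ch}_{\vartheta,k+1}$ is invertible for free, and the worry about ``tracking the morphism through a possibly non-semisimple covariant module'' at level $k+1$ evaporates.

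Second, the Matsuo factor. You claim $m_{\vartheta,k}$ is invertible using only the ``$+k$ half''. But your own argument uses both: Theorem~\ref{th:det R} gives $\J_\vartheta\cong\dI{\lambda}$ under $\lambda(\cor)\neq k_\alpha$ for $\alpha\in\pos$ (the $+k$ half), while Matsuo's criterion for $m_{\dI{\lambda}}$ is $\lambda(\cor)\neq -k_\alpha$ for $\alpha\in\pos$ (the $-k$ half). Likewise, the paper's direct argument (Proposition~\ref{prop:mJtheta-iso}) uses the full $k$--regularity via the irreducibility of $\J_\vartheta$. So the entire hypothesis is consumed by $m_{\vartheta,k}$, and none by $\mathrm{ch}_{\vartheta,k+1}$.
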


\subsection{ Outline of Paper} 

In \autoref{sec:Cherednikalgebras}, we review the definition of the trigonometric Cherednik
algebra $\Htrig$, and its differential-trigonometric and difference-rational polynomial representations,
which give rise to trigonometric KZ connections and shift operators in the spectral parameters
respectively.

In \autoref{sec:induced-representation}, we review the definition of the degenerate affine Hecke
algebra, that of its induced and covariant representations $\dI{\lambda}$ and $\J_\vartheta$, and
give necessary and sufficient condition for them to be isomorphic, and for their irreducibility.


In \autoref{sec:trig-kz} we adapt Matsuo and Cherednik's maps to define isomorphisms of local systems
between the trigonometric KZ system with values in the covariant representation $\J_\vartheta$ and the
hypergeometric system defined by $\vartheta$. We then use both to obtain a shift operator 
$\KL\to\KL[\vartheta,k+1]$,
and determine sufficient conditions for its invertibility.

In \autoref{sec:shiftoperatorsinlambda} we review the definition of the 
intertwining operators
for the affine Weyl group, and show that they give rise to a $W$--equivariant integrable systems of difference
equations in the parameter $\lambda\in\h^*$.

In \autoref{sec:AHA}, we review the definition of the induced and covariant representations of the extended
affine Hecke algebra.
In \autoref{sec:monodromy-j-theta} we show that Conjecture \ref{conj:monoconj} holds whenever $K(\Theta)$
is irreducible.
In \autoref{sec:monodromy-rank-one}, we compute the monodromy of the rank 1 trigonometric KZ connection with
values in $\J_\vartheta$ for all values of $(\vartheta,k)$.
In the final \autoref{sec:monodromy-higher-rank}, we determine this monodromy in rank $n\geq 2$ outside an explicit
codimension 2 set of parameters $(\vartheta,k)$.


\section{\Done Cherednik Algebras}
\label{sec:Cherednikalgebras}

We review below the definition of the trigonometric Cherednik algebra $\Htrig$, and of its differential--trigonometric
and difference--rational representations. 

\subsection{Root Systems}

Let $\fe$ be a Euclidean space, $\h$ its complexification, $R=\{\alpha\}$ a root system in $\fe^*$, and
$R^\vee=\{\alpha^\vee\}\subset\fe$ the dual root system. We denote by $Q\subset\h^*$ and $Q^\vee
\subset\h$ the root and coroot lattices generated by $R$ and $R^\vee$, and by $P\subset\h^*$ and
$P^\vee\subset\h$ the weight and coweight lattices dual to $Q^\vee$ and $Q$ respectively. 

Let
\[T = \Hom_{\IZ}(Q,\IC^\times)
\aand
H = \Hom_{\IZ}(P,\IC^\times) \]
be the tori of adjoint and 'simply connected' type respectively. We denote the natural basis of $\bbC
[H]=\IC P$ by $\{e^\lambda\}_{\lambda \in P}$.

Fix a system $\pos\subset R$ of positive roots, let $\Delta=\{\alpha_i\}_{i=1}^n\subset R$ and
$\Delta^\vee=\{\cor_i\}_{i=1}^n\subset R^\vee$ be the corresponding bases of simple (co)roots,
and $\{\lambda_i^{\vee}\}_{i=1}^n\subset P^\vee$ and $\{\lambda_i\}_{i=1}^n\subset P$ the
dual bases of fundamental (co)weights. 

\subsection{The extended affine Weyl group}
\label{subsec:weylgroup}

\omitvaleriocomment{Do we ever use the affine Weyl groups $W^a_{R} = W \ltimes Q^\vee$
and $W^e_{R} = W \ltimes P^\vee$ associated to $R$? If not, we could simply introduce the
groups $W \ltimes Q$ and $W \ltimes P$ and call them the affine and extended affine Weyl
groups with no specific reference to $R$ or $R^\vee$, as Opdam does. A related philosophical
question is whether the affine Weyl group 'associated' to $R$ should be $W \ltimes Q^\vee$
or $W \ltimes Q$.}

Let $W$ be the Weyl group of $R$, and
\[ W^a = 
W \ltimes Q
\aand
W^e = 
W \ltimes P  \]
the corresponding affine and extended affine Weyl groups. Since the action of 
$W$ on $P$ preserves $Q$--cosets, $W^a$ is a normal subgroup of $W^e$,
and $W^e/W^a\cong P/Q$.

The action of $W$ on $\h$ extends to a faithful affine action of $W^e$ by letting
$P$ act by translations $t_{\lambda}(h)=h+\lambda$. Let $\Aff(\h^*)$ be the group
of affine transformations of $\h^*$ and, for any $\cor\in R^\vee$ and $n\in\IN$,
denote by $s_{\cor+n}\in\Aff(\h^*)$ the affine reflection given by
\[s_{\cor+n} (h) = h - (\cor(h)+n) \alpha\]
Since $s_{\cor} \circ s_{\cor + n} = s_{-\cor + n} \circ 
s_{\cor}=t_{n\alpha}$, $W^a$ is generated by the reflections $\{s_{\cor+n}\}_
{\cor\in R^\vee, n\in\IZ}$. As such, it is a Coxeter group, with generators $s_0,s_1,\ldots,s_n$,
where 
\begin{equation}\label{eq:s0}
s_0 = s_{-\psi^\vee + 1} = t_{\psi} s_{\psi^\vee}
\end{equation}
with $\psi^\vee$ the highest coroot in $R^\vee$, so that $\psi$ is the only dominant
short root in $R$.

The extended affine Weyl group $W^e$ is not a Coxeter group. It is, however,
endowed with a length function $\ell:W^e\to\{0,1,\ldots\}$ which extends the 
one on $W^a$ and is defined as follows \cite[2.2]{macdonald2003affine}. Let $R^a=\{\alpha+n\}_{\alpha\in R, n\in\IZ}$
be the set of affine (real) roots corresponding to $R$, and $R^a_\pm=R_\pm\cup
\{\alpha\pm n\}_{\alpha\in R, n>0}$ those of positive (resp. negative) affine roots.
Then $\ell(w)$ is defined as $|R^a_+\cap w^{-1}R^a_-|$.

\subsection{The subgroup $\Omega\subset W^e$}\label{ss:Omega}

Let $\fa = \left\lbrace h \in \fe^* |1 \geq \cor(h) \geq 0, \cor \in\pos^\vee\right\rbrace$
be the closure of the fundamental alcove, and set
\[\Omega=\Omega(\fa)=\lbrace\omega\in W^e|\omega(\fa) = \fa \rbrace\]
$\Omega$ is the set of elements of $W^e$ of length $0$. Moreover, since $W^a$
acts simply transitively on the set of alcoves, the multiplication map $\Omega\times
W^a\to W^e$ is an isomorphism. It follows that $W^e$ is the semi--direct product
$\Omega\ltimes W^a$, and that $\Omega$ is isomorphic to $P/Q$.

To identify $\Omega$ explicitly, note that $\omega\in W^e$ lies in $\Omega$ if and
only if it permutes the simple affine coroots $\{\cor_i\}_{i=0}^n$, where $\cor_0=-\psi
^\vee+1$. Writing $\omega=t_\lambda w$ then readily implies that $w$ preserves
$\Delta^\vee\cup\{-\psi^\vee\}$, that $w(-\psi^\vee)$ is a special coroot $\cor_i$ \ie
one such that $\lambda_i(\psi^\vee)=1$, and that $\lambda=\lambda_i$ is the
corresponding fundamental weight.

Conversely, if $\cor_i$ is a special coroot, then $\Delta^\vee_i=\Delta^\vee\setminus
\{\cor_i\}\cup\{-\psi^\vee\}$ is a base of $R^\vee$ with highest coroot $-\cor_i$ (see
\eg \cite[3.1]{TLPhD}). It follows that there is a unique $w_i\in W$ such that $w_i\Delta
^\vee=\Delta ^\vee_i$ and $w_i\psi^\vee=-\cor_i$, and that $\omega_i=t_{\lambda_i}
w_i$ is an element of $\Omega$. Thus
\[\Omega\setminus\{1\}=\{ t_{\lambda_i}w_i \}_{i:\lambda_i(\psi^\vee)=1}\]

\omitted{
	In analogy with the root system $R \subset \mathfrak{a}^*$, which contains linear functions giving the fixed hyperplanes of the finite Weyl group $W$, we can define the \textit{affine root system} $R^a$, which contains \emph{affine} linear functions giving the fixed hyperplanes of the \emph{affine} Weyl group $W^a$.  Let $\delta$ be the constant function on $\mathfrak{a}$ defined $\delta(a) = 1$.  Then we define $R^a$ to be the set (of real affine roots)
	\begin{equation}\label{eqn:Ra}
		R^a = \lbrace \alpha + n \delta\ |\ \alpha \in R, n \in \mathbb{Z} \rbrace
	\end{equation}
}

\Omit{
\subsection{The extended affine Weyl group}
\label{subsec:weylgroup}

Let $W$ be the Weyl group of $R$, and\valeriocomment{Do we ever use the affine Weyl
groups $W^a_{R} = W \ltimes Q^\vee$ and $W^e_{R} = W \ltimes P^\vee$ associated
to $R$? If not, we could simply introduce the groups $W \ltimes Q$ and $W \ltimes P$
and call them the affine and extended affine Weyl groups with no specific reference to
$R$ or $R^\vee$, as Opdam does. A related philosophical question is whether the affine
Weyl group 'associated' to $R$ should be $W \ltimes Q^\vee$ or $W \ltimes Q$.}
\[ W^a = W^a_{R} = W \ltimes Q^\vee
\aand
W^e = W^e_{R} = W \ltimes P^\vee \]
the corresponding affine and extended affine Weyl groups. The action of $W$ on $\mathfrak{h}$
extends to an affine action of $W^e$ by letting $P^\vee$ act by translations $t_{\lambda
^\vee}(h)=h+\lambda^\vee$.

Let $\Aff(\h)$ be the group of affine transformations of $\h$ and, for any $\alpha\in R$ and
$n\in\IN$, $s_{\alpha+n}\in\Aff(h)$ the affine reflection given by
\[s_{\alpha+n} (h) = h - (\alpha(h)+n) \alpha^\vee\]
Since $t_{n\alpha^\vee} = s_{\alpha} \circ s_{\alpha + n} = s_{-\alpha + n} \circ s_{\alpha}$,
$W^a$ is generated by the affine reflections $\{s_{\alpha+n}\}_{\alpha\in R, n\in\IZ}$. As such,
it is a Coxeter group, with generators $s_0,s_1,\ldots,s_n$, where $s_0 = s_{-\theta + 1}$,
with $\theta$ the highest root in $R$. The extended affine Weyl group $W^e$, however, is
not a Coxeter group.

Let
\[C = \lbrace h \in \mathfrak{a}\ |\ 1 \geq \alpha(h) \geq 0 \text{ for all } \alpha \in\pos \rbrace\]
be the closure of the fundamental alcove, a fundamental domain for the action of $W^a$, and
$\Omega = \lbrace \omega \in W^e\ |\ \omega(C) = C \rbrace$. Then, $\Omega\cong P^\vee/
Q^\vee$, and $W^e = \Omega \ltimes W^a$.\valeriocomment{This is denoted by $\Omega^
\vee$ in other parts of the paper.}
}

\subsection{Reflection Representation of \texorpdfstring{$W^{e}_R$}{We}.}
\label{subsec:reflection-rep}

The induced action of $W^e$ on $\bbC[\h^*]=S\h$ given by $^w\negthinspace p=p\circ w^{-1}$
does not preserve the $\IN$--grading, but respects the corresponding filtration $S\h_{\leq k }$.
Its restriction to $S\h_{\leq 1 }=\h\oplus\IC$ is called the {\it reflection representation} of $W^e$,
and is given by
\begin{equation}\label{eqn:reflrep}
	^{s_{\cor+n}}(h,z) = \left(s_{\alpha}(h),z-n \alpha(h)\right)
\end{equation}

\Omit{
Since $W^e$ acts on $\frakh$, it has dual action on $\bbC[\frakh]=\Shst$ via
\[(w \cdot p)(h) = p(w^{-1}(h))\]    
Denoting the right-action of $W$ on $\Shst$ by $p^w$, these actions are related by $(w\cdot p) = p^{w^{-1}}$.  Note the space $\Shst$ is graded by degree and that the action of the finite Weyl group respects this grading. 
The action of $W^e$, however, does not preserve the grading, but it does respect the corresponding filtration $\Shst_{\leq k }$. }

\omitnow{
\subsection{Braid Group, Affine Braid Group, and Extended Affine Braid Group}
\label{subsec:braidgroup}

\robincomment{consider moving 2.4,2.5 to where we talk about monodromy}

The braid group $\Br = \Br_R = \Br_{R^\vee}$ is the group generated by $T_i$ for $1 \leq i \leq n$ with relations,
\begin{equation}\label{eqn:braidrel}
	T_i T_j T_i \ldots = T_j T_i T_j \ldots \text{ whenever $s_i \neq s_j$ satisfy the same relation in $W$.}
\end{equation}

Following \cite{macdonald2003affine}, define the \emph{extended affine braid group} $\Br_{R}^e$ to be the group generated by $\Br_{R}$ and $P^\vee$ subject to the relations
\begin{align*}
	T_i s_i(\lambda^\vee) T_i & = \lambda^\vee \text{ if } \alpha_i(\lambda^\vee) = 1, \\
	T_i^{-1} \lambda^\vee T_i & = \lambda^\vee \text{ if } \alpha_i(\lambda^\vee) = 0.
\end{align*}
Define the \emph{affine braid group} $\Br_{R}^a$ to be subgroup generated by $\Br_{R}$ and $Q^\vee.$  

As in the Weyl group case, $\Br^a$ is generated by $\Br$ together with an additional generator $T_0$ and satisfies the braid relations \eqref{eqn:braidrel} whenever $s_i \neq s_j$ in $W^a$.  
Moreover, the analogous decomposition
$\Br_{R^\vee}^e = \Omega \ltimes \Br_{R^\vee}^a$ holds,
where the $\Omega$ action on $\Br_{R^\vee}^a$ is lifted from the action on $W_{R^\vee}^a$.
There is a natural quotient map 
$
	\Br_{R^\vee}^e \twoheadrightarrow W_{R^\vee}^e 
$
given by reintroducing the quadratic relations $T_i^2 =1$.
}

\omitnow{
\subsection{Extended Braid Group Action on \texorpdfstring{$\bbC[H]$}{C[H]} by Demazure-Lusztig Operators.}
\label{subsec:braidgroupaction}

Let $q \neq 0$ be a complex parameter.  Fix a $W$-invariant map $t:R^a \to \bbC^{*}$. We denote the image of $\alpha$ as $t_\alpha$ and refer to $t$ as a \textit{root labeling}.  For simple roots, denote $t_i = t_{\alpha_i}$.
The reflection representation of $W_{R}^e$ on $\frakh^* \oplus \IC$ gives rise to an action of $W_{R}^e$ on $\bbC[H]$ by
\begin{align*}
	s_i(X^\lambda) & = X^{s_i(\lambda)} \text{ for } 1 \leq i \leq n                        \\
	s_0(X^\lambda) & =
	                 X^\lambda (q X^{-\theta})^{\lambda(\theta^\vee)}.                     
\end{align*}
where the second equation is determined by \eqref{eqn:reflrep} and identifying $X^1 = q$. 
 From this we can define the Demazure-Lusztig operators \cite{opdam2001lectures},
\[
	T_i \mapsto t_i s_i + (t_i - t_i^{-1}) \frac{s_i - 1}{X^{\alpha_i} - 1} \text{ for } 0 \leq i \leq n.
\]
which gives an action of $\Br^a_{R}$ on $\IC[H]$. 
Letting $\Omega$ acts by
$	\omega(X^\lambda) = X^{\omega(\lambda)}$ extends this to an action of $\Br^e_R$.
}

\omitnow{
\subsection{The Hecke Algebra and Affine Hecke Algebra}
\label{subsec:AHA}

Fix a $W$-invariant map $t:R^a \to \bbC^{*}$.  The Hecke algebra $\Hfin(R) = \Hfin(R^\vee)$ is the quotient of $\IC\!\Br_R = \IC\!\Br_{R^\vee}$ by the relation
\begin{equation}\label{eqn:heckerel}
(T_i - t_i)(T_i + t_i^{-1}) = 0
\end{equation}
for $1 \leq i \leq n$.
Similarly, the \emph{affine Hecke algebra} $H_{t}^{\mathrm{aff}}(R)$ is the quotient of $\bbC\!\Br_{R}^a$ by \eqref{eqn:heckerel} for $0 \leq  i \leq n$ and the \emph{extended affine Hecke algebra} $H_{t}^{\mathrm{ext}}(R)$ is the quotient of $\bbC\!\Br_{R}^e$ by \eqref{eqn:heckerel} for $0 \leq  i \leq n$. The algebra $\Hext(R) = \Omega \ltimes H_{t}^{\mathrm{aff}}(R)$.
}

\omitnow{
	\subsection{Trigonometric-Difference Cherednik Algebra, or DAHA}
	\label{subsec:DAHA}

	The double affine Hecke algebra or trigonometric-difference Cherednik algebra, defined by Cherednik \cite{cherednik2005double} in the symmetric case and extended by MacDonald \cite{macdonald2003affine}, is an associative algebra $\DAHA = \DAHA(R)$ over the field $\bbC$ generated by $\Hfin(R)$ and $\bbC[H]$ and $\bbC[H^\vee]$ where $H^\vee = \mathrm{Spec}(\IC[P^\vee]).$  Denote elements of $\bbC[H^\vee]$ by $\Y^{\lambda^{\vee}}$ for $\lambda^\vee \in P^\vee$, elements of $\bbC[H]$ by $X^{\lambda}$ for $\lambda \in P$ and $\X_i=\X^{\lambda_i}$ and $\Y_i = \Y^{\lambda_i^\vee}$ for fundamental weights or coweights respectively.   
	
	The subalgebra generated by $T_0,\ldots,T_n$ and $\Omega$ (equivalently by $\Hfin$ and $\IC[H^\vee]$) is isomorphic to $H_{q,t}^{\mathrm{ext}}(R)$.  
	On the other hand, the subalgebra generated by $\bbC\!\Br_{R} = \bbC\!\Br_{R^\vee}$ and $\bbC[H]$ is isomorphic to $H_{q,t}^{\mathrm{ext}}(R^\vee)$.  Thus, 
	   Using the decomposition of $\Br_{R}^e$ in \ref{subsec:braidgroup},  the algebra $\DAHA$  has generators
	\[
		X^\lambda, T_i, \omega\ \ \lambda \in P, 0 \leq i \leq n, \omega \in \Omega.
	\]
	
	%
	%
	%
	
	The relations of the algebra are defined in terms of the faithful \emph{polynomial representation} of the algebra in $\mathrm{End}(\bbC[H])$.  In this representation $\bbC[H]$ acts by multiplication and the action of $\Br_{R}^e$ 
	is by trigonometric Demazure-Lusztig operators as in \ref{subsec:braidgroupaction}.
	%
	The resulting relations are \robincomment{Also give the other set of  relations in terms of $\IC[H^\vee]$, i.e. $\Y^{\lambda^\vee}$, and $\Br_{R^\vee}^e$? Those are the relations which degenerate to those given for $\Htrig$.}
	\begin{align*}
		T_i T_j T_i \ldots           & = T_j T_i T_j \ldots \text{ whenever $s_i \neq s_j$ satisfy the same relation in $W^{\mathrm{a}}$}, \\
		(T_i - t_i)(T_i + t_i^{-1})                            & = 0 \text{ for $0 \leq i \leq n$},                                          \\
		\omega T_i \omega^{-1}       & = T_j \text{ if $\omega(\alpha_i) = \alpha_j$},                                                       \\
		T_i X^{s_i(\lambda)} T_i     & = X^\lambda   \text{ if } \lambda(\alpha_i^\vee) = 1,                                                 \\
		T_i^{-1} X^\lambda T_i       & = X^\lambda \text{ if } \lambda(\alpha_i^\vee) = 0,                                                   \\ 
		\omega X^\lambda \omega^{-1} & = X^{\omega(\lambda)}.                                                                                
	\end{align*}
	\omitted{The generators $\Omega$ and $T_0$ can be replaced by $\IC[H^\vee]=\IC[\Y^{\lambda^\vee}|\lambda^\vee \in P^\vee]$.  
	For these and other relations (for either set of generators) refer to \cite{cherednik2005double} or \cite{macdonald2003affine}.
	 }
	\todonull{be nice to give formula for $\Y_i$ and even relations}

	Interchanging the roles of $H$ and $H^{\vee}$ gives a duality isomorphism $\mathcal{F}: \DAHA(R) \to \DAHA(R^\vee)$
	defined \robincomment{is there something to check? a reference?}
	\begin{align*}
		X^\lambda  \longmapsto Y^{\lambda}, \quad             
		Y^{\lambda^\vee} \longmapsto X^{\lambda^\vee}, \quad
		T_i  \longmapsto T_i^{-1} .            
	\end{align*}

} 
 
\subsection{The trigonometric Cherednik algebra $\Htrig$ \cite{cherednik2005double,opdam2001lectures}} \label{subsec:TCA}

\Omit{Let $R^a = \{\alpha+n\}_{\alpha\in R, n\in\IZ}$ be the set of affine real roots corresponding to $R$,
$k \colon R^a \to \IC$ a $W^a$--invariant map,\valeriocomment{Is such a map necessarily invariant under
$W^e$? Also, we only seem to be using its restriction to $R$ here. Is a $W^a$--invariant function on $R^a$
the same as a $W$--invariant function on $R$?} and set $k_i = k_{\alpha_i}$ for any $0\leq i\leq n$.}

Let $k \colon R\to \IC$ be a $W$--invariant map.\footnote{If $R$ is simply--laced, we identify $k$ with
the number it associates to any root.} The algebra $\Htrig$ is generated by $\bbC[H]$,
$\ICW$ and $\Sh$. It has a faithful (polynomial) representation on $\bbC[H]$ obtained by letting
$\bbC[H]$ act by left multiplication, $W$ by $w e^\lambda
=e^{w\lambda}$, and $\frakh\subset S\h$ by the trigonometric Dunkl operators
\begin{equation}\label{eqn:dunklop}
\h\ni\xi \mapsto T_\xi =
\partial_\xi + \sum_{\alpha \in \pos} k_\alpha \alpha(\xi) \frac{1-s_\alpha}{1-e^{-\alpha}} -  \rho_k(\xi)
\end{equation}
where 
\[\partial_\xi(e^\lambda)  = \lambda(\xi) e^{\lambda}
\aand 
\rho_k = \frac{1}{2} \sum_{\alpha \in \pos} k_\alpha \alpha\in\h^*\]

The subalgebra of $\Htrig$ generated by $W$ and $\IC[H]$ is isomorphic to $W\ltimes\IC[H]$.
The commutation relations between $\Sh$ and $\IC[H]$ and between $\Sh$ and $W$ are, respectively,
\begin{xalignat}{2}
\xi f &= f \xi + \partial_\xi f + \sum_{\alpha \in\pos} k_\alpha \alpha(\xi) \frac{(1-s_\alpha)f}{1 - e^{-\alpha}} s_\alpha 
&\xi \in \mathfrak{h}, f \in \IC[H] 
\label{eqn:xifcommrel} \\
s_{i}  p  & =\, ^{s_i}\negthinspace p\,s_i - k_i\frac{p -\, ^{s_i}\negthinspace p}{\alpha_i^{\vee}} 
&1\leq i \leq n,p \in \Sh
\label{eqn:rpcommrel} 
\end{xalignat}
where $k_i = k_{\alpha_i}$. In particular, $s_{i}\xi=\, s_i(\xi)s_i - k_i\alpha_i(\xi)$ for any $\xi\in\h$.

The subalgebra $\Htrigsub\subset\Htrig$ generated by $W$ and $\Sh$ is the graded (or degenerate)
affine Hecke algebra of $W$.  

\subsection{Alternative presentation of $\Htrig$}\label{subsec:RDCalg}

The algebra $\Htrig$ has a different presentation, which stems from the observation that
$W\ltimes\IC[H]=W\ltimes\IC P$ is the group algebra of the extended Weyl group $W^e$. 
Specifically, the following relations
hold in $\End(\IC[H])$, for any $p\in S\h$, $0\leq i\leq n$ and $\omega\in\Omega$ \cite[Thm. 3.6]
{opdam2001lectures}
\begin{align}
s_{i}  p  	& =\, ^{s_i}\negthinspace p\,s_i - k_i\frac{p -\, ^{s_i}\negthinspace p}{\alpha_i^{\vee}} \label{eqn:s0pcommrel} \\
\omega  p & =\,^\omega\negthinspace p\,\omega
\end{align}
where $\alpha_0^\vee=-\psi^\vee+1$, 
$k_0=k_\psi$.
This presents $\Htrig$ as a (quotient of the) degenerate affine Hecke algebra of the
extended affine Weyl group $W^e$.

The above presentation gives rise to a (faithful) representation of $\Htrig$ on $\IC[\h^*]=\Sh$ by
letting $\Sh$ act by left multiplication, and $W_{R^\vee}^e$ by difference operators. Specifically,
the subgroup $W^a$ acts via the Demazure--Lusztig operators
\begin{equation}\label{eqn:Demaz-op}
S_i = s_i - k_i\frac{1 - s_i}{\alpha_{i}^\vee}\qquad\qquad 0\leq i\leq n
\end{equation}
and $\Omega$ by the restriction of the reflection representation defined in \ref{subsec:reflection-rep}.  

\Omit{
\begin{align}
	s_i & \mapsto S_i = s_i - \frac{k_i}{\alpha_{i}^\vee}(1 - s_i) \text{ for } 1 \leq i \leq n \label{eqn:Demaz-op} \\  
	s_0 & \mapsto S_0 = s_0 - \frac{k_0}{-\theta^\vee+1}(1 - s_0) \label{eqn:Demaz-op2}                                              
\end{align}
}



\Omit{
The other presentation of DAHA
.

}

\Omit{
\subsection{Localized Cherednik Algebras}\label{subsec:localizedcherednik}

\valeriocomment{This subsection may end up being omitted if we end up using affine intertwiners without the singular normalisation.}
Let $\Htrig(\frakh^*)$ be defined similarly to $\Htrig$ but with generators $\Sh$ replaced by $\frachst$.  Whereas it must be checked that the denominators in \eqref{eqn:rpcommrel} and \eqref{eqn:s0pcommrel} cancel in order for $\Htrig$ to be well-defined, these relations are clearly well-defined over $   \frachst$.

Let $S_{\krega} \subset \Sh$ be the multiplicative set generated by the $k$-shifted affine root hyperplanes $\lbrace a + k_a : a \in (R^\vee)^a \rbrace$.  Let $\Sh_{\krega} = S_{\krega}^{-1}(\Sh)$.  Define $\Htrigkrega$ similarly to $\Htrig$ but with generators $\Sh$ replaced by $\Sh_{\krega}$.  The relations \eqref{eqn:rpcommrel} and \eqref{eqn:s0pcommrel} are well-defined for the same reason as in the case of $\Sh$.  For $p \in \Sh_{\krega}$, the numerator of $p - p^{s_i}$ contains a factor of $\alpha_i^\vee$ and thus $(p - p^{s_i})/\alpha_i^\vee \in \Sh_{\krega}$.


The rational--difference representation of $\Htrig$ on $\Sh$ extends to a polynomial representation of $\Htrig(\frakh^*)$ on $\frachst$.  Let $\frachst$ act on itself by multiplication, and the action of $\Omega$ on $\Shst$ extends naturally to $\frachst$ by $f^\omega(h) = f(\omega h)$.  Lastly, \eqref{eqn:Demaz-op} are clearly well--defined over $\frachst$ since the denominators are elements of $\frachst$.
}

%
%
%

\section{Representations of the degenerate affine Hecke algebra \texorpdfstring{$\Htrigsub$}{Htrigsub}}
\label{sec:induced-representation}

In this section, we review the definition and main properties of the principal series
and covariant representations of $\Htrigsub$.

\subsection{A Relation in \texorpdfstring{$\Htrigsub$}{Htrigsub}}

For any $w\in W$, set $\N(w) = \lbrace \alpha \in\pos\ |\ w\alpha \in R^{-} \rbrace$.

\label{pr:reln in H}
\begin{proposition}
The following holds for any $w\in W$, $h\in\h$ and $p\in\Sh$.
\begin{align}
w h w^{-1} &= ^w\negmedspace h
+\sum_{\beta \in N(w^{-1})}  k_\beta \beta(wh) s_\beta
\label{eqn:hdeg-rel}
\\
p w &= w \pact{w}{p} + \sum_{l(y) < l(w)} y p_{w,y} 
\label{eq:p w}
\end{align}
where $p_{w,y}\in \Sh$ have degree less than $\deg(p)$.
\end{proposition}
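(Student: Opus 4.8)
The plan is to prove both identities by induction on the length $\ell(w)$; the two inductions are essentially independent, and in each the degree--one relation $s_i\xi = s_i(\xi)s_i - k_i\alpha_i(\xi)$ for $\xi\in\h$ (the case $\deg p=1$ of \eqref{eqn:rpcommrel}), together with \eqref{eqn:rpcommrel} itself, is the engine.

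For \eqref{eqn:hdeg-rel} the base case $w=s_i$ is handled by right--multiplying $s_ih=s_i(h)s_i-k_i\alpha_i(h)$ by $s_i$ and using $\alpha_i(s_ih)=-\alpha_i(h)$, noting $N(s_i^{-1})=\{\alpha_i\}$. For the inductive step I fix a reduced expression for $w$ and write $w=s_iv$ with $\ell(v)=\ell(w)-1$, so $whw^{-1}=s_i(vhv^{-1})s_i$. Substituting the inductive formula for $vhv^{-1}$ and conjugating each resulting term by $s_i$ — applying the base case to the degree--one element ${}^{v}h$, using $s_is_\beta s_i=s_{s_i\beta}$, the $W$--invariance of $k$, and the identity $\beta(vh)=(s_i\beta)(wh)$ (since $vh=s_i(wh)$) — produces a sum of terms $k_\gamma\,\gamma(wh)\,s_\gamma$ indexed by $\{\alpha_i\}\cup\{s_i\beta:\beta\in N(v^{-1})\}$. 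The crux is then the combinatorial statement that this index set is exactly $N(w^{-1})$, and that the union is disjoint: one checks that $\alpha_i\in N(w^{-1})$ (because $\ell(s_iv)>\ell(v)$ forces $v^{-1}\alpha_i>0$, hence $w^{-1}\alpha_i<0$), that $\beta\mapsto s_i\beta$ sends $N(v^{-1})$ injectively into the positive roots and into $N(w^{-1})$, and that it never hits $\alpha_i$; since $|N(w^{-1})|=\ell(w)=\ell(v)+1=|N(v^{-1})|+1$, a count forces equality, and then the coefficients agree termwise.

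For \eqref{eq:p w} I first rearrange \eqref{eqn:rpcommrel}: replacing $p$ by ${}^{s_i}p$ and using $s_i^2=1$ gives $p\,s_i=s_i\cdot{}^{s_i}p+r$, where $r:=k_i({}^{s_i}p-p)/\alpha_i^\vee$ lies in $\Sh$ and has degree $<\deg p$. The base case $\ell(w)=0$ is trivial. For the inductive step, write $w=s_iv$ reduced, so $pw=(p\,s_i)v=s_i\bigl({}^{s_i}p\cdot v\bigr)+r\cdot v$; I then apply the inductive hypothesis to each of the two products ${}^{s_i}p\cdot v$ and $r\cdot v$ (a polynomial times the strictly shorter element $v$), and absorb the leading $s_i$ into the Weyl--group factors by associativity. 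Using ${}^{v^{-1}}({}^{s_i}p)={}^{(s_iv)^{-1}}p={}^{w^{-1}}p$ and $s_iv=w$, the main term becomes $w\cdot{}^{w^{-1}}p=w\cdot\pact{w}{p}$ (the last equality being the passage between the left and right $W$--actions on $\Sh$), while every other term has the form $y\,p_{w,y}$ with $\ell(y)<\ell(w)$ and $\deg p_{w,y}<\deg p$.

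The step I expect to be the main obstacle is the simultaneous length and degree bookkeeping in \eqref{eq:p w}: one must verify that \emph{all} the auxiliary terms — not only the first correction term $r$, but also those produced when the inductive hypothesis is reapplied inside $s_i(\,\cdot\,)$ and inside $r\cdot v$ — strictly drop in $\ell$ and do not increase in polynomial degree, using that the left actions ${}^{(\cdot)}p$ preserve degree. Quantifying the inductive hypothesis over all $p\in\Sh$, so that it is available at strictly smaller polynomial degree, is precisely what makes this accounting close; the inversion--set combinatorics needed in \eqref{eqn:hdeg-rel} is the only other delicate point, and it is standard Coxeter theory.
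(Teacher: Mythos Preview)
Your proof is correct. For \eqref{eqn:hdeg-rel} you take exactly the paper's approach --- induction on $\ell(w)$ --- and your handling of the inversion--set combinatorics $N(w^{-1})=\{\alpha_i\}\sqcup s_iN(v^{-1})$ is the standard argument the paper leaves implicit.

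For \eqref{eq:p w} there is a minor difference in route. The paper derives it as a ``direct consequence'' of \eqref{eqn:hdeg-rel}: once one knows $hw=w\,{}^{w^{-1}}h-\sum_{\beta\in N(w^{-1})}k_\beta\beta(h)\,s_\beta w$ for $h\in\h$ (with $\ell(s_\beta w)<\ell(w)$ since $w^{-1}\beta<0$), one inducts on $\deg p$, peeling off one linear factor at a time and absorbing the resulting shorter--length terms. You instead run an independent induction on $\ell(w)$, using \eqref{eqn:rpcommrel} directly as the engine and quantifying over all $p$ at each step. Both inductions close for the same reason --- the correction terms drop strictly in one of the two parameters $(\ell,\deg)$ --- and neither is harder than the other; your approach has the advantage of not requiring \eqref{eqn:hdeg-rel} as input, while the paper's makes the logical dependence explicit. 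Your identification of the leading term $w\cdot{}^{w^{-1}}p$ with $w\cdot p^w$ is also correct: the convention here is $p^w=p\circ w$, consistent with the base case $p\,s_i=s_i\cdot{}^{s_i}p+\text{lower}$.
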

\begin{proof}
The first identity follows by induction on the length of $w$, and the second is a direct consequence of the first
(see \cite[Prop. 1.5]{heckman1997dunkl}).
\end{proof}


\subsection{Intertwiners in $\Htrigsub$ \cite{lusztig1989affine,cherednik1991unification}}
\label{subsec:HdegIntertwiners}

For any $1 \leq i \leq n$, define $\tPhi_i\in\Htrigsub$ by
\[
	\tPhi_i = s_i \alpha_i^{\vee} + k_i = - \alpha_i^{\vee} s_i  - k_i
\]
Then, the following holds \cite[Thm. 4.2]{opdam2001lectures}.

\label{pr:inter}
\begin{proposition}	The intertwiners $\tPhi_i$ satisfy 
\begin{equation}\label{eqn-p-phii}
p \tPhi_i  = \tPhi_i \,^{s_i}\negthinspace p
\end{equation}
for any $p\in S\h$, and 
\begin{equation}\label{eqn-phii-squared}
\tPhi_i^2 =   k_i^2-(\alpha_i^{\vee})^2
\end{equation}
Moreover, given two reduced decompositions $s_{i_1} \ldots s_{i_r} = s_{j_1} \ldots s_{j_r}$
of $w\in W$, we have 
\[\tPhi_{i_1} \ldots \tPhi_{i_r} = \tPhi_{j_1} \ldots \tPhi_{j_r}\]
We denote the element of $\Htrigsub$ represented by either side of the equation as $\tPhi_{w}$.
\end{proposition}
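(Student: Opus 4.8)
The identities \eqref{eqn-p-phii} and \eqref{eqn-phii-squared} are short computations from the commutation relation \eqref{eqn:rpcommrel} (and its consequence $s_i\xi=s_i(\xi)s_i-k_i\alpha_i(\xi)$), while the braid relations are the real content. For \eqref{eqn-p-phii} I would use the form $\tPhi_i=-\alpha_i^{\vee}s_i-k_i$: since $\Sh$ is commutative, $p\tPhi_i=-\alpha_i^{\vee}(p\,s_i)-k_ip$ for $p\in\Sh$, and rewriting \eqref{eqn:rpcommrel} with $p$ replaced by ${}^{s_i}\negthinspace p$ gives $p\,s_i=s_i\,{}^{s_i}\negthinspace p+k_i(\,{}^{s_i}\negthinspace p-p)/\alpha_i^{\vee}$; substituting, the $k_i$--terms cancel and one is left with $\tPhi_i\,{}^{s_i}\negthinspace p$. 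For \eqref{eqn-phii-squared} I would multiply the two expressions for $\tPhi_i$, $\tPhi_i^2=(s_i\alpha_i^{\vee}+k_i)(-\alpha_i^{\vee}s_i-k_i)$, and simplify: $(\alpha_i^{\vee})^2$ is $s_i$--fixed, so \eqref{eqn:rpcommrel} gives $s_i(\alpha_i^{\vee})^2s_i=(\alpha_i^{\vee})^2$, and $s_i\alpha_i^{\vee}=-\alpha_i^{\vee}s_i-2k_i$; the four resulting terms collapse to $k_i^2-(\alpha_i^{\vee})^2$.

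For the braid relations it suffices, by Matsumoto--Tits, to check for each pair $i\neq j$ that $\tPhi_i\tPhi_j\tPhi_i\cdots=\tPhi_j\tPhi_i\tPhi_j\cdots$ with $m_{ij}$ factors on each side, where $m_{ij}$ is the order of $s_is_j$ in $W$; write $X$ and $Y$ for the two sides. I would realise $\Htrigsub$ inside $\End_{\IC}(K)$, with $K:=\mathrm{Frac}(\Sh)$, by extending scalars in the faithful polynomial representation of $\Htrigsub$ on $\Sh$ (where $\Sh$ acts by multiplication and each $s_i$ by the Demazure operator $S_i$ of \eqref{eqn:Demaz-op}). By \eqref{eqn-phii-squared} each $\tPhi_i$ becomes invertible — its square is multiplication by $k_i^2-(\alpha_i^{\vee})^2\in K^{\times}$ — so $X$ and $Y$ are units of $\End_{\IC}(K)$; and by \eqref{eqn-p-phii} both $X$ and $Y$ intertwine multiplication by $p$ with multiplication by ${}^{w_{ij}}\negthinspace p$, where $w_{ij}$ is the longest element of $\langle s_i,s_j\rangle$ (note $s_is_js_i\cdots=s_js_is_j\cdots=w_{ij}$). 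Hence $XY^{-1}$ commutes with every multiplication operator, so $XY^{-1}\in\End_K(K)=K$, i.e. $X=f\cdot Y$ for some $f\in K^{\times}$.

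It remains to show $f=1$. The images $S_w$ of the elements $w\in W$ form a basis of the $K$--algebra generated by $\Htrigsub$ over $K$ (a triangularity statement as in \eqref{eq:p w}, together with Dedekind independence of the field automorphisms ${}^{w}(\cdot)$ of $K$), so it is enough to compare $S_{w_{ij}}$--coefficients of $X$ and of $f\cdot Y$. More generally I would compute the coefficient of $w$ in a product $\tPhi_{i_1}\cdots\tPhi_{i_r}$ along a \emph{reduced} word for $w\in W$: writing $\tPhi_{i_a}=-\alpha_{i_a}^{\vee}s_{i_a}-k_{i_a}$ and pushing all the reflections to the left via \eqref{eqn:rpcommrel} (the correction terms only shorten the Weyl group element, hence cannot contribute to the coefficient of the length--$r$ element $w$), this coefficient equals $(-1)^{\ell(w)}\prod_{a=1}^{r}s_{i_1}\cdots s_{i_{a-1}}(\alpha_{i_a}^{\vee})=(-1)^{\ell(w)}\prod_{\beta\in\N(w^{-1})}\beta^{\vee}$, using the standard fact that $\{s_{i_1}\cdots s_{i_{a-1}}(\alpha_{i_a})\}_{a=1}^{r}=\N(w^{-1})$. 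Since this depends only on $w$, the two sides $X$ and $Y$, being products along reduced words for the same element $w_{ij}$, have the same nonzero $S_{w_{ij}}$--coefficient; hence $f=1$ and $X=Y$. Matsumoto--Tits then gives that $\tPhi_{i_1}\cdots\tPhi_{i_r}$ depends only on $w=s_{i_1}\cdots s_{i_r}$, and this common value is $\tPhi_w$. The main obstacle is precisely this last step: the structural input that $\{S_w\}$ is a $K$--basis and that $\End_K(K)=K$ (both resting on the faithfulness already recorded for the polynomial representation together with PBW/triangularity), and the bookkeeping extracting the word--independent leading coefficient $(-1)^{\ell(w)}\prod_{\beta\in\N(w^{-1})}\beta^{\vee}$; alternatively one simply invokes \cite[Thm.~4.2]{opdam2001lectures}, as the statement does.
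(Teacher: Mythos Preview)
The paper does not supply a proof of this proposition; it simply cites \cite[Thm.~4.2]{opdam2001lectures}. Your verifications of \eqref{eqn-p-phii} and \eqref{eqn-phii-squared} from \eqref{eqn:rpcommrel} are correct and are the standard short computations.

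For the braid relations, your argument is correct and self-contained. Passing to $K=\mathrm{Frac}(\Sh)$ via the faithful difference--rational representation of \S\ref{subsec:RDCalg}, using \eqref{eqn-p-phii} together with the invertibility coming from \eqref{eqn-phii-squared} to force $XY^{-1}\in\End_K(K)=K^\times$, and then identifying the scalar by comparing the $w_{ij}$--leading coefficient $(-1)^{\ell(w_{ij})}\prod_{\beta\in N(w_{ij}^{-1})}\beta^\vee$, all goes through. The structural inputs you flag---that the images $S_w$ are a $K$--basis of the localized algebra, via upper--triangularity against the field automorphisms $\{{}^{w}(\cdot)\}$ and Dedekind independence of the latter---hold as stated; one small point worth making explicit is that each $S_i=(1+k_i/\alpha_i^\vee)s_i-k_i/\alpha_i^\vee$ has invertible leading coefficient in $K$, so the triangularity is genuinely unitriangular up to units.

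Compared with the reference the paper invokes, your route is uniform in the Coxeter type: it avoids any rank--$2$ case check by extracting the single word--independent invariant $\prod_{\beta\in N(w^{-1})}\beta^\vee$. This is close in spirit to Lusztig's treatment in \cite{lusztig1989affine}, repackaged through the localization. A pleasant by--product is that the leading--coefficient computation you isolate is exactly the one that later drives the determinant formula in Proposition~\ref{pr:det TT}.
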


\subsection{The Induced Module \texorpdfstring{$\dI{\lambda}$}{Ilambda}.}
\label{subsec:degen-induced-module}

For any $\lambda\in\frakh^*$, let $\IC_\lambda=\IC\bfi{\lambda}$ be the corresponding 1--dimensional
module over $\Sh$, and $\dI{\lambda} = \Ind_{\Sh}^{\Htrigsub} \IC_\lambda$ the \textit{principal series
representation} of $\Htrigsub$. As a $W$--module, $\dI{\lambda}$ is isomorphic to the left regular representation
via $\ICW\ni w\to w\bfi{\lambda}$.

\label{prop:weights-Ilambda}
\begin{proposition}
The set of $\Sh$--weights of $\dI{\lambda}$ is the $W$--orbit of $\lambda$, and the algebraic multiplicity
of any $\mu\in W\lambda$ is the order of the isotropy group of $\lambda$. In particular, if $\lambda$ is
regular, $\dI{\lambda}$ is a semisimple $\Sh$--module, with 1--dimensional weight spaces corresponding
to $W\lambda$.
\end{proposition}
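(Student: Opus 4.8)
The plan is to use the commutation relation \eqref{eq:p w} to put the action of $\Sh$ on $\dI{\lambda}$ in block--triangular form with respect to the length filtration on $W$, and then to read off the weights and their multiplicities from the diagonal blocks.

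First I would record that, by the description of $\dI{\lambda}$ preceding the statement, $\{w\bfi{\lambda}\}_{w\in W}$ is a $\IC$--basis, with $q\,\bfi{\lambda}=q(\lambda)\,\bfi{\lambda}$ for $q\in\Sh$. Applying \eqref{eq:p w} to $\bfi{\lambda}$ gives, for all $p\in\Sh$ and $w\in W$,
\[
p\cdot w\bfi{\lambda}=p(w\lambda)\,w\bfi{\lambda}+\sum_{\ell(y)<\ell(w)}p_{w,y}(\lambda)\,y\bfi{\lambda},
\]
where I use that the algebra automorphism $p\mapsto\pact{w}{p}$ of $\Sh$ satisfies $\pact{w}{p}(\lambda)=p(w\lambda)$; this last identity is the one point that requires a check, which I would do by specialising \eqref{eq:p w} to $p\in\h$ and comparing with \eqref{eqn:hdeg-rel}, then extending multiplicatively. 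In particular the span of $\{y\bfi{\lambda}:\ell(y)\le\ell(w)\}$ is an $\Sh$--submodule for every $w$.

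Next I would fix a total order $w_1,\dots,w_{|W|}$ of $W$ refining the length preorder and set $V_j$ to be the span of $w_1\bfi{\lambda},\dots,w_j\bfi{\lambda}$. By the displayed formula each $V_j$ is an $\Sh$--submodule and each quotient $V_j/V_{j-1}$ is one--dimensional with $\Sh$ acting through evaluation at $w_j\lambda$. Hence $\dI{\lambda}$ has a composition series over $\Sh$ whose factors are the characters $\IC_{w\lambda}$, $w\in W$, counted with multiplicity. Since the dimension of the generalised $\mu$--weight space of $\dI{\lambda}$ equals the number of composition factors isomorphic to $\IC_\mu$, the set of weights is $\{w\lambda:w\in W\}=W\lambda$, and the algebraic multiplicity of a given $\mu\in W\lambda$ is $\#\{w\in W:w\lambda=\mu\}$. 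The non--empty fibres of $w\mapsto w\lambda$ are the cosets of the isotropy group $W_\lambda$, so this number equals $|W_\lambda|$, which proves the first two assertions.

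Finally, if $\lambda$ is regular then $W_\lambda=\{1\}$, so every generalised weight space of $\dI{\lambda}$ is one--dimensional, hence consists of genuine $\Sh$--eigenvectors; therefore $\dI{\lambda}$ is the direct sum of its one--dimensional weight spaces, and in particular semisimple over $\Sh$. I expect the only slightly delicate step to be pinning down the diagonal character $\pact{w}{p}(\lambda)=p(w\lambda)$ (that is, the inverse/sign convention hidden in \eqref{eq:p w}); everything after that is the standard simultaneous triangularisation of a commuting family of operators.
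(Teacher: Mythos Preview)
Your proposal is correct and follows essentially the same route as the paper: order $W$ by increasing length, use \eqref{eq:p w} to make the $\Sh$--action upper triangular in the basis $\{w\bfi{\lambda}\}$, and read off the diagonal characters. The paper's proof is terser (it does not spell out the composition-series or fibre-counting step), but the argument is the same. Your caution about the convention in $\pact{w}{p}(\lambda)$ is well placed---the paper in fact writes the diagonal entry as $p(w_i^{-1}\lambda)$ rather than $p(w_i\lambda)$---but as you implicitly note, this is immaterial for the conclusion since $\{w\lambda\}_{w\in W}=\{w^{-1}\lambda\}_{w\in W}$.
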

\begin{proof}
It suffices to exhibit a basis of $\dI{\lambda}$ on which the action of $\Sh$ is upper triangular with
diagonal entries $(w\lambda)_{ w \in W }$.  Choose an ordering $w_1, \ldots, w_n$ of the elements
of $W$ by increasing length, so that $w_1 = e$ and $w_n$ is the longest element. We claim that $w_1
\bfi{\lambda},\ldots,w_n \bfi{\lambda}$ is the desired basis.  By Proposition \ref{pr:inter}, 
\[p w_i  = w_i p^{w_i}  + \sum_{j < i} w_j q_j \]
where $q_j\in\Sh$,
hence 
\[p (w_i \mathbf{i_\lambda}) 
= w_i p(w_i^{-1}\lambda) \mathbf{i_\lambda}  + \sum_{j < i} w_j q_j(\lambda) \mathbf{i_\lambda} 
=  p(w_i^{-1}\lambda) (w_i \mathbf{i_\lambda})  + \sum_{j < i} c_j w_j \mathbf{i_\lambda}\]
as claimed.
\end{proof}

\subsection{Intertwiners between induced representations}
\label{subsec:Intertwiners}

The intertwiners $\tPhi_{w}$ give rise to maps between induced representations. Specifically,
for any $\lambda\in\h^*$ and $w\in W$, define a morphism of $\Htrigsub$--modules
\begin{equation}\label{eq:btw I}
\TT{w}{\lambda}:\dI{\lambda}\to \dI{w\lambda}
\qquad\text{by}\qquad
\ilambda\to \tPhi_{w^{-1}}\inu{w\lambda}
\end{equation}
Clearly,
\begin{equation}\label{eq:v w}
\TT{v}{w\lambda}\circ\TT{w}{\lambda}=\TT{vw}{\lambda}
\end{equation}
whenever $\ell(vw)=\ell(v)+\ell(w)$.

\label{pr:det TT}
\begin{proposition}
Identify $\dI{\lambda}$ and $\IC W$ with its standard basis $\{w\}_{w\in W}$. Then, 
\[\det(\TT{w}{\lambda}) = 
\prod_{\alpha\in\pos\cap w^{-1}\Rneg}
\left(k_\alpha^2-\lambda(\alpha^\vee)^2\right)^{|W|/2}\]
In particular, $\TT{w}{\lambda}$ is an isomorphism if, and only if, $\lambda(\alpha^\vee)\neq\pm k_\alpha$
for any $\alpha\in\pos\cap w^{-1}\negr$.
\end{proposition}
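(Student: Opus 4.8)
The plan is to reduce to the case of a simple reflection by means of the cocycle identity \eqref{eq:v w}, handle that case by a single $2\times 2$ computation, and then reassemble along a reduced word. Concretely, I would fix a reduced decomposition $w=s_{i_1}\cdots s_{i_r}$; since every suffix $s_{i_j}\cdots s_{i_r}$ is again reduced, repeated use of \eqref{eq:v w} gives
\[
\TT{w}{\lambda}=\TT{s_{i_1}}{\mu_1}\circ\TT{s_{i_2}}{\mu_2}\circ\cdots\circ\TT{s_{i_r}}{\mu_r},\qquad \mu_j:=s_{i_{j+1}}\cdots s_{i_r}\,\lambda .
\]
All the modules $\dI{\mu}$ being identified with $\IC W$ via $v\mapsto v\inu{\mu}$, this is an identity of matrices, so $\det(\TT{w}{\lambda})=\prod_{j=1}^{r}\det(\TT{s_{i_j}}{\mu_j})$, and it suffices to treat a simple reflection.

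For the base case I would compute $\det(\TT{s_i}{\mu})$ for arbitrary $\mu\in\h^*$ directly. Since $\alpha_i^\vee$ acts on the cyclic generator $\inu{s_i\mu}$ by the scalar $(s_i\mu)(\alpha_i^\vee)=-\mu(\alpha_i^\vee)$, the definition $\tPhi_i=s_i\alpha_i^\vee+k_i$ gives $\tPhi_i\inu{s_i\mu}=k_i\inu{s_i\mu}-\mu(\alpha_i^\vee)\,s_i\inu{s_i\mu}$; as $\TT{s_i}{\mu}$ is a map of $\Htrigsub$--modules, hence $W$--equivariant, and $\tPhi_i\in\Htrigsub$, this yields
\[
\TT{s_i}{\mu}(v\inu{\mu})=v\,\tPhi_i\inu{s_i\mu}=k_i\,v\inu{s_i\mu}-\mu(\alpha_i^\vee)\,vs_i\inu{s_i\mu}\qquad(v\in W).
\]
Partitioning the standard basis into the $|W|/2$ left cosets $\{v,vs_i\}$ of $\langle s_i\rangle$ makes this operator block diagonal, each block having diagonal entries $k_i$ and off--diagonal entries $-\mu(\alpha_i^\vee)$; hence $\det(\TT{s_i}{\mu})=\bigl(k_i^2-\mu(\alpha_i^\vee)^2\bigr)^{|W|/2}$. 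As a consistency check, $\TT{s_i}{s_i\mu}\circ\TT{s_i}{\mu}$ equals $\tPhi_i^2$ on the cyclic vector, which by \eqref{eqn-phii-squared} is $k_i^2-\mu(\alpha_i^\vee)^2$.

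Finally I would put the pieces together. Writing $\beta_j:=s_{i_r}\cdots s_{i_{j+1}}(\alpha_{i_j})$, one has $\mu_j(\alpha_{i_j}^\vee)=\lambda(\beta_j^\vee)$, and $k_{i_j}=k_{\beta_j}$ by $W$--invariance of $k$; and the standard enumeration of inversion sets (or the evident parallel induction on $r$, splitting off $\alpha_{i_r}$ at each step) identifies $\{\beta_1,\dots,\beta_r\}$ with $\pos\cap w^{-1}\Rneg=\N(w)$, without repetition. This gives
\[
\det(\TT{w}{\lambda})=\prod_{\alpha\in\pos\cap w^{-1}\Rneg}\bigl(k_\alpha^2-\lambda(\alpha^\vee)^2\bigr)^{|W|/2},
\]
and, since $\dI{\lambda}$ and $\dI{w\lambda}$ are both $|W|$--dimensional, $\TT{w}{\lambda}$ is an isomorphism exactly when this determinant is nonzero, i.e. when $\lambda(\alpha^\vee)\neq\pm k_\alpha$ for every $\alpha\in\pos\cap w^{-1}\Rneg$. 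I expect the only delicate point to be the combinatorics of the last step — correctly matching the reflection $s_{i_j}$ of the reduced word to the root $\beta_j$ and checking that the $\beta_j$ run over $\N(w)$ exactly once; the algebraic content ($\tPhi_i^2=k_i^2-(\alpha_i^\vee)^2$ and the $\Htrigsub$--linearity of the maps $\TT{w}{\lambda}$) is already in hand.
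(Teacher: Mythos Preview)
Your proposal is correct and follows essentially the same approach as the paper: reduce to simple reflections via the cocycle identity \eqref{eq:v w}, compute the $2\times 2$ block structure on right $\langle s_i\rangle$--cosets, and take determinants. The paper is terser about the reassembly step (it simply says ``from which the result follows''), whereas you spell out the inversion--set bookkeeping $\{\beta_j\}=\N(w)$ explicitly; both are fine.
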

\begin{proof}
By \eqref{eq:v w}, it suffices to compute $\det(\TT{s_i}{\lambda})$. To that end note that, for any $w\in W$,
\[\TT{s_i}{\lambda} w\ilambda=
w\left(s_i\alpha_i^\vee+k_{\alpha_i}\right)\inu{s_i\lambda}=
k_{\alpha_i}w\inu{s_i\lambda}-
\lambda(\alpha_i^\vee)w s_i\inu{s_i\lambda}
\]
Thus, $\TT{s_i}{\lambda}$ preserves the $\IC$--span of each of each right $\langle s_i\rangle$--coset in $W$,
and acts on it as the matrix 
\[\begin{pmatrix}
k_{\alpha_i}			&	-\lambda(\alpha_i^\vee) \\
-\lambda(\alpha_i^\vee)	&	k_{\alpha_i}
\end{pmatrix}\]
from which the result follows.
\end{proof}

\label{rk:inverse of TT}
\begin{remark}
The relation \eqref{eqn-phii-squared} readily implies that $\TT_{s_i,\lambda}^{-1}=(k_i^2-
\lambda(\cor_i)^2)^{-1}\cdot\TT_{s_i,s_i\lambda}$. More generally, for any $w\in W$,
\begin{equation}\label{eq:inverse of TT}
\TT_{w,\lambda}^{-1}=
\prod_{\alpha\in\pos\cap w^{-1}\Rneg}
\left(k_\alpha^2-\lambda(\alpha^\vee)^2\right)^{-1}
\cdot\TT_{w^{-1},w\lambda}
\end{equation}
\end{remark}

\subsection{The Covariant Representations \texorpdfstring{$\J_\vartheta^\veps$}{Ilambda}.}
\label{ss:cov deg}

Let $\varepsilon:W\to\{\pm 1\}$ be a one--dimensional character, $\vartheta\in\h^*/W$,
and $\IC_\vartheta^\veps$ the 1--dimensional representations of $\Sh^W\otimes\ICW$,
where $\Sh^W$ acts by evaluation at $\vartheta$ and $W$ by $\veps$. Set
\[\J_{\vartheta}^\veps = \Ind_{\Sh^W \otimes\ICW}^{\Htrigsub} \IC_{\vartheta}^\veps\]
As an $\Sh$--module, $\J_{\vartheta}^\veps$ is isomorphic to $\Sh/I_\vartheta$, where
$I_\vartheta\subset\Sh$ is the ideal generated by $\{f-f(\vartheta)\}_{f\in\Sh^W}$, and
is therefore of dimension $|W|$ \cite[Prop. 3.6]{humphreys1992reflection}. When
$\veps\equiv 1$ is the trivial character, we shall often denote $\J_{\vartheta}^\veps$
by $\J_{\vartheta}$.

Let $q \colon \h^* \to \h^*/W$ be the quotient map.

\label{co:weights of K}
\begin{proposition}
The set of $\Sh$--weights of $\J_\vartheta^\veps$ is $q^{-1}(\vartheta)$, with the algebraic
multiplicity of each $\lambda\in q^{-1}(\vartheta)$ equal to the order of the isotropy
group of $\lambda$. In particular, if $\vartheta$ is regular, $\J_\vartheta^\veps$ is a semisimple
module over $\Sh$, with 1--dimensional weight spaces corresponding to the elements
of $q^{-1}(\vartheta)$.

\end{proposition}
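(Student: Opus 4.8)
The plan is to deduce the statement for $\J_\vartheta^\veps$ from the analogous statement for the principal series module $\dI{\lambda}$ (Proposition \ref{prop:weights-Ilambda}), which has already been established. The key observation is that $\J_\vartheta^\veps$ can be related to $\dI{\lambda}$ for any preimage $\lambda\in q^{-1}(\vartheta)$: as $\Sh$--modules, $\J_\vartheta^\veps\cong\Sh/I_\vartheta$ where $I_\vartheta$ is generated by $\{f-f(\vartheta)\}_{f\in\Sh^W}$, while $\dI{\lambda}\cong\Sh/I_\lambda$ where $I_\lambda$ is generated by $\{f-f(\lambda)\}_{f\in\Sh}$; since $I_\vartheta\subset I_\lambda$, there is a surjection $\J_\vartheta^\veps\twoheadrightarrow\dI{\lambda}$ of $\Sh$--modules. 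More usefully, one has a surjection of $\Htrigsub$--modules $\dI{\lambda}\twoheadrightarrow\J_\vartheta^\veps$ — indeed, the $\Sh^W\otimes\ICW$--submodule $\IC_\vartheta^\veps$ receives a nonzero map from the restriction of $\dI{\lambda}$, and by Frobenius reciprocity this induces a map $\dI{\lambda}\to\J_\vartheta^\veps$ which is surjective by a dimension/support count. The cleanest route, however, is purely commutative-algebraic: I would compute the support and multiplicities of $\Sh/I_\vartheta$ directly.

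First I would identify the $\Sh$--weights with the support of the module $M=\Sh/I_\vartheta$ as a coherent sheaf on $\spec\Sh=\h^*$. Since $I_\vartheta$ is generated by $\{f-f(\vartheta):f\in\Sh^W\}$, the support is exactly the fibre of the map $\h^*\to\h^*/W=\spec\Sh^W$ over $\vartheta$, i.e. the set $q^{-1}(\vartheta)$; this uses that $\Sh$ is a finite free module over $\Sh^W$ of rank $|W|$ (Chevalley), so the fibre is a finite scheme of length $|W|$. Next, for the algebraic multiplicity at a point $\lambda\in q^{-1}(\vartheta)$, I would pass to the completed (or localized) local ring $\Sh^W_\vartheta$ and use that $\Sh\otimes_{\Sh^W}\Sh^W_\vartheta$ decomposes as a product over the points of $q^{-1}(\vartheta)$ of the corresponding local rings $\Sh_\lambda$ (étale-locally, or by the Chinese Remainder Theorem after completion, since $\Sh$ is finite over $\Sh^W$). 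The multiplicity at $\lambda$ is then the length of $\Sh_\lambda/\m_\vartheta\Sh_\lambda$, where $\m_\vartheta\subset\Sh^W$ is the maximal ideal at $\vartheta$. A standard argument — the same one used for the $0$-fibre in the coinvariant algebra setting, cf. \cite[Prop. 3.6]{humphreys1992reflection} — identifies this length with $|W_\lambda|$, the order of the isotropy group of $\lambda$: the regular orbit contributes $|W|$ in total as $|W|=\sum_{\lambda\in q^{-1}(\vartheta)}|W_\lambda|=|W/W_\lambda|\cdot|W_\lambda|$, and the contribution at each $\lambda$ is $|W_\lambda|$ by applying the graded/Bernstein-reduction argument to the local ring of the orbit, or by translating $\lambda$ to $0$ and invoking that the fibre of $\Sh/\Sh^W_+\Sh$ over the $W_\lambda$-fixed directions has the expected dimension. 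The last sentence of the proposition — semisimplicity of $M$ over $\Sh$ when $\vartheta$ is regular — is then immediate: when $\vartheta$ is regular every $W_\lambda$ is trivial, so each local contribution has length $1$, forcing $M\cong\bigoplus_{\lambda\in q^{-1}(\vartheta)}\IC_\lambda$ with one-dimensional weight spaces.

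The main obstacle I anticipate is the multiplicity computation at non-regular points: asserting that $\dim_\IC\Sh_\lambda/\m_\vartheta\Sh_\lambda=|W_\lambda|$ requires knowing that $\Sh$ is "free enough" locally over $\Sh^W$ near $\lambda$, which fails in the naive sense (the branch locus is precisely the non-regular locus). The correct statement is the flatness of $\Sh$ over $\Sh^W$ combined with a computation of the fibre dimension: flatness gives that all fibres of the finite flat map $\h^*\to\h^*/W$ have the same length $|W|$, and then one must show the local length at $\lambda$ equals $|W_\lambda|$. I would handle this by reducing to the case $\lambda=0$ via the translation automorphism of $\Sh$, at which point $W_\lambda=W_0$ is a parabolic (reflection) subgroup $W'$ fixing $\lambda$, and the fibre ring $\Sh/\Sh^W_+\Sh$ near the generic point of the $W$-orbit of $\lambda$ localizes to the $W'$-coinvariant algebra, which has dimension $|W'|$ by Chevalley's theorem applied to $W'$. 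Alternatively, and perhaps most economically, I would simply cite the surjection $\dI{\lambda}\twoheadrightarrow\J_\vartheta^\veps$ together with Proposition \ref{prop:weights-Ilambda}: since $\dI{\lambda}$ has weights $W\lambda=q^{-1}(\vartheta)$ with multiplicity $|W_\lambda|$ at $\mu$, and $\dim\J_\vartheta^\veps=|W|=\dim\dI{\lambda}$, the surjection is an isomorphism of $\Sh$--modules (not of $\Htrigsub$--modules, since the $W$-action differs by $\veps$), which transports all the weight data verbatim. This last argument is clean and avoids the local algebra entirely, so I would present it as the main proof and relegate the commutative-algebra computation to a remark.
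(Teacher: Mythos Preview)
Your commutative algebra approach is essentially correct and would prove the statement, but the argument you prefer to present as the main proof has a genuine gap. The claim that there is a surjection $\dI{\lambda}\twoheadrightarrow\J_\vartheta^\veps$ of $\Htrigsub$--modules which is then forced to be an $\Sh$--module isomorphism is false. First, the Frobenius reciprocity you invoke runs the other way: a nonzero map $\IC_\vartheta^\veps\to\dI{\lambda}|_{\Sh^W\otimes\ICW}$ induces a map $\J_\vartheta^\veps\to\dI{\lambda}$, namely the map $R_\lambda^\veps$ of \eqref{eqn:defphi}, not a map in the direction you claim. Second, and more seriously, $\dI{\lambda}$ and $\J_\vartheta^\veps$ are \emph{not} isomorphic as $\Sh$--modules in general. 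In rank one with $k=0$ and $\lambda=0$, one computes directly that $\dI{0}\cong\IC_0\oplus\IC_0$ is semisimple over $\Sh=\IC[y]$, whereas $\J_0^\triv\cong\IC[y]/(y^2)$ is not. So the transport of weight data via an isomorphism cannot work; what is true is merely that the two modules have the same characteristic polynomial for each $p\in\Sh$, namely $\prod_{w\in W}(t-p(w\lambda))$, and this is exactly what the statement requires. (Your aside that $\dI{\lambda}\cong\Sh/I_\lambda$ with $I_\lambda$ the maximal ideal is also wrong: that quotient is one--dimensional.)

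The paper's proof is different from either of your approaches. It first observes that any $\Sh$--eigenvalue $\lambda$ must satisfy $q(\lambda)=\vartheta$ since $\Sh^W$ acts by evaluation at $\vartheta$. Then, when $\lambda$ is $k$--regular, the intertwiners $\tPhi_w$ of \S\ref{subsec:HdegIntertwiners} applied to a $\lambda$--eigenvector produce nonzero eigenvectors for every $w\lambda$; combined with regularity of $\vartheta$ this gives $|W|$ distinct one--dimensional weight spaces, proving the regular case. The general case is then obtained by continuity of the characteristic polynomial in $(\vartheta,k)$. Your commutative algebra route---computing the fibre of the finite flat map $\h^*\to\h^*/W$ and using a slice argument to identify the local length at $\lambda$ with $|W_\lambda|$---is a valid alternative that avoids both the intertwiners and the continuity step, at the cost of invoking Steinberg's theorem that $W_\lambda$ is a reflection subgroup and Chevalley's theorem for $W_\lambda$. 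The paper's route is shorter because the intertwiner machinery is already in place for other purposes.
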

\begin{proof}
Let $\lambda\in\h^*$ be an $S\h$--eigenvalue of $\J_\vartheta^\veps$, and $v$ a corresponding
eigenvector. Since $S\h^W$ acts by evaluation at $\vartheta$ on $\J_\vartheta^\veps$, it follows
that $q(\lambda)=\vartheta$. If $\lambda$ is $k$--regular, $\tPhi_{w}v$ is an eigenvector
with eigenvalue $w\lambda$ for any $w\in W$. This proves the claim under the further 
assumption that $\lambda$ is regular. The general result follows by continuity.
\end{proof}

\subsection{The map $\mathbf{\Rnu{\lambda}^\veps}$}
\label{ss:Rp}

Let $\lambda\in\h^*$. The induced module $\dI{\lambda}\cong\IC W$ has a 1--dimensional
subspace $\dI{\lambda}^\veps$ where $W$ acts by $\veps$. It is spanned by $e_\veps\ilambda$,
where
\begin{equation}\label{eq:epm}
e_\veps=
\frac{1}{|W|} \sum_{w \in W} \veps(w)w\in\IC W
\end{equation} 
is the idempotent corresponding to $\veps$. 
Since $\Sh^W=Z(\Htrigsub)$ acts on $\dI{\lambda}$ by evaluation at $\vartheta=q(\lambda)$,
there is a morphism of $\Htrigsub$--modules,
\begin{equation}\label{eqn:defphi}
\Rnu{\lambda}^\veps:
\J_\vartheta^\veps  \longrightarrow \dI{\lambda}
\qquad 
\ktheta \longmapsto  e_\veps \ilambda 
\end{equation}
which is unique up to a scalar.

\label{re:irred ff}
\begin{remark}
Since $\Rnu{\lambda}^\veps$ is non--zero, and $\J_\vartheta^\veps$ and $\dI{\lambda}$
have the same dimension, it follows that $\J_\vartheta^\veps$ is irreducible if and only
if $\dI{\lambda}$ is, and in turn that $\dI{\lambda}$ is irreducible if and only if all $\{\dI{w\lambda
}\}_{w\in W}$ are. 
\end{remark}

\subsection{The maps $\mathbf{\Rnu{w\lambda}^\veps}$ and intertwiners $\TT{w}{\lambda}$}
\label{ss:Rp and Phi}

The following result relates the maps $\Rnu{w\lambda}^\veps$ corresponding to the $W$--orbit
of $\lambda$. 

\label{pr:TT and Rp}
\begin{proposition}
For any $w\in W$ 
\begin{equation}\label{eq:T R}
\TT{w}{\lambda}\circ\Rnu{\lambda}^\veps = 
\prod_{\alpha\in\pos:w\alpha\in \Rneg}\left(k_\alpha-\veps_\alpha\lambda(\alpha^\vee)\right)
\cdot
\Rnu{w\lambda}^\veps
\end{equation}
where $\veps_\alpha=\veps(s_\alpha)$.
\end{proposition}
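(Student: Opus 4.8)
The plan is to reduce everything to a single computation on the cyclic vector $\ktheta$ and then induct on $\ell(w)$. Both $\TT{w}{\lambda}\circ\Rnu{\lambda}^\veps$ and $\Rnu{w\lambda}^\veps$ are morphisms of $\Htrigsub$--modules $\J_\vartheta^\veps\to\dI{w\lambda}$, hence each is determined by the image of the cyclic generator $\ktheta$. That image necessarily lies in $\dI{w\lambda}^\veps$: indeed $w\ktheta=\veps(w)\ktheta$ for $w\in W$, and the central subalgebra $\Sh^W=Z(\Htrigsub)$ acts on $\ktheta$, as well as on all of $\dI{w\lambda}$, by evaluation at $\vartheta=q(\lambda)=q(w\lambda)$, so it imposes no extra constraint. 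Since $\dim\dI{w\lambda}^\veps=1$ (it is spanned by $e_\veps\inu{w\lambda}$) and $\Rnu{w\lambda}^\veps\neq 0$ by Remark~\ref{re:irred ff}, we get $\TT{w}{\lambda}\circ\Rnu{\lambda}^\veps=c\cdot\Rnu{w\lambda}^\veps$ for a scalar $c$, and, using that $\TT{w}{\lambda}$ is $\IC W$--linear (so commutes with $e_\veps$) and that $\TT{w}{\lambda}\ilambda=\tPhi_{w^{-1}}\inu{w\lambda}$, the proposition becomes the identity
\[
e_\veps\,\tPhi_{w^{-1}}\,\inu{w\lambda}=\Bigl(\prod_{\alpha\in\pos:\,w\alpha\in\Rneg}\bigl(k_\alpha-\veps_\alpha\lambda(\alpha^\vee)\bigr)\Bigr)\,e_\veps\,\inu{w\lambda}\qquad\text{in }\dI{w\lambda}.
\]

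For the base case $w=s_i$ I would compute directly. Writing $\tPhi_i=s_i\alpha_i^\vee+k_i$ and using that $\Sh$ acts on $\inu{s_i\lambda}$ through the character $s_i\lambda$, one has $\alpha_i^\vee\cdot\inu{s_i\lambda}=(s_i\lambda)(\alpha_i^\vee)\,\inu{s_i\lambda}=-\lambda(\alpha_i^\vee)\,\inu{s_i\lambda}$; together with $e_\veps s_i=\veps(s_i)e_\veps=\veps_i e_\veps$ this gives
\[
e_\veps\,\tPhi_i\,\inu{s_i\lambda}=-\veps_i\lambda(\alpha_i^\vee)\,e_\veps\,\inu{s_i\lambda}+k_i\,e_\veps\,\inu{s_i\lambda}=\bigl(k_i-\veps_i\lambda(\alpha_i^\vee)\bigr)\,e_\veps\,\inu{s_i\lambda},
\]
which is the asserted formula since $\{\alpha\in\pos:s_i\alpha\in\Rneg\}=\{\alpha_i\}$, $k_{\alpha_i}=k_i$ and $\veps_{\alpha_i}=\veps(s_i)=\veps_i$. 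Equivalently, $\TT{s_i}{\mu}\circ\Rnu{\mu}^\veps=(k_i-\veps_i\mu(\alpha_i^\vee))\,\Rnu{s_i\mu}^\veps$ for every $\mu\in\h^*$.

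The general case then follows by induction on $\ell(w)$ using the cocycle relation \eqref{eq:v w}. For $w\neq e$ choose a simple reflection with $\ell(ws_i)=\ell(w)-1$, set $w'=ws_i$, so that $\TT{w}{\lambda}=\TT{w'}{s_i\lambda}\circ\TT{s_i}{\lambda}$; applying the base case and then the inductive hypothesis to $w'$ with translated base point $s_i\lambda$ (noting $w'(s_i\lambda)=w\lambda$) yields
\[
\TT{w}{\lambda}\circ\Rnu{\lambda}^\veps=\bigl(k_i-\veps_i\lambda(\alpha_i^\vee)\bigr)\Bigl(\prod_{\alpha\in\N(w')}\bigl(k_\alpha-\veps_\alpha(s_i\lambda)(\alpha^\vee)\bigr)\Bigr)\Rnu{w\lambda}^\veps .
\]
It remains to identify this scalar with $\prod_{\alpha\in\N(w)}(k_\alpha-\veps_\alpha\lambda(\alpha^\vee))$. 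For that I would use the standard length--additive decomposition $\N(w)=\N(w's_i)=\{\alpha_i\}\sqcup s_i\N(w')$ (a disjoint union of subsets of $\pos$ because $\ell(w's_i)=\ell(w')+1$), together with the $W$--invariance $k_{s_i\alpha}=k_\alpha$, the conjugation identity $\veps_{s_i\alpha}=\veps(s_is_\alpha s_i)=\veps_\alpha$, and $\lambda\bigl((s_i\alpha)^\vee\bigr)=(s_i\lambda)(\alpha^\vee)$: the factor of $\prod_{\alpha\in\N(w)}$ indexed by $s_i\alpha$ becomes the factor of the inductive product indexed by $\alpha\in\N(w')$, while the factor indexed by $\alpha_i$ supplies the leftover $(k_i-\veps_i\lambda(\alpha_i^\vee))$.

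The computation is essentially mechanical. The only steps requiring care are the reduction in the first paragraph — which relies on $\dim\dI{w\lambda}^\veps=1$ and the centrality of $\Sh^W$ — and keeping track of the side on which $s_i$ is peeled off so that it matches the order of composition in \eqref{eq:v w} while doing the $\N(\cdot)$ bookkeeping; this last point is where a sign or orientation slip is most likely, but it is routine and I do not expect a genuine obstacle.
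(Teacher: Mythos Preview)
Your proof is correct and follows the same approach as the paper: reduce to simple reflections via the cocycle relation \eqref{eq:v w} and compute directly on the generating vector $\ktheta$. The paper's proof is terser (it simply says ``it suffices to prove the result for $w=s_i$'' and gives the one--line computation), whereas you spell out both the reason the two morphisms must be proportional and the $\N(w)=\{\alpha_i\}\sqcup s_i\N(w')$ bookkeeping in the inductive step, but the argument is the same.
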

\begin{proof}
By \eqref{eq:v w}, it suffices to prove the result for $w=s_i$. We have 
\[\TT{s_i}{\lambda}\circ\Rnu{\lambda}^\veps\,\ktheta
=
e_\veps\tPhi_{s_i}\inu{s_i\lambda}=
e_\veps\left(s_i\alpha_i^\vee+k_{\alpha_i}\right)\inu{s_i\lambda}=
(-\veps_{\alpha_i}\lambda(\alpha_i^\vee)+k_{\alpha_i})\,\Rnu{s_i\lambda}^\veps\ktheta
\]
\end{proof}

\subsection
{Isomorphism of \texorpdfstring{$\J_\vartheta^\veps$}{Jtheta} and \texorpdfstring{$\dI{\lambda}$}{Ilambda}.}
\label{ss:I & J}

The following gives a necessary and sufficient condition for $\J_\vartheta^\veps$ to
be isomorphic to $\dI{\lambda}$.

\label{th:det R}
\begin{theorem}\hfill
\begin{enumerate}
\item Identify $\J_\vartheta^\veps$ with $S\h/S\h^W$ and $\dI{\lambda}$ with $\IC W$
as vector spaces. Then,\valeriocomment{The determinant of $\Rnu{\lambda}^\triv$
is eerily similar to that of Matsuo's map \cite[Sec. 5.3]{matsuo1992integrable}, which
is equal to $\prod_{\alpha\in\pos}\left(k_\alpha+\lambda(\alpha^\vee)\right)^{|W|/2}$. 
I doubt this is a coincidence, and at the very least wonder whether the latter could be
computed in a similar manner, as follows. For $\Rnu{\lambda}^\triv$, the
covariant representation $\J_\vartheta$ radiates out to all induced representations
$\{\I{w\lambda}\}_{w\in W}$ via the maps $\Rnu{w\lambda}^\triv$, which are related
by Proposition \ref{pr:TT and Rp}. Matsuo's situation is the opposite: the local systems
corresponding to $\{\dI{w\lambda}\}_{w\in W}$ all map to the hypergeometric one (which
one wants to say should be canonically associated to $\J_\vartheta$ in some sense).
The intertwining operators $\tPhi_w$ should still map between these systems, though,
and perhaps and analogue of Proposition \ref{pr:TT and Rp} might hold, and from this
one could work out the determinant of Matsuo's map? Note that Cherednik argues in
{\it Integration ...} that his interpretation of Matsuo's isomorphism stems from using the
{\it dual} of $\J_\vartheta$. Taking such a dual would of course reverse the direction of radiation,
thus bringing $(\Rnu{\lambda}^\triv)^t:\dI{\lambda}^*\to\J_\vartheta^*$ closer to Matsuo's
maps, and with the fact that $\J_\vartheta^*=\J_\vartheta^-$ might indeed give Matsuo's
determinant. So the question is whether $(\Rnu{\lambda}^\triv)^t$ is Matsuo's map
in some appropriate sense.}
\Omit{
\valeriocomment{The determinant of $\Rnu{\lambda}^+$ seems equal
to that of Matsuo's map \cite[Sec. 5.3]{matsuo1992integrable}, although the latter may
have a negated $\lambda$. In fact, the latter is the most likely scenario in which case,
the determinant of the covariant Matsuo map defined in \ref{subsec:matsuomap} would
be equal to $\prod_{\alpha\in\pos}\left(k_\alpha^2-\lambda(\alpha^\vee)^2\right)^{|W|/2}$.
It would be good to clarify whether there is a more precise relation between Matsuo's map,
which is built from the projection $\epsilon_+:\dI{\lambda}\to \dI{\lambda}^W\cong\IC$, and
the map $\Rp:J_\theta\to \dI{\lambda}$, which maps the generating vector $\ktheta$ to
$\epsilon_+\ilambda\in \dI{\lambda}^W$.}
}
\begin{equation}\label{eq:det R}
\det(\Rnu{\lambda}^\veps)=
c \prod_{\alpha\in\pos}\left(k_\alpha-\veps_\alpha\lambda(\alpha^\vee)\right)^{|W|/2}
\end{equation}
for some non--zero constant $c$. 
\item $\J_\vartheta^\veps$ and $\dI{\lambda}$ are isomorphic if and only if $\lambda
(\alpha^\vee)\neq \veps_\alpha k_\alpha$ for any $\alpha\in\pos$.\omitvaleriocomment
{The statement and proof of Theorem \ref{th:det R} should carry over to the affine
Hecke algebra. In fact, Kato has a n\&s criterion for $\I{\Lambda}$ to be isomorphic
to $\J_\Theta^+$.}
\end{enumerate}
\end{theorem}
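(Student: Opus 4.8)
The plan is to prove part (2) by reducing it to the determinant formula in part (1), and then to prove part (1) by a radiation/intertwiner argument modeled on the proof of Proposition \ref{pr:det TT}. Here is the strategy in order.

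\textbf{Step 1 (part (2) from part (1)).} Since $\J_\vartheta^\veps$ and $\dI{\lambda}$ have equal dimension $|W|$, the nonzero map $\Rnu{\lambda}^\veps$ is an isomorphism if and only if it has nonzero determinant with respect to any choice of bases (using the identification in part (1)). By the formula \eqref{eq:det R}, this determinant vanishes precisely when $\lambda(\alpha^\vee) = \veps_\alpha k_\alpha$ for some $\alpha \in \pos$, since $c \neq 0$. This gives (2) immediately once (1) is established. (One should also note that for any $\alpha\in\pos$, writing $\alpha = w\alpha_i$ with $\alpha_i$ simple, the condition $\lambda(\alpha^\vee) = \veps_\alpha k_\alpha$ is equivalent to the corresponding condition on $w^{-1}\lambda$ at the simple root $\alpha_i$, so the vanishing locus is genuinely a union of the hyperplanes $\{\lambda(\alpha^\vee) = \veps_\alpha k_\alpha\}_{\alpha \in \pos}$.)

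\textbf{Step 2 (the determinant, part (1)).} The map $\Rnu{\lambda}^\veps$ sends $\ktheta \mapsto e_\veps\ilambda$. I would compute $\det(\Rnu{\lambda}^\veps)$ by exploiting Proposition \ref{pr:TT and Rp}, which relates $\TT{w}{\lambda}\circ\Rnu{\lambda}^\veps$ to $\Rnu{w\lambda}^\veps$ up to the scalar $\prod_{\alpha\in\pos:\,w\alpha\in\Rneg}(k_\alpha - \veps_\alpha\lambda(\alpha^\vee))$. Taking $w = w_0$ the longest element, so that $\{\alpha \in \pos : w_0\alpha \in \Rneg\} = \pos$, one gets $\TT{w_0}{\lambda}\circ\Rnu{\lambda}^\veps = \prod_{\alpha\in\pos}(k_\alpha - \veps_\alpha\lambda(\alpha^\vee))\cdot\Rnu{w_0\lambda}^\veps$. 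Combined with Proposition \ref{pr:det TT}, which gives $\det(\TT{w_0}{\lambda}) = \prod_{\alpha\in\pos}(k_\alpha^2 - \lambda(\alpha^\vee)^2)^{|W|/2}$, this yields a relation
\[
\prod_{\alpha\in\pos}\left(k_\alpha^2 - \lambda(\alpha^\vee)^2\right)^{|W|/2}\cdot\det(\Rnu{\lambda}^\veps) = \prod_{\alpha\in\pos}\left(k_\alpha - \veps_\alpha\lambda(\alpha^\vee)\right)^{|W|}\cdot\det(\Rnu{w_0\lambda}^\veps).
\]
Since $k_\alpha^2 - \lambda(\alpha^\vee)^2 = (k_\alpha - \veps_\alpha\lambda(\alpha^\vee))(k_\alpha + \veps_\alpha\lambda(\alpha^\vee))$ and $\{\alpha^\vee\}$ and $\{-\alpha^\vee\}$ range over the same set as $\alpha$ ranges over $\pos$ with $\veps_\alpha$ fixed by $W$-invariance of $k$... more carefully, replacing $\lambda$ by $w_0\lambda$ permutes the factors appropriately, so this gives a functional equation for $\det(\Rnu{\lambda}^\veps)$ as a polynomial in $\lambda$ whose only possible zeros lie on the hyperplanes $\lambda(\alpha^\vee) = \pm\veps_\alpha k_\alpha$.

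\textbf{Step 3 (pinning down the exact formula).} To upgrade the functional equation to the stated product formula, I would (a) bound the degree of $\det(\Rnu{\lambda}^\veps)$ as a polynomial in $\lambda$ — it is at most $|W|^2/2 = \frac{|W|}{2}\cdot|\pos|\cdot(\text{avg})$; more precisely one expects total degree $\tfrac{|W|}{2}|\pos|$, matching the claimed formula — using the explicit description of $\Rnu{\lambda}^\veps$ on a monomial basis of $\Sh/\Sh^W$; and (b) determine which of the two hyperplanes $\lambda(\alpha^\vee) = +\veps_\alpha k_\alpha$ versus $\lambda(\alpha^\vee) = -\veps_\alpha k_\alpha$ actually contributes. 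The key asymmetry is that $\Rnu{\lambda}^\veps$ is a map \emph{into} $\dI{\lambda}$ landing in the $\veps$-isotypic line, so its rank drop should be governed by the failure of $\dI{\lambda}$ to contain $\IC_\vartheta^\veps$ as a quotient — equivalently, by Proposition \ref{prop:weights-Ilambda} and the structure of intertwiners, exactly when some $\tPhi_{s_\alpha}$ annihilates the relevant vector, which happens on $\lambda(\alpha^\vee) = \veps_\alpha k_\alpha$ and not on $\lambda(\alpha^\vee) = -\veps_\alpha k_\alpha$. A direct low-rank / explicit-basis computation (e.g. checking the leading coefficient, or checking one generic point on each hyperplane family) then fixes the constant $c$ up to scalar and confirms that only the $+$ family of hyperplanes appears.

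\textbf{Main obstacle.} The functional-equation argument in Step 2 only determines $\det(\Rnu{\lambda}^\veps)$ up to factors supported on the union of \emph{both} hyperplane families $\lambda(\alpha^\vee) = \pm\veps_\alpha k_\alpha$ and up to its behavior at infinity; the genuinely delicate point is Step 3(b), showing the $-$ family does \emph{not} occur (so that the determinant has \emph{simple} zeros along the $+$ hyperplanes with the right total multiplicity $|W|/2$ each, rather than, say, a symmetric answer like Matsuo's $\prod(k_\alpha^2 - \lambda(\alpha^\vee)^2)^{|W|/2}$). I expect this to require either an explicit computation of $\Rnu{\lambda}^\veps$ in a well-chosen basis of $\Sh/\Sh^W$ (e.g. Schubert-type classes indexed by $W$), tracking the specialization carefully, or a representation-theoretic argument identifying the cokernel of $\Rnu{\lambda}^\veps$ on the hyperplane $\lambda(\alpha^\vee) = \veps_\alpha k_\alpha$ with a specific nonzero module, together with a dimension count showing the generic corank there is exactly $|W|/2$.
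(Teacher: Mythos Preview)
Your overall architecture---derive (2) from (1), establish the functional equation from Propositions \ref{pr:TT and Rp} and \ref{pr:det TT} with $w=w_0$, then combine with a degree bound---is exactly the paper's approach, and your Step~2 reproduces the paper's key identity
\[
\prod_{\alpha\in\pos}\left(k_\alpha+\veps_\alpha\lambda(\alpha^\vee)\right)^{|W|/2}\det(\Rnu{\lambda}^\veps)
=
\prod_{\alpha\in\pos}\left(k_\alpha-\veps_\alpha\lambda(\alpha^\vee)\right)^{|W|/2}\det(\Rnu{w_0\lambda}^\veps).
\]

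The gap is your Step 3(b). You flag as the ``main obstacle'' the task of showing that only the hyperplanes $\lambda(\alpha^\vee)=\veps_\alpha k_\alpha$ contribute, and propose explicit basis computations or a corank analysis. The paper bypasses this entirely with a one-line trick you missed: treat $k=\{k_\alpha\}$ as an \emph{indeterminate}. Then in the polynomial ring $\IC[\lambda,k]$ the factors $k_\alpha+\veps_\alpha\lambda(\alpha^\vee)$ and $k_\alpha-\veps_\alpha\lambda(\alpha^\vee)$ are distinct irreducibles, hence coprime; the functional equation above then forces $\prod_{\alpha\in\pos}(k_\alpha-\veps_\alpha\lambda(\alpha^\vee))^{|W|/2}$ to divide $\det(\Rnu{\lambda}^\veps)$ outright. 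No further sign analysis or corank computation is needed.

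For Step 3(a), your sketch is in the right direction but incomplete. The paper makes the degree bound precise: if $\{p_i\}$ is a homogeneous basis of $S\h$ over $S\h^W$ with $\deg p_i=n_i$, then by \eqref{eq:p w} each matrix entry of $\Rnu{\lambda}^\veps$ in the basis $\{p_i\ktheta\}$ has degree $\leq n_i$, so $\deg\det(\Rnu{\lambda}^\veps)\leq\sum_i n_i$. This sum is computed via the Poincar\'e polynomial $\prod_i(1-t^{d_i})/(1-t)$ and the identities $\prod d_i=|W|$, $\sum(d_i-1)=|R_+|$, giving exactly $|R_+|\,|W|/2$. Since divisibility plus the matching degree bound pins down the determinant up to a nonzero scalar, part (1) follows.
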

\begin{proof}
(1) 
We shall show that $\det(\Rnu{\lambda}^\veps)$ has degree bounded above by $|R_+||W|/2$,
and that it is divisible by the \rhs of \eqref{eq:det R}. The latter statement requires that
$k=\{k_\alpha\}_{\alpha\in R}$ be treated as an indeterminate, which we henceforth
assume.

If $p\in S\h$ is of degree $d$, \eqref{eq:p w} implies that the coefficients of $\Rnu{\lambda}
^\veps(p\ktheta)=pe_\veps\ilambda$ in the standard basis $\{w\}$ of $\IC W$ are polynomials
in $(\lambda,k)$ of degree $\leq d$. Thus, if $\{p_i\}$ is a homogeneous basis of $S\h$ as
an $S\h^W$--module, with $\deg(p_i)=n_i$, then $\deg\det(\Rnu{\lambda}^\veps)\leq\sum n_i$.
The latter quantity is equal to $\left.td/dt P(t)\right|_{t=1}$, where $P=\prod_{i}(1-t^{d_i})/(1-t)$
with $d_i$ are the degrees of the basic invariants of $W$. This yields
\[\deg\det(\Rnu{\lambda}^\veps)
\leq
\left.t\frac{d}{dt} \prod_{i}\frac{1-t^{d_i}}{1-t}\right|_{t=1}
=
\sum_i \frac{d_i(d_i-1)}{2}\prod_{j\neq i}d_j
=
|R_+||W|/2
\]
where we used the identities $\prod d_i=|W|$ and $\sum (d_i-1)=|R_+|$ \cite
[Thm 3.9]{humphreys1992reflection}.

Let now $w\in W$. Taking determinants in \eqref{eq:T R}, and using Proposition \ref{pr:det TT} yields
\[\prod_{\alpha\in\pos:w\alpha\in \Rneg}
\left(k_\alpha+\veps_\alpha\lambda(\alpha^\vee)\right)^{|W|/2}
\det(\Rnu{\lambda}^\veps)
=
\prod_{\alpha\in\pos:w\alpha\in \Rneg}\left(k_\alpha-\veps_\alpha\lambda(\alpha^\vee)\right)^{|W|/2}
\det(\Rnu{w\lambda}^\veps)
\]
Choosing $w$ to be the longest element in $W$ shows that $\det(\Rnu{\lambda}^\veps)$
is divisible by the \rhs of \eqref{eq:det R}.

(2) is a direct consequence of (1) and the fact that, up to a scalar, $\Rnu{\lambda}^\veps$
is the only morphism $\J_\vartheta^\veps\to \dI{\lambda}$.
\end{proof}

\begin{remark}
The "if" part of Theorem \ref{th:det R} (2) was proved by Cherednik by a different
method, and under the additional assumption that the isotropy group of $\lambda$ is generated
by simple reflections \cite[Thm 2.14]{cherednik1991unification}. 
In \cite[Thm 4.7]{cherednik1994integration}, Cherednik also states, without proof, that for a given
$\vartheta\in\h^*/W$ there is a $\lambda\in q^{-1}(\vartheta)$ such that $\lambda(\alpha^\vee)\neq
\veps_\alpha k_\alpha$ for any $\alpha\in\pos$, provided $k_\alpha\neq 0$ for any $\alpha$. This
would imply that any covariant representation $\J_\vartheta^\veps$ is isomorphic to an induced one
in this case.\valeriocomment{Point out that (2) is equivalent to the unique $W^\veps$--fixed vector
in $\dI{\lambda}$ being cyclic, and that a necessary and sufficient criterion for the latter was obtained
by Kato for affine Hecke algebra using similar methods.}
\end{remark}

\subsection{Irreducibility of $\dI{\lambda}$ and $\J_\vartheta^\veps$}

Define $\lambda\in\h^*$ to be {\it $k$--regular} if 
\begin{equation}\label{eq:k reg}
\lambda(\alpha^{\vee}) \neq \pm k_{\alpha}\quad\text{ for all } \alpha \in R
\end{equation}
and denote the set of $k$--regular elements in $\h^*$ by $\frakh^*\kreg$.
Similarly, define $\vartheta\in\h^*/W$ to be $k$--regular if some, and therefore
all, $\lambda\in q^{-1}(\vartheta)\subset\h^*$ are.

\label{th:irr I}
\begin{theorem}
The following holds 
\begin{enumerate}
\item $\dI{\lambda}$ is irreducible if and only if $\lambda$ is $k$--regular.
\item $\J_\vartheta$ is irreducible if and only if $\vartheta$ is $k$--regular.
\end{enumerate}
\end{theorem}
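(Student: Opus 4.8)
The plan is to obtain part~(2) from part~(1) and to treat part~(1) in two directions, with essentially all the work concentrated in ``$k$--regular $\Rightarrow$ irreducible''.

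\emph{Reduction, and the ``only if'' direction.} For any $\lambda\in q^{-1}(\vartheta)$, Remark~\ref{re:irred ff} already tells us that $\J_\vartheta=\J_\vartheta^{\triv}$ is irreducible if and only if $\dI{\lambda}$ is, and by definition $\vartheta$ is $k$--regular exactly when $\lambda$ is; so~(2) is immediate from~(1). For the ``only if'' half of~(1) I would argue the contrapositive: if $\lambda$ is not $k$--regular then, after replacing a root by its negative if necessary, there is $\alpha\in\pos$ with $\lambda(\alpha^\vee)=\eta\,k_\alpha$ for some $\eta\in\{\pm1\}$; choosing $\veps=\triv$ when $\eta=+1$ and $\veps$ the sign character when $\eta=-1$ gives $\veps_\alpha=\eta$, hence $k_\alpha-\veps_\alpha\lambda(\alpha^\vee)=0$, so $\det(\Rnu{\lambda}^\veps)=0$ by Theorem~\ref{th:det R}(1). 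The nonzero morphism $\Rnu{\lambda}^\veps\colon\J_\vartheta^\veps\to\dI{\lambda}$ between spaces of equal dimension is then neither injective nor surjective, so its image is a nonzero proper $\Htrigsub$--submodule of $\dI{\lambda}$, which is therefore reducible.

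\emph{``If'' direction, step~1 (intertwiners).} Assume $\lambda$ is $k$--regular and take a submodule $0\neq M\subseteq\dI{\lambda}$. Since $M$ is $\Sh$--stable and $\dI{\lambda}$ is supported on $W\lambda$, we get $M=\bigoplus_{\mu\in W\lambda}M[\mu]$ in generalised weight spaces, and I would first show $M[\mu]\neq0$ for all $\mu$. For a simple reflection $s_i$, Proposition~\ref{pr:inter} gives $p\,\tPhi_i=\tPhi_i\,{}^{s_i}p$ and $\tPhi_i^2=k_i^2-(\alpha_i^\vee)^2$, so $\tPhi_i$ maps $\dI{\lambda}[\mu]$ into $\dI{\lambda}[s_i\mu]$ and $\tPhi_i^2$ acts there with the single generalised eigenvalue $k_i^2-\mu(\alpha_i^\vee)^2\neq0$ (as $\mu=w\lambda$ is again $k$--regular); hence $\tPhi_i\colon\dI{\lambda}[\mu]\to\dI{\lambda}[s_i\mu]$ is injective, and bijective since all these spaces have the common dimension $|W_\lambda|$ by Proposition~\ref{prop:weights-Ilambda}. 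Running the same argument on the $\Sh$--stable subspaces $M[\mu]$, on which $\tPhi_i^2$ is still invertible, yields $M[\mu]=\tPhi_i^2 M[\mu]\subseteq\tPhi_i M[s_i\mu]\subseteq M[\mu]$, so $\tPhi_i$ restricts to bijections $M[\mu]\xrightarrow{\,\sim\,}M[s_i\mu]$; as any two elements of $W\lambda$ differ by a product of simple reflections, nonvanishing of one weight space forces it for all of them, in particular $M[\lambda]\neq0$.

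\emph{``If'' direction, step~2 and conclusion.} The key claim is that $\operatorname{soc}_{\Sh}(\dI{\lambda}[\lambda])$ is the line $\IC\ilambda$. Indeed this socle is the genuine $\lambda$--weight space $(\dI{\lambda})_\lambda=\Hom_{\Sh}(\IC_\lambda,\dI{\lambda})$, which by Frobenius reciprocity is $\End_{\Htrigsub}(\dI{\lambda})$; since $k$--regularity makes $\Rnu{\lambda}^{\triv}$ an isomorphism (Theorem~\ref{th:det R}), precomposition gives $\End_{\Htrigsub}(\dI{\lambda})\cong\Hom_{\Htrigsub}(\J_\vartheta,\dI{\lambda})$, which is one--dimensional by the construction of $\Rnu{\lambda}^{\triv}$; and $\ilambda$, being a genuine $\lambda$--eigenvector, spans it. Then $M[\lambda]$, a nonzero finite--dimensional $\Sh$--module supported at $\lambda$, has $0\neq\operatorname{soc}_{\Sh}(M[\lambda])\subseteq\operatorname{soc}_{\Sh}(\dI{\lambda}[\lambda])=\IC\ilambda$, so $\ilambda\in M$ and hence $M\supseteq\Htrigsub\ilambda=\dI{\lambda}$. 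Therefore $\dI{\lambda}$ is irreducible, which also completes~(2) via the reduction above.

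\emph{Where the difficulty lies.} If $\lambda$ is regular the weight spaces in step~1 are one--dimensional and step~1 alone already forces $M=\dI{\lambda}$; the genuinely hard case is when $\lambda$ is $k$--regular but not regular ($|W_\lambda|>1$), where a submodule can meet every weight space while remaining proper. Step~2 is tailored to that situation, and its one nontrivial input is $\dim\End_{\Htrigsub}(\dI{\lambda})=1$, which I would extract exactly from the isomorphism $\dI{\lambda}\cong\J_\vartheta$ of Theorem~\ref{th:det R} together with the uniqueness up to scalar of $\Rnu{\lambda}^{\triv}$.
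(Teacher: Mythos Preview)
Your argument is correct, and both directions rely on the same key input (Theorem~\ref{th:det R}) as the paper does, but your ``if'' proof takes a genuinely different route. The paper is more direct: given a nonzero submodule $J$, it picks any $\Sh$--eigenvector in $J$, say of weight $w\lambda$, which yields a nonzero map $\dI{w\lambda}\to J\subseteq\dI{\lambda}$; since $w\lambda$ is $k$--regular, Theorem~\ref{th:det R} makes the unique $W$--fixed vector $e_+\inu{w\lambda}$ cyclic in $\dI{w\lambda}$, hence it maps to a nonzero multiple of the unique $W$--fixed vector $e_+\ilambda$ in $\dI{\lambda}$, which is again cyclic by Theorem~\ref{th:det R}, so $J=\dI{\lambda}$. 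This bypasses both your Step~1 (intertwiners) and your Step~2 (computing $\End_{\Htrigsub}(\dI{\lambda})$ via Frobenius). What your approach buys is the stronger intermediate statement that all generalised weight components of a submodule have equal dimension, and a clean identification of the genuine $\lambda$--eigenspace with $\End_{\Htrigsub}(\dI{\lambda})$; what the paper's approach buys is brevity, since it never needs to single out the weight $\lambda$ itself and tracks only the $W$--fixed line throughout.
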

\begin{proof}
(1) Assume that $\lambda(\alpha^\vee)=\veps_\alpha k_\alpha$ for some $\alpha\in\pos$
and $\veps_\alpha\in\{\pm 1\}$, and let $\veps:W\to\{\pm 1\}$ be a character such that
$\veps(s_\alpha)=\veps_\alpha$. By Theorem \ref{th:det R}, the non--zero intertwiner
$\Rnu{\lambda}^\veps:\J_\vartheta^\veps\longrightarrow\dI{\lambda}$ is not invertible,
so that $\dI{\lambda}$ is reducible.

Conversely, assume that $\lambda$ is $k$--regular, and let $J\subset \dI{\lambda}$ be a
non--zero submodule.
By Proposition \ref{prop:weights-Ilambda}, $J$ contains an eigenvector for $\h\subset\Htrigsub$
with eigenvalue $w\lambda$, for some $w\in W$, and therefore admits a non--zero map $\dI{w
\lambda}\to J$. The $k$--regularity of $\lambda$ and Theorem \ref{th:det R} imply that the
unique $W$--fixed vector in $\dI{w\lambda}$ is cyclic, and therefore cannot map to zero in $J$.
It follows that its image is the $W$--fixed vector in $\dI{\lambda}$ and, since the latter is cyclic by
Theorem \ref{th:det R}, that $J=\dI{\lambda}$.

(2) follows from (1) and Remark \ref{re:irred ff}.
\end{proof}

\begin{remark}
Part (1) of Theorem \ref{th:irr I} is stated without proof in \cite[Thm. 1.2.2 (a)]{cherednik2005double}.
The analogous statement for affine Hecke algebras is due to Kato \cite[Thm. 2.2]{Kato-irred}.
The proof above is a straightforward adaptation of Kato's.
\end{remark}

\section{Shift operators in the parameter $k$}\label{sec:trig-kz}

\newcommand {\eps}{\varepsilon}

\Omit{
\valeriocomment{Explain in this section that the Matsuo (but not Cherednik) isomorphism is equivariant with respect to $\Omega
^\vee=P^\vee/Q^\vee$, and therefore for the action of the monodromy action of the extended affine braid group. This proceeds
along the following lines. 1) Consider the HG system: by construction, it is defined on $H_{\reg}/W$, where $H=Spec(\IC P)
\cong\h/Q^\vee$, $P$ the weight lattice.  The torus $H$ admits a translation action of $\Omega^\vee$ which commutes with that
of $W$ since $s_i \nu =\nu\mod Q^\vee$ for any $\nu\in P^\vee$ (in the rank 1 case, $H\cong\IC^\times$, $W$ acts by $
z\to z^{-1}$ and $\Omega^\vee$ by $z\to -z$). This induces a $W$--equivariant isomorphism of germs $\O_{Wh}\to\O_{W \nu
h}$, which commutes with the action of Dunkl operators since the derivatives and $\rho_k$ term are translation invariant, and
the reflection part only uses functions in $\IC Q$, which are therefore invariant under translation by $P^\vee$, and elements
of $W$. Restricting to $W$--invariants, we get an isomorphism $HG(Wh;\lambda)\to HG(W \nu h;\lambda)$. In other
words, $HG(Wh;\lambda)$ is (the stalk at $Wh$ of) a $\Omega^\vee$--equivariant local system on $H_{\reg}/W$, and therefore
defines a representation of the extended affine braid group (which one can think of as the orbifold fundamental group of
$(H_{\reg}/W)/\Omega^\vee)$. 2) On the KZ side, the local system is also invariant under translation by $\Omega^\vee$.
3) One then just observes that the simple form of the Matsuo isomorphism $\sum_w f_w w\to\sum_w [f_w]$ make it
manifestly equivariant under the translation action of $\Omega^\vee$. 4) In the Cherednik case, the map is
$\sum_w f_w w\to\sum_w (-1)^{\ell(w)} [\Delta^{-1}\cdot f_w]$. It isn't equivariant because $\Delta=e^\rho\prod_
{\alpha\in R_+}(1-e^{-\alpha})\in e^\rho\cdot\IC Q$, and $\rho\notin Q$ in general. But the lack of equivariance can be controlled
as follows. Note first that $2\rho\in Q$, so for any $\nu\in\Omega^\vee$, $\nu e^{\rho}=\eps_\rho(\nu) e^{\rho} $,
where $\eps_\rho(\nu)=e^{2\pi\ii \rho(\nu)}\in\{\pm 1\}$ since $2\rho(\nu)\in\IZ$. Thus, $\eps_\rho$ is a $\IZ/2\IZ$--valued
character of $\Omega^\vee$, and the Cherednik isomorphism (and therefore the $k$--shift operator) is equivariant under
the actions of the extended affine braid groups provided one of them is twisted by $\eps_\rho$. 5) Finally, note that
$\eps_\rho$ can in fact be trivial, which is the case iff $\rho\in Q$, which happens for about half of the root systems.
In type $\sfA_n$, $2\rho=n\theta_1+(n-2)\theta_2+\cdots-(n-2)\theta_n-n\theta_{n+1}\in 2Q$ iff $n$ is even. In type $B_n$,
$\rho\notin Q$, in type $C_n$, $\rho\in Q$ iff $n=0,3$ mod $4$, and in type $D_n$, $\rho\in Q$ iff $n=0,1$ mod $4$.}
}

In this section, we generalize the $k$--shift operators of \cite{opdam2001lectures} and \cite{felder1994shift} to the trigonometric
KZ system corresponding to the covariant representation $\J_\vartheta$.  The construction relies on defining analogues of
Matsuo's \cite{matsuo1992integrable} and Cherednik's maps \cite{cherednik1994integration}  for $\J_\vartheta$.

\subsection{The induced and covariant representations of $\Htrig$} 
\label{ss:ind cov trig}

For any $\lambda\in\h^*$ and $\vartheta\in^*/W$, let $\dI{\lambda}$ and $\J_\vartheta$
be the induced and covariant representations of $\Htrigsub$ introduced in \ref{subsec:degen-induced-module}
and \ref{ss:cov deg} respectively, and set
\begin{align*}
\cI_\lambda		&= \Ind^{\Htrig}_{\Htrigsub} \dI{\lambda} = \Ind_{\Sh}^{\Htrig} \IC_\lambda\\
\cJ_{\vartheta}		&= \Ind_{\Htrigsub}^{\Htrig} \J_\vartheta = \Ind_{\Sh^W \otimes\ICW}^{\Htrig} \IC_{\vartheta}
\end{align*}

As $W\ltimes \IC[H]$--modules, $\cI_\lambda$ and $\cJ_{\vartheta}$ are isomorphic to
$\IC[H]\otimes \dI{\lambda}$ and $\IC[H]\otimes \J_\vartheta$ respectively, where $W$
acts on each tensor factor. In particular, as $W$--equivariant sheaves on $H$, they are
isomorphic to the trivial vector bundle with fiber the left regular representation $W$.

\subsection{The trigonometric KZ functor.}\label{subsec:trigKZconn}

The following construction is due to Varagnolo--Vasserot \cite{varagnolo2004double}, and generalizes
the KZ functor for rational Cherednik algebras defined in \cite{ginzburg2003category}.

Let 
\[\Delta = \prod_{\alpha \in\pos} (e^{\alpha/2} - e^{-\alpha/2}) =
e^{\rho} \prod_{\alpha \in\pos} (1 - e^{-\alpha})\in\bbC[H]\]
be the Weyl denominator 
and $H_{\reg} = \lbrace h \in H |\Delta(h) \neq 0 \rbrace$, so that $\bbC[H_{\reg}] = \bbC[H][\Delta^{-1}]$.
Let $\Htrigreg\subset\End(\bbC[H_{\reg}])$ be the subalgebra generated by $\Htrig$ and $\IC[H_{\reg}]$.
Writing \eqref{eqn:dunklop} as
\begin{equation}
	\label{eqn:new dunklop}
	\partial_\xi = T_\xi - \sum_{\alpha \in \pos} k_\alpha \alpha(\xi) \frac{1-s_\alpha}{1-e^{-\alpha}} +  \rho_k(\xi)
\end{equation}
shows that $\Htrigreg=W\ltimes \mathcal{D}_{H_{\reg}}$, where $\mathcal{D}_{H_{\reg}}$ is the Weyl
algebra of $H_{\reg}$. 

Let $V$ be a module over $\Htrigsub$ and $\VV = \mathrm{Ind}_{\Htrigsub}^{\Htrig} V$ the corresponding
induced representation of $\Htrig$. As a $\IC[H]$-module, $\VV$ is a vector bundle over $H$ with fiber $V$.
Localizing to $H_{\reg}$ endows $\VV$ with an action of $\mathcal{D}_{H_{\reg}} \rtimes W = \Htrigreg$. By
\eqref{eqn:new dunklop}, the corresponding $W$--equivariant integrable meromorphic connection has 
logarithmic poles along $\lbrace \Delta = 0 \rbrace$, and the covariant derivative along a translation invariant
vector field $\xi\in\h$ is given by
\[
\nabla_\xi = \xi - \sum_{\alpha \in \pos} k_\alpha \alpha(\xi) \frac{1- s_\alpha}{1-e^{-\alpha}} + \rho_k(\xi)
\]
Identifying $\VV$ with $\bbC[H]\otimes V$ as $\bbC[H]$--modules, and using the fact that $[\nabla_\xi,f]
=\partial_\xi f$ for any $f\in\bbC[H]$, shows that
\begin{equation}\label{eqn:KZconnection}
\nabla_\xi=d-\sum_{\alpha \in \pos} k_\alpha \alpha(\xi) \frac{1- \tfact{s}_\alpha}{1-e^{-\alpha}} + \rho_k(\xi)+ \tfact{\xi} 
\end{equation}
where $\tfact{X}=1\otimes X$ denotes the $\IC[H]$--linear, fibrewise action of $X\in\Htrigsub$.

Denote the connections on the vector bundles $\cJ_{\vartheta}$ and $\cI_\lambda$ by $\Kconn$
and $\Iconn$ respectively, and the corresponding local systems on $H_{\reg}/W$ as $\KL$ and
$\IL$.

\subsection{The KZ connection \cite{matsuo1992integrable}}

Matsuo introduced the following meromorphic connection $\nabla\KZ$ on the holomorphically
trivial bundle $\cW$ over $H$ with fibre $\IC W$. Let $\Endd(\IC W)$ be the algebra of diagonal
endomorphisms in the standard basis $\{w\}_{w\in W}$ of $\IC W$ and, for any $\alpha\in R$, define
\[\eps_\alpha\in\Endd(\IC W)\qquad\text{by}\qquad\eps_\alpha\,w=-\sign(w^{-1}\alpha)w\]
Fix $\lambda\in\h^*$, and define a linear map 
\[e_\lambda:\h\to\Endd(\IC W)\qquad\text{by}\qquad e_\lambda (\xi)w=(w\lambda,\xi)w\]
Given a $W$--invariant set of weights $k=\{k_\alpha\}_{\alpha\in R}$, the connection
$\nabla\KZ$ is defined by
\[\nabla\KZ=
d+\half{1}\sum_{\alpha\in\pos}k_\alpha d\alpha
\left(\frac{1+e^{-\alpha}}{1-e^{-\alpha}}\otimes (1-s_\alpha)+1\otimes s_\alpha\eps_
\alpha\right)-de_\lambda\]

The following identifies Matsuo's connection with the one arising from the trigonometric
KZ functor applied to the induced representation of the Cherednik algebra $\Htrigneg$
with {\it negated} weights $-\lambda$ and $-k=\{-k_\alpha\}$.\footnote{The negation of
$k$ can be avoided if the Dunkl operators \eqref{eqn:dunklop} are defined with $-k$
instead of $k$, that is as $T_\xi = \partial_\xi - \sum_{\alpha \in \pos} k_\alpha \alpha(\xi) \frac{1-s_\alpha}
{1-e^{-\alpha}} +  \rho_k(\xi)$, as done for example in \cite{cherednik1994integration}.}

\label{le:KZ KZ}
\begin{lemma}
The connection $\nabla\KZ$ coincides with the trigonometric KZ connection $\Iconn[-\lambda,-k]$ arising from the trigonometric Cherednik algebra for the weight $-k$.
\end{lemma}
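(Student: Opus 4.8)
The plan is to prove the lemma by a direct, term--by--term comparison of connection forms. Since both $\Iconn[-\lambda,-k]$ and $\nabla\KZ$ are specified through their covariant derivatives $\nabla_\xi$ along the translation--invariant vector fields $\xi\in\h$ (so that the de Rham $d$ becomes the derivation $\partial_\xi$ and a logarithmic form $d\alpha$ pairs to $\alpha(\xi)$), it suffices to check that $\nabla_\xi$ agrees for every $\xi$. First I would fix the identification of the underlying bundles: the fibre of $\cI_{-\lambda}$ is $\dI{-\lambda}$, which by \ref{subsec:degen-induced-module} is $\ICW$ via $w\mapsto w\bfi{-\lambda}$, and I would match this with the standard basis of Matsuo's $\cW$. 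Under this identification the fibrewise action $\tfact{s}_\alpha$ of $s_\alpha\in W\subset\Htrigsub$ is left multiplication $w\bfi{-\lambda}\mapsto s_\alpha w\bfi{-\lambda}$, which is precisely Matsuo's operator $s_\alpha$ on $\ICW$.

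The heart of the argument is computing the fibrewise action $\tfact{\xi}$ of $\xi\in\h\subset\Sh\subset\Htrigsub$, keeping in mind that for $\Iconn[-\lambda,-k]$ the algebra $\Htrigsub$ carries the parameter $-k$, so the relevant commutation relation is $s_i\xi=s_i(\xi)s_i+k_i\alpha_i(\xi)$, i.e. \eqref{eqn:rpcommrel} (and hence \eqref{eqn:hdeg-rel}) with $k$ negated. Iterating \eqref{eqn:hdeg-rel} I expect to obtain, in $\Htrigsub$,
\[
\xi\,w=w\cdot{}^{w^{-1}}\xi-\sum_{\beta\in\N(w)}k_\beta\,(w\beta)(\xi)\,s_{w\beta}\,w .
\]
Applying this to $\bfi{-\lambda}$, using $\lambda(w^{-1}\xi)=(w\lambda)(\xi)$ for the diagonal term, and reindexing by $\alpha=-w\beta\in\pos$ — which runs exactly over $\{\alpha\in\pos:w^{-1}\alpha\in\Rneg\}$ as $\beta$ runs over $\N(w)$, with $k_\beta=k_\alpha$, $s_{w\beta}=s_\alpha$ and $(w\beta)(\xi)=-\alpha(\xi)$ — this should give
\[
\tfact{\xi}\,(w\bfi{-\lambda})=-(w\lambda,\xi)\,w\bfi{-\lambda}+\sum_{\alpha\in\pos,\ w^{-1}\alpha\in\Rneg}k_\alpha\,\alpha(\xi)\,s_\alpha w\bfi{-\lambda}.
\]
Since $s_\alpha(1+\eps_\alpha)$ sends $w$ to $2s_\alpha w$ precisely when $w^{-1}\alpha\in\Rneg$ and to $0$ otherwise, this translates into $\tfact{\xi}=-e_\lambda(\xi)+\tfrac12\sum_{\alpha\in\pos}k_\alpha\alpha(\xi)\,s_\alpha(1+\eps_\alpha)$ in Matsuo's notation.

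The remaining step is a routine rearrangement of \eqref{eqn:KZconnection} specialised to the parameters $(-\lambda,-k)$. Writing $\tfrac{1}{1-e^{-\alpha}}=\tfrac12+\tfrac12\tfrac{1+e^{-\alpha}}{1-e^{-\alpha}}$ splits the reflection term $\sum_{\alpha\in\pos}k_\alpha\alpha(\xi)\tfrac{1-s_\alpha}{1-e^{-\alpha}}$ into a "constant" part $\tfrac12\sum_\alpha k_\alpha\alpha(\xi)(1-s_\alpha)$ and the part $\tfrac12\sum_\alpha k_\alpha\alpha(\xi)\tfrac{1+e^{-\alpha}}{1-e^{-\alpha}}(1-s_\alpha)$, the latter being exactly the corresponding term of $\nabla\KZ$. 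Using $\rho_{-k}(\xi)=-\rho_k(\xi)=-\tfrac12\sum_{\alpha\in\pos}k_\alpha\alpha(\xi)$, the constant part together with $\rho_{-k}(\xi)+\tfact{\xi}$ collapses to $-\tfrac12\sum_\alpha k_\alpha\alpha(\xi)s_\alpha+\tfact{\xi}$, and inserting the formula for $\tfact{\xi}$ from the previous step turns this into $-e_\lambda(\xi)+\tfrac12\sum_\alpha k_\alpha\alpha(\xi)s_\alpha\eps_\alpha$ — precisely the $-de_\lambda$ and $s_\alpha\eps_\alpha$ contributions of $\nabla\KZ_\xi$. Comparing the two expressions then yields $\Iconn[-\lambda,-k]_\xi=\nabla\KZ_\xi$ for all $\xi$.

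I expect the only genuinely delicate point to be the bookkeeping inside the computation of $\tfact{\xi}$: one has to keep consistent the $-k$ in the Dunkl operator, the left regular action $w\mapsto s_\alpha w$ (as opposed to $w\mapsto ws_\alpha$), the identification $(w\lambda,\xi)=\lambda(w^{-1}\xi)$, and the dictionary between $\N(w)$ and Matsuo's $\sign(w^{-1}\alpha)$ entering $\eps_\alpha$. Once $\tfact{\xi}$ is pinned down correctly, the rest is the elementary manipulation of $\tfrac{1}{1-e^{-\alpha}}$ and $\rho_k$ above.
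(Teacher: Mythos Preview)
Your proposal is correct and follows essentially the same approach as the paper: both compute the fibrewise action $\tfact{\xi}$ on $\dI{-\lambda}$ via \eqref{eqn:hdeg-rel} (obtaining $\tfact{\xi}=-e_\lambda(\xi)+\sum_{\alpha\in\pos}k_\alpha\alpha(\xi)s_\alpha\delta_\alpha$ with $\delta_\alpha=\tfrac12(1+\eps_\alpha)$), and both use the identity relating $\tfrac{1}{1-e^{-\alpha}}$ and $\tfrac{1+e^{-\alpha}}{1-e^{-\alpha}}$ to match the remaining terms. The only cosmetic difference is direction: the paper rewrites $\nabla\KZ_\xi$ into the form \eqref{eqn:KZconnection} using $\tfrac{1+x}{1-x}=\tfrac{2}{1-x}-1$, whereas you rewrite \eqref{eqn:KZconnection} into Matsuo's form using the equivalent $\tfrac{1}{1-x}=\tfrac12+\tfrac12\tfrac{1+x}{1-x}$.
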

\begin{proof}
Since $\frac{1+x}{1-x}=\frac{2}{1-x}-1$ and $\rho_k=\half{1}\sum_{\alpha\in\pos}k_\alpha\alpha$
we have, for any $\xi\in\h$
\[\nabla\KZ_\xi=\partial_\xi+
\sum_{\alpha\in\pos}k_\alpha\alpha(\xi)
\frac{1-s_\alpha}{1-e^{-\alpha}}
-\rho_k(\xi)+
\sum_{\alpha\in\pos}k_\alpha\alpha(\xi)\,s_\alpha\delta_\alpha
-e_\lambda(\xi)
\]
where $\delta_\alpha=\half{1}(1+\eps_\alpha)\in\Endd(\IC W)$ maps $w$ to $\delta_{w^{-1}
\alpha\in\negr}w$.

Note now that the action of $\xi\in H_{k'}$ on $I_\mu$ is given by
\[
\xi w\imu
=
w\left(^{w^{-1}}\xi+\sum_{\beta\in\pos:w\beta\in\negr} k'_\beta w\beta(\xi)s_\beta\right)\imu\\
\Omit{=
\left(e_\mu(\xi)
-
\sum_{\alpha\in\pos:w^{-1}\alpha\in\negr} k'_\alpha\alpha(\xi)s_\alpha
\right)w\imu
}
=
\left(e_\mu(\xi)
-
\sum_{\alpha\in\pos} k'_\alpha\alpha(\xi)s_\alpha\delta_\alpha
\right)w\imu
\]
where we used \eqref{eqn:hdeg-rel}. Comparing with \eqref{eqn:KZconnection}
shows that $\nabla\KZ_\xi=\Iconnvf[-\lambda,-k]{\xi}$.
\end{proof}

We denote by $\cW(\lambda,k)$ the local system on $H_{\reg}/W$ defined by $\nabla\KZ$.
By Lemma \ref{le:KZ KZ}, $\cW(\lambda,k)=\IL[-\lambda,-k]$.

\subsection{The hypergeometric system}

For any $h_0 \in H_{\reg}$, let $\O_{h_0}$ be the germ of holomorphic functions at $h_0$,
set $\O_{Wh_0} = \bigoplus_{w \in W}\O_{wh_0}$, and let $\O^W_{Wh_0} \subseteq
\O_{Wh_0}$ be the germ of $W$--invariant functions at $Wh_0$. We denote by $\symO
{\cdot} \colon \O_{h_0} \to \O_{Wh_0}^W$ the isomorphism given by $f \mapsto \{wf
\}_{w\in W}$.

By \eqref{eqn:dunklop}, the Dunkl operators act on $\O_{Wh_0}$. The corresponding
action of $\Sh^W$ restricts to one on $\O^W_{Wh_0}$, which is given by differential
operators. Fix $\vartheta \in \mathfrak {h}^*/W$. The hypergeometric system is defined
by
\begin{equation}\label{eqn:CSsystem}
\Cs{\vartheta}{k} = \left\{f \in \O^W_{Wh_0}\left|\,p f = p(\vartheta)f,\,p \in \Sh^W\right.\right\}
\end{equation}
By \cite[Cor. 4.1.8]{heckman1995harmonic}, $\Cs{\vartheta}{k}$ is a local system over
$H_{\reg}/W$ of rank $|W|$.

\subsection{The Matsuo map \texorpdfstring{$m_{\dI{\lambda}}$}{mIlambda}}\label{subsec:matsuo-isomorphism}

Let $\germ{\cI_\lambda}$ be the germ of holomorphic sections of $\cI_\lambda$ at $h_0$.
In \cite{matsuo1992integrable}, Matsuo defines an $\O_{h_0}$--linear map
\omitvaleriocomment
{Not quite correct: the identfication is between $\calL (-\lambda,-k) \longrightarrow
\Cs{\vartheta}{k}$: $k$ and $\lambda$ are {\bf negated}. Give references for this (Matsuo,
Opdam Acta's paper, Heckman's Bourbaki seminar).}
$m_{\dI{\lambda}}\colon (\cI_\lambda)_{h_0} \longrightarrow \O_{Wh_0}^W$. 
Trivializing $\germ{\cI_\lambda}$ as $\O_{h_0} \otimes \dI{\lambda}$, $m_{\dI{\lambda}}$ is given by
\[
\sum_{w \in W} f_w w\pos 
\longmapsto \symO{\sum_{w \in W} f_w}
\]

\Omit{
Let $\epsilon_{+} = |W|^{-1}\sum_{w} w$ and $\epsilon_{-} = |W|^{-1}\sum_{w} \mathrm{sgn}(w)w$
be the idempotents corresponding to the trivial and sign representations of $W$.}
Let $\epsilon_+\in\IC W$ be the symmetric idempotent \eqref{eq:epm}. Then, if $f \in\O
_{h_0}\otimes \dI{\lambda}$,
\begin{equation}\label{eq:matsuo 2}
m_{\dI{\lambda}}(f) = [\eta_+\epsilon_{+} f]
\end{equation}
where $\eta_+:\dI{\lambda}^W\to\IC$ is the isomorphism given by $\epsilon_+\ilambda\to 1$.

Let $q:\h^*\to\h^*/W$ be the projection, and set $\pm\vartheta=q(\pm\lambda)$. Then, $m_
{\dI{\lambda}}$ restricts to a morphism of local systems $\IL \longrightarrow\Cs
{-\vartheta}{-k}$,\omitvaleriocomment{You had stated that $m_{\dI{\lambda}}$ is a morphism $\IL \to\Cs{\vartheta}{k}$, but since Matsuo's map is really a map $\cW(\lambda,k)\to
\Cs{\vartheta}{k}$, and $\IL=\cW(-\lambda,-k)$ by Lemma \ref{le:KZ KZ}, both
statements cannot be true. On the other hand, I thought you checked your statement in rank
1 with Mathematica. In that case $-\vartheta=\vartheta$, but still $k\neq-k$. Can we reconcile
this?} which is an isomorphism if and only if $\lambda(\alpha^\vee)\neq -k_\alpha$ for all $\alpha\in\pos$
\cite{matsuo1992integrable,opdam2001lectures}.\footnote{Matsuo's isomorphism is formulated
in terms of the local system $\cW(-\lambda,-k)$ which is identified with $\IL$
via Lemma \ref{le:KZ KZ}.}


\subsection{The radial part maps $\Radpm$ \cite[Lemma 5.10]{opdam2001lectures}}

The following will be needed in \ref{subsec:matsuomap} to spell out the analogue of
Matsuo's map for the representation $\J_\vartheta$.

\label{pr:rad-parts}
\begin{proposition}
The following holds. 
\begin{enumerate}
\item \label{lem-item-2} 
$\epsilon_{\pm} \Htrigsub \epsilon_{\pm} = \Sh^W \epsilon_{\pm}$. 
\item The map $\Sh^W\ni p\mapsto p \epsilon_{\pm}\in \Sh^W\epsilon_\pm$
is an algebra isomorphism. It therefore induces a map of $\Sh^W$--modules 
\[\Radpm:\Htrigsub\longrightarrow \Sh^W
\quad\text{defined by}\quad
\Radpm(f)\epspm=\epspm f\epspm\]
\end{enumerate}
\Omit{
Set now \[\pi_{\pm} = \prod_{\alpha \in \pos} \left( \alpha^{\vee} \pm k_\alpha \right)\in \Sh \subset \Htrigsub\]
Then,\hfill
\begin{enumerate}[(c)]
\item \label{lem-item-1} $\epsilon_{\mp} \pi_{\pm} \epsilon_{\pm} = \pi_{\pm} \epsilon_{\pm}$.
\item[(d)] \label{lem-item-3} 
$\epsilon_{\mp} \Htrigsub \epsilon_{\pm} = \Sh^{W} \pi_{\pm} \epsilon_{\pm}$.  
\item[(e)] The map $\Sh^W\ni h \mapsto h \pi_{\pm} \epsilon_{\pm}\in\Sh^W \pi_{\pm} \epsilon_{\pm}$
is a linear isomorphism. It therefore induces a map of $\Sh^W$--modules 
\[\pmRad:\Htrigsub \longrightarrow \Sh^W
\quad\text{by}\quad
\pmRad(f)\pi_\pm\epspm=\epsmp f\epspm\]
\end{enumerate} 
}
\end{proposition}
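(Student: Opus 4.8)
The plan is to prove parts (1) and (2) together, using the standard fact that $\epsilon_\pm$ is a primitive idempotent for the one-dimensional character $\veps = \triv$ (resp.\ $\sign$) of $W$, combined with the PBW-type decomposition $\Htrigsub \cong \ICW \otimes \Sh$ as a vector space.

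First I would prove the inclusion $\epsilon_\pm \Htrigsub \epsilon_\pm \subseteq \Sh^W \epsilon_\pm$. Using $\Htrigsub = \Sh \cdot \ICW$ (or $\ICW \cdot \Sh$), write a general element as $\sum_{w} p_w w$ with $p_w \in \Sh$. Then $\epsilon_\pm \left(\sum_w p_w w\right)\epsilon_\pm = \sum_w (\epsilon_\pm p_w)(w\epsilon_\pm) = \sum_w \veps(w)(\epsilon_\pm p_w)\epsilon_\pm$, so it suffices to show $\epsilon_\pm p\, \epsilon_\pm \in \Sh^W\epsilon_\pm$ for a single $p \in \Sh$. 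The key computational input is the commutation relation \eqref{eqn:rpcommrel}: for a simple reflection $s_i$ and $p \in \Sh$, $s_i p = {}^{s_i}p\, s_i - k_i(p - {}^{s_i}p)/\alpha_i^\vee$, and crucially $(p - {}^{s_i}p)/\alpha_i^\vee$ again lies in $\Sh$ and has strictly smaller degree than $p$. Moving all elements of $W$ to the right of $p$ therefore expresses $p\,\epsilon_\pm$ (more precisely, $w p\,\epsilon_\pm$ for each $w$) as a sum $\sum_{u \in W} q_u\, u\,\epsilon_\pm = \sum_u \veps(u) q_u\,\epsilon_\pm$ with $q_u \in \Sh$; applying $\epsilon_\pm$ on the left and averaging shows $\epsilon_\pm p\,\epsilon_\pm = \left(\tfrac{1}{|W|}\sum_{w\in W}\veps(w)\,{}^{w}p\right)\epsilon_\pm$, where the scalar factor is the $W$-(anti)symmetrization of $p$. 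For $\veps = \triv$ this is literally the $W$-average $\in \Sh^W$; for $\veps = \sign$ one divides by $\prod_{\alpha \in \pos}\alpha^\vee$, but the antisymmetrized polynomial is divisible by this product, so the quotient is again in $\Sh^W$ — this is exactly the content that makes the computation work and is the step I expect to require the most care (tracking the denominators coming from iterated use of \eqref{eqn:rpcommrel} and checking they cancel, so that the result genuinely lands in $\Sh^W$ rather than a localization). The reverse inclusion $\Sh^W\epsilon_\pm \subseteq \epsilon_\pm\Htrigsub\epsilon_\pm$ is immediate since $\Sh^W = \Sh^W\cdot W$ is central in $\Htrigsub$, so $p\epsilon_\pm = \epsilon_\pm p\epsilon_\pm$ for $p \in \Sh^W$. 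This proves (1).

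For (2), the map $\Sh^W \to \Sh^W\epsilon_\pm$, $p \mapsto p\epsilon_\pm$, is visibly $\Sh^W$-linear and surjective by definition of the target. Injectivity follows from the PBW decomposition: $p\epsilon_\pm = \tfrac{1}{|W|}\sum_w \veps(w)\, p\, w$ with the $\{w\}$ linearly independent over $\Sh$, so $p\epsilon_\pm = 0$ forces $p = 0$. It is an algebra homomorphism because $\epsilon_\pm$ is idempotent and central modulo the relevant subalgebra: $(p\epsilon_\pm)(p'\epsilon_\pm) = pp'\epsilon_\pm^2 = pp'\epsilon_\pm$ using that $\Sh^W$ is central so commutes past $\epsilon_\pm$. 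Hence $p \mapsto p\epsilon_\pm$ is an algebra isomorphism $\Sh^W \xrightarrow{\sim} \epsilon_\pm\Htrigsub\epsilon_\pm = \Sh^W\epsilon_\pm$. Finally, given $f \in \Htrigsub$, the element $\epsilon_\pm f \epsilon_\pm$ lies in $\Sh^W\epsilon_\pm$ by (1), so there is a unique $\Radpm(f) \in \Sh^W$ with $\Radpm(f)\epsilon_\pm = \epsilon_\pm f\epsilon_\pm$; this defines $\Radpm$, and it is a morphism of $\Sh^W$-modules because for $g \in \Sh^W$, $\Radpm(gf)\epsilon_\pm = \epsilon_\pm gf\epsilon_\pm = g\epsilon_\pm f\epsilon_\pm = g\Radpm(f)\epsilon_\pm$ (using centrality of $g$), and injectivity of $p \mapsto p\epsilon_\pm$ gives $\Radpm(gf) = g\Radpm(f)$, and similarly $\Radpm$ is additive.

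The only genuine obstacle is the divisibility claim underlying part (1) in the sign case: one must verify that all the denominators $\alpha_i^\vee$ introduced by repeated application of \eqref{eqn:rpcommrel} are absorbed, so that $\epsilon_-\Htrigsub\epsilon_- $ really equals $\Sh^W\epsilon_-$ and not something living over $\Sh_{\text{loc}}$. This is a standard fact (the antisymmetrization map $\Sh \to \Sh^W\cdot\prod\alpha^\vee$ is well-defined) but it is where the argument has content; everything else is formal manipulation of the idempotents and the PBW decomposition. I would cite \cite[Lemma 5.10]{opdam2001lectures} for the precise bookkeeping if a fully self-contained treatment is not desired.
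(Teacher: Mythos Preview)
The paper does not give its own proof of this proposition; it is quoted from \cite[Lemma 5.10]{opdam2001lectures}, as the section header indicates. So your sketch would stand in for the argument, and it contains a genuine error in part (1).

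Your displayed formula $\epsilon_\pm p\,\epsilon_\pm = \bigl(\tfrac{1}{|W|}\sum_{w}\veps(w)\,{}^{w}p\bigr)\epsilon_\pm$ is false: in rank one with $\veps = \triv$ and $p = \alpha^\vee$, one computes $\epsilon_+ \alpha^\vee \epsilon_+ = -k\,\epsilon_+$ (using $s\alpha^\vee = -\alpha^\vee s - 2k$), whereas the $W$-average of $\alpha^\vee$ is zero. The correction terms produced by \eqref{eqn:rpcommrel} do not vanish, and the difficulty is not one of divisibility by $\prod_{\alpha\in\pos}\alpha^\vee$ as you suggest---the answer simply is not the naive (anti)symmetrization. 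Relatedly, in the sign case your formula would place the anti-invariant $\sum_w \sign(w)\,{}^wp$ on the right, which lies in $\delta\cdot\Sh^W$ rather than $\Sh^W$, so the stated conclusion would fail outright.

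A clean way to fix (1): by PBW, $\Htrigsub\epsilon_\pm = \Sh\cdot\ICW\cdot\epsilon_\pm = \Sh\,\epsilon_\pm$, so $\epsilon_\pm\Htrigsub\epsilon_\pm \subseteq \Sh\,\epsilon_\pm$. Now $q\,\epsilon_\pm$ with $q \in \Sh$ lies in $\epsilon_\pm\Sh\,\epsilon_\pm$ iff $s_i(q\,\epsilon_\pm) = \veps_i\, q\,\epsilon_\pm$ for all $i$. Applying \eqref{eqn:rpcommrel} and the injectivity of $\Sh \to \Sh\,\epsilon_\pm$ (your PBW argument), this becomes $(\veps_i\alpha_i^\vee + k_i)\cdot(q - {}^{s_i}q)/\alpha_i^\vee = 0$ in the domain $\Sh$, forcing $q = {}^{s_i}q$ and hence $q \in \Sh^W$. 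Note that this works uniformly for both signs, with no divisibility issue to track. The reverse inclusion and all of part (2) follow from the centrality of $\Sh^W$ exactly as you wrote; those portions of your argument are correct.
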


\subsection{The Matsuo map \texorpdfstring{$m_{\J_\vartheta}$}{mJtheta}}
\label{subsec:matsuomap}

We now define an analog of Matsuo's map for $\J_\vartheta$. 

Let $\lambda\in\h^*$ and set $\vartheta=q(\lambda)\in\h^*/W$.
\omitvaleriocomment
{I am denoting the projection by $q$ since we use $p$ for elements of $\Sh$ and $\pi$ for the
elements $\pi_\pm$. Make sure the notation is consistent in the paper.} 
Let $\Rp \colon \J_\vartheta \to \dI{\lambda}$ be the $\Htrigsub$--linear map \eqref{eqn:defphi},
and set $\cRp = \Ind_{\Htrigsub}^{\Htrig} \Rp \colon \cJ_\vartheta
\to \cI_\lambda$, so that
\[\cRp:\sum_i f_i p_i \jtheta \mapsto
\sum_i f_i p_i  \epsilon_{+} \ilambda \]
where $f_i \in \IC[H]$ and $p_i \in \Sh$.
Let $m_{\J_\vartheta}:\germ{\J_\vartheta}\to\O^{W}_{Wh_0}$ be the $\O_{h_0}$--linear map defined
by the composition
\[
	\begin{tikzcd}
		(\cJ_\vartheta)_{h_0}  \arrow[rd,"m_{\J_\vartheta}"'] \arrow[r,"\cRp"]  & (\cI_\lambda)_{h_0} \arrow[d,"m_{\dI{\lambda}}"]   \\
		& \O^{W}_{Wh_0}
	\end{tikzcd}
\]

The following gives an explicit description of $m_{\J_\vartheta}$. Notice first that the linear
map $\Sh\to\IC$, $p\to\Radp(p)(\vartheta)$ descends to a $W$--invariant linear map $\J_
{\vartheta}\mapsto\IC$, which will be denoted by the same symbol.

\label{prop:desc-mJ}
\begin{proposition}\hfill
\begin{enumerate}
\item The map $m_{\J_\vartheta}$ 
is the $\O_{h_0}$--linear
map induced by $\Radp(\cdot)(\vartheta):\J_{\vartheta}\mapsto\IC$. In particular, it is independent
of the choice of $\lambda\in q^{-1}(\vartheta)$. 
\item 
$m_{\J_\vartheta}$ restricts to a morphism of local systems $\KL \to \Cs{-\vartheta}{-k}$.
\end{enumerate}
\end{proposition}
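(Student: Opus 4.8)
The plan is to prove part (1) first, and then deduce part (2) as a formal consequence together with the corresponding property of Matsuo's map $m_{\dI{\lambda}}$.

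For part (1), I would start from the definition $m_{\J_\vartheta} = m_{\dI{\lambda}}\circ\cRp$ and chase a general element $\sum_i f_i p_i\jtheta$ with $f_i\in\IC[H]$, $p_i\in\Sh$. By $\O_{h_0}$--linearity it suffices to treat $p\jtheta$ with $p\in\Sh$. Applying $\cRp$ gives $p\,\epsilon_+\ilambda\in\germ{\cI_\lambda}$, which we view inside $\O_{h_0}\otimes\dI{\lambda}$; by \eqref{eq:matsuo 2}, $m_{\dI{\lambda}}$ applied to this is $[\eta_+\epsilon_+\,(p\,\epsilon_+\ilambda)]$. Now $\epsilon_+ p\,\epsilon_+\in\epsilon_+\Htrigsub\epsilon_+=\Sh^W\epsilon_+$ by Proposition \ref{pr:rad-parts}\eqref{lem-item-2}, and by definition of $\Radp$ we have $\epsilon_+ p\,\epsilon_+ = \Radp(p)\,\epsilon_+$ in $\Htrigsub$. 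Hence $\eta_+\epsilon_+ p\,\epsilon_+\ilambda = \Radp(p)\cdot\eta_+(\epsilon_+\ilambda)$, and since $\Sh^W$ acts on $\dI{\lambda}$ by evaluation at $\vartheta = q(\lambda)$, this equals the scalar $\Radp(p)(\vartheta)$. Therefore $m_{\J_\vartheta}(p\jtheta) = \Radp(p)(\vartheta)\cdot[1]$, which is exactly the image of $p\jtheta$ under the $\O_{h_0}$--linear extension of the map $\Radp(\cdot)(\vartheta)\colon\J_\vartheta\to\IC$. That the latter is well defined on $\J_\vartheta = \Sh/I_\vartheta$ (i.e. kills $I_\vartheta$ and is $W$--invariant) is the observation recorded just before the statement, using that $\Radp$ is $\Sh^W$--linear and that $\epsilon_+$ absorbs $W$; so the formula for $m_{\J_\vartheta}$ manifestly does not involve the chosen lift $\lambda$, only $\vartheta$. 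This proves (1).

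For part (2), recall from \ref{subsec:matsuo-isomorphism} that $m_{\dI{\lambda}}$ restricts to a morphism of local systems $\IL\to\Cs{-\vartheta}{-k}$, i.e. it intertwines the flat connection $\Iconn$ on $\cI_\lambda$ with the Dunkl/hypergeometric connection on $\O^W_{Wh_0}$. On the other hand $\cRp = \Ind_{\Htrigsub}^{\Htrig}\Rp$ is a morphism of $\Htrig$--modules, hence in particular a morphism of $W\ltimes\mathcal D_{H_{\reg}}$--modules after localization, so it is flat: it intertwines $\Kconn$ on $\cJ_\vartheta$ with $\Iconn$ on $\cI_\lambda$. Composing, $m_{\J_\vartheta} = m_{\dI{\lambda}}\circ\cRp$ intertwines $\Kconn$ with the hypergeometric connection, and its image lands in $\O^W_{Wh_0}$; since by definition of $\Cs{\vartheta}{k}$ in \eqref{eqn:CSsystem} the subsheaf of horizontal sections for the hypergeometric connection inside $\O^W_{Wh_0}$ is precisely (the germ defining) $\Cs{-\vartheta}{-k}$, we conclude that $m_{\J_\vartheta}$ is a morphism of local systems $\KL\to\Cs{-\vartheta}{-k}$.

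I expect the only genuine point requiring care — the ``main obstacle'' — to be the bookkeeping around signs and negations, namely confirming that the target is $\Cs{-\vartheta}{-k}$ rather than $\Cs{\vartheta}{k}$; this is inherited verbatim from Matsuo's map via Lemma \ref{le:KZ KZ} (which identifies $\IL$ with $\cW(-\lambda,-k)$), so no new computation is needed, but the statement must be quoted consistently. Everything else is a short diagram chase using Proposition \ref{pr:rad-parts} and the definitions of $\Rp$, $\cRp$, and $m_{\dI{\lambda}}$.
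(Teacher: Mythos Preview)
Your proposal is correct and follows essentially the same approach as the paper: the paper proves (1) by the same diagram chase $m_{\J_\vartheta}(\sum_i f_i p_i\jtheta) = m_{\dI{\lambda}}(\sum_i f_i p_i\epsilon_+\ilambda) = \sum_i[f_i]\Radp(p_i)(\vartheta)$ using \eqref{eq:matsuo 2} and Proposition~\ref{pr:rad-parts}, and deduces (2) exactly as you do, from the fact that $m_{\dI{\lambda}}$ is a morphism $\IL\to\Cs{-\vartheta}{-k}$ and that $\cRp$, being an induced $\Htrigsub$--morphism, is a morphism of vector bundles with connection.
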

\begin{proof}
For any $f_i\in\IC[H]$ and $p_i\in\Sh$, we have
\begin{multline*}
m_{\J_\vartheta} \left( \sum_i f_i p_i \jtheta \right) 
=  
m_{\dI{\lambda}} \left( \sum_i f_i p_i  \epsilon_{+} \ilambda \right)\\        
=
\sum_i \symO{f_i\,\eta_+\epsilon_{+} p_i  \epsilon_{+} \ilambda}
=
\sum_i \Radp(p_i)(\vartheta)\symO{f_i\,\eta_+\epsilon_{+}\ilambda}
=
\sum_i \symO{f_i}\Radp(p_i)(\vartheta)
\end{multline*}
where we used \eqref{eq:matsuo 2}.
\Omit{
\[m_{\J_\vartheta} \left( \sum_i f_i p_i \jtheta \right) 
=  
m_{\dI{\lambda}} \left( \sum_i f_i p_i  \epsilon_{+} \ilambda \right)          
=
\sum_i \symO{f_i}\symO{\eta_+\epsilon_{+} p_i  \epsilon_{+} \ilambda}
=
\sum_i \symO{f_i}\Radp(p_i)(\vartheta)\]
}
The map $m_{\J_\vartheta}$ restricts to a map of local systems because $m_
{\cI_\lambda}$ restricts to a map $\IL \longrightarrow \Cs
{-\vartheta}{-k}$, and $\cRp$ is a morphism of vector bundles
with connection.
\end{proof}


%
%
%
%

\subsection{Equivariance under $\Omega^\vee=P^\vee/Q^\vee$}\label{ss:equi 1}

The torus $H$ admits a translation action of $\Omega^\vee$ which commutes
with that of $W$ since $s_i \nu =\nu\mod Q^\vee$ for any $\nu\in P^\vee$, and preserves $H_{\reg}$ since $Q
(P^\vee)=\IZ$. This induces a $W$--equivariant isomorphism of germs $\O_{Wh}\to\O_{W\nu h}$ for any $h\in
H_{\reg}$ and $\nu\in P^\vee$, which commutes with the action of Dunkl operators since the derivatives and
$\rho_k$ term are translation invariant, while the reflection part only uses the action of $W$ and of functions
in $\IC Q$, which are invariant under $P^\vee$.
Restricting to $W$--invariants therefore yields an isomorphism of hypergeometric systems
\[\Cs{\vartheta}{k}_{Wh}\to\Cs{\vartheta}{k}_{W\negthinspace\nu h}\]
so that $\Cs{\vartheta}{k}$ is a $\Omega^\vee$--equivariant local system on $H_{\reg}/W$.

Similarly, the KZ connections $\Kconn$ and $\Iconn$ are invariant under translation
by $\Omega^\vee$, and Matsuo's morphisms $m_{\dI{\lambda}},m_{\J_\vartheta}$ are readily seen
to be equivariant \wrt $\Omega^\vee$.

\subsection{Conditions for $m_{\J_\vartheta}$ to be an isomorphism}

\label{prop:mJtheta-iso}
\begin{proposition}
If $\vartheta$ is $k$--regular, the map $m_{\J_\vartheta} \colon \KL
\to\Cs{-\vartheta}{-k}$ is an isomorphism.\valeriocomment{State this as an iff?}
\end{proposition}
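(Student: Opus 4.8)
The plan is to reduce the statement to the already-established invertibility criteria for the two maps whose composition defines $m_{\J_\vartheta}$, namely $\cRp$ and Matsuo's map $m_{\dI{\lambda}}$. Recall from \ref{subsec:matsuomap} that $m_{\J_\vartheta} = m_{\dI{\lambda}}\circ\cRp$ for any choice of preimage $\lambda\in q^{-1}(\vartheta)$, and that, since both factors are morphisms of vector bundles with connection over $H_{\reg}$, it suffices to check that the composite is an isomorphism of fibres at a single point $h_0$ (or equivalently an isomorphism of $\O_{h_0}$-modules after trivialising). So the first step is: fix a preimage $\lambda$ of $\vartheta$, and argue that both $\cRp$ and $m_{\dI{\lambda}}$ are fibrewise isomorphisms.

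The second step handles $m_{\dI{\lambda}}$. By Matsuo's theorem (as recalled at the end of \ref{subsec:matsuo-isomorphism}), the map $m_{\dI{\lambda}}\colon\IL\to\Cs{-\vartheta}{-k}$ is an isomorphism precisely when $\lambda(\alpha^\vee)\neq -k_\alpha$ for all $\alpha\in\pos$. The third step handles $\cRp=\Ind_{\Htrigsub}^{\Htrig}\Rp$: since induction from $\Htrigsub$ to $\Htrig$ is exact and sends isomorphisms to isomorphisms (equivalently, $\cRp$ is $\IC[H]\otim(-)$ applied to $\Rp$ fibrewise), $\cRp$ is an isomorphism of bundles iff $\Rp\colon\J_\vartheta\to\dI{\lambda}$ is an isomorphism of $\Htrigsub$-modules. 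By Theorem \ref{th:det R}(2) (with $\veps\equiv\triv$, so $\veps_\alpha=1$), this holds iff $\lambda(\alpha^\vee)\neq k_\alpha$ for all $\alpha\in\pos$.

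The fourth and final step is to combine these with the freedom to choose $\lambda$. If $\vartheta$ is $k$-regular, then by definition every preimage $\lambda\in q^{-1}(\vartheta)$ satisfies $\lambda(\alpha^\vee)\neq\pm k_\alpha$ for all $\alpha\in R$; in particular a single fixed preimage $\lambda$ avoids both $k_\alpha$ and $-k_\alpha$ on every positive coroot, so both $\Rp$ (hence $\cRp$) and $m_{\dI{\lambda}}$ are isomorphisms, and therefore so is their composite $m_{\J_\vartheta}$.

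I do not expect a serious obstacle here: the main point to be careful about is the bookkeeping of signs and of the negation of parameters — Matsuo's map lands in $\Cs{-\vartheta}{-k}$ and its invertibility is governed by $\lambda(\alpha^\vee)\neq -k_\alpha$, whereas $\Rp$ is governed by $\lambda(\alpha^\vee)\neq +k_\alpha$ — so one must invoke the full strength of $k$-regularity (both signs) rather than just one half of it. A secondary routine point is to note that invertibility of a morphism of local systems on the connected base $H_{\reg}/W$ can be tested on one fibre, which justifies passing between the fibrewise statements above and the statement about local systems. One could also remark, as the marginal query suggests, that the converse holds on the nose for a fixed $\lambda$ (the composite is invertible iff $\lambda(\alpha^\vee)\neq\pm k_\alpha$ for all $\alpha$), but since a non-$k$-regular $\vartheta$ may still admit \emph{some} non-resonant preimage, the cleanest statement is the one given.
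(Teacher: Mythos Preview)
Your argument is correct, but the paper takes a genuinely different route. Rather than factoring $m_{\J_\vartheta}=m_{\dI{\lambda}}\circ\cRp$ and quoting the separate invertibility criteria for each factor, the paper argues directly that $m_{\J_\vartheta}$ is injective (both local systems have rank $|W|$, so this suffices): if $f$ is a flat section with $m_{\J_\vartheta}(f)=0$, then an induction on degree using the KZ equation and the $\tfact{W}$-invariance of $m_{\J_\vartheta}$ shows $m_{\J_\vartheta}((\Htrigsub)^{\mathsf f}f)=0$; since $k$-regularity forces $\J_\vartheta$ to be irreducible (Theorem~\ref{th:irr I}), the submodule $\Htrigsub f(h_0)\subset\J_\vartheta$ must be $0$ (it cannot be all of $\J_\vartheta$ because $\Radp(1)(\vartheta)=1$), whence $f=0$.

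Your factorisation approach is shorter and perfectly adequate here, since both Matsuo's criterion for $m_{\dI{\lambda}}$ and Theorem~\ref{th:det R} for $\Rp$ are already available. The paper's direct argument, however, is the template reused verbatim for the Cherednik map in Proposition~\ref{prop:chiso}, where the factorisation through $\cI_\lambda$ is problematic: $\wt{ch}_{\J_\vartheta}$ carries the factor $\pi_-(\lambda)$ which may vanish, so one cannot simply compose invertible pieces. The direct injectivity argument, by contrast, applies uniformly to $ch_{\J_\vartheta}$ for \emph{all} $(\vartheta,k)$, using that $\pRad(\cdot)(\vartheta)$ generates the dual $\J_\vartheta^*\cong\J_\vartheta^-$ and hence does not annihilate any nonzero submodule.
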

\begin{proof}
The proof closely follows \cite[Thm. 4.6]{cherednik1994integration}. Since both $\KL$ and $\Cs{-\vartheta}{-k}$
are of rank $|W|$, it is sufficient to prove the injectivity of $m_{\J_\vartheta}$.

Let $f\in\germ{\KL}$ be a local horizontal section such that $m_{\J_\vartheta}(f) = 0$. We first claim that
$m_{\J_\vartheta}((\Htrigsub)\sff f) = 0$, where the superscript $\mathsf{f}$ denotes the fibrewise action of $\Htrigsub$.
Since $m_{\J_\vartheta}$ is invariant under $\tfact{W}$, it suffices to show that $m_{\J_\vartheta}(\tfact{p}f) = 0$ for
all $p \in \Sh$. Assume by induction that this holds for all $p$ with $\mathrm{deg}(p) \leq m $.  Then, $\Kconnvf[\vartheta,k]{\xi} f=0$
implies that
\[-\partial_\xi f =\tfact{\xi} f - \sum_{\alpha \in \pos} k_\alpha \alpha(\xi) \frac{1- \tfact{s}_\alpha}{1-e^{-\alpha}} f + \rho_k(\xi) f\]
Applying $\tfact{p}\in\Sh^{\leq m}$ gives 
\[\begin{split}
-\partial_\xi (\tfact{p}f)
&=  \tfact{p}\tfact{\xi} f - \sum_{\alpha \in \pos} k_\alpha \alpha(\xi) \frac{\tfact{p} f - \tfact{p}\tfact{s}_\alpha f}{1-e^{-\alpha}} + \rho_k(\xi) \tfact{p} f \\
&=  \tfact{p}\tfact{\xi} f -  \sum_{\alpha \in \pos} k_\alpha \alpha(\xi) \frac{\tfact{p} f - \sum_{y \in W} \tfact{y}\tfact{p}_{s_\alpha,y} f}{1-e^{-\alpha}} + \rho_k(\xi) \tfact{p} f
\end{split}\]
where the second equality follows from Proposition \ref{pr:reln in H}, $y\in W$ is such that $\ell(y)\leq\ell(s_\alpha)$, and $\deg
(p_{s_\alpha,y}) \leq m$.  Applying now $m_{\J_\vartheta}$ implies that
\begin{multline*}
m_{\J_\vartheta}( \tfact{p}\tfact{\xi} f ) \\
=
-\partial_\xi m_{\J_\vartheta}(\tfact{p}f)
 +  \sum_{\alpha \in \pos} k_\alpha \alpha(\xi) \frac{m_{\J_\vartheta}(\tfact{p} f) - \sum_{y \in W}m_{\J_\vartheta}( \tfact{y}\tfact{p}_{s_\alpha,y} f)}{\symO{1-e^{-\alpha}}}
 - \rho_k(\xi) m_{\J_\vartheta}(\tfact{p} f)     
\end{multline*}
The inductive hypothesis and the $\tfact{W}$--invariance of $m_{\J_\vartheta}$ then imply that $m_{\J_\vartheta}( \tfact{p}\tfact{\xi} f )
=0$. Since $\xi\in\h$ is arbitrary, it follows that $m_{\J_\vartheta}(\tfact{(\Sh^{\leq m+1})}f )= 0$ as claimed.

Let now $M_f = \Htrigsub f(h_0)\subset\J_\vartheta$ be the submodule generated by $f(h_0)$ in the
fiber of $\cJ_\vartheta$ over $h_0$. Since $\vartheta$ is $k$--regular, $\J_\vartheta$ is irreducible by
Theorems \ref{th:irr I} and \ref{th:det R} and $M_f$ is either $0$ or $\J_\vartheta$. We have shown
above that $\Rad(M_f)(\vartheta) = 0$.  Since $\Rad(1)(\vartheta) = 1$, $M_f$ must be $0$, whence
$f(h_0) =0$ and therefore $f=0$ since $\Kconnvf[\vartheta,k]{\xi} f=0$.
\end{proof}

\subsection{The radial part maps $\pmRad$ \cite[Lemma 5.8]{opdam2001lectures}}\label{ss:PRad}

The following is a counterpart to Proposition \ref{pr:rad-parts}, and will be used in \ref{subsec:chJtheta}
to spell out the analogue of Cherednik's map for the representation $\J_\vartheta$.

\label{pr:+rad-parts}
\begin{proposition}
Set 
\[\pi_{\pm} = \prod_{\alpha \in \pos} \left( \alpha^{\vee} \pm k_\alpha \right)\in \Sh \subset \Htrigsub\]
Then,\hfill
\begin{enumerate}
\item \label{lem-item-1} $\epsilon_{\mp} \pi_{\pm} \epsilon_{\pm} = \pi_{\pm} \epsilon_{\pm}$.
\item \label{lem-item-3} 
$\epsilon_{\mp} \Htrigsub \epsilon_{\pm} = \Sh^{W} \pi_{\pm} \epsilon_{\pm}$.  
\item The map $\Sh^W\ni h \mapsto h \pi_{\pm} \epsilon_{\pm}\in\Sh^W \pi_{\pm} \epsilon_{\pm}$
is a linear isomorphism. It therefore induces a map of $\Sh^W$--modules 
\[\pmRad:\Htrigsub \longrightarrow \Sh^W
\quad\text{defined by}\quad
\pmRad(f)\pi_\pm\epspm=\epsmp f\epspm\]
\end{enumerate} 
\end{proposition}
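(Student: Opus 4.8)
The plan is to prove the three assertions in order, treating the upper sign throughout (the lower sign is handled analogously, tracking signs, or deduced from the upper one via the isomorphism $w\mapsto\varepsilon(w)w$, $k\mapsto -k$, which fixes $\Sh$ and swaps $\epsilon_+\leftrightarrow\epsilon_-$ and $\pi_+\leftrightarrow\pi_-$); here $\varepsilon\colon W\to\{\pm1\}$ is the sign character, so that $\epsilon_+$ and $\epsilon_-$ are the idempotents onto the trivial and sign isotypic components. Statement (1) will follow from a direct computation showing that $\pi_+\epsilon_+$ transforms under $\varepsilon$. First I would factor $\pi_+=(\alpha_i^\vee+k_i)\,\pi_+^{(i)}$ with $\pi_+^{(i)}=\prod_{\alpha\in\pos\setminus\{\alpha_i\}}(\alpha^\vee+k_\alpha)$; since $s_i$ permutes $\pos\setminus\{\alpha_i\}$ and $k$ is $W$--invariant, $\pi_+^{(i)}$ is $s_i$--invariant and hence commutes with $s_i$ in $\Htrigsub$ by \eqref{eqn:rpcommrel}. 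Combining this with the special case $s_i\alpha_i^\vee=-\alpha_i^\vee s_i-2k_i$ of \eqref{eqn:rpcommrel} and with $s_i\epsilon_+=\epsilon_+$ gives $s_i\,\pi_+\epsilon_+=-\pi_+\epsilon_+$ for every simple $i$, hence $w\,\pi_+\epsilon_+=\varepsilon(w)\,\pi_+\epsilon_+$ for all $w\in W$; applying $\epsilon_-$ on the left then yields $\epsilon_-\pi_+\epsilon_+=\pi_+\epsilon_+$, which is (1).

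For (2), the inclusion $\Sh^W\pi_+\epsilon_+\subseteq\epsilon_-\Htrigsub\epsilon_+$ is immediate from (1) and the centrality of $\Sh^W$; the reverse inclusion is the heart of the matter. By PBW, $\Htrigsub$ is a free left $\Sh$--module on $W$, so (using $w\epsilon_+=\epsilon_+$) the map $p\mapsto p\epsilon_+$ is a linear isomorphism from $\Sh$ onto $M:=\Htrigsub\epsilon_+$ (the induced module $\Ind^{\Htrigsub}_{\ICW}\IC_{\triv}$). A short computation with \eqref{eqn:rpcommrel} shows $s_i\cdot(p\epsilon_+)=S_i(p)\,\epsilon_+$ for all $p\in\Sh$, with $S_i$ the Demazure--Lusztig operator of \eqref{eqn:Demaz-op}; so, under $\Sh\cong M$, the restriction to $W$ of the $\Htrigsub$--action is the Demazure--Lusztig action. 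Transporting the degree filtration of $\Sh$ to $M$ and passing to $\gr$, each $S_i$ degenerates to the ordinary reflection $s_i$ (its correction term strictly lowers degree), so $\gr M\cong\Sh$ with the naive $W$--action. Because $\epsilon_-\Htrigsub\epsilon_+=\epsilon_- M$ is the sign--isotypic part of $M$, and $\gr$ commutes with the filtration--preserving projector $\epsilon_-$, we get $\gr(\epsilon_- M)=\epsilon_-\gr M$, which is the space of $W$--anti-invariants in $\Sh$. By the classical theorem on reflection groups \cite{humphreys1992reflection}, this is the free $\Sh^W$--module $\Sh^W\!\cdot\delta$ of rank one, generated in degree $N:=|\pos|$ by $\delta:=\prod_{\alpha\in\pos}\alpha^\vee$. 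Now $\pi_+\epsilon_+$ lies in $\epsilon_- M$ by (1), and corresponds under $\Sh\cong M$ to $\pi_+$, whose top--degree component is exactly $\delta$; hence the symbol of $\pi_+\epsilon_+$ generates $\gr(\epsilon_- M)$, and a standard lifting of generators along a filtration gives $\epsilon_- M=\Sh^W\pi_+\epsilon_+$, completing (2).

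Statement (3) is then formal: $q\mapsto q\,\pi_+\epsilon_+$ is surjective onto $\Sh^W\pi_+\epsilon_+$ by construction, and injective because $q\,\pi_+\epsilon_+=0$ forces $q\pi_+=0$ in the domain $\Sh$ (by injectivity of $p\mapsto p\epsilon_+$), whence $q=0$ as $\pi_+\neq0$; by (2) the target of this isomorphism is $\epsilon_-\Htrigsub\epsilon_+$, so every $f\in\Htrigsub$ determines a unique $\pRad(f)\in\Sh^W$ with $\epsilon_- f\epsilon_+=\pRad(f)\,\pi_+\epsilon_+$, and centrality of $\Sh^W$ makes $\pRad$ a map of $\Sh^W$--modules. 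The one genuinely non-routine step is the reverse inclusion in (2): identifying the $W$--action on $\Htrigsub\epsilon_+$ with the Demazure--Lusztig action — so that its associated graded is the naive $W$--module $\Sh$ — is precisely what makes the classical description of the anti-invariants applicable; everything else is either the elementary computation of (1) or formal bookkeeping.
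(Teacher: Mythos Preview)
The paper does not give its own proof of this proposition: it is quoted from Opdam's lecture notes \cite[Lemma 5.8]{opdam2001lectures}, as indicated in the subsection heading. So there is no in--paper argument to compare against directly.

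That said, your proof is correct, and the strategy --- verifying (1) by a direct computation with the commutation relation \eqref{eqn:rpcommrel}, then identifying $\Htrigsub\epsilon_+$ with $\Sh$ carrying the Demazure--Lusztig $W$--action, passing to the associated graded to reduce to the classical description of anti--invariants $\Sh^\varepsilon=\Sh^W\delta$, and lifting the generator $\delta$ to $\pi_+$ --- is the standard one and is essentially what Opdam does. The only point worth tightening is the passage ``$\gr$ commutes with the filtration--preserving projector $\epsilon_-$'': this is true, but the clean justification is that $M=\epsilon_-M\oplus(1-\epsilon_-)M$ is a direct sum of \emph{filtered} subspaces (each $s_i$, hence $\epsilon_-$, preserves the degree filtration by \eqref{eqn:rpcommrel}), so $\gr M=\gr(\epsilon_-M)\oplus\gr((1-\epsilon_-)M)$; since the induced $W$--action on $\gr M$ is the naive one, the first summand is exactly the sign--isotypic part. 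Your lifting step (induction on degree using that the symbol of $q\pi_+\epsilon_+$ is $q\delta$) is fine. Part (3) is, as you say, formal once (1) and (2) are in hand.
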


\subsection{The Cherednik map $ch_{\J_\vartheta}$}
\label{subsec:chJtheta}

\newcommand {\DDelta}{\Delta^{-1}}

In \cite{cherednik1994integration}, Cherednik defines an $\O_{h_0}$--linear map $ch_{\dI{\lambda}}:
(\cI_\lambda)_{h_0} \longrightarrow \O_{Wh_0}^W$ which restricts to $\IL \longrightarrow
\Cs{-\vartheta}{-k\csign 1}$, where $(-k\csign 1)_\alpha=-k_\alpha\csign 1$.\omitvaleriocomment{I am 
not sure about the sign of the shift, because I don't know whether Cherednik's original map is a
morphism $\cW(\lambda,k)\to\Cs{\vartheta}{k+1}$ or $\cW(\lambda,k)\to\Cs{\vartheta}{k-1}$. I think
you had checked which one it was, but then negated the sign because your rank 1 computations with Mathematica didn't agree with
Cherednik's sign of the shift. What is the correct sign of the shift here, given that, by Lemma \ref
{le:KZ KZ}, $\cW(\lambda,k)=\IL[-\lambda,-k]$? Note that a) I created a macro in this file called
$\backslash$csign, which produce a $\pm 1$ for now, and we can instantiate to the correct sign from
Cherednik's paper. b) If his morphism is $\cW(\lambda,k)\to\Cs{\lambda}{k+\epsilon}$, where
$\epsilon=\pm 1$, then the map we are using is $\IL=\cW(-\lambda,-k)
\to\Cs{-\lambda}{-k+\epsilon}$ i.e. the sign of 'our' shift starting from $\IL$ or
$\KL$ is the same as Cherednik's. Do you agree?}
It is similar to Matsuo's map, but defined
in terms of the sign character of $W$ by
\[f=\sum_{w \in W} f_w w \ilambda
\longmapsto
\symO{  \DDelta\sum_{w \in W} (-1)^{l(w)} f_w}=
\symO{  \DDelta\eta_-\epsilon_- f}
\]
where $\epsilon_-$ is the sign idempotent \eqref{eq:epm}, and $\eta_-:\epsilon_{-}\dI{\lambda}\to\IC$
the isomorphism mapping $\epsilon_-\ilambda$ to $1$.

Analogously to \ref{subsec:matsuomap}, define the $\O_{h_0}$--linear map $\wt{ch}_{\J_\vartheta}:
(\cJ_\vartheta)_{h_0}\to\O_{W h_0}^W$ by the composition
\[
	\begin{tikzcd}
		(\cJ_\vartheta)_{h_0}  \arrow[rd,"\wt{ch}_{\J_\vartheta}"'] \arrow[r,"\cRp"]  & (\cI_\lambda)_{h_0} \arrow[d,"ch_{\dI{\lambda}}"]   \\
		& \O_{W h_0}^W
	\end{tikzcd}
\]

%
\begin{proposition}
The map $\wt{ch}_{\J_\vartheta}:\germ{\J_\vartheta}\to\O^{W}_{Wh_0}$ is the $\O_{h_0}$--linear
map induced by the map
\[\J_{\vartheta}\mapsto\IC[\DDelta]
\qquad
p\jtheta\to\pi_-(\lambda)\ \pRad(p)
(\vartheta)\symO{\DDelta}\]
\Omit{
	The map $\wt{ch}_{\J_\vartheta}$ is the $\O_{h_0}$-linear map determined by 
	\begin{align*}
		\J_{\vartheta} & \longrightarrow  \bbC                                                     \\
		p          & \longmapsto   \pi_{-}(\lambda)\pRad(p)(\vartheta)
	\end{align*}
	That is, it induces a map 
	\begin{align*}
		\wt{ch}_{\J_\vartheta} \colon (\cJ_{\vartheta})_{h_0} & \longrightarrow  \O_{Wh_0}^{W}                                                     \\
		\sum_i f_i p_i          & \longmapsto \symO{ \sum_i f_i  \pi_{-}(\lambda)\pRad(p_i)(\vartheta) }
	\end{align*}
}	
\end{proposition}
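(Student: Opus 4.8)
The plan is to unwind the definition $\wt{ch}_{\J_\vartheta}=ch_{\dI{\lambda}}\circ\cRp$ and reduce everything to one scalar identity in $\dI{\lambda}$, in complete parallel with the proof of Proposition \ref{prop:desc-mJ}. Since $\cRp\bigl(\sum_i f_i p_i\jtheta\bigr)=\sum_i f_i\,p_i\epsilon_+\ilambda$ and, by the defining formula of Cherednik's map, $ch_{\dI{\lambda}}(f)=\symO{\DDelta\,\eta_-\tfact{\epsilon_-}f}$ with $\tfact{\epsilon_-}$ the fibrewise action on the $\dI{\lambda}$--factor, applying $ch_{\dI{\lambda}}$ and pulling the resulting scalars out of $\symO{\,\cdot\,}$ gives
\[\wt{ch}_{\J_\vartheta}\Bigl(\sum_i f_i p_i\jtheta\Bigr)=\symO{\DDelta\sum_i f_i\,\eta_-\bigl(\epsilon_- p_i\epsilon_+\ilambda\bigr)}.\]
Thus it is enough to prove $\eta_-\bigl(\epsilon_- p\epsilon_+\ilambda\bigr)=\pi_-(\lambda)\,\pRad(p)(\vartheta)$ for all $p\in\Sh$; the right--hand side above then equals $\sum_i\pi_-(\lambda)\pRad(p_i)(\vartheta)\,\symO{f_i\DDelta}$, which is exactly the $\O_{h_0}$--linear map induced by $p\jtheta\mapsto\pi_-(\lambda)\pRad(p)(\vartheta)\symO{\DDelta}$. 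That $p\mapsto\pRad(p)(\vartheta)$ is well defined on $\J_\vartheta=\Sh/I_\vartheta$ follows, as in Proposition \ref{prop:desc-mJ}, from $\pRad$ being a morphism of $\Sh^W$--modules.

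First I would invoke Proposition \ref{pr:+rad-parts}: for $p\in\Sh\subset\Htrigsub$ one has $\epsilon_- p\epsilon_+=\pRad(p)\,\pi_+\epsilon_+$ in $\Htrigsub$, with $\pRad(p)\in\Sh^W=Z(\Htrigsub)$. Applying this to $\ilambda$ and using that the center acts on $\dI{\lambda}$ through the character $\vartheta=q(\lambda)$ yields $\epsilon_- p\epsilon_+\ilambda=\pRad(p)(\vartheta)\,\pi_+\epsilon_+\ilambda$. Hence the whole problem reduces to the identity
\[\pi_+\,\epsilon_+\ilambda=\pi_-(\lambda)\,\epsilon_-\ilambda\qquad\text{in }\dI{\lambda},\]
after which $\eta_-\bigl(\epsilon_- p\epsilon_+\ilambda\bigr)=\pRad(p)(\vartheta)\,\eta_-\bigl(\pi_+\epsilon_+\ilambda\bigr)=\pi_-(\lambda)\pRad(p)(\vartheta)$.

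To prove this last identity, note that by Proposition \ref{pr:+rad-parts}(1) one has $\epsilon_-\pi_+\epsilon_+=\pi_+\epsilon_+$, so $\pi_+\epsilon_+\ilambda$ lies in the one--dimensional space $\epsilon_-\dI{\lambda}=\IC\,\epsilon_-\ilambda$; write $\pi_+\epsilon_+\ilambda=c\,\epsilon_-\ilambda$. I would pin down $c$ by comparing coefficients of $w_0\ilambda$ in the basis $\{w\ilambda\}_{w\in W}$, where $w_0$ is the longest element of $W$. In $c\,\epsilon_-\ilambda$ this coefficient is $c\,(-1)^{\ell(w_0)}/|W|$. For $\pi_+\epsilon_+\ilambda=\tfrac{1}{|W|}\sum_{v\in W}\pi_+ v\ilambda$, relation \eqref{eq:p w} gives $\pi_+ v\ilambda=\pi_+(v\lambda)\,v\ilambda+\sum_{\ell(y)<\ell(v)}(\pi_+)_{v,y}(\lambda)\,y\ilambda$, and since $\ell(w_0)$ is maximal the only contribution to $w_0\ilambda$ comes from the diagonal term with $v=w_0$, with coefficient $\pi_+(w_0\lambda)/|W|$. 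Therefore $c=(-1)^{\ell(w_0)}\pi_+(w_0\lambda)$. Finally, since $w_0\pos=\negr$ and $k$ is $W$--invariant, the substitution $\alpha\mapsto-w_0\alpha$ identifies $\pi_+(w_0\lambda)=\prod_{\alpha\in\pos}\bigl(k_\alpha-\lambda(\alpha^\vee)\bigr)=(-1)^{\ell(w_0)}\pi_-(\lambda)$, so $c=\pi_-(\lambda)$, as desired.

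The only genuinely delicate point is this scalar identity $\pi_+\epsilon_+\ilambda=\pi_-(\lambda)\epsilon_-\ilambda$: one cannot simply commute $\pi_+$ past $\epsilon_+$, because $\Sh$ and $\IC W$ do not commute inside $\Htrigsub$, and dropping the lower--order correction terms of \eqref{eq:p w} produces the wrong scalar. Isolating the coefficient of the longest element is precisely what makes those corrections disappear; with this in hand, everything else is bookkeeping strictly parallel to the Matsuo case treated in Proposition \ref{prop:desc-mJ}.
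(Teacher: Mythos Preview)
Your proof is correct and follows essentially the same route as the paper: unwind the composition, apply Proposition \ref{pr:+rad-parts} to rewrite $\epsilon_-p\epsilon_+$ as $\pRad(p)\pi_+\epsilon_+$, use centrality of $\pRad(p)$, and then establish the scalar identity $\pi_+\epsilon_+\ilambda=\pi_-(\lambda)\epsilon_-\ilambda$ by comparing the coefficient of $w_0\ilambda$ via \eqref{eq:p w}. One minor slip: the diagonal coefficient from \eqref{eq:p w} is $\pi_+(v^{-1}\lambda)$ rather than $\pi_+(v\lambda)$, but since you only use $v=w_0=w_0^{-1}$ this does not affect the conclusion.
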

\begin{proof}
Let $f_i\in\IC[H]$ and $p_i\in\Sh$, then
\[\begin{split}
\wt{ch}_{\J_\vartheta} \left( \sum_i f_i p_i \jtheta \right) 
&= 
ch_{\dI{\lambda}} \left( \sum_i f_i p_i  \epsilon_{+} \ilambda \right)\\           
&=
\sum_i [f_i] \symO{\DDelta \eta_- \epsilon_{-} p_i  \epsilon_{+} \ilambda}\\
&=
\sum_i [f_i] \symO{\DDelta\eta_-  \epsilon_{-} \pRad(p_i) \pi_{+}  \epsilon_{+} \ilambda  }\\
&=
\sum_i [f_i]\ \pRad(p_i)(\vartheta) \symO{\DDelta\eta_-  \epsilon_{-} \pi_{+}  \epsilon_{+} \ilambda  }
\end{split}\]		                                                                      
where the third equality follows from Proposition \ref{pr:+rad-parts}, and the fourth from the fact that $\pRad
(p_i)$ is central.

By Proposition \ref{pr:+rad-parts}, $\pi_+ \epsilon_+ \ilambda$ is an anti--invariant element of
$\dI{\lambda}$, and is therefore equal to $c(\lambda) \epsilon_- \ilambda$ for some constant
$c(\lambda)$. To determine it, we compare the coefficients of the term $w_0 \ilambda$.
Applying Proposition \ref{pr:reln in H} for $\pi_+$ and each $w \in W$ gives
\[\pi_+ \epsilon_+ \ilambda 
= \frac{1}{|W|} \left( w_0 (\pi_+)^{w_0}( \lambda) + \sum_{l(y) < l(w_0)} y p_y(\lambda) \right) \ilambda 
\]
for some $p_y \in \Sh$.  Note that 
	\[
	(\pi_+)^{w_0} = \left(\prod_{\alpha \in \pos} \left(\alpha^\vee + k_\alpha \right) \right)^{w_0} = \prod_{\alpha \in \pos} \left(-\alpha^\vee + k_\alpha \right) = (-1)^{l(w_0)} \pi_-
	\]
	Thus
	\begin{align*}
	\pi_+ \epsilon_+ \ilambda &= 1/|W| (-1)^{l(w_0)} \pi_-(\lambda) w_0 \ilambda + \ldots \\
	\epsilon_- \ilambda &= 1/|W| (-1)^{l(w_0)} w_0 \ilambda + \ldots 
	\end{align*}
where the lower order terms $\ldots$ lie in $\bigoplus_{w\neq w_0}\IC w\ilambda$. This implies that
$\pi_+ \epsilon_+ \ilambda  = \pi_-(\lambda) \epsilon_- \ilambda$, and therefore that
\[ \wt{ch}_{\J_\vartheta} \left( \sum_i f_i p_i \jtheta \right) 
=
\pi_-(\lambda) \sum_i [f_i]\ \pRad(p_i)(\vartheta) \symO{\DDelta}
\]
\end{proof}

Note that, unlike the map $m_{\J_\vartheta}$, $\wt{ch}_{\J_\vartheta}$ depends on the choice of
$\lambda\in q^{-1}(\vartheta)$ and can be identically zero, due to the factor $\pi_-(\lambda)$. To
remedy both of these issues, define the $\O_{h_0}$--linear map 
\[ch_{\J_\vartheta} \colon (\cJ_\vartheta)_{h_0}\to\O_{Wh_0}^W
\qquad\text{by}\qquad
p\jtheta\to
\pRad(p)(\vartheta)\symO{\DDelta}\]
Then, $ch_{\J_\vartheta}$ is independent of $\lambda$, anti--invariant \wrt the fibrewise action
of $W$, and such that $\pi_-(\lambda)ch_{\J_\vartheta}=\wt{ch}_{\J_\vartheta}$. Moreover, it
restricts to a morphism of local systems $\KL\longrightarrow\Cs{-\vartheta}{-k\csign 1}$.

\subsection{Equivariance of $ch_{\J_{\vartheta}}$ under $\Omega^\vee$}\label{ss:equi 2}

Unlike Matsuo's map, $ch_{\J_{\vartheta}}$ is not equivariant \wrt the translation action of $\Omega^\vee
$ because it involves $\Delta=e^\rho\prod_{\alpha\in R_+}(1-e^{-\alpha})\in e^\rho\cdot\IC Q$, and $\rho
\notin Q$ in general.

To measure its lack of equivariance, note that $2\rho\in Q$, so that for any $\nu\in\Omega^\vee$, $\nu
e^{\rho}=\eps_\rho(\nu) e^{\rho} $, where
\begin{equation}\label{eq:eps rho}
\eps_\rho(\nu)=e^{2\pi\ii \rho(\nu)}\in\{\pm 1\}
\qquad\text{since}\qquad
2\rho(\nu)\in\IZ
\end{equation}
Thus, $\eps_\rho$ is a $\IZ/2\IZ$--valued character of $\Omega^\vee$, and $ch_{\J_\vartheta} \colon (\cJ_
\vartheta)_{h_0} \to \O_{Wh_0}^W$ is equivariant \wrt $\Omega^\vee$, provided one of the two actions 
is twisted by $\eps_\rho$.

Note that $\eps_\rho$ may be trivial, and that this holds if and only if $\rho\in Q$. For example, in
types
\begin{align*}
\text{$\sfA_n$:}\,\,
2\rho=&n\alpha_1+\cdots+i(n-i+1)\alpha_i+\cdots+n\alpha_n\in 2Q\Leftrightarrow n\in 2\IN\\
\text{$\sfB_n$:}\,\,
2\rho=&(2n-1)\alpha_1+\cdots+i(2n-i)\alpha_i+\cdots+n^2\alpha_n\notin 2Q\\
\text{$\sfC_n$:}\,\,
2\rho=	&2n\alpha_1+\cdots+i(2n-i+1)\alpha_i+\cdots\\
		&\cdots +(n-1)(n+2)\alpha_{n-1}+\half{1}n(n+1)\alpha_n\in 2Q\Leftrightarrow n=0,3\mod 4\\
\text{$\sfD_n$:}\,\,
2\rho=	&2(n-1)\alpha_1+\cdots+2(in-\frac{i(i+1)}{2})\alpha_i+\cdots\\
		&\cdots+\frac{n(n-1)}{2}(\alpha_{n-1}+\alpha_n)\in 2Q\Leftrightarrow n=0,1\mod 4
\end{align*}

\subsection{Conditions for $ch_{\J_\vartheta}$ to be an isomorphism}

\Omit{
\label{lem:fisRad}
\begin{lemma}
Identifying $\J_\vartheta^*$ with linear maps on $S$ which are 0 on $M_\vartheta$, the element $f$ in $\J_\vartheta^*$ can be identified with the restriction 
  \[
  {}^+\mathrm{Rad} |_S \colon S \to S^W.
\]  
\end{lemma}
\begin{proof}
	   By Proposition \ref{pr:+rad-parts}, ${}^+\mathrm{Rad}(\pi_+) = 1$.  Since up to lower degree terms $\pi_+ \equiv \delta$ we also have $f(\pi_+) = 1$.  Now let $p$ be homogeneous of $\deg(p) < \deg(\delta)$.  By Proposition \ref{pr:+rad-parts}, $\epsilon_- p \epsilon_+ = q \pi_+ \epsilon_+$ for $q \in S^W$.  Since $\deg(q) + \deg(\pi_+) < \deg(p)$ we must have $q = 0$.  Thus ${}^+\mathrm{Rad}(p) = 0$. 
\end{proof}
}

\label{prop:chiso}
\begin{proposition}
The map $ch_{\J_\vartheta}$ restricts to an isomorphism of local systems $\KL \to \Cs
{-\vartheta}{-k\csign 1}$ for any $\vartheta\in\h^*/W$ and $k$.\valeriocomment{It is natural to ask
whether the HG system, which is (the solutions of) a $W$--equivariant $\mathcal{D}$--module on
$H_{\reg}$, can be obtained from the trigonometric KZ functor applied to a category $\mathcal O$
representation $M_\vartheta$ of $\Htrig$. Matsuo gives a map $KZ(\mathcal I_\lambda)\to HG(
\vartheta;k)$ but it is not always invertible, so isn't a completely satisfactory answer to this question.
On the other hand, Proposition \ref{prop:chiso} seems to say that ${\mathcal K}_\vartheta$ is the
correct answer to this question. If so, remark this explicitly, and promote this proposition to a Theorem.
Also, double check that Cherednik may not have proved something along those lines in {\it Integration...}}

\end{proposition}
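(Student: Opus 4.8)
The plan is to adapt the argument used for $m_{\J_\vartheta}$ in the proof of Proposition~\ref{prop:mJtheta-iso}, replacing the appeal to the irreducibility of $\J_\vartheta$ (which fails when $\vartheta$ is not $k$--regular) by an intrinsic cyclicity statement that holds at every parameter. Since $\KL$ and $\Cs{-\vartheta}{-k\csign 1}$ are local systems of rank $|W|$ (see \ref{ss:ind cov trig} and \cite[Cor. 4.1.8]{heckman1995harmonic}) and $ch_{\J_\vartheta}$ is already known to be a morphism of local systems between them, it suffices to prove that $ch_{\J_\vartheta}$ is injective on a single fibre; that is, that any local horizontal section $f$ of $\KL$ with $ch_{\J_\vartheta}(f)=0$ vanishes identically.

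The first step repeats, essentially verbatim, the propagation argument in the proof of Proposition~\ref{prop:mJtheta-iso}. Using $\Kconnvf{\xi}f=0$, the relation \eqref{eq:p w} of Proposition~\ref{pr:reln in H}, and now the \emph{anti}--invariance of $ch_{\J_\vartheta}$ under the fibrewise $W$--action in place of the invariance of $m_{\J_\vartheta}$, an induction on $\deg p$ shows that $ch_{\J_\vartheta}(\tfact{p}f)=0$ for all $p\in\Sh$, and hence, since $\Htrigsub$ is spanned by $\{wp:w\in W,\ p\in\Sh\}$, that $ch_{\J_\vartheta}(\tfact{h}f)=0$ for all $h\in\Htrigsub$. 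Evaluating at a base point $h_0$, with $\J_\vartheta$ the fibre of $\cJ_\vartheta$ at $h_0$ carrying its fibrewise $\Htrigsub$--action, and recalling that $ch_{\J_\vartheta}$ is $\O_{h_0}$--linearly induced by the functional
\[
\ell_\vartheta\colon \J_\vartheta\longrightarrow\IC,\qquad p\jtheta\longmapsto \pRad(p)(\vartheta),
\]
this says exactly that $\ell_\vartheta$ annihilates the $\Htrigsub$--submodule $M_f=\Htrigsub\cdot f(h_0)\subseteq\J_\vartheta$.

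The crux is then to show that $\ell_\vartheta$ annihilates no non--zero submodule of $\J_\vartheta$, which forces $M_f=0$, hence $f(h_0)=0$ and $f\equiv0$ by horizontality, so that $ch_{\J_\vartheta}$ is injective and therefore an isomorphism. By the definition of $\pRad$ in Proposition~\ref{pr:+rad-parts} one has $\pRad(wf)=\sign(w)\pRad(f)$ for $w\in W$, so $\ell_\vartheta$ is $W$--anti--invariant; and $\ell_\vartheta(\pi_+\jtheta)=\pRad(\pi_+)(\vartheta)=1$, so $\ell_\vartheta\neq0$. Thus $\ell_\vartheta$ spans the $1$--dimensional sign--isotypic subspace of the contragredient module $(\J_\vartheta)^*$. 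Under the standard identification $(\J_\vartheta)^*\cong\J_\vartheta^-$ of $\Htrigsub$--modules (which incorporates a twist by the sign character, reflecting the Frobenius structure of $\Sh$ over $\Sh^W$), that subspace is the line spanned by the canonical generator of the induced module $\J_\vartheta^-$, which is a cyclic vector for \emph{all} $(\vartheta,k)$. Hence $\Htrigsub\cdot\ell_\vartheta=(\J_\vartheta)^*$; a submodule annihilated by $\ell_\vartheta$ is then annihilated by all of $(\J_\vartheta)^*$, so it is $0$. This gives $M_f=0$, completing the proof.

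The main obstacle is precisely the cyclicity of $\ell_\vartheta$ in the previous paragraph: this is the point at which Cherednik's map genuinely differs from Matsuo's, for which the corresponding vector is the image of the $W$--invariant vector of $\dI{\lambda}$ and is cyclic only off the hyperplanes $\lambda(\alpha^\vee)=\pm k_\alpha$. The anti--invariance of $\pRad$ — a consequence of the sign idempotent built into Cherednik's construction, made visible once the factor $\pi_-(\lambda)$ is divided out — forces $\ell_\vartheta$ to be the \emph{canonical} generator of $\J_\vartheta^-$, which carries no genericity hypothesis. (One could alternatively observe that $ch_{\J_\vartheta}=\pi_-(\lambda)^{-1}ch_{\dI{\lambda}}\circ\cRp$ is an isomorphism whenever $\lambda\in q^{-1}(\vartheta)$ can be chosen with $\lambda(\alpha^\vee)\neq k_\alpha$ for all $\alpha\in\pos$, using Theorem~\ref{th:det R} together with Cherednik's invertibility result for $ch_{\dI{\lambda}}$, and then handle the remaining parameters by a deformation argument; but the direct argument above is cleaner and uniform in $(\vartheta,k)$.) Making the identification $(\J_\vartheta)^*\cong\J_\vartheta^-$ and the underlying anti--involution on $\Htrigsub$ precise is the one new ingredient beyond the Matsuo case.
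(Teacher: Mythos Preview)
Your argument is correct and follows essentially the same route as the paper's proof: the propagation step is lifted verbatim from Proposition~\ref{prop:mJtheta-iso}, and the non--degeneracy of $\ell_\vartheta=\pRad(\cdot)(\vartheta)$ on every non--zero submodule is deduced from its $W$--anti--invariance together with the identification $(\J_\vartheta)^*\cong\J_\vartheta^-$ (the paper cites \cite[Prop.~1.6]{cherednik1994integration} for the latter), which pins $\ell_\vartheta$ down as the cyclic generator of $\J_\vartheta^-$. Your treatment is slightly more explicit in verifying $\ell_\vartheta(\pi_+\jtheta)=1$ and in spelling out why cyclicity in the dual kills all submodules, but the structure and the key idea are the same.
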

\begin{proof}
If $\pi_-(\lambda)\neq 0$, then $ch_{\J_\vartheta}=\pi_-(\lambda)^{-1}\wt{ch}_{\J_\vartheta}=\pi_-(\lambda)
^{-1}ch_{\dI{\lambda}}\circ\cRp$ restricts to a map $\KL \to \Cs{-\vartheta}{-k-1}$.
The result therefore holds for all $\lambda$ by continuity.

The proof that $ch_{\J_\vartheta}$ is an isomorphism is similar to that of Proposition \ref{prop:mJtheta-iso}.
Specifically, if $f\in\germ{\cJ_{\vartheta}}$ is a germ of a flat section such that $ch_{\J_\vartheta}(f)=0$ then, 
arguing as in \ref {prop:mJtheta-iso}, one shows that $ch_{\J_\vartheta}((\Htrigsub)\sff f)=0$, where the
superscript $\mathsf{f}$ denotes the fibrewise action of $\Htrigsub$. 

By \cite[Prop. 1.6]{cherednik1994integration}, the dual $\J_\theta^*$ is isomorphic to $\J_{\vartheta}^-$.
Since $\pRad(\cdot)(\vartheta)\in\J_\theta^*\setminus\{0\}$ is antiinvariant under $W$, it follows
that it is the generating vector of $\J_{\vartheta}^-$, and therefore does not vanish on any non--trivial
submodule of $\J_{\vartheta}$. The constraint $ch_{\J_\vartheta}((\Htrigsub)\sff f)=0$ then implies
that $(\Htrigsub)\sff f(h_0)=0$, and therefore that $f=0$.
\end{proof} 

\subsection{Shift operators in the parameter \texorpdfstring{$k$}{k}}  
\label{sec:shiftk}

We now generalize Felder and Veselov's construction of $k$--shift operators using Matsuo
and Cherednik's maps \cite{felder1994shift} to $\cJ_\vartheta(k)$. Composing one with the
inverse of the other gives a map
\[
S_{\J_\vartheta}(k) = ch_{\J_\vartheta}^{-1}(k\csign 1) \circ m_{\J_\vartheta}(k) \colon \cJ_\vartheta(k) \to \cJ_\vartheta(k\csign 1)
 \]
which restricts to a map of local systems $S_{\J_\vartheta}(k): \KL \to \KL[\vartheta,k\csign 1]$.

\Omit{
The shift operators $S_{\J_\vartheta}$ fit into the following commutative diagram,
\begin{equation}\label{eqn:FVdiagram}
	\begin{tikzcd} 
  	\cdots &	\KL[\vartheta,k-1] \arrow[r,"S_{\J_\vartheta}(k-1)"] \arrow[d,"m_{\J_\vartheta}(k-1)"] & \KL \arrow[r,"S_{\J_\vartheta}(k)"] \arrow[d,"m_{\J_\vartheta}(k)"] \arrow[ld,"ch_{\J_\vartheta}(k)"]  & \KL[\vartheta,k+1] \arrow[ld,"ch_{\J_\vartheta}(k+1)"]  & \cdots
	 \\
		\cdots &  \Cs{\vartheta}{-k+1}&  \Cs{\vartheta}{k}   &  & \cdots 
	\end{tikzcd}
\end{equation}
}

The shift operators $S_{\J_\vartheta}$ fit into the following commutative diagram,\omitvaleriocomment
{The shift operators would go in the opposite direction as what you had written before is the correct
sign to take in the initial Cherednik map is $-1$, but this is the dictated by the corrected
Matsuo and Cherednik's map, modulo checking that these map as claimed. Is this an issue?}
\begin{equation}\label{eqn:FVdiagram}
\begin{tikzcd} 
 \cdots&\KL[\vartheta,k\ncsign 1] \arrow[r,"S_{\J_\vartheta}(k\ncsign 1)"] \arrow[d,"m_{\J_\vartheta}(k\ncsign 1)"] & \KL \arrow[r,"S_{\J_\vartheta}(k)"] \arrow[d,"m_{\J_\vartheta}(k)"] \arrow[ld,"ch_{\J_\vartheta}(k)"]  & \KL[\vartheta,k\csign 1]f \arrow[ld,"ch_{\J_\vartheta}(k\csign 1)"]  & \cdots\\
\cdots&\Cs{-\vartheta}{-k\csign 1}&  \Cs{-\vartheta}{-k}&&\cdots 
\end{tikzcd}
\end{equation}
Propositions \ref{prop:mJtheta-iso} and \ref{prop:chiso}, and Sections \ref{ss:equi 1} and \ref{ss:equi 2}
then imply the
following.

\label{thm:k-shift-op-invert}
\begin{theorem}\hfill
\begin{enumerate}
\item If $\vartheta$ is $k$--regular, the map $S_{\J_\vartheta}(k) \colon \KL \to \KL[\vartheta,k\csign 1]$
is an isomorphism.
\item $S_{\J_\vartheta}(k)$ is equivariant \wrt the translation action of $\Omega^\vee=P^\vee/Q^\vee$, provided
one of the local systems is tensored with the character $\eps_\rho:\Omega^\vee\to\{\pm 1\}$ defined by \eqref
{eqn:FVdiagram}.
\end{enumerate}
\end{theorem}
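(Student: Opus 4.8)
The plan is to read off both statements from the commutative diagram \eqref{eqn:FVdiagram}, by combining the invertibility criteria already proved for Matsuo's map $m_{\J_\vartheta}$ and Cherednik's map $ch_{\J_\vartheta}$ with their behaviour under the translation action of $\Omega^\vee$. Almost all of the work has been done already: only the bookkeeping of the parameter shifts and of the two $\Omega^\vee$--actions remains.

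For (1), recall that by definition $S_{\J_\vartheta}(k)=ch_{\J_\vartheta}(k\csign 1)^{-1}\circ m_{\J_\vartheta}(k)$, where $m_{\J_\vartheta}(k)$ goes from $\KL$ to the hypergeometric local system $\Cs{-\vartheta}{-k}$, while $ch_{\J_\vartheta}(k\csign 1)$ goes from $\KL[\vartheta,k\csign 1]$ to that same $\Cs{-\vartheta}{-k}$; this is exactly what makes the composite a morphism $\KL\to\KL[\vartheta,k\csign 1]$, and it is the first thing I would double-check. By Proposition \ref{prop:chiso}, $ch_{\J_\vartheta}(k\csign 1)$ is an isomorphism for every value of the parameters, so $S_{\J_\vartheta}(k)$ is well defined and is an isomorphism precisely when $m_{\J_\vartheta}(k)$ is. By Proposition \ref{prop:mJtheta-iso} the latter holds whenever $\vartheta$ is $k$--regular, which gives (1).

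For (2), I would trace the $\Omega^\vee$--action through the same composition. By Section \ref{ss:equi 1}, $m_{\J_\vartheta}(k)$ intertwines the translation action of $\Omega^\vee$ on $\KL$ with the untwisted action on $\Cs{-\vartheta}{-k}$. By Section \ref{ss:equi 2}, the only obstruction to the equivariance of $ch_{\J_\vartheta}(k\csign 1)$ is the factor $\Delta^{-1}$, on which $\nu\in\Omega^\vee$ acts by the sign $\eps_\rho(\nu)$ of \eqref{eq:eps rho}; equivalently, $ch_{\J_\vartheta}(k\csign 1)$ intertwines the action on $\KL[\vartheta,k\csign 1]\otimes\eps_\rho$ with the untwisted action on $\Cs{-\vartheta}{-k}$. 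Composing $ch_{\J_\vartheta}(k\csign 1)^{-1}$ with $m_{\J_\vartheta}(k)$ then shows that $S_{\J_\vartheta}(k)$ intertwines the action on $\KL$ with the action on $\KL[\vartheta,k\csign 1]\otimes\eps_\rho$, which is the assertion (the twist may of course be transferred to the source instead).

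The genuinely nontrivial content is confined to Propositions \ref{prop:mJtheta-iso} and \ref{prop:chiso}, so what remains is routine. The main place where care is needed — and the likeliest pitfall — is the consistency of the shift and sign conventions: one must verify that the target of $ch_{\J_\vartheta}(k\csign 1)$ really is the local system $\Cs{-\vartheta}{-k}$ that $m_{\J_\vartheta}(k)$ maps into, so that the composite is defined, and that the $\eps_\rho$--twist in (2) lands on the correct side, uniformly in whatever sign $\csign$ is eventually fixed for the Cherednik shift.
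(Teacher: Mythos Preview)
Your proposal is correct and follows exactly the approach the paper takes: the paper simply states that the theorem follows from Propositions \ref{prop:mJtheta-iso} and \ref{prop:chiso} together with Sections \ref{ss:equi 1} and \ref{ss:equi 2}, and you have spelled out precisely that deduction.
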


\begin{remark}
Opdam defines a $k$--shift operator $\tilde{\mathcal{D}}_{\vartheta,k} \colon \Cs{\vartheta}{k} \to \Cs{\vartheta}{k+1}$ \cite[Def. 5.9]{opdam2001lectures}.  A multiple of Opdam's operator $\mathcal{D}_{\vartheta,k} = d(\vartheta,k) \tilde{\mathcal{D}}_{\vartheta,k}$ where $d(\vartheta,k) \in \IC(\vartheta,k)$ fits into the commutative diagram \eqref{eqn:FVdiagram}.
\end{remark}

\Omit{
We could have defined the $k$--shift operator $\tilde{S}_{\J_\vartheta}(k) = \cRp^{-1} \circ ch_{\I_\vartheta}^{-1}(k+1) \circ m_{\I_\vartheta}(k) \circ \cRp$, however, this results in stricter conditions for existence and invertibility.  For example  $\cRp^{-1}$ exists when $\lambda(\alpha^\vee) \neq k_\alpha + 1$ for $\alpha \in\pos$.
 }
 

\omitnow{
\subsection{Example of Rank One}\label{subsec-ex-rank1}

In rank one, we can exhibit the map $S_{\J_\vartheta}(k)$ and its invertibility very concretely.  We first describe the composite maps $m_{\J_\vartheta}$ and $ch_{\J_\vartheta}$.  By Proposition \ref{prop:desc-mJ}, the map $m_{\J_\vartheta}$ is the $\IC[H]$-linear map given by $ \Radp_{\lambda}(p) = \eta^{+}(\epsilon_+ p \epsilon_{+} \ilambda$. 

In rank one $\J_\vartheta = \IC[y]/(y^2-\vartheta)$ has basis $\beta = \lbrace 1, y \rbrace$, and the Weyl group has elements $W=\lbrace 1,s \rbrace$.  Thus $\epsilon_{+} = (1/2)(1+s)$.  With respect to basis $\beta$, $m_{\J_\vartheta}$ has matrix
\[
m_{\J_\vartheta} = \left(
	\begin{array}{cc}
	1                 & -k 
	\end{array}
	\right)
\]

By Proposition \ref{prop:mJtheta-iso}, the map $m_{\J_\vartheta}$ is an isomorphism of $\IC$-local systems $\KL$ to $\Cs{\vartheta}{k-1}$ if it is non-zero on all $\Htrigsub$-submodules $V \subset \J_\vartheta$.  When $\lambda^2 \neq k^2$, then $\J_\vartheta$ is irreducible and since $m_{\J_\vartheta}$ is non-zero on some element of $\J_\vartheta$, it is an isomorphism.  If, however, $\lambda^2 = k^2$, then $\J_\vartheta$ has a unique proper submodule $V = \mathbf{C}(y+k)$.  In this case $m_{\J_\vartheta}(V) = 0$, so $m_{\J_\vartheta}$ is an isomorphism if and only if $\lambda^2 \neq k^2$. 

The map $\wt{ch}_{\J_\vartheta}$ is a $\IC[H]$-linear map determined by $\wt{ch}_{\J_\vartheta}(p) =  \eta^{-1}(\epsilon_{-} p \epsilon_+ \ilambda)$.  In rank one, $\epsilon_- = (1/2)(1-s)$ and $\pi^\pm= y \pm k$.  On the basis $\beta$, the map $\wt{ch}_{\J_\vartheta}(1) = 0$, but 
\[
\wt{ch}_{\J_\vartheta}(y) = \eta^{-}(\pi_+ \epsilon_+ \ilambda) = \eta^{-}( \epsilon_- \pi_- \ilambda) = \Delta^{-1} \pi_{-}(\lambda).
\]  
Thus with respect to $\beta$,
\[
ch_{\J_\vartheta} = 	\left(
	\begin{array}{cc}
	0                 & 1  
	\end{array}
	\right).
\]
Since $ch_{\J_\vartheta}(y+k) = 1$ it is an isomorphism for all values of $(\lambda,k)$.

The map $S_{\J_\vartheta}(k)$ is thus an isomorphism if and only if $\lambda^2 \neq k^2$.  To compute $S_{\J_\vartheta}(k)$ explicitly, we need a fundamental solution $\Psi(k)$ in $\KL$.  We compute such a $\Psi(k)$ in \ref{subsec:monoofvlambda} below.  In the case $\lambda^2 \neq k^2$ both $m_{\J_\vartheta}(\Psi(k))$ and $ch_{\J_\vartheta}(\Psi(k+1))$ are fundamental solutions in $\Cs{\vartheta}{k}$.  They are thus related by a transformation $m_{\J_\vartheta}(\Psi(k)) = ch_{\J_\vartheta}(\Psi(k+1)) M$.  Consequently, 
\[
S_{\J_\vartheta}(k) = \Psi(k+1) M \Psi(k)^{-1}.
\]   
Using the fundamental solution from \ref{subsec:monoofvlambda}, this turn out to be 
\[
S_{\J_\vartheta}(k) = \left(
\begin{array}{cc}
 (-3 k-2) X-\frac{k}{X} & (4 k^2 +2 k -\lambda ^2)X+\frac{\lambda ^2}{X} \\
 -X+\frac{1}{X} & k X - \frac{k}{X} \\
\end{array}
\right)
\]
where $\mathbf{C}[H] = \mathbf{C}[X^{\pm1}]$ and $X=e^{\alpha/2}$.
}

\section{Shift operators in the parameter \texorpdfstring{$\lambda$}{lambda}}
\label{sec:shiftoperatorsinlambda}

In this section, we review the definition of $\lambda$--shift operators between induced
representations of $\Htrig$ for any element of the extended affine Weyl group $W^e$.
We also give necessary and sufficient conditions for their invertibility, thus extending
the results obtained in \ref{subsec:Intertwiners} for the degenerate affine Hecke algebra.

\subsection{Affine Intertwiners in $\Htrig$ \cite{cherednik1991unification}}
\label{subsec:affineHdegIntertwiners}

The following extends the definition of the intertwiners $\{\tPhi_i\}_{i=1}^n\subset
\Htrigsub$ given in \ref{subsec:HdegIntertwiners} to the affine node $i=0$. Let $
\cor_0=-\psi^\vee+1$ and $s_0=s_{-\psi^\vee + 1}$ be the affine simple coroot
and reflection \eqref{eq:s0}, set $k_0=k_{\psi}$, and define $\tPhi_0\in\Htrig$ by
\[\tPhi_0 = s_0 \cor_0 + k_0 = - \cor_0 s_0  - k_0\]

Then, \eqref{eqn:s0pcommrel} implies that $\tPhi_0$ satisfies the commutation
and squaring relations \eqref{eqn-p-phii}--\eqref{eqn-phii-squared} for $i=0$.
Moreover, given two reduced decompositions $s_{i_1} \ldots s_{i_r} = s_{j_1}
\ldots s_{j_r}$ of $w\in W^a$, the following holds \cite[Thm. 4.2]{opdam2001lectures}
\[\tPhi_{i_1} \ldots \tPhi_{i_r} = \tPhi_{j_1} \ldots \tPhi_{j_r}\]
and we denote by $\tPhi_{w}\in\Htrig$ the element represented by either side
of the equality. 

Finally, if $w=\omega s_{i_1}\cdots s_{i_k}$ is a reduced decomposition of $w
\in W^e$, with $\omega\in\Omega$ and $0\leq i_j\leq n$ for any $j=1,\ldots,k$,
set
\[\tPhi_w=\omega \tPhi_{i_1} \ldots \tPhi_{i_k}\in\Htrig\]
Then, $p\tPhi_w=\tPhi_w\ ^{w}p$ for any $p\in S\h$, and $\tPhi_v\tPhi_w=\tPhi
_{vw}$ whenever $\ell(vw)=\ell(v)+\ell(w)$.

\subsection{Intertwiners between induced representations}
\label{subsec:affine Intertwiners}

Let $\lambda\in\h^*$, and $\cI_\lambda$ be the induced $\Htrig$--module introduced
in \ref{ss:ind cov trig}. For any $w\in W^e$, define a morphism of $\Htrig$--modules
\begin{equation}\label{eq:affine btw I}
\cT{w}{\lambda}:\cI_{\lambda}\to\cI_{w\lambda}
\qquad\text{by}\qquad
\ilambda\to \tPhi_{w^{-1}}\inu{w\lambda}
\end{equation}
Clearly,
\begin{equation}\label{eq:affine v w}
\cT{v}{w\lambda}\circ\cT{w}{\lambda}=\cT{vw}{\lambda}
\end{equation}
whenever $\ell(vw)=\ell(v)+\ell(w)$.

Extend the weight function $k$ to a $W^e$--invariant function $R^a\to\IC$ on the set of affine roots.\valeriocomment
{does this require a check?} Then, the following holds

\label{pr:det cT}
\begin{proposition}\hfill
\begin{enumerate}
\item Identify $\cI_\lambda$ with the free $\IC[H]$--module generated by $\IC W$ by
$f w\inu{\lambda}\to f\otimes w$, $f\in \IC[H]$, $w\in W$. Then, for any $w\in W^a$
\[\det(\cT{w}{\lambda}) = 
\prod_{\alpha\in\apos\cap w^{-1}\aneg}
\left(k_\alpha^2-\cor(\lambda)^2\right)^{|W|/2}\]
\item $\cT{w}{\lambda}$ is an isomorphism if and only if $\lambda(\alpha^\vee)\neq\pm k_\alpha$
for any $\alpha\in\apos\cap w^{-1}\aneg$.
\end{enumerate}
\end{proposition}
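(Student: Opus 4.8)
The plan is to follow the strategy of the proof of Proposition~\ref{pr:det TT}: use the cocycle relation \eqref{eq:affine v w} to express $\cT{w}{\lambda}$ as a composite of maps attached to the simple reflections of $W^a$ occurring in a reduced word for $w$, compute each of these determinants by hand, and reassemble via the standard description of inversion sets. Fix a reduced decomposition $w=s_{i_1}\cdots s_{i_\ell}$ in $W^a$ and set $w_j=s_{i_j}s_{i_{j+1}}\cdots s_{i_\ell}$, with $w_{\ell+1}=e$. Since $\ell(\cdot)$ is additive along a reduced word, \eqref{eq:affine v w} gives $\cT{w}{\lambda}=\cT{s_{i_1}}{w_2\lambda}\circ\cdots\circ\cT{s_{i_\ell}}{w_{\ell+1}\lambda}$, hence $\det(\cT{w}{\lambda})=\prod_{j=1}^{\ell}\det(\cT{s_{i_j}}{w_{j+1}\lambda})$. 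It therefore suffices to compute $\det(\cT{s_i}{\mu})$ for every simple reflection $s_i$, $0\le i\le n$, and every $\mu\in\h^*$; note that the intermediate parameters $w_{j+1}\lambda$ need not lie in $W\lambda$, but $\dI{\mu}$, $\cI_\mu$ and the maps $\cT{s_i}{\mu}$ are defined for all $\mu\in\h^*$.

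For $1\le i\le n$ the intertwiner $\tPhi_i=s_i\cor_i+k_i$ lies in $\Htrigsub$; under the identification $\cI_\mu\cong\IC[H]\otimes\dI{\mu}$ with $W$ acting diagonally (\ref{ss:ind cov trig}) it acts only on the second factor, so the matrix of $\cT{s_i}{\mu}$ in the $\IC[H]$--basis $\{w\inu\mu\}_{w\in W}$ is that of $\TT{s_i}{\mu}$ and Proposition~\ref{pr:det TT} gives $\det(\cT{s_i}{\mu})=(k_i^2-\mu(\cor_i)^2)^{|W|/2}$. For $i=0$, recall $\cor_0=-\psi^\vee+1$ and $s_0=t_\psi s_{\psi^\vee}$, so $t_\psi$ acts on $\cI_{s_0\mu}$ by multiplication by $e^\psi\in\IC[H]^\times$, while $\cor_0\inu{s_0\mu}=\cor_0(s_0\mu)\inu{s_0\mu}=-\cor_0(\mu)\inu{s_0\mu}$ since $s_0$ negates $\cor_0$ as an affine--linear function. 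Hence $\cT{s_0}{\mu}(\inu\mu)=\tPhi_0\inu{s_0\mu}=k_0\inu{s_0\mu}-\cor_0(\mu)e^\psi(s_{\psi^\vee}\inu{s_0\mu})$, and applying $w\in W$ (which acts diagonally) gives $\cT{s_0}{\mu}(w\inu\mu)=k_0\,w\inu{s_0\mu}-\cor_0(\mu)e^{w\psi}(ws_{\psi^\vee}\inu{s_0\mu})$. Thus $\cT{s_0}{\mu}$ preserves the $\IC[H]$--span of each right $\langle s_{\psi^\vee}\rangle$--coset $\{w,ws_{\psi^\vee}\}$ and, using $s_{\psi^\vee}\psi=-\psi$, acts on it by the matrix $\left(\begin{smallmatrix}k_0 & -\cor_0(\mu)e^{-w\psi}\\ -\cor_0(\mu)e^{w\psi} & k_0\end{smallmatrix}\right)$, whose determinant $k_0^2-\cor_0(\mu)^2$ is free of the monomials $e^{\pm w\psi}$; over the $|W|/2$ cosets this yields $\det(\cT{s_0}{\mu})=(k_0^2-\cor_0(\mu)^2)^{|W|/2}$. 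In all cases $\apos\cap s_i^{-1}\aneg=\{\alpha_i\}$, so $\det(\cT{s_i}{\mu})=\prod_{\alpha\in\apos\cap s_i^{-1}\aneg}(k_\alpha^2-\cor(\mu)^2)^{|W|/2}$.

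Substituting back, $\det(\cT{w}{\lambda})=\prod_{j=1}^{\ell}(k_{i_j}^2-(w_{j+1}\lambda)(\cor_{i_j})^2)^{|W|/2}$. By the standard description of the inversion set, $\apos\cap w^{-1}\aneg=\{\beta_j\}_{j=1}^{\ell}$ with the affine roots $\beta_j:=w_{j+1}^{-1}\alpha_{i_j}$ pairwise distinct, and correspondingly $\beta_j^\vee=w_{j+1}^{-1}\cor_{i_j}$, so that $(w_{j+1}\lambda)(\cor_{i_j})=\lambda(w_{j+1}^{-1}\cor_{i_j})=\beta_j^\vee(\lambda)$, while $k_{i_j}=k_{\beta_j}$ by $W^a$--invariance of $k$. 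This rewrites the product as $\prod_{\alpha\in\apos\cap w^{-1}\aneg}(k_\alpha^2-\cor(\lambda)^2)^{|W|/2}$, which is (1).

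For (2): $\cT{w}{\lambda}$ is $\IC[H]$--linear between free $\IC[H]$--modules of rank $|W|$, hence an isomorphism if and only if $\det(\cT{w}{\lambda})$ is a unit of $\IC[H]$; and if it is invertible as an $\IC[H]$--module map its inverse is automatically $\Htrig$--linear. By (1) this determinant is the constant $\prod_{\alpha\in\apos\cap w^{-1}\aneg}(k_\alpha^2-\cor(\lambda)^2)^{|W|/2}\in\IC$, which is nonzero precisely when $\lambda(\cor)\neq\pm k_\alpha$ for every $\alpha\in\apos\cap w^{-1}\aneg$, and vanishes otherwise. The step demanding real care is the $i=0$ computation: one must correctly unwind the conventions for $s_0=t_\psi s_{\psi^\vee}$ and $\cor_0=-\psi^\vee+1$, check that $\cor_0$ acts on the cyclic generator of $\cI_{s_0\mu}$ by $-\cor_0(\mu)$, and verify that the monomial factors $e^{\pm w\psi}$ introduced by the translation part of $s_0$ cancel within each $2\times2$ block. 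This cancellation is what makes the determinant formula hold exactly for $w\in W^a$; for general $w\in W^e$ the length-zero part of $w$ would contribute a monomial unit of $\IC[H]$, which is why (1) is stated only for $w\in W^a$.
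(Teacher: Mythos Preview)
Your argument is correct and follows the paper's proof essentially verbatim: reduce to simple reflections via the cocycle relation \eqref{eq:affine v w}, invoke Proposition~\ref{pr:det TT} for $1\le i\le n$, and compute the $2\times 2$ block matrix for $s_0$ exactly as the paper does. The one point to make explicit is that part~(2) is meant for all $w\in W^e$, not just $W^a$; the paper handles this by noting that $\Omega$ preserves $\apos$ and that $\cT{\omega}{\lambda}$ is invertible for every $\omega\in\Omega$, which is precisely your ``monomial unit'' observation---you should state this as completing the proof of~(2) rather than only as an aside about why~(1) is restricted to $W^a$.
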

\begin{proof}
(1) By \eqref{eq:affine v w} and Proposition \ref{pr:det TT}, it suffices to prove the result for $w=s_0$.
The proof is similar to that of Proposition \ref{pr:det TT}. Namely, for any $w\in W$,
\[\begin{split}
\cT{s_0}{\lambda} w\ilambda
&=
w\left(s_0\cor_0+k_0\right)\inu{s_0\lambda}
=
k_0 w\inu{s_0\lambda}-
\cor_0(\lambda)w s_0\inu{s_0\lambda}\\
&=
k_0 w\inu{s_0\lambda}-
\cor_0(\lambda)w e^{\psi} s_{\psi^\vee}\inu{s_0\lambda}
=
k_0 w\inu{s_0\lambda}-\cor_0(\lambda)e^{w\psi} w s_{\psi^\vee}\inu{s_0\lambda}
\end{split}\]
Thus, $\cT{s_0}{\lambda}$ preserves the $\IC[H]$--span of each of each right $\langle s_{\psi^\vee}
\rangle$--coset in $W$, and acts on it as the matrix 
\[\begin{pmatrix}
k_0					&	-\cor_0(\lambda)e^{-w\psi} \\
-\cor_0(\lambda)e^{w\psi}	&	k_0
\end{pmatrix}\]
from which the result follows.

(2) is a direct consequence of (1), the fact that $\Omega$ preserves $\apos$ and that
$\cT{\omega}{\lambda}$ is clearly invertible for any $\omega\in\Omega$ and $\lambda
\in\h^*$.
\end{proof}

\begin{remark} Similarly to Remark \ref{rk:inverse of TT}, if $w\in W^e$ the inverse
of $\cT{w}{\lambda}$ is readily seen to be
\[\cT_{w,\lambda}^{-1}=
\prod_{\alpha\in\apos\cap w^{-1}\aneg}
\left(k_\alpha^2-\cor(\lambda)^2\right)^{-1}
\cdot\cT_{w^{-1},w\lambda}
\]
\end{remark}

\subsection{Affine $k$--regularity}

\begin{definition}\label{def:affinekregular}
An element $\lambda \in \frakh^*$ is \textit{affine $k$--regular} if
\[
	\cor(\lambda) \neq \pm k_{\alpha} \text{ for all } \cor \in (R^{\vee})^a
\]
that is if $\cor(\lambda) \pm k_\alpha \notin \mathbb{Z}$ for all $\cor\in R^\vee$.  
\end{definition}

We denote the set of affine $k$--regular $\lambda$ by $\frakh^*\krega$. 

\label{cor:k-reg-lambda-shift}
\begin{corollary}
If $\lambda\in\frakh^*\krega$, $\cT{w}{\lambda}: \cI_\lambda \to \cI_{w\lambda}$
is an isomorphism for any $w\in W^e$.
\end{corollary}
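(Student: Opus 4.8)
The plan is to read the statement directly off Proposition \ref{pr:det cT}, with essentially no additional work. Recall that for $w\in W^e$ the inversion set $\apos\cap w^{-1}\aneg$ is finite — its cardinality is $\ell(w)$ — and consists of positive affine roots. By part (2) of Proposition \ref{pr:det cT}, $\cT{w}{\lambda}$ is an isomorphism as soon as $\lambda(\alpha^\vee)\neq\pm k_\alpha$ for every root $\alpha$ in this finite set. Now the affine $k$--regularity of $\lambda$ is, by Definition \ref{def:affinekregular}, the stronger statement that $\cor(\lambda)\neq\pm k_\alpha$ for \emph{all} affine coroots $\cor\in(R^\vee)^a$ at once; in particular this holds for the coroots attached to the roots in $\apos\cap w^{-1}\aneg$, for every $w$. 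Hence the hypothesis of Proposition \ref{pr:det cT}(2) is met uniformly in $w\in W^e$, and the corollary follows.

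If one prefers to invoke only part (1) of Proposition \ref{pr:det cT}, which is phrased for $w\in W^a$, I would instead first write $w=\omega w'$ with $\omega\in\Omega$ and $w'\in W^a$, using $W^e=\Omega\ltimes W^a$. Since $\Omega$ permutes the simple affine coroots it preserves $\apos$ and $\aneg$, so $\ell(\omega w')=\ell(w')$, and the cocycle identity \eqref{eq:affine v w} gives $\cT{w}{\lambda}=\cT{\omega}{w'\lambda}\circ\cT{w'}{\lambda}$. The first factor is invertible because $\omega$ has length $0$ (it acts through $\tPhi_\omega=\omega$), as already noted in the proof of Proposition \ref{pr:det cT}; for the second factor, the determinant $\prod_{\alpha\in\apos\cap (w')^{-1}\aneg}(k_\alpha^2-\cor(\lambda)^2)^{|W|/2}$ of Proposition \ref{pr:det cT}(1) is nonzero precisely because affine $k$--regularity rules out $\cor(\lambda)=\pm k_\alpha$ for every affine coroot, in particular for the finitely many occurring in the product.

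I do not anticipate a genuine obstacle. The only point requiring care is the bookkeeping that the inversion sets $\apos\cap w^{-1}\aneg$ live inside the affine root system $R^a$, so that the ``global'' requirement of Definition \ref{def:affinekregular} — a condition on all affine coroots simultaneously — is exactly what is needed to kill every determinant appearing in Proposition \ref{pr:det cT}. No estimate or new computation is involved; the corollary is really a packaging of the inversion criterion established there.
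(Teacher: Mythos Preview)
Your argument is correct and matches the paper's approach: the corollary is stated there without proof, as an immediate consequence of Proposition \ref{pr:det cT}(2) together with Definition \ref{def:affinekregular}, which is exactly what you do. Your alternative route via the decomposition $w=\omega w'$ and part (1) is also valid and simply reproduces the reasoning already embedded in the proof of Proposition \ref{pr:det cT}(2).
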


\subsection{Intertwiners and the KZ connection}

Since the affine intertwiners $\{\cT{w}{\lambda}\}_{w\in W^e}$ are morphisms of $\Htrig
$--modules, the application of the trigonometric KZ functor of \ref{subsec:trigKZconn} to
induced representations implies that they define morphisms $\cI_{\lambda}\to\cI_{w\lambda}$
of $W$--equivariant vector bundles with integrable connections over $H_{\reg}$, and in
particular that
\[\nabla^{I}(w\lambda,k)\circ\cT{w}{\lambda}=\cT{w}{\lambda}\circ\nabla^{I}(\lambda,k)\]

As pointed out in \ref{ss:equi 1}, $(\cI_\lambda,\Iconn)$ is equivariant \wrt the translation
action of $\Omega^\vee=P^\vee/Q^\vee$. The morphisms $\cT{w}{\lambda}$ corresponding
to the affine Weyl group $W^a=W\ltimes Q$ intertwine the action of $\Omega^\vee$ since 
their definition involves the action of the Weyl group $W$, the group algebra $\IC Q=\IC
[H/\Omega^\vee]$ and the Dunkl operators, all of which commute with $\Omega^\vee$. 

This, however, is not the case for the action of the intertwiners $\{\cT{\omega}{\lambda}\}
_{\omega\in\Omega}$. Specifically, consider the bilinear map 
\[\langle-,-\rangle:P/Q\otimes_\IZ P^\vee/Q^\vee\to U(1)
\qquad
\nu\otimes\nu^\vee\to\exp\left(2\pi\ii(\nu,\nu^\vee)\right)\]
Then, the application of $\cT{\omega}{\lambda}$ twists the $\Omega^\vee$--action by the
character $\langle\omega,-\rangle$. It follows that, for any $w\in W^e$ and $\nu^\vee\in
\Omega^\vee$
\[T_{\nu^\vee}\circ\cT{w}{\lambda}=\langle\pi(w),\nu^\vee\rangle\cT{w}{\lambda}\circ T_{\nu^\vee}\]
where $\pi$ is the quotient map $W^e\to P/Q$.\valeriocomment{There might be a sign involved.
Check. Also, should introduce the notation $T_{\nu^\vee}$ in \ref{ss:equi 1}.}

\omitvaleriocomment{
Redo this section along the lines of Section 3 i.e. use unnormalised intertwiners for the affine
(extended) Weyl group, get a formula for their determinant, and a necessary and sufficient
condition for their invertibility. 2) Amend Section 2 accordingly, by removing the discussion of
the difference KZ functor (though that is very nice). 3) Point out explicitly, though not in this
section since the KZ functor has not yet appeared, that the fact that the interwining operators
are morphisms of vector bundles with KZ connection 4) }
\omitvaleriocomment{Point out that, for $\nu\in P$, the shift
operator $\Pi_\nu:\dI{\lambda}\to\dI{\lambda+\nu}$ is one of vector bundles with (trigo KZ)
connection, but probably not equivariant for the translation action of $P^\vee$. My guess
is that there is a pairing $\langle\cdot,\cdot\rangle: P\otimes_{\IZ}P^\vee/Q^\vee\to U(1)$,
$\nu\otimes\nu^\vee\to\exp(2\pi\ii(\nu,\nu^\vee))$, and that $\Pi_\nu$ twists the action
of $\Omega^\vee$ by the character $\langle\nu,\cdot\rangle:\Omega^\vee\to U(1)$. 5) Point out,
for the $\lambda$ as well as the $k$--shift operators that, even when non--invertible, they give
morphisms of vector bundles with connections and thefore of monodromy representations,
which is non--trivial information and might allow to restrict the possible values of monodromy.}
\omitvaleriocomment{A further note on the equivariance of affine intertwiners \wrt $\Omega^\vee=
P^\vee/Q^\vee$. 1) $\Omega^\vee$ acts on $H$ by translations, and therefore on
$\IC[H]$. Explicitly, this action should be given by $\nu^\vee e^\mu=\exp(-2\pi\ii(\nu^\vee,
\mu))e^\mu$. It centralises $W$ and Dunkl operators, so induces an action by automorphisms
of $\Omega^\vee$ on $\Htrig$. 2) The definition of the induced or covariants representations 
of $\Htrig$ makes it clear that this action is canonically implemented on the correspondings
vector bundles. 3) The question is then how it interacts with the intertwining operators $\wt
{\Phi}_w$. For $w\in W$, these are built out of $W$ and Dunkl operators, so they commute
with $\Omega^\vee$. For $w=s_0=e^\psi s_{\psi^\vee}$, $w$ should also commute with 
$\Omega^\vee$ because $e^\psi\in\IC Q\subset\IC P$ is invariant under translations, and
it should follow that the corresponding intertwiner $\wt{\Phi}_{s_0}$ also commutes with 
$\Omega^\vee$. Finally, for $w\in\Omega$, one should be able to compute the correcting factor
using the explicit formula for $w$ given in Opdam's notes, and likely find that the action
of the corresponding intertwining operator (which in this case is equal to $w$) twists that
of $\Omega^\vee$ by the pairing $\langle\cdot,\cdot\rangle: P\otimes_{\IZ}P^\vee/Q^\vee\to U(1)$,
$\nu\otimes\nu^\vee\to\exp(2\pi\ii(\nu,\nu^\vee))$, perhaps with a minus sign.}

\Omit{

\subsection{The embedding $W^e_{R^\vee} \ltimes \Sh \subset \Htrigkrega$}

Recall from \ref{subsec:localizedcherednik} that $\Htrigkrega$ is defined similarly to $\Htrig$ but with additional generators $1/(\alpha^\vee + n + k)$ where $\alpha^\vee \in R^\vee$ and $n \in \mathbb{Z}$.  Embed $W^e_{R^\vee} \ltimes \Sh$  into $\End(\frachst)$ via
the reflection representation (\ref{subsec:reflection-rep}) and $\Htrig$ into $\End(\frachst)$ via the rational-difference polynomial representation (\ref{subsec:TCA}).

\label{prop:Weembedding}
\begin{proposition} 
The following holds in $\End(\frachst)$
\[
	W^e_{R^\vee} \ltimes \Sh \subset \Htrigkrega
\]
\end{proposition}
\begin{proof}
It suffices to prove that the generators of $W^e_{R^\vee} \ltimes \Sh$ lie in $\Htrigkrega(R^\vee)$.
The algebra $\Htrigkrega$ contains the rational Demazure-Lusztig operators as described in \eqref{eqn:Demaz-op},
\begin{align*}
		S_i & = s_i - \frac{k_i}{\alpha_{i}^\vee}(1 - s_i) \text{ for } 0 \leq i \leq n
\end{align*}
Within the localization, we may solve for $s_i$ to write 
\begin{equation}\label{eqn:Phisi}
	s_i = \frac{1}{\alpha_i^\vee + k_i}\left(\alpha_i^\vee S_i + k_i\right)
\end{equation}
By definition $\Htrig$ contains $\Omega_{R^\vee}$ and $\Sh$ as well.
\end{proof}

Representations of $\Htrig$ thus become representations of the equivariant difference Weyl algebra $W^e_{R^\vee} \ltimes \Sh$
of $\frakh^*$. This is analogous to the KZ functor for rational Cherednik algebras \cite{ginzburg2003category} or to the
 generalization of the KZ functor to trigonometric Cherednik algebras \cite{varagnolo2004double} described in \autoref{sec:trig-kz}.

\subsection{The normalized intertwiners $\Phi_w$}

\label{def:Phialgembed}
\begin{definition}
Let $\Phi\colon W^e_{R^\vee} \ltimes \Sh \hookrightarrow \Htrigkrega$ be the embedding from Proposition \ref{prop:Weembedding}. Denote $\Phi_w = \Phi(w)$.
\end{definition}

\label{lem:intertwiner}
\begin{lemma}
	The following holds for any $w \in W^e_{R^\vee}$ and $p\in\Sh$
	\[
		p \Phi_w = \Phi_w \act{w}{p}
	\]
\end{lemma}
\begin{proof}
This follows since $pw=wp^w$ holds in $W^e_{R^\vee} \ltimes \Sh$ and $\Phi$ is an $\Sh$-linear algebra embedding.
\end{proof}

\subsection{Properties of the intertwiners \texorpdfstring{$\Phi_w$}{Phi_w}}
\label{prop:propoflambdashift}

By \eqref{eqn:Phisi}, the image of a simple reflection $s_i$ under $\Phi$ is
\[
	\Phi_{s_i} =  \frac{\alpha^\vee_{i}}{\alpha^\vee_{i} + k_{i}} s_{i} + \frac{k_{i}}{\alpha^\vee_{i} + k_{i}}
\]
This is an $\Sh$--multiple of Cherednik's intertwiner $\tPhi_i$
\[
	\Phi_{s_i} = \frac{1}{\alpha_{i}^\vee + k_i} \tPhi_i
\]
Relative to $\tPhi_{i}$, the disadvantage of $\Phi_{s_i}$ is that it exists only in the localization $\Htrigkrega$.  However, the operators $\Phi$ enjoy certain advantages including the fact that $\Phi$ is a group homomorphism and in particular that $\Phi_{s_i}^2 = 1$.
\Omit{
Opdam defines $\tPhi_y$ for an arbitrary $y \in W^e$ by setting $\tPhi_y = \omega \tPhi_{i_1} \cdots \tPhi_{i_\ell}$ when $\omega s_{i_1} \cdots s_{i_\ell}$ is a reduced expression for $y$. However, $\tPhi_{uv} = \tPhi_{u}\tPhi_{v}$ holds only when $l(y) = l(u) + l(v)$ \cite[Remark 4.4]{opdam2001lectures} and even that statement is non-trivial \cite[Theorem 4.2]{opdam2001lectures}. }

Other advantages will be encountered later when considering shift operators in the parameter $k$.  Generically, the set $\lbrace \Phi_w \ilambda : w \in W \rbrace$, defined using $\Phi$, is a basis of $\dI{\lambda}$ and Matsuo's map (see \ref{subsec:matsuo-isomorphism}), has a simple form with respect to this basis.
Additionally, we remark that the shift operators $\Phi$ commute with the shift operators in $k$, yielding a trispectral system (A complete proof will appear in future work.)
}

\section{Representations of the extended affine Hecke algebra}\label{sec:AHA}

\subsection{The Extended Affine Hecke Algebra $\Hext$}

Let $P^\vee\subset\h$ be the coweight lattice, and $H^\vee=\Hom_{\IZ}(P^\vee,\IC^\times)$ the dual torus of
simply connected type. We denote the element of $\IC P^\vee=\IC[H^\vee]$ corresponding to $\lambda\in P^
\vee$ by $\Y^\lambda$ or $e^\lambda$, and for any $1\leq i\leq n$, set
\[\X_i=\Y^{\alpha^\vee_i}\aand\Y_i=\Y^{\lambda^\vee_i}\]

Let $q:R\to\IC^\times$ be a W--invariant function, set $q_\alpha=q(\alpha)$ and $q_i=q_{\alpha_i}$.
The extended affine Hecke algebra $\Hext$ is the $\IC$--algebra generated by $\{T_i\}_{1\leq i\leq n}$
and $\IC P^\vee$, with relations
\begin{gather}
\underbrace{T_i T_j T_i\cdots }_{m_{ij}} = \underbrace{T_j T_i T_j\cdots }_{m_{ij}}  \label{eqn:AHA-TT-rel} \\
(T_i-1)(T_i+q_i)  = 0 \label{eqn:AHA-T-rel}\\
T_i \Y^\lambda - \Y^{s_i\lambda} T_i =
(q_i-1) \frac{\Y^{s_i\lambda} - \Y^\lambda }{1-\X_i^{-1}} 
\label{eqn:AHA-TY-rel}
\end{gather}
where $m_{ij}$ is the order of $s_i s_j$ in $W$ \cite[6.2]{opdam2001lectures}.
 Note that the \rhs of \eqref{eqn:AHA-TY-rel} lies
in $\IC P^\vee$ since $\Y^{s_i\lambda}-\Y^\lambda=\Y^\lambda(\X^{-\lambda(\alpha_i)}_i-1)$.

Let $\Hq$ be the (finite) Hecke algebra generated by $\{T_i\}_{i=1}^n$ with relations \eqref{eqn:AHA-TT-rel}--\eqref
{eqn:AHA-T-rel}. Then, $\Hext$ is isomorphic to $\Hq\otimes\IC P^\vee$ as $(\Hq,\IC P^\vee)$--bimodule,
and its center is the algebra $(\IC P^\vee)^W=\IC[H^\vee]^W$ of $W$--invariants \cite{lusztig1989affine}.

\Omit{
\subsection{The Generalized Extended Affine Hecke Algebra}

Let $Q^\vee\subset P^\vee\subset\h$ be the coroot and coweight lattices, $Q^\vee \subset L^\vee \subset
P^\vee$ an intermediate lattice, and $H^\vee_{L^\vee}=\Hom_{\IZ}(L^\vee ,\IC^\times)$ the corresponding
torus. We denote the element of $\IC L^\vee=\IC[H^\vee_{L^\vee}]$ corresponding to $\lambda\in L^\vee $
by $\Y^\lambda$ or $e^\lambda$ and set $\X_i=\Y^{\alpha^\vee_i}$ for any simple coroot $\alpha^\vee _i$.
\valeriocomment{Stick to $L^\vee =P^\vee$ throughout the section, and to $H^{\mathrm{ext}}_q$ (and make
sure the notation for that AHA is unified). The corresponding torus $H^\vee=\Hom_\IZ(P^\vee,\IC^\times)$
is then the 'dual' torus of simply connected type.}

Let $q:R\to\IC^\times$ be a W--invariant function, set $q_\alpha=q(\alpha)$ and $q_i=q_{\alpha_i}$.
Define the extended affine Hecke algebra $\Hext$ to be the $\IC$--algebra generated by $\IC L^\vee$,
and $T_i \ 1 \leq i \leq n$, with relations\valeriocomment{Should $q$ in the \rhs of \eqref{eqn:AHA-TY-rel} be $q_i$?}
\begin{gather}
T_i T_j \ldots T_j T_i = T_j T_i \ldots T_i T_j  \label{eqn:AHA-TT-rel} \\
(T_i-1)(T_i+q_i)  = 0 \label{eqn:AHA-T-rel}\\
T_i \Y^\lambda - \Y^{s_i\lambda} T_i = (q_i-1) \frac{\Y^{s_i\lambda} - \Y^\lambda }{1-\X_i^{-1}} \label{eqn:AHA-TY-rel}
\end{gather}
where $s_i s_j \ldots s_j s_i = s_j s_i \ldots s_i s_j$ in the first relation. 
In this notation $H^{P^\vee}_q = H^{\mathrm{ext}}_q$ and $H^{Q^\vee}_q = \Haff$.

\subsection{Intertwiners}

Equation \ref{eqn:AHA-TY-rel} implies that\valeriocomment{Omit this section? We do not seem to (explicitly)
use intertwining operators for the affine Hecke algebra.}
\[
  \left(T_i + \frac{q-1}{1-\X_i}\right) \Y^\lambda = \Y^{s_i \lambda} \left(T_i + \frac{q-1}{1-\X_i} \right)
\]
Thus the element $F_i = T_i + \frac{q-1}{1-\X_i}$ satisfies $ F_i  \Y^\lambda = \Y^{s_i \lambda} F_i$ \cite[1.2]{cherednik2005double}.
%
The following lemma is helpful later to determine when $F_i$ is  invertible.

\label{lem:trigonometric-intertwiner}
\begin{lemma}
	The intertwiner $F_i$ satisfies
	\[
		F_i^2 = \frac{(\X_i-q)(q\X_i-1)}{(1-\X_i)^2}
	\]
\end{lemma}

\begin{proof}
	We have
	\begin{align*}
		F_i^2  
		      & = T_i^2 + (q-1) \left( T_i \frac{1}{1-\X_i} + \frac{1}{1-\X_i} T_i  \right) + (q-1)^2 \left( \frac{1}{1-\X_i} \right)^2 
	\end{align*}
	By \eqref{eqn:AHA-T-rel} and \eqref{eqn:AHA-TY-rel}, this is
	\begin{align*}
		  & = -(q-1)T_i+q + (q-1) \left( T_i \frac{1}{1-\X_i} +  T_i  \frac{1}{1-1/\X_i}   + (q-1) \frac{\frac{1}{1-1/\X_i} - \frac{1}{1-\X_i} }{1-\X_i} \right) \\
		  & \quad\quad  + (q-1)^2 \left( \frac{1}{1-\X_i} \right)^2                                                                    
	\end{align*}
	Simplifying the above equation yields the claimed formula.
\end{proof}
}

\subsection{The Induced and Covariant Representations}

Let $\Lambda \in H^\vee$, $\IC_\Lambda = \IC \iLambda$ the corresponding evaluation
representation of $\IC[H^\vee]$, and set
\[\I{\Lambda} = \mathrm{Ind}_{\IC[H^\vee]}^{\Hext} \IC_\Lambda\]
As an $\Hq$--module, $\I{\Lambda}$ is isomorphic to the left regular representation.

Let $\eps \colon W \to \{ \pm 1\}$ be a character, set $\eps_\alpha=\eps(s_\alpha)$ and
$\eps_i = \eps(s_i)$. Let $\Theta\in H^\vee/W$, and $\IC_{\Theta}^\eps$ the character
of $\Hq\otimes\IC[H^\vee]^W$ on which $\IC[H^\vee]^W$ acts by evaluation at $\Theta$,
and $\Hq$ by $T_i\to \eps_i q_i^{(1-\eps_i)/2}$. Define the covariant representation $\J
(\Theta)^\eps$ by\omitvaleriocomment{Generalise this with an arbitrary
$W$--invariant $\veps:\{s_i\}\to\{\pm 1\}$ as in 3.5. Note, however, that this is a little less
convenient to define since $T_i$ satisfy $(T_i-1)(T_i+q_i)=0$ rather than $T_i^2-q_i^2=0$.
In the latter convention, the representations would have been defined by $T_i\to \eps_i q_i
^{\eps_i}$. I wonder whether this suggests that the latter convention for $T_i$ is a better
one? Perhaps the former is better adapted to the monodromy of the trigo KZ connection,
though, since the residues are $1-s_\alpha$, so have eigenvalues $0$ and $2$ rather than
the more symmetric $\pm 1$.}
\[
\J(\Theta)^\eps = \mathrm{Ind}_{\Hq\otimes\IC[H^\vee]^W}^{\Hext} \IC_{\Theta}^\eps
\]
As a $\IC[H^\vee]$--module, $\J(\Theta)^\eps$ is isomorphic to $\IC[H^\vee]/M_\Theta$,
where $M_\Theta$ is the ideal generated by $\{f-f(\Theta)\}_{f\in\IC[H^\vee]^W}$, and is
therefore of dimension $|W|$. When $\eps$ is the trivial or sign character, we denote
$\J(\Theta)^\eps$ by $\J(\Theta)$ or $\J(\Theta)^-$ respectively.  

\subsection{Isomorphism between $\J(\Theta)^\eps$ and $\I{\Lambda}$}

Fix a character $\eps:W\to\{\pm 1\}$.

\label{le:idempotent}
\begin{lemma}
There is an element $\Qeps\in\Hq$ such that $T_i\Qeps=\Qeps T_i=\eps_i q_i^{(1-\eps_i)/2}
\Qeps$. $\Qeps$ is unique up to a scalar, and given by the formula
\[\Qeps=\sum_{w\in W} a_w T_w
\qquad
\text{where}
\qquad 
a_w= \eps_{i_1} q_{i_1}^{-(\eps_{i_1}+1)/2}\cdots \eps_{i_k}  q_{i_k}^{-(\eps_{i_k}+1)/2}\]
and $T_w=T_{i_1}\cdots T_{i_k}\in\Hq$ is the element corresponding to any
reduced decomposition $w=s_{i_1}\cdots s_{i_k}$.
\end{lemma}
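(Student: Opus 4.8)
The plan is to prove existence by verifying the displayed formula directly, and uniqueness by an induction on length in the standard basis $\{T_w\}_{w\in W}$ of $\Hq$. Throughout I would abbreviate $c_i=\eps_iq_i^{(1-\eps_i)/2}$ for the prospective eigenvalue and $d_i=\eps_iq_i^{-(\eps_i+1)/2}$ for the scalar entering $a_w$, so that $a_{s_iw}=d_ia_w$ whenever $\ell(s_iw)=\ell(w)+1$. A case check on $\eps_i\in\{\pm1\}$, which gives $(c_i,d_i)=(1,q_i^{-1})$ or $(-q_i,-1)$, records the three elementary identities used below: $d_iq_i=c_i$, $\ 1+d_i(1-q_i)=c_id_i$, and $c_i-1+q_i=d_i^{-1}$.

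The first thing to settle is that $a_w$ is well defined, i.e. independent of the reduced decomposition. Since $s_{w\alpha}=ws_\alpha w^{-1}$, both $q$ and $\eps$ are constant on $W$--orbits of roots; in particular $q_i=q_j$ and $\eps_i=\eps_j$, hence $d_i=d_j$, whenever $s_is_j$ has odd order. Thus the product $d_{i_1}\cdots d_{i_k}$ is unchanged by every braid move, so by Matsumoto's theorem it depends only on $w$. Reversing a reduced word reverses a product of commuting scalars, which moreover yields $a_{w^{-1}}=a_w$.

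Next I would verify $T_i\Qeps=c_i\Qeps$. Partition $W=A\sqcup s_iA$ with $A=\{w:\ell(s_iw)>\ell(w)\}$. For $w\in A$ one has $T_iT_w=T_{s_iw}$ and, using the quadratic relation \eqref{eqn:AHA-T-rel} in the form $T_i^2=(1-q_i)T_i+q_i$, also $T_iT_{s_iw}=(1-q_i)T_{s_iw}+q_iT_w$; together with $a_{s_iw}=d_ia_w$ this gives
\[
T_i\bigl(a_wT_w+a_{s_iw}T_{s_iw}\bigr)=a_w\bigl[d_iq_i\,T_w+(1+d_i(1-q_i))\,T_{s_iw}\bigr],
\]
which by the first two identities above equals $c_i\bigl(a_wT_w+a_{s_iw}T_{s_iw}\bigr)$. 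Summing over the pairs yields $T_i\Qeps=c_i\Qeps$ (and $\Qeps\neq0$ since its $T_e$--coefficient is $a_e=1$). The identity $\Qeps T_i=c_i\Qeps$ then follows by applying to $T_i\Qeps=c_i\Qeps$ the $\IC$--algebra anti--involution of $\Hq$ fixing every $T_i$: it sends $T_w\mapsto T_{w^{-1}}$, hence fixes $\Qeps$ because $a_{w^{-1}}=a_w$.

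Finally, for uniqueness, let $Q=\sum_wb_wT_w$ satisfy $T_iQ=c_iQ$ for all $i$; I claim $b_w=b_e\,a_w$ by induction on $\ell(w)$. For $w\neq e$ pick $i$ with $\ell(s_iw)<\ell(w)$ and put $v=s_iw$. The only standard--basis products $T_iT_u$ contributing a $T_w$ term are $T_iT_v=T_w$ (contributing $b_v$) and $T_iT_w=(1-q_i)T_w+q_iT_v$ (contributing $b_w(1-q_i)$), so comparing the $T_w$--coefficients of $T_iQ$ and $c_iQ$ gives $b_v+b_w(1-q_i)=c_ib_w$, i.e. $b_w=(c_i-1+q_i)^{-1}b_v=d_ib_v=b_e\,d_ia_v=b_e\,a_w$, using the third identity, the induction hypothesis, and $a_w=d_ia_v$. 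Hence $Q=b_e\Qeps$. The only genuinely delicate points in all of this are the well--definedness of $a_w$ (the orbit/Matsumoto argument) and keeping the signs and powers of $q_i$ straight in the three scalar identities; the rest is bookkeeping.
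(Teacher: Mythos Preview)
Your proof is correct and follows essentially the same approach as the paper: partition $W$ into $\langle s_i\rangle$--cosets, use the quadratic relation to verify the eigenvalue identity on each coset pair, and establish uniqueness by an upward induction on length. Your version is in fact a bit more careful than the paper's, since you explicitly verify that $a_w$ is independent of the chosen reduced decomposition (via Matsumoto and $W$--invariance of $q,\eps$), and you obtain the right--eigenvalue identity from the left one via the standard anti--involution fixing each $T_i$, whereas the paper simply asserts the argument is ``similar.''
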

\begin{proof}
Fix a simple reflection $s_i$.  We prove $\Qeps T_i=\eps_i q_i^{(1-\eps_i)/2}T_i$; the argument
for $T_i \Qeps$ is similar. Set $W_i=\{ w \in W|\ell(w s_i) = \ell(w) + 1 \}$, so that $W=W_i\sqcup
W_is_i$. Then,
\begin{equation}\label{eqn:Eeps}
E_\eps
 = \sum_{w \in W_i} a_w T_w ( 1 + \eps_i q_i^{-(\eps_i+1)/2} T_i )
\end{equation}
The relation \eqref{eqn:AHA-T-rel} can be rewritten as 
\begin{equation}\label{eqn:AHA-T-rel-eps}
\left(1 + \eps_i q_i^{-(\eps_i+1)/2} T_i \right) T_i = \left(1 + \eps_i q_i^{-(\eps_i+1)/2} T_i \right) \eps_i q_i^{(1-\eps_i)/2}
\end{equation}
Expanding $\Qeps T_i$ using \eqref{eqn:Eeps} and applying \eqref{eqn:AHA-T-rel-eps} gives the desired result.
An upward induction on the length of $w$ shows that $\Qeps$ is unique up to a scalar.
\end{proof}

Let now $\Lambda \in H$, and denote its image in $H/W$ by $\Theta$. It follows from Lemma
\ref{le:idempotent} that there is a non--zero intertwiner of $\Hext$--modules 
\[\Rnu{\Lambda}^\veps:\J(\Theta)^\eps\longrightarrow \I{\Lambda},\qquad
\kTheta \longmapsto  \Qeps \iLambda\]
which is unique up to a scalar. In particular, $\J(\Theta)^\eps$ is irreducible if and only if $\I{\Lambda}$
is and, in turn, $\I{\Lambda}$ is irreducible if and only if all $I(w\Lambda)$ are.

The following result is due to Kato \cite{Kato-irred}, and is an analogue of Theorem
\ref{th:det R}.

\label{thm:iso-ILambda-JTheta}
\begin{theorem}
The map $\Rnu{\Lambda}^\veps$ is an isomorphism if and only if 
\begin{equation}\label{eq:Kato iso crit}
e^{\alpha}(\Lambda)\neq q_\alpha^
{\eps_\alpha}
\quad\text{for all $\alpha \in R^+$}
\end{equation}
In particular, $\I{\Lambda}$ and $\J(\Theta)^\eps$ are isomorphic if and only if \eqref{eq:Kato iso crit} holds.
\end{theorem}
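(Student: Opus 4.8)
The plan is to follow the template of the proof of Theorem \ref{th:det R}, replacing the degenerate intertwiners $\tPhi_i$ by their affine Hecke algebra analogues; this is in essence Kato's argument \cite{Kato-irred}. Since $\J(\Theta)^\eps$ and $\I{\Lambda}$ both have dimension $|W|$, and $\Rnu{\Lambda}^\veps$ is, up to a scalar, the unique morphism between them (by Frobenius reciprocity together with the uniqueness in Lemma \ref{le:idempotent}), it suffices to fix identifications $\J(\Theta)^\eps\cong\IC[H^\vee]/M_\Theta$ and $\I{\Lambda}\cong\bigoplus_{w\in W}\IC\,T_w\iLambda$ and to show that the determinant of $\Rnu{\Lambda}^\veps$ in these bases, a Laurent function of $\Lambda\in H^\vee$, vanishes exactly along the hypertori $\{e^\alpha=q_\alpha^{\eps_\alpha}\}_{\alpha\in R^+}$ (part $(2)$ then being immediate from the iff in $(1)$, since $\Rnu{\Lambda}^\veps$ is never zero). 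The first ingredient is the affine intertwiner: for $1\le i\le n$ put $F_i=T_i+\frac{q_i-1}{1-\X_i}$, an element of the localization of $\Hext$ away from $\X_i=1$, which satisfies $F_i\,\Y^\lambda=\Y^{s_i\lambda}F_i$ and $F_i^2=\frac{(\X_i-q_i)(q_i\X_i-1)}{(1-\X_i)^2}$. By the braid relations these assemble into $F_w$ for $w\in W$, hence into morphisms $\cT{w}{\Lambda}\colon\I{\Lambda}\to\I{w\Lambda}$, $\iLambda\mapsto F_{w^{-1}}\inu{w\Lambda}$, defined for $\Lambda$ off the root hypertori; restricting $\cT{s_i}{\Lambda}$ to the $\IC[H^\vee]$-span of each right $\langle s_i\rangle$-coset in $W$ and reading off a $2\times2$ block — exactly as in Proposition \ref{pr:det TT} — one computes that $\det\cT{s_i}{\Lambda}$ is, up to a unit, a power of $\frac{(\X_i-q_i)(q_i\X_i-1)}{(1-\X_i)^2}$.

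Next I would prove the affine analogue of Proposition \ref{pr:TT and Rp}: that $\cT{w}{\Lambda}\circ\Rnu{\Lambda}^\veps$ equals $\Rnu{w\Lambda}^\veps$ up to an explicit factor which is a product, over $\alpha\in R^+$ with $w\alpha\in R^-$, of units times $\frac{e^\alpha(\Lambda)-q_\alpha^{\eps_\alpha}}{1-\X_\alpha(\Lambda)}$. It suffices to check this for $w=s_i$, where it follows from $T_i\Qeps=\Qeps T_i=\eps_i q_i^{(1-\eps_i)/2}\Qeps$ (Lemma \ref{le:idempotent}): applying $F_i$ to $\Qeps\inu{s_i\Lambda}$ and commuting $\X_i$ past $\Qeps$ produces the factor $e^{\alpha_i}(\Lambda)-q_i^{\eps_i}$. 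Taking determinants in this identity and using the rank-one formula above — mirroring the passage from \eqref{eq:T R} to \eqref{eq:det R}, and with $w=w_0$ the longest element — the denominators $(1-\X_\alpha)^{|W|}$ appearing on the two sides cancel in pairs, and, after treating the $q_\alpha$ as independent indeterminates and using that $(e^\alpha-q_\alpha^{\eps_\alpha})$ and $(e^\alpha-q_\alpha^{-\eps_\alpha})$ are coprime, one concludes that $\det\Rnu{\Lambda}^\veps$ is divisible by $\prod_{\alpha\in R^+}(e^\alpha(\Lambda)-q_\alpha^{\eps_\alpha})^{|W|/2}$. That this divisibility is an equality up to a nowhere-vanishing function follows from a size estimate playing the role of the degree bound in Theorem \ref{th:det R}(1): expanding $\Y^\mu\Qeps$ in the Bernstein basis $\{T_w\Y^\nu\}$ by iterating \eqref{eqn:AHA-TY-rel} gives, as in \eqref{eq:p w}, an upper-triangular description of the $\IC[H^\vee]$-action on $\I{\Lambda}$ from which the support of $\det\Rnu{\Lambda}^\veps$ in $\Lambda$ can be bounded; specializing all $q_\alpha\to1$ — where $\Hext$ collapses to $\IC W\ltimes\IC[H^\vee]$, $\Qeps$ to $|W|\,e_\eps$, and the determinant is transparently $\prod_\alpha(e^\alpha(\Lambda)-1)^{|W|/2}$ up to a constant — then pins down the normalization, up to a $q$-power that one checks is a unit (equivalently, one invokes \cite{Kato-irred} at this step). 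Part $(2)$ follows at once, $\I{\Lambda}$ and $\J(\Theta)^\eps$ being isomorphic iff the (unique up to scalar, never-zero) map $\Rnu{\Lambda}^\veps$ has nonzero determinant.

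The main obstacle is exactly the bookkeeping just alluded to: unlike the $\tPhi_i$, the affine intertwiners $F_i$ are not elements of $\Hext$ but of its localization along $\X_\alpha=1$, so the morphisms $\cT{w}{\Lambda}$ and the intertwining identity carry spurious factors $(1-\X_\alpha)^{\pm1}$, while $\det\Rnu{\Lambda}^\veps$ is itself an honest Laurent polynomial; one must verify that these factors cancel — they do, in pairs, coming from $\det\cT{w_0}{\Lambda}$ against the $|W|$-th power of the intertwining factor — so that the final identity involves neither poles nor extra zeros along $\X_\alpha=1$. This is the point at which the affine case genuinely departs from Theorem \ref{th:det R}, and where it is most economical to invoke Kato's original computation \cite{Kato-irred}. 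An alternative that avoids the global determinant is Lusztig's reduction: near a generic point of a hypertorus $\{e^\alpha=q_\alpha^{\eps_\alpha}\}$ the algebra $\Hext$ degenerates to a rank-one graded affine Hecke algebra, for which Theorem \ref{th:det R} applies verbatim and exhibits a simple zero of $\det\Rnu{\Lambda}^\veps$; combined with the divisibility above, this again yields the criterion.
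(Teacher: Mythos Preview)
Your approach is viable but far more elaborate than the paper's. The paper does not reprove Kato's criterion at all: it simply invokes \cite[Thm 2.4]{Kato-irred} for the sign character $\eps=\mathrm{sgn}$, and then observes that the map
\[
a_\eps\colon H_q^{\mathrm{ext}}\longrightarrow H_{q^\eps}^{\mathrm{ext}},\qquad T_i\longmapsto \eps_i\,q_i^{(1-\eps_i)/2}T_i,
\]
is an algebra isomorphism under which $\J(\Theta)^\eps$ pulls back to $\J(\Theta)$ (with the appropriately twisted parameter). This one--line reduction immediately transports Kato's criterion to arbitrary $\eps$. You instead attempt to rerun the entire Theorem~\ref{th:det R} argument in the affine setting, which is essentially what Kato himself does; but you acknowledge the denominator bookkeeping from the localized intertwiners is the genuine obstacle and, at that point, invoke Kato anyway. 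So the net logical content of your proof is the same citation, wrapped in a long sketch that the paper simply omits. What your approach buys is insight into \emph{why} the criterion takes that form, at the cost of length; what the paper's buys is brevity and a clean separation of the new content (the $\eps$--twist) from what is already in the literature.

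One small correction if you do pursue the intertwiner route: with the cross relation in the form \eqref{eqn:AHA-TY-rel} the correct normalization is $F_i=T_i+\dfrac{q_i-1}{1-\X_i^{-1}}$, not $T_i+\dfrac{q_i-1}{1-\X_i}$. With this fix, the computation $\Qeps F_i\,\inu{s_i\Lambda}=\bigl(\eps_i q_i^{(1-\eps_i)/2}+\frac{q_i-1}{1-e^{\alpha_i^\vee}(\Lambda)}\bigr)\Qeps\,\inu{s_i\Lambda}$ indeed produces the factor $\frac{q_i-z}{1-z}$ for $\eps_i=+1$ and $\frac{q_iz-1}{1-z}$ for $\eps_i=-1$ (where $z=e^{\alpha_i^\vee}(\Lambda)$), matching the stated criterion; with your original $F_i$ the zeros land at $z=q_i^{\mp 1}$ instead.
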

\begin{proof}
This is proved in \cite[Thm 2.4]{Kato-irred} for $\J(\Theta)^-$.  The result follows for arbitrary $\eps$ using the isomorphism
$a_{\eps} \colon H_{q}^{\mathrm{ext}} \to H_{q^\eps}^{\mathrm{ext}}$ given by $T_i \mapsto \eps_i q_i
^{(1-\eps_i)/2} T_i$.  The pullback of $\J(\Theta)$ under $a_{\eps}$ is $\J(\Theta)^\eps$.
\end{proof}

\subsection{Irreducibility of $\I{\Lambda}$}

The following result is due to Kato \cite[Thm 2.2]{Kato-irred}, and is an analogue of Theorem \ref{th:irr I}.
\omitvaleriocomment
{Kato's criterion is more complicated for a general $L^\vee$ in the following ways: 1) Let $W_\Lambda\subset$ be
the isotropy group of $\Lambda$, and $W_{(\Lambda)}\subset W_\Lambda$ the (normal) subgroup generated
by the reflections $s_\alpha$, where $\alpha$ is such that $e^{\alpha}(\Lambda)=1$. Then a further condition
for irreducibility is that $W_{(\Lambda)}=W_\Lambda$. This is true if $L^\vee=P^\vee$ (see below), but can
fail otherwise. E.g. if $T$ is the adjoint rank 1 torus, so that $e^\alpha$ gives an identification $T\to\IC^
\times$, and $\Lambda=-1$, then $W_{\Lambda}=W$, but $W_{(\Lambda)}=1$. If $L^\vee=P^\vee$, so that $H^\vee
_{L^\vee}=H^\vee\cong\h^*/Q$, $h\in H^\vee$ and $\wt{h}\in\h^*$ is a preimage, it is easy to see that the quotient map
$W^a=W\rtimes Q\to W$ maps $W^a_{\wt{h}}$ onto $W_h$. Since $W^a$ is a Coxeter group
(this is where $L^\vee=P^\vee$ is used), $W^a_{\wt{h}}$ is generated by the reflections across the affine hyperplanes
$Ker(\alpha^\vee-\alpha^\vee(\wt{h}))$, where $\alpha^\vee\in R^\vee$ is such that $\alpha^\vee(\wt{h})\in\IZ$, and it is easy to identify
those with the $\alpha\in R^\vee$ such that $e^\alpha(h)=1$. 2) In addition to this, Kato's conditions take the above nice form provided $\alpha(L^\vee)\neq 2\IZ$ for
all roots $\alpha$. This is clearly true for $L^\vee=P^\vee$ but can fail otherwise, e.g. in rank 1 for $L^\vee=Q^\vee$.}

\label{th:Kato}
\begin{theorem}\hfill
\begin{enumerate}
\item $\I{\Lambda}$ is irreducible if and only if $e^\alpha(\Lambda)\neq q_\alpha$ for
any $\alpha\in R$.
\item $\J(\Theta)^\eps$ is irreducible if and only if $e^\alpha(\Lambda)\neq q_\alpha$ for
any $\alpha\in R$.
\end{enumerate}
\end{theorem}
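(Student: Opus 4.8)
The plan is to transcribe the proof of Theorem~\ref{th:irr I} to the extended affine Hecke algebra, with Kato's isomorphism criterion (Theorem~\ref{thm:iso-ILambda-JTheta}) playing the role of Theorem~\ref{th:det R} and the element $\Qeps\in\Hq$ of Lemma~\ref{le:idempotent} playing the role of the idempotent $e_\eps\in\IC W$. First I would record two preliminary points. The condition ``$e^\alpha(\Lambda)\neq q_\alpha$ for all $\alpha\in R$'' depends only on $\Theta=W\Lambda$, since $e^\alpha(w\Lambda)=e^{w^{-1}\alpha}(\Lambda)$ and $q$ is $W$--invariant; and, because $e^{-\alpha}(\Lambda)=e^{\alpha}(\Lambda)^{-1}$ while $q_{-\alpha}=q_\alpha$, it is equivalent to requiring $e^\alpha(\Lambda)\neq q_\alpha^{\pm 1}$ for all $\alpha\in\pos$. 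Second, statement~(2) will follow at once from~(1) together with the observation made above, after the definition of $\Rnu{\Lambda}^\eps$, that $\J(\Theta)^\eps$ is irreducible if and only if $\I{\Lambda}$ is; so it suffices to prove~(1).

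For the ``only if'' direction I would argue by contraposition. Suppose $e^\alpha(\Lambda)=q_\alpha$ for some $\alpha\in R$. Then there is a root $\beta\in\pos$ and a sign $\eta\in\{\pm 1\}$ with $e^\beta(\Lambda)=q_\beta^{\eta}$ (namely $\beta=\alpha$, $\eta=1$ if $\alpha\in\pos$, and $\beta=-\alpha$, $\eta=-1$ otherwise). Taking $\eps$ to be the trivial character when $\eta=1$ and the sign character when $\eta=-1$, so that $\eps(s_\beta)=\eta$, Theorem~\ref{thm:iso-ILambda-JTheta} shows that $\Rnu{\Lambda}^\eps\colon\J(\Theta)^\eps\to\I{\Lambda}$ is \emph{not} an isomorphism. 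It is nonetheless non--zero: it sends the generator $\kTheta$ to $\Qeps\iLambda=\sum_{w\in W}a_wT_w\iLambda$, and the coordinates $a_w$ in the basis $\{T_w\iLambda\}_{w\in W}$ of $\I{\Lambda}$ are all non--zero by Lemma~\ref{le:idempotent}. A non--invertible non--zero map between $\Hext$--modules of equal dimension $|W|$ cannot be surjective, so its image is a proper non--zero submodule of $\I{\Lambda}$, which is therefore reducible.

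For the ``if'' direction, assume $e^\alpha(\Lambda)\neq q_\alpha$ for all $\alpha\in R$, take $\eps$ to be the trivial character, and let $J\subseteq\I{\Lambda}$ be a non--zero submodule. Since $\IC[H^\vee]$ is commutative and $J$ is finite--dimensional, $J$ contains a $\IC[H^\vee]$--eigenvector $v$; as $\IC[H^\vee]^W=Z(\Hext)$ acts on $\I{\Lambda}$ by the scalar of evaluation at $\Lambda$, the eigenvalue of $v$ lies in $W\Lambda$, say it equals $w\Lambda$, and the universal property of induction extends $\inu{w\Lambda}\mapsto v$ to a non--zero $\Hext$--linear map $\phi\colon\I{w\Lambda}\to J$. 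Since the hypothesis is $W$--invariant, $e^\alpha(w\Lambda)\neq q_\alpha$ for all $\alpha\in\pos$, so by Theorem~\ref{thm:iso-ILambda-JTheta} the map $\Rnu{w\Lambda}^\eps$ is an isomorphism and $\Qeps\inu{w\Lambda}$ is a cyclic vector of $\I{w\Lambda}$; consequently $\Qeps v=\phi(\Qeps\inu{w\Lambda})\neq 0$, and $T_i(\Qeps v)=\Qeps v$ for all $i$ by Lemma~\ref{le:idempotent}. At this point I would invoke the Hecke--algebra analogue of the fact that the $W$--fixed subspace of $\IC W$ is a line: identifying $\I{\Lambda}$ with the left regular representation of $\Hq$, the subspace $\{u\in\I{\Lambda}\mid T_iu=u\ \text{for all }i\}$ equals $\IC\,\Qeps\iLambda$. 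Granting this, $\Qeps v$ is a non--zero multiple of $\Qeps\iLambda$, so $\Qeps\iLambda\in J$; and $\Qeps\iLambda$ is a cyclic vector of $\I{\Lambda}$ by Theorem~\ref{thm:iso-ILambda-JTheta}, whence $J=\I{\Lambda}$ and $\I{\Lambda}$ is irreducible. The step I expect to be the main obstacle is exactly this last input, the identification $\{u\mid T_iu=u\ \text{for all }i\}=\IC\,\Qeps\iLambda$; I would establish it by a short induction on length, noting that \eqref{eqn:AHA-T-rel} forces $c_{s_iw}=q_i^{-1}c_w$ whenever $\ell(s_iw)>\ell(w)$ for any $x=\sum_wc_wT_w\in\Hq$ with $T_ix=x$ for all $i$, so that such an $x$ is determined by $c_e$ and the non--zero element $\Qeps$ spans the space — or else simply cite it from the representation theory of finite Hecke algebras.
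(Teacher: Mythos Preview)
Your argument is correct and is precisely the natural transcription of the proof of Theorem~\ref{th:irr I} to the affine Hecke algebra, with Theorem~\ref{thm:iso-ILambda-JTheta} in place of Theorem~\ref{th:det R} and $\Qeps$ in place of $e_\eps$. The computation you flag as the possible obstacle, namely that $\{u\in\I{\Lambda}\mid T_iu=u\ \forall i\}=\IC\,E_{\triv}\iLambda$, goes through exactly as you outline: comparing coefficients in $T_ix=x$ with respect to the basis $\{T_w\}$ and using $T_i^2=(1-q_i)T_i+q_i$ gives $c_{s_iw}=q_i^{-1}c_w$ whenever $\ell(s_iw)>\ell(w)$, so $c_w$ is determined by $c_e$, and Lemma~\ref{le:idempotent} shows the resulting element is $c_e\,E_{\triv}$.

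By way of comparison: the paper does not actually prove this theorem. It simply attributes the result to Kato \cite[Thm.~2.2]{Kato-irred} and states it without argument. What you have written is essentially Kato's proof, as one would expect given the paper's remark after Theorem~\ref{th:irr I} that the degenerate proof there is itself an adaptation of Kato's. So you have supplied the argument the paper omits, and in the expected manner.
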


\section{Monodromy of \texorpdfstring{$\cJ_\vartheta$}{J theta} for generic parameters}\label{sec:monodromy-j-theta}

In this section, we prove that Conjecture \ref{conj:monoconj} holds for generic values of $(\vartheta,k)$.

\subsection{Monodromy Representations of the Affine Hecke Algebra}
\label{subsec:affineheckealg}

Let $V$ be an $\Htrigsub$--module, and $\VV$ the corresponding induced representation
of $\Htrig$. By \eqref{eqn:KZconnection}, the KZ connection on $\VV=\IC[H]\otimes V$ has
the form
\[
\nabla=d-
\left( \sum_{\alpha \in \pos} k_\alpha  \frac{d\alpha}{1-e^{-\alpha}} \left(1- \tfact{s}_\alpha\right)
-\tfact{\iota}-\rho_k(\iota) \right)\] 
where $\tfact{X}=1\otimes X$ for $X\in\Htrigsub$, and $\iota$ is the $\h$--valued translation--invariant
1--form on $H$ which identifies $T_h H$ and $\h$.

The connection $\nabla$ is $W$--equivariant, and descends to the torus $T=\Hom_{\IZ}(Q,\IC^
\times)$. Its monodromy therefore yields a representation $\mu$ of the orbifold fundamental group $\pi
_1\orb(T_{\reg}/W)$, which is the extended affine braid group $\mathrm{Br}^\mathrm{e}$. Moreover,
$\mu$ factors through the extended affine Hecke algebra $\Hext$ \cite[Corollary 4.3.8]{heckman1995harmonic}. \omitrobincomment{I think we are inconsistent on when we use $\mathcal{H}$ and $H$ for ext. aff. Hecke algebra. See section 6} 
\omitvaleriocomment{Name the generators as $\{T_i,\Y^{\cow{i}}\}_{i=1}^n$, possibly denoting $\Y^{\cow{i}}$
by $\Y_i$ since we do not seem to have much use for $\Y^{\alpha_i}$, and say what they look like topologically,
as paths in $\h$ (Cherednik does this).}

\subsection{Canonical coordinates on $T$}

Let $\{Z_i\}_{i=1}^n$ be the coordinates on $T$ given by $Z_i = e^{\alpha_i}$.  These give rise
to a toric compactification 
\[T \hookrightarrow \ol{T}= \mathrm{Spec}(\bbC[Z_1,\ldots,Z_n])\cong\IC^n\]

For any positive root $\alpha$, write $\alpha=\sum_{i} m^i_{\alpha} \alpha_i$, where $m^i_\alpha
=\alpha(\cow{i})\in\IZ_{\geq 0}$. Then,
\[ d\alpha = \sum_i m^i_\alpha d\alpha_i=\sum_i m^i_\alpha d\log Z_i
\aand
\iota
=
\sum_i \cow{i}d\alpha_i=\sum_i \cow{i}d\log{Z_i}\]
It follows that in the coordinates $Z_i$, the connection $\nabla$ has the form
\begin{equation}\label{eq:nabla on T}
\nabla= 
d	
-
\sum_{\alpha \in \pos}\sum_{i} k_\alpha\left(\frac{Z^{m_\alpha}}{1-Z^{m_\alpha}}   \frac{m_\alpha^i
d Z_i}{Z_i} \right)\left( 1-\tfact{s}_\alpha \right)+\sum_i
\left(\tfact{(\cow{i})}+\rho_k(\cow{i})\right)\frac{d Z_i}{Z_i}  
\end{equation}
where $Z^{m_\alpha} = \prod_j Z_j^{m_\alpha^j}$.

Note that the first term in \eqref{eq:nabla on T} is regular on each of the divisors $\{Z_i = 0\}$
since only the terms in which $m_\alpha^i > 0$ contribute, and $Z^{m_\alpha}/Z_i$ is regular
for these.

\Omit{
\subsection{Canonical coordinates on $T$}

Let $\{Z_i\}_{i=1}^n$ be the coordinates on $T$ given by $Z_i = e^{-\alpha_i}$.  These give a toric
compactification $\ol{T}\cong\IC^n$ of $T$ via
\[T \hookrightarrow \ol{T}= \mathrm{Spec}(\bbC[Z_1,\ldots,Z_n])\]

For any positive root $\alpha$, write $\alpha=\sum_{i} m^i_{\alpha} \alpha_i$,
where $m^i_\alpha=\alpha(\cow{i})\in\IZ_{\geq 0}$. Then,
\[ d\alpha = \sum_i m^i_\alpha d\alpha_i=-\sum_i m^i_\alpha d\log Z_i
\aand
\iota
=
\sum_i \cow{i}d\alpha_i=-\sum_i \cow{i}d\log{Z_i}\]
It follows that in the coordinates $Z_i$, the connection $\nabla$ has the form
\begin{equation}\label{eq:nabla on T}
\nabla= 
d	
+
\sum_{\alpha \in \pos}\sum_i
\frac{k_\alpha m_\alpha^i}{1-Z^{m_\alpha}}  
\frac{d Z_i}{Z_i}\left( 1-\tfact{s}_\alpha \right)
-\sum_i
\left(\tfact{(\cow{i})}+\rho_k(\cow{i})\right)\frac{d Z_i}{Z_i}  
\end{equation}
where $Z^{m_\alpha} = \prod_j Z_j^{m_\alpha^j}$. Note that, if $m_\alpha^i>0$,
\[\frac{1}{(1-Z^{m_\alpha})Z_i}=
\frac{1}{Z_i}+R_i\]
where $R_i$ is homolorphic in the 

Note that the first term in \eqref{eq:nabla on T} is regular on each of the divisors $\{Z_i = 0\}$ since
only the terms in which $m_\alpha^i > 0$ appear and thus $Z^{m_\alpha}/Z_i$ is regular.
}

\subsection{Topological description of the generators of $\Hext$}\label{sec:genofHext}

%
\omitvaleriocomment
{Various things to address:
1) the fact that the monodromy factors through $\Hext$ was proved by Cherednik. Can you refer to the correct place in his book/papers?  [RW: Opdam claims the above reference to Heckman and Schlichtkrull. Are you sure Cherednik is the original/proper citation?  I have added the HS reference for now. I believe \cite{cherednik2005double}[Section 1.2.3] could also be cited. \\
 2) Commuting generators should be denoted by $\Y_i$ rather than $\Y^{\cow{i}}\}_{i=1}^n$, make
sure this is doone everywhere, and consistently.  [RW: Done.  Although $\Y^{\cow{i}}$ does not appear so often.  Also $\Y_i$ used to be $\Y^{\alpha_i^\vee}$ which as convienent in 6.3] \\
3) Flat continuation should be analytic continuation. [RW: Changed.] \\
 4) The description of the loops could be a bit more
precise (we give such a description in Sect. 8 for the rank 1 case, and Cherednik (and perhaps Opdam?) also give a more precise description. [RW: More precise desription given.] \\
5) The
description of $\mu(\Y^\lambda)$ makes no sense as stated. For example what does it mean to say 'the loop $t_0\exp(2\pi\iota a)$? $t_0$ is in the torus,
but how is $\exp(2\pi\iota a)$ an element of it? [RW: Fixed. It was missing $\lambda$] \\
6) What does 'followed by a twist mean? Specify this explicitly [RW: done.], since we'll be using this in
Section 8, and make sure Section 8 and 9 refer to that description in the appropriate places [RW: done.]. The description of $\mu(\Y_i)$ must be explicit enough that it is then
clear that, whenever $\nabla$ is non--resonant, the monodromy of $\Y_i$ in the LVL solution is $\exp(-2\pi\iota(\lambda_i^\vee)^f)$, which is needed later in this section.}
\Omit{
The algebra $\Hext$ has generators $\{T_i,\Y_i\}_{i=1}^n$.  The $T_i$ generate a
subalgebra isomorphic to the finite dimensional Hecke algebra $\Hq = \langle T_i \rangle_{i=1}^n$ and the $\Y_i$ 
generate the group algebra $\IC P^\vee =\langle \Y_i \rangle_{i=1}^n.$  For $\lambda = \sum_{i=1}^n m_i \cow{i}  \in P^\vee$, denote $\Y^\lambda = \prod_{i=1}^n \Y_i^{m_i}.$ 
}
Fix $x_0 \in T_{\reg}$ such that $Z_i(x_0) = A \in \mathbb{R}_{>0}$ for $A$ close to 0 and $1 \leq i \leq n$.  The monodromy $\mu(T_i)$ is given by analytic  continution with respect to $\nabla$ along the path $\gamma_{T_i}$ which connects $x_0$ to $s_i(x_0)$ by a straight line except near $Z_i(t) = 1$ where $\gamma_{T_i}$ passes around $Z_i(t) = 1$ in a small circular arc with positive orientation. 
For $\lambda \in P^\vee$, define the closed loop $\gamma_{\Y^\lambda}(a) = x_0 \exp(2 \pi \ii a \lambda)$ for $0 \leq a \leq 1$.  Define $\tilde{\mu}(\Y^\lambda)$ by analytic continuation along $\gamma_{\Y^\lambda}$.  Following the convention from \cite[Theorem 6.8]{opdam2001lectures}, define $\mu(\Y^\lambda) = \tilde{\mu}(\Y^\lambda) e^{2 \pi \ii \rho_k(\lambda)}$.           


\subsection{Non--resonance}

Let $t_0 \in \overline{T}$ the point with coordinates $Z_i(t_0)=0$. We say that $\nabla$ is {\it
non--resonant} at $t_0$, or non--resonant for short if, for any $1\leq i\leq n$, the eigenvalues
of $\cow{i}$ on $V$ do not differ by non--zero integers. The following result spells this condition
out explicitly in type $\sfA$. 

\label{pr:non resonance}
\begin{proposition}
Assume that $R$ is of type $\sfA_{n-1}$. 
Let $\lambda\in\h^*$, $V=\dI{\lambda}$ or $\J_{q(\lambda)}$, and $\VV$ the corresponding induced
representation of $\Htrig$. The trigonometric KZ connection on $\VV$ is non--resonant if and only if 
for any $1\leq i\leq\lfloor \frac{n}{2}\rfloor$ and $i$--tuple $\alpha^\vee_{j_1},\ldots,\alpha^\vee_{j_i}$
of pairwise orthogonal coroots, the following holds
\[\lambda(\alpha^\vee_{j_1}+\cdots+\alpha^\vee_{j_i})\notin\IZ_{\neq 0}\]
\end{proposition}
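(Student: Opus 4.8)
The plan is to identify the residue of $\nabla$ at the point $t_0$, translate non--resonance into a condition on the $W$--orbits of the fundamental coweights, and then carry out the type $\sfA$ combinatorics.

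First I would use the observation recorded just after \eqref{eq:nabla on T}: the first summand of $\nabla$ is regular along each divisor $\{Z_i=0\}$, so the residue of $\nabla$ along $\{Z_i=0\}$ is $\tfact{(\cow{i})}+\rho_k(\cow{i})\,\id$. Since each $\rho_k(\cow{i})$ is a scalar, non--resonance at $t_0$ is precisely the requirement that, for every $i$, the eigenvalues of $\cow{i}$ acting on $V$ do not differ by a non--zero integer, which is the definition adopted above. Now by Proposition \ref{prop:weights-Ilambda} when $V=\dI{\lambda}$, and by Proposition \ref{co:weights of K} when $V=\J_{q(\lambda)}$, the set of $\Sh$--weights of $V$ is the orbit $W\lambda$; hence the set of eigenvalues of $\cow{i}$ on $V$ is $\{\mu(\cow{i}):\mu\in W\lambda\}=\{\lambda(w\cow{i}):w\in W\}$. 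Writing $D_i=\{u-v:u,v\in W\cow{i}\}$ for the set of differences of the $W$--orbit of $\cow{i}$, non--resonance of $\nabla$ is therefore equivalent to $\lambda(D_i)\cap\IZ_{\neq 0}=\emptyset$ for every $i$.

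The main step is to compute $D_i$ explicitly in type $\sfA_{n-1}$. I would realize $\h=\{x\in\IC^n:\sum_j x_j=0\}$ with $W=S_n$ permuting the $e_j$, identify roots with coroots so that $\alpha^\vee_{ab}=e_a-e_b$, and take $\cow{i}=e_1+\dots+e_i-\tfrac{i}{n}(e_1+\dots+e_n)$, whose $W$--orbit is $\{e_S-\tfrac{i}{n}\mathbf{1}\}$ as $S$ runs over the $i$--element subsets of $\{1,\dots,n\}$, with $e_S=\sum_{a\in S}e_a$. Then for two such subsets $S,T$ one has $(e_S-\tfrac{i}{n}\mathbf{1})-(e_T-\tfrac{i}{n}\mathbf{1})=e_{S\setminus T}-e_{T\setminus S}=\sum_{l=1}^m(e_{a_l}-e_{b_l})$, where $\{a_1,\dots,a_m\}=S\setminus T$ and $\{b_1,\dots,b_m\}=T\setminus S$; these $2m$ indices are distinct, so this is a sum of $m$ pairwise orthogonal coroots, and $0\le m=i-|S\cap T|\le\min(i,n-i)$. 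Conversely, given pairwise orthogonal coroots $\alpha^\vee_{a_1b_1},\dots,\alpha^\vee_{a_mb_m}$ with $m\le\min(i,n-i)$ (so the $2m$ endpoints are distinct), choosing an $(i-m)$--element subset $C$ of the remaining $n-2m$ indices and setting $S=\{a_1,\dots,a_m\}\cup C$, $T=\{b_1,\dots,b_m\}\cup C$ realizes $\sum_l\alpha^\vee_{a_lb_l}$ as an element of $D_i$. Hence $D_i$ is exactly the set of sums of at most $\min(i,n-i)$ pairwise orthogonal coroots. This bookkeeping between orbit--difference vectors and sums of pairwise orthogonal coroots is the only point demanding care; the rest is formal.

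Finally, as $i$ ranges over the indices $1,\dots,n-1$ of the fundamental coweights of $\sfA_{n-1}$, the quantity $\min(i,n-i)$ runs through all of $\{1,\dots,\lfloor n/2\rfloor\}$, and $0\in D_i$ for every $i$ as the empty sum; therefore $\bigcup_i D_i$ is the set of all sums $\alpha^\vee_{j_1}+\dots+\alpha^\vee_{j_m}$ of $m$ pairwise orthogonal coroots with $0\le m\le\lfloor n/2\rfloor$. Intersecting $\lambda(\bigcup_i D_i)$ with $\IZ_{\neq 0}$ discards the $m=0$ term, so the condition $\lambda(\bigcup_i D_i)\cap\IZ_{\neq 0}=\emptyset$ is exactly the stated criterion, which completes the proof.
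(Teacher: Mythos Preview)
Your proof is correct and follows essentially the same approach as the paper's: both reduce non--resonance to the condition $\lambda(\sigma\cow{i}-\sigma'\cow{i})\notin\IZ_{\neq 0}$ via the weight Propositions \ref{prop:weights-Ilambda} and \ref{co:weights of K}, then realise $\cow{i}$ in the standard coordinates of $\IC^n$ and observe that differences within the $\SS_n$--orbit of $\cow{i}$ are exactly sums of pairwise orthogonal coroots. Your write--up is somewhat more explicit than the paper's---you carefully track the bound $\min(i,n-i)$ and verify that the union over $i$ recovers all tuple sizes up to $\lfloor n/2\rfloor$---but the underlying argument is the same.
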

\begin{proof}
By Propositions \ref{prop:weights-Ilambda} and \ref{co:weights of K}, the eigenvalues for the action of $\cow{i}$ on $V$ are $\{\sigma \cow{i} \}_{\sigma \in \SS_n}$. Thus $\nabla$ is non--resonant if
and only if $\lambda(\sigma\cow{i}-\sigma'\cow{i})\notin\IZ_{\neq 0}$ for any $1\leq i\leq n-1$ and
$\sigma,\sigma'\in W=\SS_n$. Let $\{e_i\}_{i=1}^n$ be the standard basis of $\IC^n$. Then, $\cow
{i}=\sum_{a=1}^i e_i-1/n\sum_{a=1}^n e_i$ so that
\[\sigma\cow{i}-\sigma'\cow{i}=
\sum_{a=1}^i e_{\sigma(i)} - \sum_{a=1}^i e_{\sigma'(i)}=
\sum_{a\in I_+}e_a - \sum_{a\in I_-}e_a=
\sum_{a\in I_+}(e_a - e_{\tau(a)})
\]
where $I_+=\sigma\{1,\ldots,i\}\setminus\sigma'\{1,\ldots,i\}$, $I_-=\sigma'\{1,\ldots,i\}\setminus
\sigma\{1,\ldots,i\}$ and $\tau$ is any chosen bijection $I_+\mapsto I_-$. The claim follows.
\end{proof}

\begin{remark} Write $\lambda=\sum_i\lambda_i\theta_i$, where $\{\theta_i\}$ is the dual basis to $\{e_i\}$.
Then, the above criterion is equivalent to requiring that for any $1\leq i\leq\lfloor \frac{n}{2}\rfloor$ and
disjoint subsets $I_\pm\subset\{1,\ldots,n\}$ of size $i$, the following holds
\[\sum_{j\in I_+}\lambda_j - \sum_{j\in I_-}\lambda_j \notin\IZ_{\neq 0}\]
\end{remark}

\Omit{
The following will be useful in Section \ref{sec:monodromy-higher-rank}.\valeriocomment{and should probably be moved there}

\begin{lemma}
Assume that $R$ is of type $\sfA_{n-1}$. For any $\lambda\in\h^*$, set
\[ B_\lambda = 
\{ \beta=\alpha^\vee_{i_1}+\cdots+\alpha^\vee_{i_j}|\,\alpha^\vee_{i_k}\in R^\vee,\,\alpha^\vee_{i_k}\perp\alpha^\vee_{i_\ell} \text{ for any } \leq k\neq \ell\leq j, \text{ and }
\lambda(\alpha^\vee_{i_1}+\cdots+\alpha^\vee_{i_j})\notin\IZ_{\neq 0}\}\]
Then, there is an element $\mu\in P$ such $(\lambda-\mu)(\beta)=0$ for any $\beta\in B_\lambda$.
\end{lemma}
\begin{proof}
Let $Q^\vee_\lambda\subseteq Q^\vee$ be the sublattice generated by $B_\lambda$. We claim
that $Q^\vee_\lambda$ is a direct summand in $Q^\vee$, from which the claim readily follows.
\end{proof}
}

\subsection{Canonical fundamental solution}

The following is well--known, see {\it e.g.} \cite[4.2]{heckman1995harmonic}

\label{thm:holomorphicpartofLVL}
\begin{theorem}
Assume that $\nabla$ is {\it non--resonant} at $t_0$. Then, for any connected and simply--connected open
set $t_0\in U\subset \ol{T}_{\reg}$, there is a unique holomorphic function $H_0\colon U \to GL(V)$ which
is uniquely determined by the requirement that $H(t_0) = 1$ and that the gauge transform of $\nabla$ by $H$
is equal to 
\[d+\sum_i\left(\tfact{(\cow{i})}+\rho_k(\cow{i})\right)\frac{d Z_i}{Z_i}\]
Thus, for any determination of the functions $\log(Z_i)$ on $U$,
\[\Phi_0 = H_0(Z)\cdot\prod_i Z_i^{-\tfact{(\cow{i})}-\rho_k(\cow{i})}\]
is a fundamental solution of $\nabla$.\omitvaleriocomment{Give a topological description of the generators
of the affine Hecke algebra, and Cherednik's correction which gives the monodromy of $\Y_i$ in $\Phi_
0$ as $\exp(2\pi\ii \tfact{(\cow{i})})$ rather than $\exp(2\pi\ii \tfact{(\cow{i})}-\rho_k(\cow{i}))$. This 
should be in Cherednik's book, and is mentioned in Opdam's notes on page 35.}
\end{theorem}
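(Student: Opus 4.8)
The plan is to produce $H_0$ as a convergent matrix-valued power series in the coordinates $Z_i$ around $t_0$, extend it over $U$, and then read off $\Phi_0$ by undoing the gauge transformation. By the observation following \eqref{eq:nabla on T}, on a polydisc $\{|Z_1|,\dots,|Z_n|<\rho\}$ about $t_0$ the connection has the form $\nabla=d+\Omega$ with $\Omega=\sum_{i=1}^n A_i(Z)\,\frac{dZ_i}{Z_i}$, each $A_i$ holomorphic and $A_i(0)=R_i:=\tfact{(\cow i)}+\rho_k(\cow i)\in\End(V)$. A first point to record is that the residues $R_i$ pairwise commute: the coweights $\cow i$ lie in the commutative subalgebra $\Sh\subset\Htrigsub$, so their fibrewise actions on $V$ commute, while the $\rho_k(\cow i)$ are scalars. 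Hence $d+\sum_i R_i\frac{dZ_i}{Z_i}$ is a flat connection and, for any branch of the functions $\log Z_i$ on a simply connected $U\subset\ol T_{\reg}$, $\prod_i Z_i^{-R_i}$ is a (multivalued) fundamental solution of it.

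Requiring the gauge transform of $\nabla$ by $H$ to be this residual connection is equivalent to the linear system $Z_i\,\partial_{Z_i}H=HR_i-A_iH$, $i=1,\dots,n$, and I look for a solution $H=\sum_{p\in\IZ_{\geq 0}^n}H_pZ^p$ with $H_0=\mathrm{id}$. Writing $A_i=\sum_m A_{i,m}Z^m$, so $A_{i,0}=R_i$, and comparing the coefficient of $Z^p$ for $p\neq 0$ yields
\[\bigl(p_i+\operatorname{ad}(R_i)\bigr)H_p=-\sum_{\substack{m+m'=p\\ m\neq 0}}A_{i,m}H_{m'},\]
whose right-hand side involves only the $H_{m'}$ with $|m'|<|p|$. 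For any $p\neq 0$ one may choose an index $i$ with $p_i\geq 1$; then $-p_i$ is a nonzero integer, hence by non-resonance — the eigenvalues of $R_i$ (equivalently of $\cow i$ on $V$) do not differ by nonzero integers — not an eigenvalue of $\operatorname{ad}(R_i)$, so $p_i+\operatorname{ad}(R_i)$ is invertible on $\End(V)$. This determines $H_p$ by induction on $|p|$; running the same computation with $A_i$ replaced by $R_i$ shows the only formal solution of the homogeneous system $Z_i\partial_{Z_i}G=[G,R_i]$ with $G(t_0)=\mathrm{id}$ is $G=\mathrm{id}$, which gives uniqueness.

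It remains to handle two standard points and conclude. First, the series $H$ assembled by choosing one admissible direction $i$ at each multidegree must be shown to solve \emph{all} $n$ equations: this is the Frobenius compatibility step, in which the integrability of $\nabla$, together with $[R_i,R_j]=0$, forces the defect series $\mathcal B_i:=Z_i\partial_{Z_i}H-HR_i+A_iH$ to satisfy a homogeneous recursion whose solution, since the chosen component vanishes at each degree, is identically zero — alternatively one may invoke the general classification of regular-singular integrable connections with normal-crossing polar locus. Second, convergence: Cauchy estimates give $\|A_{i,m}\|\leq C\rho^{-|m|}$, and $\|(p_i+\operatorname{ad}(R_i))^{-1}\|$ is bounded by a constant uniformly for $p_i\geq 1$ (it is $O(1/p_i)$ for $p_i$ large, leaving only finitely many values to absorb), so the recursion is dominated by a geometric majorant series and $H=H_0$ converges on a smaller polydisc. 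It then extends uniquely and holomorphically over all of $U$ by flat continuation — the identity $dH_0=H_0\sum_i R_i\frac{dZ_i}{Z_i}-\Omega H_0$ defines an integrable system whose apparent log poles cancel because $H_0(t_0)=\mathrm{id}$ — and stays in $GL(V)$ there (as $\det H_0$, nonzero at $t_0$, satisfies the rank-one analogue of the same equation). Gauging $\nabla$ back by $H_0^{-1}$ then turns $\prod_i Z_i^{-R_i}$ into the asserted fundamental solution $\Phi_0=H_0(Z)\prod_i Z_i^{-\tfact{(\cow i)}-\rho_k(\cow i)}$. I expect the genuine work to be the Frobenius compatibility step — promoting \emph{one solved equation per multidegree} to a solution of the full integrable system — and keeping the convergence estimate uniform across the choices of coordinate direction; the linear-algebraic core, invertibility of $p_i+\operatorname{ad}(R_i)$, is immediate from non-resonance.
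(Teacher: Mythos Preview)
The paper does not prove this theorem at all: it simply states it as well known and refers to \cite[4.2]{heckman1995harmonic}. Your argument is the standard Frobenius method for a Fuchsian system with normal-crossing singularities and commuting residues, which is exactly what one finds in that reference. The key steps---commutativity of the residues $R_i$ because the $\cow{i}$ lie in the commutative subalgebra $\Sh$, invertibility of $p_i+\operatorname{ad}(R_i)$ from non-resonance, the compatibility of the $n$ recursions coming from integrability of $\nabla$ and $[R_i,R_j]=0$, and convergence by a majorant---are all correctly identified and in the right order. You are also right to flag the Frobenius compatibility step (showing that solving \emph{one} coordinate equation at each multidegree produces a simultaneous solution of all $n$) as the place where integrability is genuinely used; in the reference this is usually handled by an explicit computation of the defect, as you sketch.
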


\subsection{Monodromy representation for generic parameters}

Set
\[q_\alpha = e^{2 \pi \ii k_\alpha}\aand\Theta = \exp_{H^\vee}{\vartheta}\]
Then, the following holds \cite[Prop. 3.4]{cherednik1994integration}\footnote
{Note that \cite{cherednik1994integration} makes the additional assumption
that the stabilizer of $\Lambda$ is generated by simple reflections. The proof
of Proposition \ref{pr:generic monodromy} shows that this assumption is not
necessary.}
\label{pr:generic monodromy}
\begin{proposition}
Assume that
\begin{equation}\label{eq:generic mon}
e^\alpha(\Lambda) \neq q_\alpha\quad\text{for any $\alpha \in R$}
\end{equation}
Then, 
\begin{enumerate}
\item The monodromy of $\Iconn$ is isomorphic to the induced representation
$\I{\Lambda}$.
\item The monodromy of $\Kconn$ is isomorphic to the covariant representation
$K(\Theta)$. In particular, Conjecture \ref{conj:monoconj} holds in this case.
\end{enumerate}
\begin{proof} 
Since \eqref{eq:generic mon} implies that $\lambda(\alpha^\vee)\neq k_\alpha$ for any
$\alpha\in R$, it follows from Theorems \ref{th:det R} and \ref{thm:iso-ILambda-JTheta}
that (1) and (2) are equivalent.

To prove (1), note that the eigenvalues of any $y\in\h$ on $\dI{\lambda}$ are $\{w\lambda
(y)\}_{w\in W}$ by Proposition \ref{prop:weights-Ilambda}. It follows from Theorem \ref
{thm:holomorphicpartofLVL} that those of any $\Y\in P^\vee$ are $\{w\Lambda(\Y)\}_{w
\in W}$ whenever $\Iconn$ is non--resonant\valeriocomment{There is a negation
in the exponents in Theorem \ref{thm:holomorphicpartofLVL} which would give the
eigenvalues as $\{w\Lambda^{-1}(\Y)\}_{w\in W}$. I suspect this comes from the change of
coordinates $Z_i=e^{\alpha_i}$ rather than $Z_i=e^{-\alpha_i}$. Check this.}, and therefore for all $(\lambda,
k)$ by continuity. Thus, there is a non--zero morphism $\Psi_\Lambda:\I{\Lambda}\to
V$ of $\Hext$--modules for any $(\lambda,k)$. Since $\I{\Lambda}$ is irreducible by
Theorem \ref{th:Kato}, $\Psi_\Lambda$ is injective and therefore an isomorphism.
\end{proof}
\valeriocomment{Since \eqref{eq:generic mon} cuts across the non--resonance locus,
it shows that the FG conjecture holds on both (some) resonant and (some) non--resonant
points. Make sure this is pointed out somewhere.}
\omitvaleriocomment{Cherednik in \cite[Prop. 3.4 (a)--(b)]{cherednik1994integration} additionally assumes
that the stabilizer of $\Lambda$ is generated by simple reflections. We do not seem to need this
{\bf but} we are dependent on Kato's irreducibility criterion for $\I{\Lambda}$ (as is Cherednik) so
make sure Kato does not rely on that additional assumption.}
\end{proposition}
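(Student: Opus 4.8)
The plan is to deduce the two parts from one another and then prove the first by a weight computation at the large--volume limit. For the equivalence of (1) and (2): the hypothesis \eqref{eq:generic mon} forces $\lambda(\alpha^\vee)\neq\pm k_\alpha$ for every $\alpha\in R^+$ (equality would give $e^\alpha(\Lambda)=q_\alpha^{\pm 1}$), so $\lambda$ is $k$--regular and Theorem \ref{th:det R} gives an isomorphism $\J_\vartheta\cong\dI{\lambda}$ of $\Htrigsub$--modules. Inducing up to $\Htrig$ identifies $\cJ_\vartheta$ with $\cI_\lambda$, and since the trigonometric KZ connection is functorially attached to the $\Htrig$--module structure (after localisation to $H_{\reg}$), $\Kconn$ and $\Iconn$ are isomorphic as vector bundles with connection; hence their monodromies agree. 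On the Hecke side, \eqref{eq:generic mon} is precisely Kato's isomorphism criterion, so Theorem \ref{thm:iso-ILambda-JTheta} gives $K(\Theta)\cong\I{\Lambda}$. Thus it suffices to prove (1), and the assertion about Conjecture \ref{conj:monoconj} is then immediate.

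To prove (1), recall from Section \ref{subsec:affineheckealg} that the monodromy of $\Iconn$, based near the point $t_0$ with $Z_i(t_0)=0$, factors through $\Hext$ and yields a representation on a space $V$ with $\dim V=|W|$. I would first treat the non--resonant case. By Theorem \ref{thm:holomorphicpartofLVL} there is a canonical fundamental solution of $\Iconn$ near $t_0$, in which the monodromy operator $\mu(\Y^{\cow{i}})$ is semisimple with eigenvalues the exponentials of those of $\tfact{(\cow{i})}$ (after the $\rho_k$--correction built into the definition of $\mu(\Y^{\cow{i}})$ in Section \ref{sec:genofHext}). By Proposition \ref{prop:weights-Ilambda} the eigenvalues of $\cow{i}\in\h\subset S\h$ on $\dI{\lambda}$ are $\{w\lambda(\cow{i})\}_{w\in W}$, so the $\IC[H^\vee]$--weights of $V$ are exactly $\{w\Lambda\}_{w\in W}$. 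In particular $\Lambda$ occurs as a weight, so Frobenius reciprocity produces a non--zero morphism of $\Hext$--modules $\Psi_\Lambda\colon\I{\Lambda}=\mathrm{Ind}_{\IC[H^\vee]}^{\Hext}\IC_\Lambda\to V$.

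To remove the non--resonance hypothesis, I would note that the monodromy of the family $\{\Iconn\}$ depends algebraically on $(\lambda,k)$, and that the condition of $\Lambda$ occurring among the $\IC[H^\vee]$--weights of $V$ is closed in the parameters; since it holds on the dense non--resonant locus it holds for all $(\lambda,k)$ satisfying \eqref{eq:generic mon}, and $\Psi_\Lambda$ exists there. Finally, Theorem \ref{th:Kato} asserts that $\I{\Lambda}$ is irreducible precisely when $e^\alpha(\Lambda)\neq q_\alpha$ for all $\alpha\in R$, that is when \eqref{eq:generic mon} holds; hence $\Psi_\Lambda$ is injective, and as $\dim\I{\Lambda}=|W|=\dim V$ it is an isomorphism. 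This gives (1), hence (2), and so Conjecture \ref{conj:monoconj} in this range.

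The step I expect to be the main obstacle is pinning down the $\IC[H^\vee]$--weights of the monodromy in the canonical solution of Theorem \ref{thm:holomorphicpartofLVL}: one must reconcile the choice of coordinates $Z_i=e^{\alpha_i}$ (as opposed to $e^{-\alpha_i}$) with the $\rho_k$--twist entering $\mu(\Y^{\cow{i}})$ and make sure the eigenvalues come out as $\{w\Lambda(\Y)\}$ rather than $\{w\Lambda^{-1}(\Y)\}$, since an inversion here would break the Frobenius reciprocity step. A secondary point requiring care is the passage from the honest to the generalised $\Lambda$--eigenspace when $(\lambda,k)$ is resonant, together with the algebraic dependence of the monodromy on the parameters underpinning the continuity argument.
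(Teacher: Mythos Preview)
Your proposal is correct and follows essentially the same route as the paper: reduce (2) to (1) via Theorems \ref{th:det R} and \ref{thm:iso-ILambda-JTheta}, compute the $\IC[H^\vee]$--weights of the monodromy from Proposition \ref{prop:weights-Ilambda} and the canonical solution of Theorem \ref{thm:holomorphicpartofLVL} in the non--resonant case, extend by continuity, and conclude by Kato's irreducibility criterion (Theorem \ref{th:Kato}). Your flagged concern about the sign of the exponents ($\Lambda$ versus $\Lambda^{-1}$) is exactly the point the authors themselves single out as needing verification.
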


\Omit{

\subsection{Reduction to Rank One} \label{subsec:rankonered}  

Cherednik employs the method of reduction to rank one in order to compute $\mu(T_i)$ (\cite[1.2.3]{cherednik2005double}).  Let $R^{(i)} = \lbrace \pm \alpha_i \rbrace$ be the subroot system corresponding to one of the simple roots in $R$. 
 Fix the index $i$ of one of the simple roots.
  We reserve $i$ for the distinguished simple root and use $j$ as a general index.

In order to compute the monodromy of $\nabla$ in rank $n$, we can easily read off the operators $\Y_i$ from the large volume limit, however computing $T_i$ requires the technique of \emph{reduction to rank 1}.    
 Denote the torus of adjoint type associated to $R^{(i)}$ by $T^{(i)} = \mathrm{Spec}(\bbC[Z_i^{\pm 1}])$.  Then \eqref{eqn:KZconnection} defines a trigonometric KZ connection $\nabla^{(i)}$ over $T^{(i)}$.  
 the $\bbC$-span of $R^{(i)}$ by $\frakh^{(i)}$ and the rank one torus of simply connected type by $H^{(i)}$ and the torus of adjoint type by $T^{(i)}$. 
This root system has a vector bundle associated to it in the same manner in which we associated one to $R$.  
Denote the vector bundle and connection by $\nabla^{(i)}, .$  As in \eqref{eqn:KZconnection} we define
\begin{align*} 
	\nabla^{(i)} =  d + \phi  + \rho_k - \sum_{\alpha \in (R^{(i)})^+} k \alpha \frac{1- \phi(s_\alpha)}{1-e^{-\alpha}}. 
\end{align*}\todo{be careful.  The derivative should be in $T^{(i)}$}
Let $\xi = \alpha_i^\vee/2$ the fundamental weight dual to $\alpha_i$, which is the vector field $Z_i \partial_{Z_i}$.  Then connection $\nabla^{(i)}$ evaluated at $\xi$ is
\begin{equation}\label{eqn:nablai}
	\nabla^{(i)}_\xi =  Z_i \partial_{Z_i} + \tfact{(\alpha_i^\vee/2)} -  k_i \frac{1- \tfact{s}_{\alpha_i}}{1-Z_i^{-1}} + \rho_k(\alpha_i^\vee/2).
\end{equation}
This vector bundle has a rank one base $T^{(i)}$ and fiber with rank $\mathrm{dim}(V)$.
Let $\Phi_0^{(i)}$ be the large volume limit solution of $\nabla^{(i)}$.  The following theorem is proved in \cite[1.2.3]{cherednik2005double}.
\label{thm:rankonered}
\begin{theorem} 
	If $\nabla$ is non-resonant, the monodromy of the loop $\delta_i$ is computed by $\Phi_0^{(i)}$ as
	\[\mu_{\Phi_0}(\delta_i)=
	\mu_{\Phi_0^{(i)}}(\delta_i)
	\]
\end{theorem}
\begin{proof}
	Assume that $\nabla$ is non-resonant so that by Corollary \ref{cor:LVLexist}, the large volume limit
	\[
		\Phi (Z) = H(Z) \prod_{j} Z_i^{A_j}.  
	\]
	exists in a neighborhood of $0$.  Since $H$ is regular near $Z=0$, we can set
	\begin{align*}
		\overline H (Z)    & = H(0,\ldots,0,Z_i,0,\ldots,0),         \\
		\overline \Phi (Z) & = \overline{H} (Z) \prod_{j} Z_j^{A_j}. 
	\end{align*}
	This satisfies
	\[
		\overline{\nabla} \, \overline \Phi = 0
	\] 
	where
	\[
		\overline{\nabla} = \lim_{j \neq i, Z_j \to 0} \nabla.
	\]
	Letting $\alpha = \sum_j m^j_\alpha \alpha_j$ then 
	\[
		e^{-\alpha} = \frac{1}{Z_1^{m^1_\alpha} \ldots Z_n^{m^n_\alpha} }
	\]
	Since we send $Z_i \to 0$ for all $j \neq i$, this means unless $m^j_\alpha = 0$ for all $j \neq i$,  then $e^{-\alpha} \to \infty$. Thus for all $\alpha \neq \alpha_i$,
	\[
		\frac{1}{1 - e^{-\alpha}} \to 0.
	\]
	So most terms of $\nabla$ go to $0$ (see \eqref{eqn:KZconnection}) and in the limit 
	\[
		\overline{\nabla}_\xi = \xi +  \phi(\xi) + \rho_k(\xi) - k \alpha_i(\xi) \frac{1- \phi(s_{\alpha_i})}{1-e^{-\alpha_i}} .
	\]
	Thus substituting $\xi = \lambda_j^\vee$ we find 
	\begin{align}
		Z_i \partial_{Z_i} \overline{\Phi} & = \left(\phi(\lambda_i^\vee) +  \rho_k(\lambda_i^\vee) - k  \frac{1- \phi(s_{\alpha_i})}{1-e^{-\alpha_i}} \right) \overline{\Phi}, \label{eqn:Phibarsys} \\
		Z_j \partial_{Z_j} \overline{\Phi} & = \left( \phi(\lambda_j^\vee) +  \rho_k(\lambda_j^\vee) \right)  \overline{\Phi}\ \ \text{for $j \neq i$.} \label{eqn:Phibarsys2}                      
	\end{align}
	Now define
	\begin{align*}
		G(Z)       & =  Z_i^{\left( \phi(\alpha_i^\vee/2) +  \rho_k(\alpha_i^\vee/2)  \right)} \prod_j Z_j^{-\left(\phi(\lambda_j^\vee) + \rho_k(\lambda_j^\vee)  \right)} \\
		\Phi^{(i)} & = G \overline{ \Phi } .                                                                                        
	\end{align*}
	
	We show that $G(Z)$ is invariant with respect to both the action of $s_i$ in the fiber and in the base.  Invariance in the fiber means that $G(Z)$ commutes with $\phi(s_i)$.  For $j \neq i$, the coweight $(\lambda^\vee_j)^{s_i} = \lambda^\vee_j$ and thus $\lambda^\vee_j$ 
	and $s_i$ commute in $\Htrig$.  Thus $Z_j^{\phi(\lambda^\vee_j)+ \rho_k(\lambda^\vee_j)}$ and $\phi(s_i)$ commute.
	And the $Z_i$ exponent
	\begin{align*}
		(-\lambda^\vee_i + \alpha_i^\vee/2)^{s_i} & = (-\lambda^\vee_i+\alpha_i^\vee)+(\alpha_i^\vee/2-\alpha_i^\vee) \\
		                                          & = -\lambda^\vee_i+\alpha_i^\vee/2                                 
	\end{align*}
	is similarly fixed.  Thus $\phi(s_i)$ commutes with $G$. 
	
	Invariance in the base means $G(s_i(Z)) = G(Z)$. The action of $s_i$ in the base is
	\begin{align*}
		s_i(Z_j) & = e^{\alpha_j - \alpha_j(\alpha_i^\vee) \alpha_i} = Z_i^{-\alpha_j(\alpha_i^\vee)} Z_j \\
		s_i(Z_i) & = e^{-\alpha_i} = Z_i^{-1}                                                             
	\end{align*}
	and thus
	\begin{align*}
		G(s_i(Z)) & = Z_i^{-\left( \phi(\alpha_i^\vee/2) +  \rho_k(\alpha_i^\vee/2)  \right)}  Z_i^{ \sum_j \alpha_j (\alpha_i^\vee)\left(\phi(\lambda_j^\vee) + \rho_k(\lambda_j^\vee)  \right)} \left( \prod_j Z_j^{-\left(\phi(\lambda_j^\vee) + \rho_k(\lambda_j^\vee)  \right)}  \right) 
	\end{align*}
	and since $\lbrace \lambda_j^\vee \rbrace$ and $\lbrace \alpha_j \rbrace$ are dual bases,
	\[
		Z_i^{ \sum_j \alpha_j (\alpha_i^\vee)\left(\phi(\lambda_j^\vee) + \rho_k(\lambda_j^\vee)  \right)} =  Z_i^{ \left(\phi(\alpha_i^\vee) + \rho_k(\alpha_i^\vee)  \right)} 
	\]
	and so
	\begin{align*}
		G(s_i(Z)) & = Z_i^{-\left( \phi(\alpha_i^\vee/2) +  \rho_k(\alpha_i^\vee/2)  \right)}  Z_i^{ \left(\phi(\alpha_i^\vee) + \rho_k(\alpha_i^\vee)  \right)} \left( \prod_j Z_j^{-\left(\phi(\lambda_j^\vee) + \rho_k(\lambda_j^\vee)  \right)}  \right) \\
		          & = G(Z).                                                                                                                                                                                                                               
	\end{align*}
	
	We can now show that $\Phi^{(i)}$ satisfies the system $\nabla^{(i)}$. For $j \neq i$, we have 
	\begin{align*}
		Z_j \partial_{Z_j} (G \overline{\Phi}  ) & = (Z_j \partial_{Z_j}  G)\overline{\Phi}  + G(Z_j \partial_{Z_j}\overline{\Phi} )                                                                                        \\
		                                         & =   \left(  - \phi(\lambda_j^\vee) -  \rho_k(\lambda_j^\vee) \right) G \overline{\Phi} + G \left( \phi(\lambda_j^\vee) +  \rho_k(\lambda_j^\vee) \right)\overline{\Phi}  . 
	\end{align*}
	The subalgebra $S(\frakh) \subset \Htrig$ is commutative and so $G$ and $\phi(\lambda_j^\vee) +  \rho_k(\lambda_j^\vee)$ commute, which makes the above derivative $0$. 
	For $i$,
	\begin{align}
		Z_i \partial_{Z_i} (G \overline{\Phi}  ) & = (Z_i \partial_{Z_i}  G)\overline{\Phi}  + G(Z_i \partial_{Z_i}\overline{\Phi} ) \nonumber                                                                            \\
		                                         & =   \left(  \phi(\alpha_i^\vee/2) +  \rho_k(\alpha_i^\vee/2) - \phi(\lambda_i^\vee) -  \rho_k(\lambda_i^\vee) \right) G \overline{\Phi} \nonumber                        \\
		                                         & \qquad \qquad+ G \left( \phi(\lambda_i^\vee) + \rho_k(\lambda_i^\vee) - k  \frac{1- \phi(s_{\alpha_i})}{1-e^{-\alpha_i}} \right)\overline{\Phi} \label{eqn:commuteG} . 
	\end{align}
	Since $\phi(s_i)$ commutes with $G$, as before we can simplify \eqref{eqn:commuteG} to get
	\[
		Z_i \partial_{Z_i} (G \overline{\Phi}  ) =  \left( \phi(\alpha_i^\vee/2) +  \rho_k(\alpha_i^\vee/2) - k  \frac{1- \phi(s_{\alpha_i})}{1-e^{-\alpha_i}}  \right) G \overline{\Phi}.
	\]
	Thus $\nabla^{(i)} \Phi^{(i)} = 0$ as claimed.  
	
	It remains to show that $\Phi$ and $\Phi^{(i)}$ have the same monodromy around $\delta_i$.
	
	The monodromy of $\Phi$ around $\delta_i$ is defined
	\begin{align*}
		\Phi^{-1}(Z) \phi(s_i) \Phi(s_i(Z)). 
	\end{align*}
	Changing the basepoint changes the entire monodromy representation by conjugation.  We are free to compute monodromy with respect to any base point we choose.  So by letting the basepoint go to the wall $Z_j = 0$ for $j \neq i$, we see that the monodromy of $\overline{\Phi}$  \todo{not satisfied with this.}
	\[
		\overline{\Phi}^{-1}(Z) \phi(s_i) \overline{\Phi}(s_i(Z)).
	\]
	is conjugate to the monodromy of $\Phi$.   From the properties of $G(Z)$, we can show $\Phi^{(i)}$ has the same monodromy as $\overline{\Phi}$ and thus $\Phi$ as claimed.  By definition,
	\begin{align*}
		\mu_{\Phi^{(i)}}(\delta_i) & = (\Phi^{(i)})^{-1}(Z) \phi(s_i) \Phi^{(i)}(s_i(Z))                             \\
		                           & = \overline{\Phi}^{-1}(Z)  G^{-1}(Z) \phi(s_i) G(s_i(Z))\overline{\Phi}(s_i(Z)) 
	\end{align*}
	since $G(s_i(Z)) = G(Z)$ and $G(Z)$ commutes with $\phi(s_i)$ this is
	\[
		= \overline{\Phi}^{-1}(Z)  \phi(s_i) G^{-1}(Z) G(Z)\overline{\Phi}(s_i(Z)) 
	\]
	which is $\mu_{\overline{\Phi}} (\delta_i)$.
	
	\todo{is this necessary} To finish the proof, we show that $\Phi^{(i)}$ is the large volume limit $\Phi_0^{i}$ of $\nabla^{(i)}$.   This amounts to showing that $\Phi^{(i)}$ has the right asymptotic expansion, that is,
	\[
		\Phi^{(i)}(Z_i) = \overline{H}(Z_i) Z_i^{\phi(\alpha_i^\vee/2)+\rho_k(\alpha_i^\vee/2)}.
	\]
	The element $\left(  \phi(\alpha_i^\vee/2) +  \rho_k(\alpha_i^\vee/2) - \phi(\lambda_i^\vee) -  \rho_k(\lambda_i^\vee) \right)$ commutes with $\phi(s_i)$  and $S(\frakh)$ so \eqref{eqn:Phibarsys} and \eqref{eqn:Phibarsys2} are satisfied by both
	\begin{align*}
		  & \left(  \phi(\alpha_i^\vee/2) +  \rho_k(\alpha_i^\vee/2) - \phi(\lambda_i^\vee) - \rho_k(\lambda_i^\vee) \right) \overline \Phi \text{ and } \\
		  & \overline \Phi \left(  \phi(\alpha_i^\vee/2) + \rho_k(\alpha_i^\vee/2) - \phi(\lambda_i^\vee) - \rho_k(\lambda_i^\vee) \right).             
	\end{align*}
	And since $\overline \Phi \left(  \phi(\alpha_i^\vee/2) + \rho_k(\alpha_i^\vee/2) - \phi(\lambda_i^\vee) - \rho_k(\lambda_i^\vee) \right)$ commutes with the exponential term of $\overline{\Phi}$ which is still $\prod Z_j^{A_j}$ since it commutes with the $A_j = \phi(\lambda_j^\vee) - \rho_k(\lambda_j^\vee)$, both of these solutions have the same asymptote.  Thus they are equal.   We can likewise show the other exponents of $G$ commute with $\overline \Phi$.  Thus $\overline \Phi$ and $G$ commute.  
	
	Thus
	\begin{align*}
		\Phi^{(i)}(Z_i) & = G(Z) \left( \overline{H}(Z_i) \prod_j Z_j^{\left(\phi(\lambda_j^\vee) +\rho_k(\lambda_j^\vee)  \right)} \right) \\
		                & = \left( \overline{H}  \prod_j Z_j^{\left(\phi(\lambda_j^\vee) +\rho_k(\lambda_j^\vee)  \right)} \right) G(Z)     \\
		                & = \overline{H} Z_i^{\left( \phi(\alpha_i^\vee/2) + \rho_k(\alpha_i^\vee/2)  \right)}                              
	\end{align*}
	as desired. 
	
\end{proof}
}

\section{Monodromy in rank one}\label{sec:monodromy-rank-one}

\newcommand {\AD}{\mathsf{AD}}
In this section, we compute the monodromy of the trigonometric KZ connection with values in
the covariant representation $\J_\vartheta$ for all values of the parameters $(\vartheta,k)$, 
when $R=\{\pm\alpha\}$ is of rank 1. We make use of the shift operators in $\lambda$ and $k$
to resolve the monodromy at resonant values of the parameters.\valeriocomment{Here is a slightly
different, and possibly shorter, way to determine monodromy in rank 1. 1) By Proposition \ref
{pr:generic monodromy}, the monodromy of $\nabla^\vartheta$ is $K(\Theta)$ whenever
$\Lambda\neq q^{\pm 1}$, so may assume that $(\lambda,k)\in\AD=\{(\lambda,k)|\,\lambda
-k\in\IZ\text{ or }\lambda+k\in\IZ\}$ (this may, or may not cut down the work a little). 2) The
translation action of $\IZ^2$ on $\IC^2$ restricts to one on $\AD$. Determine a fundamental
domain for that \ie parametrise the orbits (a possible choice could be $\{(t,t)|\,t\in[0,1)\}\sqcup
\{(t,-t)|\,t\in(0,1/2)\sqcup(1/2,1)\}$) 3) On each orbit one can introduce the finer equivalence
relation $p\sim p'$ if $p,p'$ differ by an invertible shift. 4) It is clear that equivalence classes
of points map to isomorphic monodromy representations, so it seems useful to determine
the equivalence classes within each orbit. 5) Then, for each such class, pick a good (non--resonant
if possible?) point to determine the monodromy there. 6) Note that, if $p,p'$ differ by a non--invertible
shift, the latter should still give a morphism of monodromy representations at $p,p'$ which is
valuable information we have not exploited yet.}

\subsection{Monodromy of KZ connection in rank one} 
\label{subsec:monoKZrankone}

Fix $(\vartheta,k)\in\h^*/W\times\IC$, $\lambda\in q^{-1}(\vartheta)\subset\h^*$ with $\Re(\lambda(\alpha^\vee)) \geq 0$, and identify $\frakh^*$
with $\bbC$ by $\nu\to\nu(\alpha^\vee)$.  Let $\Lambda = e^{\pi i \lambda}$.

Denote the character of $H^{\mathrm{ext}}_q$ in which $\Y$ acts by $a$ and $T$ by $b$ by $(a,b)$.
The following determines the values of $(\vartheta,k)$ for which Conjecture \ref{conj:monoconj} holds. 

\label{th:main 1}
\begin{theorem}
The monodromy of the local system $\KL$ is isomorphic to $\J(\Theta)$ except in the following cases
\begin{enumerate}
\item If $\lambda,k\in\IZ$ and $|\lambda| \geq\max(k,1-k)$, $\KL$ is isomorphic to $(-\Lambda,1) \oplus (-\Lambda,-1)$.
\item If $\lambda,k\in\half{1}+\IZ$ and $|\lambda| \geq\max(k,1-k)$, $\KL$ is isomorphic to the induced
representation $\I{-\Lambda}$.
\item If one of $k \pm \lambda$ lies in $\IZ_{>0}$, $\lambda\notin\half{1}+\IZ$, and (1) does not apply, 
$\KL$ is isomorphic to $K(\Theta)^-$.
\end{enumerate}
\end{theorem}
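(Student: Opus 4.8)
The plan is to transcribe the rank-$1$ connection $\Kconn$ into a scalar second-order Fuchsian equation, read off its Riemann scheme in terms of $(\lambda,k)$, and identify the resulting monodromy with a representation of $\Hext$ via the classification and irreducibility statements of Sections~\ref{sec:induced-representation} and~\ref{sec:AHA}. First I would reduce to the resonant locus: by Proposition~\ref{pr:generic monodromy} the monodromy of $\Kconn$ is isomorphic to $K(\Theta)$ whenever $e^\alpha(\Lambda)\neq q_\alpha$, and in rank $1$ this fails exactly when one of $k\pm\lambda$ lies in $\IZ$, so only that locus needs work. There I would use the two families of shift operators that also serve to set up the reduction: the $\lambda$-shift operators $\cT{w}{\lambda}\colon\cI_\lambda\to\cI_{w\lambda}$, $w\in W^e$, which translate $\lambda$ by an element of $P\cong\IZ$ and are isomorphisms of vector bundles with connection away from the walls $\lambda(\alpha^\vee)\in\pm k+\IZ$ (Proposition~\ref{pr:det cT}), together with the $k$-shift operator $S_{\J_\vartheta}(k)\colon\KL\to\KL[\vartheta,k+1]$, invertible when $\vartheta$ is $k$-regular (Theorem~\ref{thm:k-shift-op-invert}); both preserve the isomorphism class of the monodromy, and composing with the identification $\J_\vartheta\cong\dI{\lambda}$ of Theorem~\ref{th:det R} (valid when $\lambda(\alpha^\vee)\neq k$) moves the problem to a convenient representative. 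The inequalities $|\lambda|\geq\max(k,1-k)$, together with $k\pm\lambda\in\IZ_{>0}$ and $\lambda\notin\half{1}+\IZ$, are precisely the configurations where every available shift pushes $\lambda$ \emph{toward} the wall of the Weyl chamber and cannot be reversed — equivalently, where $\J_\vartheta$ fails to be isomorphic to $\dI{\lambda}$ — so a genuinely new monodromy type appears there.

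Since $ch_{\J_\vartheta}\colon\KL\to\Cs{-\vartheta}{-k+1}$ is an isomorphism of local systems for \emph{all} $(\vartheta,k)$ (Proposition~\ref{prop:chiso}), I would replace the vector-valued problem by the monodromy of the rank-$1$ hypergeometric system $\Cs{-\vartheta}{-k+1}$, a Gauss hypergeometric local system on $\mathbb{P}^1\setminus\{0,1,\infty\}$. In the coordinate $z=e^\alpha$ on $T$ the singular points are $z=0,\infty$ (from $\IC[H]$) and $z=1$ (the root hypertorus $\{\Delta=0\}$): the local exponents at $z=\infty$ are governed by the $\Sh$-weights $\pm\lambda$ of $\J_\vartheta$ (Proposition~\ref{co:weights of K}), those at $z=1$ by the residue $k(1-\tfact{s}_\alpha)$ of $\Kconn$ together with the shifts by $\Delta^{-1}$ and $\rho_k$ built into $ch_{\J_\vartheta}$, and those at $z=0$ by the trace (Fuchs) relation. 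Passing to monodromy and writing $\Lambda=e^{\pi\ii\lambda}$, $q=e^{2\pi\ii k}$, the local monodromy data is expressed in terms of $\Lambda$ and $q$, hence in terms of the characters $(y,t)$ of $\Hext$ in \eqref{eq:AHA rk 1 intro}.

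I would then apply the classical analysis of a $2\times2$ Fuchsian system with three singular points: the monodromy is irreducible — hence, by Proposition~\ref{pr:generic monodromy}, isomorphic to $K(\Theta)$ — unless a sum of local exponents at a singular point is an integer (forcing reducibility) or a difference of local exponents at a singular point is a nonzero integer (permitting a unipotent Jordan block). Matching the degenerations with the three exceptional cases: if $\lambda,k\in\IZ$ and $|\lambda|\geq\max(k,1-k)$, the local monodromy at $z=1$ is trivial and the monodromy splits as a direct sum of two characters, which — computing the $Y$-eigenvalues from the canonical fundamental solution (Theorem~\ref{thm:holomorphicpartofLVL}) and the action of $T$ at $z=1$, against \eqref{eq:AHA rk 1 intro} — must be $(-\Lambda,1)\oplus(-\Lambda,-1)$; if $\lambda,k\in\half{1}+\IZ$ and $|\lambda|\geq\max(k,1-k)$, the same local data forces an indecomposable, reducible monodromy, i.e.\ a nonsplit self-extension, which is the induced representation $\I{-\Lambda}$ by Theorem~\ref{thm:iso-ILambda-JTheta}; and if one of $k\pm\lambda\in\IZ_{>0}$ with $\lambda\notin\half{1}+\IZ$ and (1) does not apply, the exponent difference at $z=1$ is a nonzero integer but the monodromy stays irreducible with the eigenvalue of $T$ equal to $-q$, giving $K(\Theta)^-$, again by Theorem~\ref{thm:iso-ILambda-JTheta}. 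In all remaining configurations one checks that $K(\Theta)$ is irreducible (Theorem~\ref{th:Kato}) and concludes by Proposition~\ref{pr:generic monodromy}.

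\textbf{The main obstacle} is the analysis near the wall $\lambda(\alpha^\vee)=\pm k$, where one must carefully distinguish logarithmic, semisimple and reducible-but-non-logarithmic monodromy; this is exactly where the boundary inequalities $|\lambda|\geq\max(k,1-k)$ enter, since the Matsuo and Cherednik maps and the identification $\J_\vartheta\cong\dI{\lambda}$ all degenerate there and the generic picture cannot be imported. A second essential point — the one yielding the failure of Conjecture~\ref{conj:monoconj} — is to verify that $(-\Lambda,1)\oplus(-\Lambda,-1)$, $\I{-\Lambda}$ and $K(\Theta)^-$ are each \emph{not} isomorphic to $K(\Theta)$, which follows by comparing $\IC[H^\vee]$-module structures and invoking the irreducibility criterion of Theorem~\ref{th:Kato}.
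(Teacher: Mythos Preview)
Your high--level strategy coincides with the paper's: both reduce $\KL$ to the rank--$1$ hypergeometric system via the Cherednik map $ch_{\J_\vartheta}$ (Proposition~\ref{prop:chiso}), and both invoke the $\lambda$-- and $k$--shift operators to move within integer lattice translates of $(\lambda,k)$. The paper's organisation is also case--based: first $\lambda\notin\half{1}\IZ$, then the anchor points $\lambda=0$ and $\lambda=\half{1}$, and finally $\lambda\in\IZ_{\neq 0}$ and $\lambda\in\half{1}+\IZ_{\neq 0}$, the latter two reduced to the anchors by shifting.

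Where your proposal diverges, and where it goes wrong, is in the analysis of the three exceptional configurations. You attempt to read off the monodromy type from the Riemann scheme alone, but this is not enough to distinguish $K(\Theta)$ from $K(\Theta)^-$, nor to decide when the representation splits: both $K(\Theta)$ and $K(\Theta)^-$ have identical local monodromy data (eigenvalues $\{1,-q\}$ for $T$ and $\{\Lambda,\Lambda^{-1}\}$ for $\Y$), and what separates them is \emph{which} $T$--eigenline is cyclic under $\Y$. The paper determines this by explicitly writing down the hypergeometric solutions $\Psi^\infty,\Psi^0$, computing the Kummer connection matrix $K$ in closed form (as a matrix of $\Gamma$--functions), extracting the $T^\infty$--eigenvector for the eigenvalue $1$ from the first column of $K$, and checking whether its two components both survive. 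That survival criterion is exactly $k\pm\lambda\notin\IZ_{>0}$, and the analogous check on the second column gives the $K(\Theta)^-$ case.

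Your specific claim in case~(1), that ``the local monodromy at $z=1$ is trivial'', is false: the exponent difference at the root hyperplane is $k-\half{1}$, so the local eigenvalues there are $1$ and $-q$, which for $k\in\IZ$ are $1$ and $-1$, never both $1$. The actual mechanism, visible in the paper's direct computation at $(\lambda,k)=(\pm 1,0)$, is the opposite: $T=\tfact{s}=\mathrm{Diag}(1,-1)$ is nontrivial, while $\bar{\Y}=e^{\pi\ii y}=-\mathrm{Id}$ is \emph{scalar}, and it is the scalarity of $\Y$ that forces the splitting $(-\Lambda,1)\oplus(-\Lambda,-1)$. Likewise in case~(2) at $(\lambda,k)=(\half{1},\half{1})$ the paper finds $T$ unipotent with a Jordan block and $\Y$ semisimple with eigenvalues $\pm\ii$, so the indecomposability is driven by $T$, not by any degeneration at $z=\infty$. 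Finally, your closing sentence (``in all remaining configurations $K(\Theta)$ is irreducible'') cannot be right on the locus $k\pm\lambda\in\IZ$ you have already restricted to, since by Theorem~\ref{th:Kato} $K(\Theta)$ is reducible precisely there; the paper instead shows directly, via the cyclic--vector check above and via shifting to the anchor points, that the monodromy is nevertheless isomorphic to the (reducible) $K(\Theta)$ in those remaining cases.
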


\begin{figure}[ht]
  \centering
  
  \begin{tikzpicture}[scale=0.3]
    \begin{axis}[
        color =black,
        xlabel=k,
        ylabel=$\lambda$,
        axis lines=middle,
        xmin=-3, xmax=3,
        ymin=-3, ymax=3,
        xtick={-3,-2,-1,0,1,2,3}, ytick={-3,-2,-1,0,1,2,3},
        width=18cm,
        line width=1pt,
        x label style={at={(axis description cs:1.01,0.5)},anchor=west,font=\huge},
        y label style={at={(axis description cs:0.5,1.01)},anchor=south,font=\huge},
    ]
    \addplot [color=red,line width=2pt,domain=-3:3, samples=2] {x-1};
    \addplot [color=red,line width=2pt,domain=-3:3, samples=2] {x-2};
    \addplot [color=red,line width=2pt,domain=-3:3, samples=2] {x-3};  
    \addplot [color=red,line width=2pt,domain=-3:3, samples=2] {x-4};  
    \addplot [color=red,line width=2pt,domain=-3:3, samples=2] {x-5};  
    \addplot [color=red,line width=2pt,domain=-3:3, samples=2] {x-6};  
    \addplot [color=red,line width=2pt,domain=-3:3, samples=2] {-x+1};
    \addplot [color=red,line width=2pt,domain=-3:3, samples=2] {-x+2};
    \addplot [color=red,line width=2pt,domain=-3:3, samples=2] {-x+3};
    \addplot [color=red,line width=2pt,domain=-3:3, samples=2] {-x+4}; 
    \addplot [color=red,line width=2pt,domain=-3:3, samples=2] {-x+5};
    \addplot [color=red,line width=2pt,domain=-3:3, samples=2] {-x+6};  
    
    \addplot [color=green,only marks,mark size=5pt] table[row sep=\\]{%
        0.5 0.5\\
        -0.5 1.5\\
        0.5 1.5\\
        1.5 1.5\\
        -1.5 2.5\\
        -0.5 2.5\\
        0.5 2.5\\
        1.5 2.5\\
        2.5 2.5\\
    };
    \addplot [color=green,only marks,mark size=5pt] table[row sep=\\]{%
        0.5 -0.5\\
        -0.5 -1.5\\
        0.5 -1.5\\
        1.5 -1.5\\
        -1.5 -2.5\\
        -0.5 -2.5\\
        0.5 -2.5\\
        1.5 -2.5\\
        2.5 -2.5\\
    };
    
    \addplot [color=white,only marks,mark size=5pt] table[row sep=\\]{%
        1.5 0.5\\
        2.5 0.5\\
        2.5 1.5\\
        1.5 -0.5\\
        2.5 -0.5\\
        2.5 -1.5\\
    };
    \addplot [color=blue,only marks,mark size=5pt] table[row sep=\\]{%
        0 -1\\
        1 -1\\
        -1 -2\\
        0 -2\\
        1 -2\\
        2 -2\\
        -2 -3\\
        -1 -3\\
        0 -3\\
        1 -3\\
        2 -3\\
        3 -3\\
        0 1\\
        1 1\\
        -1 2\\
        0 2\\
        1 2\\
        2 2\\
        -2 3\\
        -1 3\\
        0 3\\
        1 3\\
        2 3\\
        3 3\\
    };
    \end{axis}
  \end{tikzpicture}
  \caption{The cases from Theorem \ref{th:main 1} corresponding to different monodromy representations:
  (0) White $K(\Theta)$, (1) Blue $(-\Lambda,1) \oplus (-\Lambda,-1)$, (2) Green $\I{-\Lambda}$, (3) Red $K(\Theta)^{-}$} 
  
  \label{fig_rank_one} 
\end{figure}

\noindent
{\bf Remark.} The proof of Theorem \ref{th:main 1} shows that $(-\Lambda,1) \oplus (-\Lambda,-1)$, $\I{-\Lambda}$, and $K(\Theta)^-$
are not isomorphic to $K(\Theta)$ at the values of $(\vartheta,k)$ specified.  Note also that Theorem \ref{th:main 1}
implies that resonance and the validity of Conjecture \ref{conj:monoconj} are independent: there are both resonant
and non--resonant values of $(\vartheta, k)$ where the monodromy is either isomorphic to $K(\Theta)$ or not
isomorphic to $K(\Theta)$. 

\subsection{} 

Theorem \ref{th:main 1} is proved in the rest of this section. In \ref{ss:red to HG},
we use the Cherednik isomorphism from Section \ref{subsec:chJtheta} to replace $\KL[\vartheta,k]$ by the $\Omega^\vee=\IZ/2\IZ$--equivariant local system on $\IC\setminus\{0,1\}$
defined by the hypergeometric equation (HGE). In \ref{ss:orbifold}, we identify explicit generators
of the orbifold fundamental group of $\IC\setminus\{0,1\}/(\IZ/2\IZ)$. In \ref{ss:generic case} we
treat the case when the HGE is not resonant at $\infty$, which corresponds to $\lambda
\notin\frac{1}{2}\IZ$. We then treat the special case $\lambda=0,\frac{1}{2}$ in \ref{ss:la 0} and
\ref{subsec:proofrank1lambdahalf} respectively. Finally, the cases $\lambda\in\IZ_{\neq 0}$
and $\lambda\in\frac{1}{2}+\IZ_{\neq 0}$ are reduced to these by using the 
$\lambda$--shift operators in \ref{subsec:proofrank1lambdaint} and \ref{ss:half nonzero}.

The proof relies on the invertiblity conditions of the $\lambda$ and $k$--shift operators. By \ref
{thm:k-shift-op-invert}, the $k$--shift operator $\KL \to \KL[\vartheta,k+1]$ is an isomorphism when
$\lambda \neq \pm k$. The $\lambda$--shift operators $\cT{t_\rho}{\lambda} \colon \dI{\lambda}
\to I_{\lambda+1}$ were defined in \ref{subsec:affine Intertwiners} on induced representations,
but can also be defined over $K_\vartheta$, which is more relevant here.
 \begin{lemma}
 If $\lambda \neq -k, k-1$, there is an isomorphism of $\Htrig$--modules 
 \[ \cT{t_\rho}{\lambda}^{\J} \colon \J_{q(\lambda)} \to \J_{q(\lambda+1)} \qquad \cT{t_\rho}{\lambda}^{\J}
 =
 \mathrm{R}_{q(\lambda+1),\lambda+1} \circ \cT{t_\rho}{\lambda} \circ \mathrm{R}_{q(\lambda),\lambda} \]
 \end{lemma}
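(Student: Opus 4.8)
The plan is to take $\cT{t_\rho}{\lambda}^{\J}$ to be exactly the displayed composite, where $\mathrm{R}_{q(\mu),\mu}$ denotes the $\Htrig$--induction of the map $\Rnu{\mu}^{+}\colon\J_{q(\mu)}\to\dI{\mu}$ of \eqref{eqn:defphi} (and the last arrow is inverted), and to verify three things: that the composite is defined, that it is $\Htrig$--linear, and that it is bijective. The first two are formal. Indeed $\mathrm{R}_{q(\mu),\mu}$ is $\Htrig$--linear by functoriality of induction, $\cT{t_\rho}{\lambda}\colon\cI_\lambda\to\cI_{\lambda+1}$ is $\Htrig$--linear by construction (\autoref{subsec:affine Intertwiners}), and $\mathrm{R}_{q(\lambda+1),\lambda+1}$ is invertible — so the composite makes sense — as soon as $\lambda+1\neq k$, i.e.\ $\lambda\neq k-1$, by Theorem \ref{th:det R}. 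So the substance is bijectivity, and since all three factors act between spaces with $|W|=2$--dimensional fibres, it is enough to show that each factor is itself an isomorphism.

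For the two ``$\mathrm{R}$'' maps I would invoke Theorem \ref{th:det R}: in rank one $\Rnu{\mu}^{+}$ is an isomorphism iff $\mu\neq k$, and since $\Htrig$ is free over $\Htrigsub$ induction is exact, so $\mathrm{R}_{q(\lambda),\lambda}$ is an isomorphism iff $\lambda\neq k$ and $\mathrm{R}_{q(\lambda+1),\lambda+1}$ iff $\lambda\neq k-1$. For $\cT{t_\rho}{\lambda}$ I would use that $\rho=\alpha/2$ is the fundamental weight $\lambda_1$, so (recalling from \autoref{ss:Omega} that $\omega_1=t_{\lambda_1}w_1$ with $w_1=s_1$) the translation has the length--one reduced factorisation $t_\rho=\omega_1 s_1$ with $\omega_1\in\Omega\setminus\{1\}$; then by \eqref{eq:affine v w} and Proposition \ref{pr:det cT}, $\cT{t_\rho}{\lambda}=\cT{\omega_1}{s_1\lambda}\circ\cT{s_1}{\lambda}$ with $\cT{\omega_1}{s_1\lambda}$ always invertible and $\det\cT{s_1}{\lambda}=(k^{2}-\lambda^{2})^{|W|/2}$, so $\cT{t_\rho}{\lambda}$ is an isomorphism iff $\lambda\neq\pm k$. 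Combining the three conditions, the composite is an isomorphism of $\Htrig$--modules whenever $\lambda\notin\{k,-k,k-1\}$; note that one really needs $\lambda\neq k$ as well as the two conditions listed in the statement, since $\cT{t_\rho}{\lambda}$ contributes the obstruction $\lambda=\pm k$ and $\mathrm{R}_{q(\lambda),\lambda}$ re--contributes $\lambda=k$.

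Finally I would record that, after applying the trigonometric KZ functor of \autoref{subsec:trigKZconn}, each of the three factors restricts to a morphism of $W$--equivariant vector bundles with connection on $H_{\reg}$ (for the ``$\mathrm{R}$'' maps this is in \autoref{subsec:matsuomap}, for $\cT{t_\rho}{\lambda}$ in \autoref{subsec:affine Intertwiners}), so that $\cT{t_\rho}{\lambda}^{\J}$ further restricts to an isomorphism of local systems $\KL[q(\lambda),k]\to\KL[q(\lambda+1),k]$ — the form actually used in the monodromy computation that follows. I do not expect any serious difficulty in this lemma: it is essentially the assembly of the invertibility criteria already established in Theorem \ref{th:det R} and Proposition \ref{pr:det cT}, and the only point to get right is the bookkeeping of the excluded values of $\lambda$ (together with the trivial observation $t_\rho=\omega_1 s_1$ reducing the affine intertwiner to the finite one).
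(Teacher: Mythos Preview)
Your factor-by-factor argument is correct, but it only establishes the isomorphism when $\lambda\notin\{k,-k,k-1\}$ — as you yourself note. The lemma, however, claims the stronger conclusion $\lambda\neq -k,\,k-1$. The paper obtains this by a different route: after recording exactly the three individual invertibility criteria you cite (Theorem \ref{th:det R} for the two $\mathrm{R}$--maps and Proposition \ref{pr:det cT} for $\cT{t_\rho}{\lambda}$), it writes out the composite $\cT{t_\rho}{\lambda}^{\J}$ explicitly as a $2\times 2$ matrix over $\IC[H]$ in the basis $\{1,\alpha^\vee\}$ of $\J_\vartheta$ (with $x=e^\rho$), and observes that the apparent obstruction at $\lambda=k$ cancels in the product, leaving a map that is well-defined and invertible precisely when $\lambda\neq -k,\,k-1$. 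At $\lambda=k$ neither $\mathrm{R}_{q(\lambda),\lambda}$ nor $\cT{t_\rho}{\lambda}$ is individually invertible, yet the composite is; your argument cannot detect this.

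This is not a cosmetic point. The values $(\lambda,k)$ with $\lambda=k$ sit on the edge of the ``non-shiftable'' region used in \ref{subsec:proofrank1lambdaint}, and the sharper invertibility condition is what governs exactly which lattice points can be shifted to $\lambda=0$. So while your outline is conceptually the right assembly of Theorem \ref{th:det R} and Proposition \ref{pr:det cT}, the one substantive step in the paper's proof — the explicit matrix computation and the cancellation it reveals — is precisely what you are missing, and without it you prove a strictly weaker lemma than the one stated.
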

 \begin{proof}
 By Theorem \ref{th:det R}, the map $\Rp: \J_{q(\lambda)}  \longrightarrow \dI{\lambda}$ given by \eqref{eqn:defphi} is
an isomorphism when $\lambda\neq k$, and $\mathrm{R}_{q(\lambda+1),\lambda+1}$ is an isomorphism when $\lambda \neq k+1$.  By Proposition \ref{pr:det cT}, $\cT{t_\rho}{\lambda}$ is an isomorphism when $\lambda \neq \pm k$. 
With respect to the basis $\lbrace 1, \alpha^\vee\rbrace$ and $x=e^\rho$, the map $\cT{t_\rho}{\lambda}^{\J} = \mathrm{R}_{q(\lambda+1),\lambda+1} \circ \cT{t_\rho}{\lambda} \circ \mathrm{R}_{q(\lambda),\lambda}$ is \valeriocomment{This is likely no longer true with
the new normalisation of the shifts.}
\[
\frac{1}{2x}
\left(
\begin{array}{cc}
 x^2 (-2 k+\lambda +1)+\lambda +1 & x^2 (2 k-\lambda -1) (2 k+\lambda )+\lambda  (\lambda +1) \\
 1-x^2 & x^2 (2 k+\lambda )+\lambda  \\
\end{array}
\right)
\]
Due to cancellation, the resulting map is an isomorphism $\J_{q(\lambda)} \to \J_{q(\lambda+1)}$ whenever $\lambda
\neq -k, k-1$.
\end{proof}

\begin{remark}
The apparent lack of symmetry about the line $k=1/2$ in condition (3) from Theorem \ref{th:main 1} represented by the  red lines in Figure \ref{fig_rank_one} is result of our focus on $K(\Theta)$. The set of points which is neither isomorphic to $K(\Theta)$ nor $K(\Theta)^-$ is symmetric about $k=1/2$, and there are lines $k \pm \lambda \in \IZ$ corresponding to parameter values where monodromy is not isomorphic to $K(\Theta)^-$.
\end{remark}

\subsection{Reduction to the hypergeometric system}\label{ss:red to HG}

By Proposition \ref{prop:chiso}, the map $ch_{\J_\vartheta}$ is an $\Omega^\vee$--equivariant
isomorphism of the local system $\KL$ on $H_{\reg}/W$ defined by the trigonometric
KZ connection with values in $\J_\vartheta$, to the hypergeometric system $\Cs{\vartheta}{1-k}
\otimes\veps_\rho$ where $\veps_\rho$ is the representation $H^{\mathrm{ext}}_q \to \mathbb{C}^\times$ defined by $\veps_\rho(T)=1$ and $\veps_\rho(\nu)=\exp(2\pi\ii\rho(\nu))$. 

Let $H_{\reg}\to\IC^\times \setminus \{\pm 1\}$ be the identification given by the coordinate $x=e^\rho$.
Then, $w=1/2 -(x + x^{-1})/4$ identifies $H_{\reg}/W$ and $\IC\setminus\{0,1\}$, and the action
of $\Omega^\vee\cong\IZ/2\IZ$ with $w\to1-w$.
In the coordinate $w$, the local system $\Cs
{\vartheta}{1-k}$ is defined by the hypergeometric equation 
\begin{equation}\label{eq;hypergeomW}
w(1-w) \partial_w^2 F + (c - (1+a+b) w) \partial_w F - a b F=0
\end{equation} 
where \cite[Ex. 2.6]{heckman1997dunkl}, 
\[a = \lambda -k + 1,\quad b=-\lambda -k+1\aand c = 3/2-k\]

\subsection{The orbifold fundamental group of $H_{\reg}/W\times\Omega^\vee$}\label{ss:orbifold}

The local system $\Cs{\vartheta}{1-k}$ gives rise to a monodromy representation
of the orbifold fundamental group
\[\pi_1\orb(H_{\reg}/W\times\Omega^\vee)\cong\pi_1\orb(\IC\setminus\{0,1\}/w\equiv 1-w)\]

Fix a basepoint $x_0 = A i \in H_{\reg}$ with $A \in \mathbb{R}_{>0}$ very close to 0. Then,
$\pi_1\orb(H_{\reg}/W\times\Omega^\vee)$ is generated by two paths $T,Y$ in $H_{\reg}$
whose endpoints differ by elements of $W\times\Omega^\vee$. The generator $\Y$
corresponds to the path from $x_0$ to $-x_0 = - Ai$ counterclockwise around $x=0$,
and $T$ to the path from $x_0$ to $1/x_0 = - (1/A)i$ clockwise around $x=0$.

In terms of the coordinate $w=1/2-(x+x^{-1})/4$, the basepoint $x_0$ is mapped to $w_0
= 1/2 -i(A-1/A)/4$.  As $A \to 0$, $w_0 \to \infty$ parallel to the positive imaginary axis along
$\Re(w)=1/2$.  The path $\Y$ becomes a clockwise path $w_0$ to $1-w_0 = 1/2+i(A-1/A)/4$. 
As $A \to \infty$ this becomes a positively-oriented half-loop at $w = \infty$.  The path $T$
becomes a full clockwise loop based at $w_0$ around $w=0$. Both paths are pictured  in 
Figure \ref{fi:TY}.

\begin{figure}
\centering
\includegraphics[width=50mm,trim=130 140 50 0,clip]{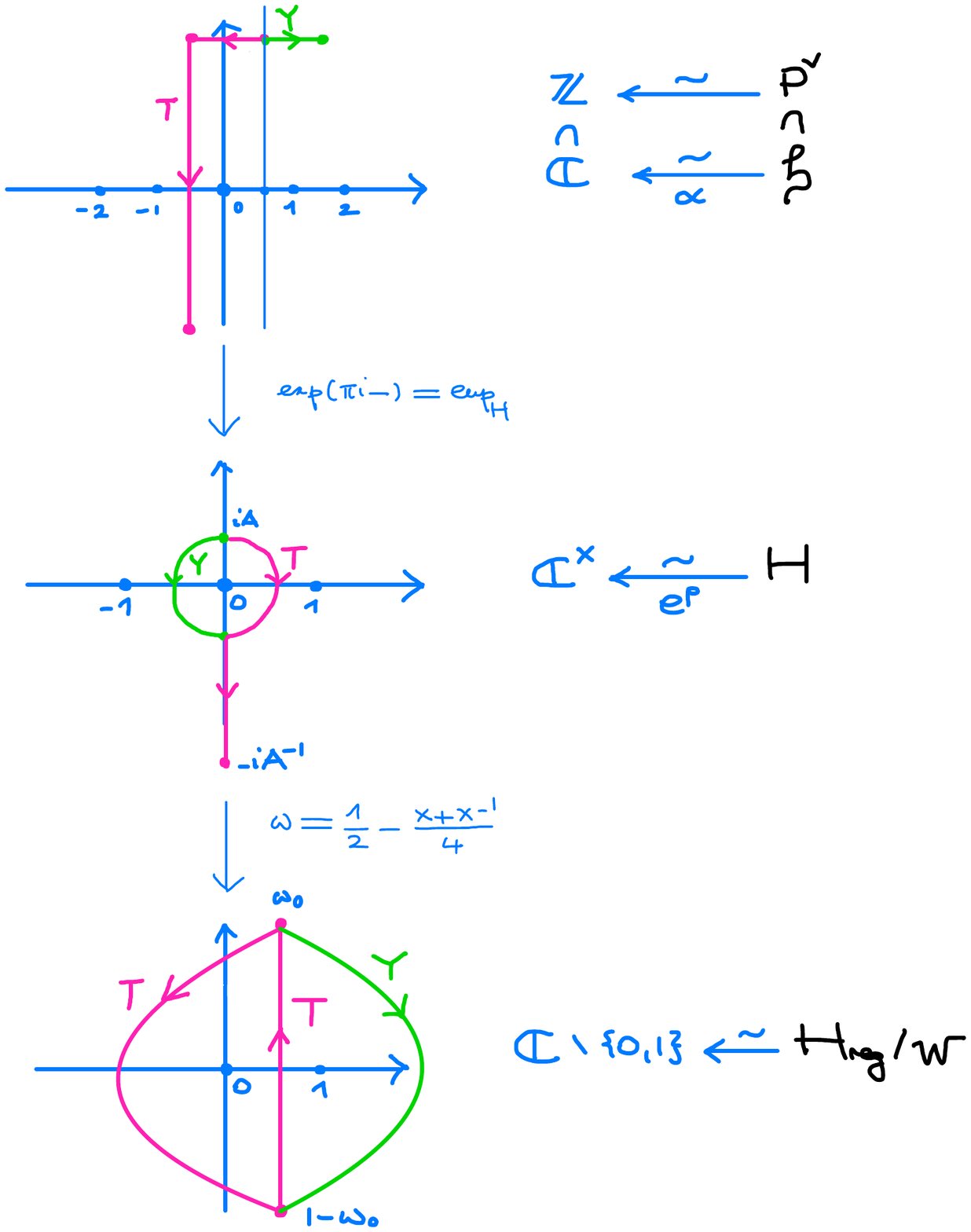}
\caption{The generators $T,Y$ of the orbifold fundamental group.}
\label{fi:TY}
\end{figure}

\subsection{Case 1: $\lambda\notin\half{1}\IZ$}\label{ss:generic case}

By assumption, $a-b = 2 \lambda \notin \mathbb{Z}$.  Near $w=\infty$, the hypergeometric equation
then has the two independent solutions \cite[15.10]{NIST:DLMF}
\begin{align*}
	&\Psi_1^\infty = w^{-a} F(a,a-c+1,a-b+1; 1/w) \\
	&\Psi_2^\infty = w^{-b} F(b,b-c+1,b-a+1;1/w)
\end{align*} 
where 
\[F(\alpha,\beta,\gamma;z)=
\sum_{n\geq 0}\frac{(\alpha)_n(\beta)_n}{(\gamma)_n}\frac{z^n}{n!}\]
is the  hypergeometric function, $(q)_m=q(q+1)\cdots(q+m-1)$ if $m\geq 1$ and $(q)_0=1$.

In the fundamental solution $\Psi^\infty=(\Psi_1^\infty\,\Psi_2^\infty)$, the (half--)monodromy of the loop
$\Y$, twisted by $\veps_\rho(\Y)=-1$ is given by the matrix
\[\Y^\infty=
\begin{pmatrix}
\Lambda^{-1}\sqrt{q} & 0 \\
0 & \Lambda\sqrt{q}
\end{pmatrix}\]
where $q=\exp(2\pi\ii k)$, and $\Lambda=\exp(\pi\ii\lambda)$.\valeriocomment{$\Lambda=\exp(\pi\ii
\lambda)$ should be the exponental map of the (simply connected dual torus) corresponding to $P^\vee$,
but check this.} The matrix $\Y^\infty$ defines $\tilde{\mu}(\Y)$. Accounting for the normalization defined in \ref{sec:genofHext}, $\mu(\Y) = \bar{\Y}^\infty=\Y^\infty q^{-1/2}$, so that $\bar{\Y}^\infty + (\bar{\Y}^\infty)^{-1} = \Lambda
+ \Lambda^{-1}$.
 
The monodromy of the loop $T$ is more easily computed in a different solution.  Assume temporarily
that $1-c=k-1/2\notin\IZ$. Then, near $w=0$ the hypergeometric equation admits the two independent
solutions \cite[15.10]{NIST:DLMF}
\begin{align*}
	&\Psi_1^0 = F(a,b,c; w) \\
	&\Psi_2^0 = w^{1-c} F(a-c+1,b-c+1,2-c;w)
\end{align*} 
Since $e^{2 \pi \ii (1-c)} = -e^{2 \pi \ii k}=-q$, in the fundamental solution $\Psi^0 = ( \Psi_1^0\  \Psi_2^0)$, the monodromy
of $T$ is given by
\[
T^0 = 
\begin{pmatrix}
1 & \phantom{-}0 \\
0 & -q
\end{pmatrix}
\]

The fundamental solutions $\Psi^\infty$ and $\Psi^0$ are related by $\Psi^0 = \Psi^\infty K$, where $K$
is the Kummer matrix \cite[15.10]{NIST:DLMF}
\[K = 
\begin{pmatrix}
e^{a \pi \ii} \frac{\Gamma(c) \Gamma(b-a)}{\Gamma(b) \Gamma(c-a)} &  -e^{(a-c)\pi \ii}\frac{\Gamma(2-c)\Gamma(b-a)}{\Gamma(1-a)\Gamma(b-c+1)} \\[1.1ex]
e^{b \pi \ii} \frac{\Gamma(c)\Gamma(a-b)}{\Gamma(a)\Gamma(c-b)} &
-e^{(b-c)\pi \ii} \frac{\Gamma(2-c)\Gamma(a-b)}{\Gamma(1-b)\Gamma(a-c+1)} 
\end{pmatrix}\]
Moreover, $\{\bar{\Y}^\infty,T^\infty=K T^0 K^{-1}\}$ satisfy the defining relation \eqref{eqn:AHA-TY-rel} of the extended
affine Hecke alegbra $H^{\mathrm{ext}}_q$. The corresponding representation is isomorphic to $K(\Theta)$ precisely
when the eigenline for $T^\infty$ corresponding to the eigenvalue 1 is cyclic for $\bar{\Y}^\infty$. This eigenline is spanned
by the first column of $K$, which has components
\begin{align*}
\begin{pmatrix}
e^{a \pi \ii} \frac{\Gamma(c) \Gamma(b-a)}{\Gamma(b) \Gamma(c-a)}\\[1.1ex]
e^{b \pi \ii} \frac{\Gamma(c)\Gamma(a-b)}{\Gamma(a)\Gamma(c-b)}
\end{pmatrix}
=
-e^{-\ii \pi k}\cdot \sfG{\frac{3}{2}-k}\cdot 
\begin{pmatrix}
\frac{e^{\ii \pi\lambda}  \Gamma (-2 \lambda )}{\Gamma (-k-\lambda +1)\Gamma \left(\frac{1}{2}-\lambda \right) } \\[1.2ex]
\frac{e^{-\ii \pi\lambda}  \Gamma (2 \lambda )}{ \Gamma (-k+\lambda +1)\Gamma \left(\lambda +\frac{1}{2}\right)}
\end{pmatrix}
\end{align*}
Dividing by $\sfG{\frac{3}{2}-k}$ yields an eigenvector $\psi^\infty(\lambda,k)_1$ which, under the running assumption
$\lambda\notin(1/2)\IZ$, is well--defined and non--zero for any value of $k\in\IC$.
Moreover, $\psi^\infty(\lambda,k)_1$
is cyclic under $\bar{\Y}^\infty$ exactly when $-k+\lambda+1,-k-\lambda+1\notin\mathbb{Z}_{\leq 0}$, that is when $k\pm\lambda
\notin\IZ_{>0}$. 


To determine the monodromy when one of $k\pm\lambda$ lies in $\mathbb{Z}_{>0}$, consider the second column of
$K$, namely
\begin{align*}
-
\begin{pmatrix}
e^{(a-c)\pi \ii}\frac{\Gamma(2-c)\Gamma(b-a)}{\Gamma(1-a)\Gamma(b-c+1)}\\[1.1ex]
e^{(b-c)\pi \ii} \frac{\Gamma(2-c)\Gamma(a-b)}{\Gamma(1-b)\Gamma(a-c+1)} 
\end{pmatrix}
=
-ie^{\ii\pi k}\cdot\sfG{k+\frac{1}{2}}\cdot
\begin{pmatrix}
\frac{e^{\ii \pi  \lambda } \Gamma (-2 \lambda )}{\Gamma (k-\lambda )\Gamma \left(\frac{1}{2}-\lambda \right) }\\[1.1ex]
\frac{e^{-\ii \pi  \lambda } \Gamma (2 \lambda )}{\Gamma (k+\lambda )\Gamma \left(\lambda +\frac{1}{2}\right) }
\end{pmatrix}
\end{align*}
Dividing by $\sfG{k+\frac{1}{2}}$ yields a well--defined, non--zero eigenvector $\psi^\infty(\lambda,k)_{-q}$ of $T^\infty$
for any $k\in\IC$, corresponding to the eigenvalue $-q$. That vector is cyclic under $\bar{\Y}^\infty$, and thus the monodromy representation isomorphic to $K(\Theta)^-$, provided $k+\lambda,
k-\lambda\notin\mathbb{Z}_{\leq 0}$.

Note that one cannot simultaneously have $k+\lambda\in\mathbb{Z}_{>0}$ and $k-\lambda\in\mathbb{Z}_{\leq 0}$
(resp. $k-\lambda\in\mathbb{Z}_{>0}$ and $k+\lambda\in\mathbb{Z}_{\leq 0}$) since $\lambda\notin\half{1}\IZ$ by assumption.
Summarising, either $k\pm\lambda\notin\IZ_{>0}$, in which case the action of $\{\bar{\Y}^\infty,T^\infty\}$ is equivalent
to the covariant representation $K(\Theta)$, or $k\pm\lambda\notin\IZ_{\leq 0}$, in which case that action is not equivalent
to $K(\Theta)$, but is equivalent to $K(\Theta)^-$.

\subsection{Case 2: $\lambda = 0$}\label{ss:la 0}

If $\lambda=0$, $\Psi^\infty$ from \ref{ss:generic case} is no longer a fundamental solution since $\Psi^\infty_1 = \Psi^\infty_2$.  We
shall therefore modify $\Psi^\infty$ so the solutions remain independent as $\lambda \to 0$. 

By Proposition \ref{pr:non resonance}, the KZ local system $\KL$ is non--resonant at $x = 0$
for $\lambda$ in a neighborhood of $0$. By Theorem \ref{thm:holomorphicpartofLVL}, it therefore admits
a canonical solution with asymptotic expansion $(\mathrm{Id}+O(x))x^{\mathrm{Diag}(\lambda+k,-\lambda+k)}$.
Following the reduction from \ref{ss:red to HG}, applying $ch_{\J_\vartheta}$ to this canonical solution and changing coordinates to $w$ gives a fundamental solution $\Phi^\infty$ of 
the hygeometric equation
at $w = \infty$ different from $\Psi^\infty$. For $\lambda \neq 0$, when both $\Psi^\infty$ and $\Phi^\infty$ are fundamental solutions they can be shown to be related by $\Psi
^\infty=\Phi^\infty W$, where
\[
W = \left(
\begin{array}{cc}
 -\lambda  & \lambda  \\
 \phantom{-}1 & 1 \\
\end{array}
\right)
\]
by comparing asymptotics at $w=\infty$.

The monodromy with respect to $\Phi^\infty$ is given by $W \bar{\Y}^\infty W^{-1}$ and $W T^\infty W^{-1}$ which are defined at $\lambda =0$ since $\Phi^\infty$ is invertible at $\lambda = 0$.  After conjugating further by a permutation of the basis elements and a diagonal matrix, these are  

\[
	\bar{\Y}^\infty_\Phi = 
	\left(
	\begin{array}{cc}
		\cos(2 \pi \lambda)                      & - i \lambda \sin(2 \pi \lambda) \\[1.1ex]
		- \frac{i  \sin(2 \pi \lambda)}{\lambda} & \cos(2 \pi \lambda)             
	\end{array}
	\right)
\]
and
\begin{equation}
{T}^\infty_\Phi    = \left(                         
	\begin{array}{cc}
	t_{1,1}     & t_{1,2}                          \\
	t_{2,1}     & t_{2,2}                          
	\end{array}
\right)  
\label{eqn-sl2-operator-T}
\end{equation}
where
\begin{align*}
	\scriptstyle{ t_{1,1}}  & = \scriptstyle{ \frac{1}{2} \left(-e^{2 i k \pi }+\frac{\left(1+e^{2 i k \pi }\right)  \lambda  \left(\Gamma (-k-\lambda +1) \Gamma (k-\lambda ) \Gamma (\lambda )^2-\Gamma (-\lambda )^2 \Gamma (-k+\lambda +1) \Gamma (k+\lambda )\right) \sin (\pi  \lambda )}{\pi ^2 (\csc (\pi  (k-\lambda ))-\csc (\pi  (k+\lambda )))}+1\right) }                                                                             
	\\
	\scriptstyle{ t_{1,2} } & =  \scriptstyle{ \frac{1}{2} \lambda  \left(i \left(-1+e^{2 i k \pi }\right) \cot (\pi  \lambda )-\frac{\left(1+e^{2 i k \pi }\right) \lambda  \left(\Gamma (-k+\lambda +1) \Gamma (k+\lambda ) \Gamma (-\lambda )^2+\Gamma (-k-\lambda +1) \Gamma (k-\lambda ) \Gamma (\lambda )^2\right) \sin (\pi  \lambda )}{\pi ^2 (\csc (\pi  (k-\lambda ))-\csc (\pi  (k+\lambda )))}\right) }                                
	\\
	\scriptstyle{ t_{2,1} } & = \scriptstyle{ \frac{\left(1+e^{2 i k \pi }\right) \lambda  \left(\Gamma (-k+\lambda +1) \Gamma (k+\lambda ) \Gamma (-\lambda )^2+\Gamma (-k-\lambda +1) \Gamma (k-\lambda ) \Gamma (\lambda )^2\right) \sin (\pi  \lambda )-2 e^{i k \pi } \pi ^2 \cot (\pi  \lambda ) (\csc (\pi  (k-\lambda ))-\csc (\pi  (k+\lambda ))) \sin (k \pi )}{2 \pi ^2 \lambda  (\csc (\pi  (k-\lambda ))-\csc (\pi  (k+\lambda )))} } \\
	\scriptstyle{ t_{2,2} } & = \scriptstyle{ \frac{1}{2} \left(-e^{2 i k \pi }-\frac{\left(1+e^{2 i k \pi }\right) \lambda  \left(\Gamma (-k-\lambda +1) \Gamma (k-\lambda ) \Gamma (\lambda )^2-\Gamma (-\lambda )^2 \Gamma (-k+\lambda +1) \Gamma (k+\lambda )\right) \sin (\pi  \lambda )}{\pi ^2 (\csc (\pi  (k-\lambda ))-\csc (\pi  (k+\lambda )))}+1\right)  }                                                                           
\end{align*}

Taking the limit as $\lambda$ goes $0$ gives
\[
	\bar{\Y}^\infty =  
	\left( 
	\begin{array}{cc}
		1        & 0 \\
		-2 \pi \ii & 1 
	\end{array}
	\right)
\]
while ${T}^\infty_\Phi$ has matrix entries
\begin{align*}
	t_{1,1} & = -\frac{e^{i k \pi } \left(\gamma +i \pi +H_{-k}+\psi ^{(0)}(k)\right) \sin (k \pi )}{\pi } \\ t_{1,2} &= -\frac{2 e^{i k \pi } \sin (k \pi )}{\pi } \\
	t_{2,1} &= \scriptstyle{\frac{\left(\left(1+e^{2 i k \pi }\right) \pi ^2 \csc (k \pi ) \left(\pi ^2 \csc ^2(k \pi )+\left(H_{-k}+\psi ^{(0)}(k)+\gamma \right){}^2\right)-2 e^{i k \pi } \pi ^4 \cot (k \pi ) \left(2 \cot ^2(k \pi )+1\right)\right) \sin (k \pi ) \tan (k \pi )}{4 \pi ^3} } \\
	t_{2,2} &= -\frac{i \left(-1+e^{2 i k \pi }\right) \left(\gamma -i \pi +H_{-k}+\psi ^{(0)}(k)\right)}{2 \pi } 
\end{align*}
where $H_{-k} = \psi(1-k) + \gamma$ is the harmonic number, $\gamma$ the Euler--Mascheroni
constant, and $\psi ^{(0)}(z) = \Gamma'(z)/\Gamma(z)$ the digamma function.

The corresponding monodromy representation fails to be isomorphic to $K(\Theta)$ precisely when
the $T$--eigenspace corresponding to the eigenvalue 1 is not cyclic under $\Y$, that is when it is 
spanned by the second column $\Phi^\infty_2$ of $\Phi^\infty$.

If $k\notin\IZ$, $t_{1,2}$ does not vanish, and $\Phi^\infty_2$ is not an eigenvector of $T$,
and the monodromy representation is isomorphic to $K(\Theta)$.
If $k=0$, then taking the limit as $k$ goes to $0$ gives 
\[
	T = \left(
	\begin{array}{cc}
		1 & 0  \\
		0 & -1 \\
	\end{array}
	\right)
\]
and the $T$-eigenvector corresponding to $1$ is clearly cyclic for $\bar{\Y}^\infty_\Phi$. 
Thus, the monodromy representation is isomorphic to $K(\Theta)$.

When $k \in \mathbb{Z}_{\neq 0}$, the monodromy representation is not isomorphic to $K(\Theta)$.  
For $k=1$, the limit of \eqref{eqn-sl2-operator-T} is 
\[
	\left(
	\begin{array}{cc}
		-1 & 0 \\
		\phantom{-}0  & 1 \\
	\end{array}
	\right)
\]
and thus the $T$-eigenvector corresponding to $1$ is not cyclic, but the eigenvector corresponding to $-q = -1$ is cyclic for $\bar{\Y}^\infty_\Phi$.  Thus the monodromy representation is $K(\Theta)^-$.

For $k \in \IZ \setminus \lbrace 0,1 \rbrace$ we use the $k$--shift operator to relate the monodromy representation to that of $k=0$ or $k=1$.  Note that neither the $k$-shift or $\lambda$-shift changes the representation type of the monodromy.  Let $\tau$ denote the shift in parameters $(\lambda,k) \mapsto (\lambda+m,k+n)$.  Both $k$-shift and $\lambda$-shift introduce a twist by $e_\rho$ which twists the operator $(\cT{t_\rho}{\lambda}^m S_{\J_\vartheta}^n)^* (\Y) = (-1)^{m+n} \Y$.  However, twist introduced by the $k$-shift is canceled by the normalization factor $e^{\pi \ii k}$ from \ref{sec:genofHext} used to define $\bar{\Y}$ and the $\lambda$-shift is canceled by 
\[
	\tau(\Theta) = \tau(\bar{\Lambda} + 1/\bar{\Lambda}) = e^{\pi \ii (\lambda + m)} +  e^{-\pi \ii (\lambda + m)} = (-1)^n \Theta
\]
In short, the effect of the shift operators on the monodromy representation is to shift the parameters $\tau(K(\Theta)^{\pm}) = (\cT{t_\rho}{\lambda}^m S_{\J_\vartheta}^n)^* K(\Theta)^{\pm}$.

For $k \in \mathbb{Z}_{\geq 1}$ the $k$-shift operator is invertible and so the monodromy at $k \in \mathbb{Z}_{\geq 1}$ is isomorphic to $ K(\Theta)^-$ but not $K(\Theta)$.
%
Similarly for $k \in \mathbb{Z}_{\leq -1}$ the $k \mapsto k+1$ shift operator is invertible.  So all such integers have the same monodromy as for $k=0$ up to a sign twist $K(\Theta)$.

\subsection{Case 3: $\lambda = 1/2$}\label{subsec:proofrank1lambdahalf}

If $k \neq 1/2$, the hypergeometric series in $\Psi^\infty$ have the special values \cite[15.4]{NIST:DLMF} 
\[
\Psi^\infty_1 = w^{1/2-k} \qquad \Psi^\infty_2 = (w-1)^{-1/2+k}
\]
With respect to these solutions
\[
\bar{\Y}^\infty = 
\left(
\begin{array}{cc}
 i & 0 \\
 0 & -i \\
\end{array}
\right)
\aand
T^\infty 
=
\frac{1}{2} \left(
\begin{array}{cc}
  \left(1-q\right) & \frac{1+q}{1-2 k} \\
  \left(1+q\right) (1-2 k) &  \left(1-q\right) \\
\end{array}
\right)
\]
 The eigenvector for $T^\infty$ corresponding to $1$ is $ \Psi^\infty_1 + (1-2k)\Psi^\infty_2$.
Since it is cyclic for $\bar{\Y}^\infty$, the monodromy is isomorphic to $K(\Theta)$ in this case. 

When $k =1/2$, the hypergeometric system has solutions
\[
\Psi_1^\infty = 1 \qquad \Psi_2^\infty = -\log(1-w) + \log(w)
\]
giving normalized monodromy
\[
\bar{\Y}^\infty = 
\left(
\begin{array}{cc}
 i & 0 \\
 0 & -i \\
\end{array}
\right)
\aand
T^\infty = 
\left(
\begin{array}{cc}
 1 & \frac{\pi \ii}{2} \\
 0 & 1 \\
\end{array}
\right)
\]
The $\bar{\Y}^\infty$-eigenvector corresponding to $-\ii$ generates the representation under the $T^\infty$ action,
and the monodromy representation is isomorphic to $\I{-\ii}$. 
Any $T^\infty$-eigenvector corresponding to $1$ is an $\bar{\Y}^\infty$-eigenvector and thus cannot be cyclic under $\bar{\Y}^\infty$.  
Thus the monodromy representation is not isomorphic to $K(\Theta)$.   

\subsection{Case 4: $\lambda \in \mathbb{Z}_{\neq 0}$}\label{subsec:proofrank1lambdaint}

Assume that $(\lambda,k)$ does not satisfy
\begin{equation}\label{non-shiftable-points}
	\lambda, k \in \mathbb{Z} \text{ and } |\lambda| \geq k \text{ and } |\lambda| \geq 1 -  k
\end{equation}
In this case, the $\lambda$-shift operator $K_{q(\lambda)} \to K_{0}$ is invertible by the conditions given in \ref{subsec:monoKZrankone} and thus $\KL$ is isomorphic $\KL[0,k]$, which we computed in Case 2, cf. \ref{ss:la 0}.  By \ref{ss:la 0}, $\KL$ is thus isomorphic to $K(\Theta)$ when $k \notin \IZ_{>0}$ and $K(\Theta)^{-}$ when $k \in \IZ_{>0}$.

The points which satisfy \eqref{non-shiftable-points} are shown in Figure \ref{fig:sl2-param-space}.  These may be shifted using the $k$ and $\lambda$-shift operators to either of $(\pm 1,0)$.  That is, $\KL$ is ismorphic to one of $\KL[\pm 1,0]$.
\label{fig:sl2-param-space}
\begin{figure}[ht]
	\centering
		\begin{tikzpicture}[scale=0.3]
  \begin{axis}[
      color =black,
      xlabel=k,
      ylabel=$\lambda$,
      axis lines=middle,
      xmin=-3, xmax=3,
      ymin=-3, ymax=3,
      xtick={-3,-2,-1,0,1,2,3}, ytick={-3,-2,-1,0,1,2,3},
      width=18cm,
      line width=1pt,
      x label style={at={(axis description cs:1.01,0.5)},anchor=west,font=\huge},
      y label style={at={(axis description cs:0.5,1.01)},anchor=south,font=\huge},
  ]
  \addplot [color=red,only marks,mark size=5pt] table[row sep=\\]{%
      0 1\\
      -1 2\\
      0 2\\
      1 2\\
      -2 3\\
      -1 3\\
      0 3\\
      1 3\\
      2 3\\
      1 1\\
      2 2\\
      3 3\\
  };
  \addplot [color=red,only marks,mark size=8pt,mark=x] table[row sep=\\]{%
      1  -1\\
      -1 -2\\
      1 -2\\
      2 -2\\
      -2 -3\\
      -1 -3\\
      1 -3\\
      2 -3\\
      3 -3\\
      0 -1\\
      0 -2\\
      0 -3\\
  };
  \addplot [color=blue!90,line width=2pt,domain=-3:0, samples=2] {x};
  \addplot [color=blue!90,line width=2pt,domain=0.5:3, samples=2] {x};
  \addplot [color=blue!90,line width=2pt,domain=-3:0.5, samples=2] {x-1};  
  \addplot [color=blue!90,line width=2pt,domain=1:3, samples=2] {x-1}; 
  \addplot [color=blue!90,line width=2pt,domain=-3:0, samples=2] {-x};
  \addplot [color=blue!90,line width=2pt,domain=0.5:3, samples=2] {-x};
  \addplot [color=blue!90,line width=2pt,domain=-3:0.5, samples=2] {-x+1};  
  \addplot [color=blue!90,line width=2pt,domain=1:3, samples=2] {-x+1}; 
  \addplot [color=black!30,dashed,line width=3pt,domain=-3:3, samples=2] {1};
  \addplot [color=black!30,dashed,line width=3pt,domain=-3:3, samples=2] {2};
  \addplot [color=black!30,dashed,line width=3pt,domain=-3:3, samples=2] {3};
  \addplot [color=black!30,dashed,line width=3pt,domain=-3:3, samples=2] {-1};
  \addplot [color=black!30,dashed,line width=3pt,domain=-3:3, samples=2] {-2};
  \addplot [color=black!30,dashed,line width=3pt,domain=-3:3, samples=2] {-3};
          \coordinate (A) at (axis cs:{2,2});
      \coordinate (B) at (axis cs:{1,2});
      \coordinate (C) at (axis cs:{0,2});
      \coordinate (D) at (axis cs:{0,1});
      \draw[line width=3pt,red,-stealth] (A)--(B);
      \draw[line width=3pt,red,-stealth] (B)--(C);
      \draw[line width=3pt,red,-stealth] (C)--(D);
  \end{axis}
\end{tikzpicture}
	\caption{
	Points marked by red dots or crosses cannot be shifted using $\lambda$ shifts to $\lambda=0$, but can be shifted by $k$ and $\lambda$ shifts to one of two points. 
  The blue lines represent parameter values which cannot be shifted either to or from by $\lambda \mapsto \lambda+1$ or $k \mapsto k+1$.  
  }
\end{figure}
We compute $\KL[\pm 1,0]$ without reducing to the scalar hypergeometric system.  For $y \in \mathfrak{h}$, the KZ connection has the form $x \partial_x + \tfact{y}$ with solution $\Phi(x) = x^y$.  
The monodromy can be computed directly as $\bar{\Y} = \Y = e^{\pi \ii y} = - \mathrm{Id}$.  The solution $\Phi(x)$ is holomorphic at $x=1$ and thus $T = \tfact{s} = \mathrm{Diag}(1,-1)$.  Thus the monodromy representation splits into a direct sum $(-1,1) \oplus (-1,-1)$.  Thus for all $(\lambda,k)$ satisfying \eqref{non-shiftable-points} the monodromy representation is isomorphic to $(-1,1) \oplus (-1,-1)$. 
The $T^\infty$-eigenspace is not cyclic, and thus the monodromy representation is not isomorphic to $K(\Theta)$.

\subsection{Case 5: $\lambda \in \half{1}+\IZ$}\label{ss:half nonzero}

If $(\lambda,k)$ does not satisfy 
\begin{equation}\label{eq-case6-non-invert}
 \lambda, k \in \mathbb{Z} + 1/2 \text{ and } |\lambda| > k \text{ and } |\lambda| > -k +1
\end{equation}
then the $\lambda$-shift operator 
$K_{q(\lambda)} \to K_{1/2}$ is invertible by the conditions given in \ref{subsec:monoKZrankone} and thus $\KL$ is isomorphic $\KL[q(1/2),k]$, which we computed in Case 3, cf. \ref{subsec:proofrank1lambdahalf}.  Note that assuming \eqref{eq-case6-non-invert} does not hold implies $k \neq 1/2$. Thus by \ref{subsec:proofrank1lambdahalf}, $\KL$ is isomorphic to $K(\Theta)$.


In the case \eqref{eq-case6-non-invert} holds, shifting by $k$ and $\lambda$ gives an isomorphism between $\KL$ and $\KL[1/2,1/2]$ which we computed in \ref{subsec:proofrank1lambdahalf} to be isomorphic to $\I{-\ii}$  but not isomorphic to $K(\Theta)$.


\begin{figure}[ht]
  \centering
  \begin{tikzpicture}[scale=0.3]
    \begin{axis}[
        color =black,
        xlabel=k,
        ylabel=$\lambda$,
        axis lines=middle,
        xmin=-3, xmax=3,
        ymin=-3, ymax=3,
        xtick={-3,-2,-1,0,1,2,3}, ytick={-3,-2,-1,0,1,2,3},
        width=18cm,
        line width=1pt,
        x label style={at={(axis description cs:1.01,0.5)},anchor=west,font=\huge},
        y label style={at={(axis description cs:0.5,1.01)},anchor=south,font=\huge},
    ]
    \addplot [color=red,only marks,mark size=5pt] table[row sep=\\]{%
        0.5 0.5\\
        -0.5 1.5\\
        0.5 1.5\\
        1.5 1.5\\
        -1.5 2.5\\
        -0.5 2.5\\
        0.5 2.5\\
        1.5 2.5\\
        2.5 2.5\\
    };
    \addplot [color=red,only marks,mark size=8pt,mark=x] table[row sep=\\]{%
        0.5 -0.5\\
        -0.5 -1.5\\
        0.5 -1.5\\
        1.5 -1.5\\
        -1.5 -2.5\\
        -0.5 -2.5\\
        0.5 -2.5\\
        1.5 -2.5\\
        2.5 -2.5\\
    };
    \addplot [color=blue!90,line width=2pt,domain=-3:0, samples=2] {x};
    \addplot [color=blue!90,line width=2pt,domain=0.5:3, samples=2] {x};
    \addplot [color=blue!90,line width=2pt,domain=-3:0.5, samples=2] {x-1};  
    \addplot [color=blue!90,line width=2pt,domain=1:3, samples=2] {x-1}; 
    \addplot [color=blue!90,line width=2pt,domain=-3:0, samples=2] {-x};
    \addplot [color=blue!90,line width=2pt,domain=0.5:3, samples=2] {-x};
    \addplot [color=blue!90,line width=2pt,domain=-3:0.5, samples=2] {-x+1};  
    \addplot [color=blue!90,line width=2pt,domain=1:3, samples=2] {-x+1}; 
    \addplot [color=black!30,dashed,line width=3pt,domain=-3:3, samples=2] {1};
    \addplot [color=black!30,dashed,line width=3pt,domain=-3:3, samples=2] {2};
    \addplot [color=black!30,dashed,line width=3pt,domain=-3:3, samples=2] {3};
    \addplot [color=black!30,dashed,line width=3pt,domain=-3:3, samples=2] {-1};
    \addplot [color=black!30,dashed,line width=3pt,domain=-3:3, samples=2] {-2};
    \addplot [color=black!30,dashed,line width=3pt,domain=-3:3, samples=2] {-3};
          \coordinate (A) at (axis cs:{-1.5,2.5});
      \coordinate (B) at (axis cs:{-0.5,2.5});
      \coordinate (C) at (axis cs:{0.5,2.5});
      \coordinate (D) at (axis cs:{0.5,1.5});
      \coordinate (E) at (axis cs:{0.5,0.5});
      \draw[line width=3pt,red,-stealth] (A)--(B);
      \draw[line width=3pt,red,-stealth] (B)--(C);
      \draw[line width=3pt,red,-stealth] (C)--(D);
      \draw[line width=3pt,red,-stealth] (D)--(E);
    \end{axis}
  \end{tikzpicture}

  \caption{Points which do not satisfy \eqref{eq-case6-non-invert}.  Example of shift.}
  \label{fig:sl2-param-space-case6}
\end{figure}

\omitted{
\section{Monodromy in Rank Two}\label{sec-sl3-shift}




We will now illustrate how shifting using the shift operators in $\lambda$ and $k$ can be used to resolve the resonance of many points in the $\mathfrak h^* \times \IC$ parameter space.  Let $\lbrace \alpha_1^\vee,\alpha_2^\vee \rbrace$ be the two simple coroots and let $\lbrace \lambda_1,\lambda_2 \rbrace$ be the fundamental weights giving a dual basis to $\lbrace \alpha_1^\vee,\alpha_2^\vee \rbrace$.  Recall that by Proposition \ref{prop:resonancecond} the connection $\nabla$ is resonant when 
$\lambda(\alpha^\vee)\in\mathbb{Z}_{\neq 0}$ for a (positive) coroot $\alpha$.
\[\mathbb{Z}_{\neq 0} 
  \lambda(\alpha_1^\vee) \in \mathbb{Z}_{\neq 0} \text{ or } \lambda(\alpha_2^\vee) \in \mathbb{Z}_{\neq 0}  \text{ or } \lambda(\alpha_1^\vee+\alpha_2^\vee) \in \mathbb{Z}_{\neq 0}. 
\]
In order to plot the parameter space $\mathfrak{h}^* \times \mathbf{C}$, we show only the real span of $\lbrace \alpha_1^\vee,\alpha_2^\vee \rbrace$ and fix a real value of $k$.   Since resonance occurs for integer values, we can thus see the resonant values of $\lambda$  on such a plot.
\begin{figure}[ht]
  \newcommand*\rows{10}
  \begin{center}
    \begin{tikzpicture}[scale=1.5]
      \clip (-1.8,-1.8) rectangle (1.8,1.8);

      \draw[dashed, black!20] ($-5*(1,{sqrt(3)})$) -- ($({0},0)+(5,{5*sqrt(3)})$);
      \draw[dashed, black!20] ($-5*(1,-{sqrt(3)})$) -- ($({0},0)+(5,{-5*sqrt(3)})$);
      \draw[dashed, black!20] ($(-4,0)$) -- ($(4,0)$);
      \foreach \x in {-5, -4, -3,-2,-1,1,2,3,4,5} {
        \draw[black!70] ($({\x*sqrt(2/3)},0)-5*(1,{sqrt(3)})$) -- ($({\x*sqrt(2/3)},0)+(5,{5*sqrt(3)})$);
      }
      \foreach \x in {-5, -4, -3,-2,-1,1,2,3,4,5} {
        \draw[black!70] ($({\x*sqrt(2/3)},0)-5*(1,-{sqrt(3)})$) -- ($({\x*sqrt(2/3)},0)+(5,{-5*sqrt(3)})$);
      }
      \foreach \y in {-5, -4, -3,-2,-1,1,2,3,4,5} {
        \draw[black!70] ($(-4,{\y*sqrt(2/3)*0.5*sqrt(3)})$) -- ($(4,{\y*0.5*sqrt(3)*sqrt(2/3)})$);
      }
      \node at (0,0)[circle,fill,inner sep=2pt]{};
      \draw[line width=1pt,blue,-stealth](0,0)--(0,{sqrt(2)}) node[anchor=south west]{$\alpha_2$};
      \draw[line width=1pt,blue,-stealth](0,0)--({sqrt(3/2)},{-sqrt(1/2)}) node[anchor=south west]{$\alpha_1$};
      \draw[line width=1pt,red,-stealth](0,0)--({sqrt(2/3)},{0}) node[anchor=south west]{$\lambda_1$};
      \draw[line width=1pt,red,-stealth](0,0)--({sqrt(1/6)},{sqrt(1/2)}) node[anchor=south west]{$\lambda_2$};
    \end{tikzpicture}
    \caption{Resonant values of $\lambda$ are showed with solid dark lines.}
  \end{center}
\end{figure}

For type $\mathsf{A}_2$ the shift operators in the parameter $\lambda$ for the representations $\dI{\lambda}$ are cases 
\valeriocomment{This is one of the places where $\lambda_3$ should be removed, see my comment on the next page.}  
\valeriocomment{Also, the discussion of invertibility reviews material proved elsewehere in the paper. It might make sense
to collect it together again, and to cite where this is proved (no reference is given at the moment), but not in the preamble
to the theorem which is too long and discursive. If needed, the recap should be somewhere in the proof of the theorem,
perhaps near the start. In fact, the main theorem should be stated as soon as possible in the section, possibly after a 
very short text saying we now consider Lie type $A_2$, and let $\lambda_1,\lambda_2$ be the fundamental weights,
and maybe defined the bad set of $(k,\lambda)$.}
\begin{align}
  \lambda_1 :                         & \dI{\lambda} \cong \dI{\lambda + \lambda_1} \text{when} \label{eqn-lambda1-shift}                              \\
                                      & \lambda(\alpha_1^\vee) \neq -k,k \text{ and }\lambda(\alpha_1^\vee+\alpha_2^\vee) \neq -k,k \nonumber  \\
  \lambda_2 :                         & \dI{\lambda} \cong \dI{\lambda + \lambda_2} \text{when} \label{eqn-lambda2-shift}                              \\
                                      & \lambda(\alpha_2^\vee) \neq -k,k \text{ and }\lambda(\alpha_1^\vee+\alpha_2^\vee) \neq  -k,k \nonumber \\
  \lambda_3 = \lambda_1 - \lambda_2 : & \dI{\lambda} \cong \dI{\lambda + \lambda_1 - \lambda_2} \text{when} \label{eqn-lambda3-shift}                  \\
                                      & \lambda(\alpha_1^\vee) \neq -k,k\text{ and }\lambda(\alpha_2^\vee) \neq k+1,-k+1. \nonumber      
\end{align}
We give the invertibility conditions for $\dI{\lambda}$ and not $\J_\vartheta$ because the invertibility conditions for $\dI{\lambda}$ are easier to work with.  Let $\vartheta$ be the image of $\lambda$ under $\mathfrak{h}^* \to \mathfrak{h}^*/W$.  To work on $\J_\vartheta$, we can use the map between $\dI{\lambda}$ and $\J_\vartheta$ which is an isomorphism when
\begin{equation}\label{eqn-sl3-I-J-iso}
  \lambda(\alpha_1^\vee) \neq  k \text{ and }  \lambda(\alpha_2^\vee) \neq  k \text{ and }  \lambda(\alpha_1^\vee + \alpha_2^\vee) \neq  k.
\end{equation}
We call such points $(\lambda,k)$ \emph{$J$-representative}.
Note that only the positive roots appear in this condition. 
\todo{refs}  Consequently, every parameter $\vartheta$ is represented by at least one $\lambda$. \todo{explain more}
 
The conditions of \eqref{eqn-lambda1-shift}, \eqref{eqn-lambda2-shift}, \eqref{eqn-lambda3-shift} are easily summarized by saying we cannot shift \emph{from} a line $\lambda(\alpha^\vee) = k$ for any $\alpha \in R$ \emph{in the positive direction}.   This is illustrated in the schematic \autoref{lambda-shift-schematic}.
 
\begin{figure}[ht]
  \centering
  \begin{subfigure}[b]{0.3\textwidth}
    \centering
    \begin{tikzpicture}[scale=1.4]
      \clip (-1.2,-1.2) rectangle (1.2,1.2);
      \draw[blue, line width=2pt, dashed] ($-5*(1,{sqrt(3)})$) -- ($({0},0)+(5,{5*sqrt(3)})$);
      \node at (0,0)[rectangle,fill,inner sep=2pt]{};
      \node (la1) at ({sqrt(2/3)},{0}) {}; 
      \node (la2) at ({sqrt(1/6)},{sqrt(1/2)}) {}; 		 
      \node at ($0.6*(la1)-1.4*(la2)$)  [blue] {$\scriptstyle{\lambda(\alpha_1^\vee) = \pm k}$};
      \node at (la1) [circle, fill,  inner sep=2pt] {};		
      \node at (la1) [circle, fill, white, inner sep=1.5pt] {};		
      \node at (${-1}*(la1)$) [circle, fill, inner sep=2pt] {};		
      \node at ($(la2)$) [circle, fill, inner sep=2pt] {};		
      \node at (${-1}*(la2)$) [circle, fill, inner sep=2pt] {};		
      \node at ($(la1)-(la2)$) [circle, fill, inner sep=2pt] {};		
      \node at ($(la1)-(la2)$) [circle, fill, white, inner sep=1.5pt] {};
      \node at ($(la2)-(la1)$) [circle, fill, inner sep=2pt] {};		
      \node at (0,0) [anchor = north] {$\lambda$};
    \end{tikzpicture}
  \end{subfigure}
  \begin{subfigure}[b]{0.3\textwidth}
    \centering
    \begin{tikzpicture}[scale=1.4]
      \clip (-1.2,-1.2) rectangle (1.2,1.2);
      \node (la1) at ({sqrt(2/3)},{0}) {}; 
      \node (la2) at ({sqrt(1/6)},{sqrt(1/2)}) {}; 		 
      \draw[blue, line width=2pt, dashed] ($-5*(la1)+5*(la2)$) -- ($5*(la1)-5*(la2)$);
      \node at ($0.6*(la1)-1.4*(la2)$)  [blue] {$\scriptstyle{\lambda(\alpha_1^\vee+\alpha_2^\vee) = \pm k}$};
      \node at (0,0)[rectangle,fill,inner sep=2pt]{};
      \node at (la1) [circle, fill,  inner sep=2pt] {};		
      \node at (la1) [circle, fill, white, inner sep=1.5pt] {};		
      \node at (${-1}*(la1)$) [circle, fill, inner sep=2pt] {};		
      \node at ($(la2)$) [circle, fill, inner sep=2pt] {};		
      \node at (${-1}*(la2)$) [circle, fill, inner sep=2pt] {};		
      \node at ($(la1)-(la2)$) [circle, fill, inner sep=2pt] {};		
      \node at ($(la2)-(la1)$) [circle, fill, inner sep=2pt] {};		
      \node at ($(la2)$) [circle, fill, white, inner sep=1.5pt] {};
      \node at (0,0) [anchor = north] {$\lambda$};
    \end{tikzpicture}
  \end{subfigure}
  \begin{subfigure}[b]{0.3\textwidth}
    \centering
    
    \begin{tikzpicture}[scale=1.4]
      \clip (-1.2,-1.2) rectangle (1.2,1.2);
      \node (la1) at ({sqrt(2/3)},{0}) {}; 
      \node (la2) at ({sqrt(1/6)},{sqrt(1/2)}) {}; 		 
      \draw[blue, line width=2pt, dashed] ($-5*(la1)$) -- ($5*(la1)$);
      \node at ($0.6*(la1)-1.4*(la2)+(0,1.2)$)  [blue] {$\scriptstyle{\lambda(\alpha_2^\vee) = \pm k}$};
      \node at (0,0)[rectangle,fill,inner sep=2pt]{};
      \node at (la1) [circle, fill,  inner sep=2pt] {};		
      \node at (${-1}*(la1)$) [circle, fill, inner sep=2pt] {};		
      \node at ($(la2)$) [circle, fill, inner sep=2pt] {};		
      \node at (${-1}*(la2)$) [circle, fill, inner sep=2pt] {};		
      \node at ($(la1)-(la2)$) [circle, fill, inner sep=2pt] {};		
      
      \node at ($(la2)-(la1)$) [circle, fill, inner sep=2pt] {};		
      \node at (la2) [circle, fill, white, inner sep=1.5pt] {};		
      \node at (${-1}*(la1)+(la2)$) [circle, fill, white, inner sep=1.5pt] {};
      \node at (0,0) [anchor = north] {$\lambda$};
    \end{tikzpicture}
  \end{subfigure}
  
  \caption{The central rectangle can be shifted to a filled in circle, but cannot be shifted to any open circle.}
  \label{lambda-shift-schematic}
\end{figure}
 
Lastly, we also have the shift operator in the parameter $k$ which is invertible when $\lambda$ is $k$--regular. That is, the connection $\nabla$ coming via the $KZ$-functor from the $\Htrigsub$ representation $\J_\vartheta$ and the connection $\nabla$ coming via the $KZ$-functor from the $\mathrm{H^{trig}}_{k+1}$ representations $\J_\vartheta$ are equivalent when  \todo{ref}
\begin{equation}\label{eqn-sl3-k-shift-inv}
  \lambda(\alpha_1^\vee) \neq \pm k \text{ and }  \lambda(\alpha_2^\vee) \neq \pm k \text{ and }  \lambda(\alpha_1^\vee + \alpha_2^\vee) \neq \pm k.
\end{equation}

We can graphically represent when the $\lambda$ shifts are invertible by adding the six lines of \eqref{eqn-sl3-k-shift-inv} to our plot.  In the case of $\lambda_1,\lambda_2$, we can shift $\lambda \mapsto \lambda + \lambda_i$ when we are not shifting \emph{from a point on a line to a point not on that line}.   

The last shift $\lambda \mapsto \lambda + \lambda_1 - \lambda_2$ follows the same rule except with respect to the pair of lines parallel to $\lambda_1$.  With respect to these, the shift is invertible unless shifting \emph{from a point not on the line to a point on the line}.  Since the last shift is technically unnecessary, being the composition of powers of the other two, one may safely ignore this shift.\valeriocomment{I am in fact confused as to whether $\lambda_3$ is considered in the first place, and think any reference to it should be removed, unless there is a good reason not to.}

Adding in this data, and fixing, for demonstration, $k=4/3$, our diagram is as in \autoref{summary-fig}.
\begin{figure}[ht]
  \newcommand*\rows{10}
  \begin{center}
    \begin{tikzpicture}[scale=1.6]
      \clip (-1.8,-1.8) rectangle (1.8,1.8);
      \draw[dashed, black!20] ($-5*(1,{sqrt(3)})$) -- ($({0},0)+(5,{5*sqrt(3)})$);
      \draw[dashed, black!20] ($-5*(1,-{sqrt(3)})$) -- ($({0},0)+(5,{-5*sqrt(3)})$);
      \draw[dashed, black!20] ($(-4,0)$) -- ($(4,0)$);
      \foreach \x in {-5, -4, -3,-2,-1,1,2,3,4,5} {
        \draw[black!70] ($({\x*sqrt(2/3)},0)-5*(1,{sqrt(3)})$) -- ($({\x*sqrt(2/3)},0)+(5,{5*sqrt(3)}) 											$);
      }
      \foreach \x in {-5, -4, -3,-2,-1,1,2,3,4,5} {
        \draw[black!70] ($({\x*sqrt(2/3)},0)-5*(1,-{sqrt(3)})$) -- ($({\x*sqrt(2/3)},0)+(5
        ,{-5*sqrt(3)})$);
      }
      \foreach \y in {-5, -4, -3,-2,-1,1,2,3,4,5} {
        \draw[black!70] ($(-4,{\y*sqrt(2/3)*0.5*sqrt(3)})$) -- ($(4,{\y*0.5*sqrt(3)*sqrt(2/3)})$);
      }
      \node at (0,0)[circle,fill,inner sep=2pt]{};
      \draw[line width=1pt,red,-stealth](0,0)--({sqrt(2/3)},{0}) node[anchor=north east]{$					
        \lambda_1$};
      \draw[line width=1pt,red,-stealth](0,0)--({sqrt(1/6)},{sqrt(1/2)}) node[anchor=north west]{$
        \lambda_2$};
      \draw[orange, line width=2pt, dotted] ($(-4,{4/3*sqrt(2/3)*0.5*sqrt(3)})$) -- ($(4
      ,{4/3*0.5*sqrt(3)*sqrt(2/3)})$);
      \draw[blue, line width=2pt, dashed] ($(-4,{-4/3*sqrt(2/3)*0.5*sqrt(3)})$) -- ($(4
      ,{-4/3*0.5*sqrt(3)*sqrt(2/3)})$);
      \draw[orange, line width=2pt, dotted] 
      ($({4/3*sqrt(2/3)},0)-5*(1,{sqrt(3)})$) -- ($({4/3*sqrt(2/3)},0)+(5,{5*sqrt(3)})$);
      \draw[blue, line width=2pt, dashed] 
      ($({-4/3*sqrt(2/3)},0)-5*(1,{sqrt(3)})$) -- ($({-4/3*sqrt(2/3)},0)+(5,{5*sqrt(3)})$);
      \draw[orange, line width=2pt, dotted] 
      ($({4/3*sqrt(2/3)},0)-5*(1,-{sqrt(3)})$) -- ($({4/3*sqrt(2/3)},0)+(5,{-5*sqrt(3)})$);
      \draw[blue, line width=2pt, dashed] 
      ($({-4/3*sqrt(2/3)},0)-5*(1,-{sqrt(3)})$) -- ($({-4/3*sqrt(2/3)},0)+(5,{-5*sqrt(3)})$);
         
    \end{tikzpicture}
    \caption{Slice of the parameter space $\mathfrak{h}^* \times \IC$ at $k = 4/3$. Resonant values of $\lambda$ are showed with solid dark lines.  Blue dashed lines represent $\lambda(\alpha^\vee) = -k$ for $\alpha \in\pos$.  Orange dotted lines represent $\lambda(\alpha^\vee) = k$ for $\alpha \in\pos$.  None of these lines cannot be shifted ``off of'' in the direction $\lambda_1$ or $\lambda_2$.  Points $\lambda$ on the  orange dotted lines do not represent $\J_\vartheta$, only $\dI{\lambda}$.}
    \label{summary-fig}
  \end{center}
\end{figure}

We must break the problem into three basic cases for different values of $k$.  In each case the set of points which can be shown to have monodromy isomorphic to a non-resonant point is different. 

\label{thm:sl3-shift}
\begin{theorem} 
\valeriocomment{As for the rank 1 case, we should discuss how to organize the statement of this Theorem.
I would be tempted to start with the non--resonant case (even though this is probably going to be dealt with
in Section 11 for all ranks), so my case 1 or first bullet/item point would be $\lambda(\alpha^\vee)\notin\bbZ
_{\neq 0}$ for all coroots $\alpha^\vee$, and the mondromy should be specified as a representation of the AHA.
Then I suppose one has to introduce the 'bad set' of parameters
$\Lambda$ described by (1)--(3) (but since the definition is long, it might make sense to introduce it before
the statement of the Theorem?). And then the theorem should say that if $(k,\lambda)$ do not belong to 
$\Lambda$ the monodromy is equivalent to a representation of the AHA, which should be specificed
explicitly (or is this too complicated?).}
\valeriocomment{Follow up: the FG conjecture says that the monodromy of $\J_{\vartheta,k}$ is equivalent to
$\cJ_{\Theta=\exp(\vartheta),q=\exp(k))}$, for an appropriate definition of the exponentials, correct?
Is it also correct that the shift operators in $\lambda,k$ do not alter the value of $\Theta$ and $q$, i.e. that
shifting by the weight lattice in $\lambda/\vartheta$ and whatever the shift in $k$ is (integer or half--integer) lie in the
kernel of these exponential maps? If so, the theorem should be stated more succintly as saying that
1) define the bad set $\Lambda$, and notice that it is of codimension 2 2) If $(\lambda,k)$ do not lie
in $\Lambda$, then the GF conjecture holds, that is the monodromy of $\J_{\vartheta,k}$ is equivalent to
$\cJ_{\Theta,q}$. The {\it proof} (but not the statement) can then go into the cases a) non-resonant
first b) resonant etc... In particular, the words non--resonant should not appear in the statement of our
main result.}

In Lie type $\mathsf{A}_2$, every $\J_\vartheta$ is non-resonant, or has monodromy equivalent to that of a non-resonant $\J_\phi$ for some $\phi$ and $k$, or has $\vartheta = \overline{\lambda}$ where
\valeriocomment{a) Conditions should not be formulated in terms of $\lambda_3$, which is linearly dependent from $\lambda_1,\lambda_2$. For example, the first two lines of condition (1) should read $\lambda = \nu_2 - i \lambda_1+j \lambda_2, 0\leq j<i$ and $\lambda=-\nu_2 - i \lambda_1 +j \lambda_2, 0<j\leq i$. b) what are $\nu_1,\nu_2$?}     

  \begin{enumerate}
    \item If $k \notin \mathbb{Z}$,
          \begin{align*}
            \lambda & = \nu_2 - i \lambda_1 - j \lambda_3, \text{ for } i \geq 1, j \geq 0 \text{ or} \\
            \lambda & = -\nu_2 - i \lambda_1 - j \lambda_3, \text{ for } i \geq 0, j \geq 1.          
          \end{align*}
    \item If $k \in \mathbb{Z}_{>0}$,
          \begin{align*}
            \lambda & = -\nu_1 - i \lambda_1 + j \lambda_2, \text{ for } i \geq 0, 0 \leq j \leq k - 1 \text{ or} \\
            \lambda & = -\nu_1 - i \lambda_3 + j \lambda_2, \text{ for } i \geq 0, 1 \leq j \leq k - 1.           
          \end{align*}
    \item If $k \in \mathbb{Z}_{< 0}$,
          \begin{equation}\label{eqn:thm-sl3-case3}
            \lambda = \nu_1 + i \lambda_3 + j \lambda_1, \text{ for }  k \leq i \leq -1,  1 \leq j \leq -i.
          \end{equation}
  \end{enumerate}  
\end{theorem} 

We now prove Theorem \ref{thm:sl3-shift} by shifting resonant values of the parameter space to non-resonant values.  We break Case (1) into four subcases 1.1--1.4 below.

\subsection{Case 1.1}\label{subsec-case1}

\todolow{make an issue of points which are not $J$-representative?}

Consider points of the form $(\lambda,k)$ where $k \notin \mathbb{Z}$.  Assume $\lambda^w$  is $J$-representative for all $w \in W$.  That is, $\lambda(\alpha^\vee) \neq \pm k$ for any $\alpha \in R$.  We assume $\lambda$ is resonant and exactly one of $\lambda(\alpha^\vee) \in \mathbb{Z}$.  Since all $\lambda^w$ are $J$-representative, we can choose $\lambda$ such that $\lambda(\alpha_1^\vee) = m \in \mathbb{Z}_{< 0}$.  Thus by assumption $\lambda(\alpha_2^\vee) \notin \mathbb{Z}$, and  $\lambda(\alpha_1^\vee + \alpha_2^\vee) \notin \mathbb{Z}$.   We can write $\lambda = (-m) \lambda_1 + x \lambda_2$ where $x \notin \mathbb{Z}$.
 
We wish to shift $\lambda$ to 
\[
  \mu = \lambda + m \lambda_1 = x \lambda_2
\]
which has $\mu(\alpha_1^\vee) = 0 \in \mathbb{Z}_{\leq 0}$, which does not imply resonance.  Note $\mu(\alpha_2^\vee) = \lambda(\alpha_2^\vee)$ and $\mu(\alpha_1^\vee + \alpha_2^\vee) = \lambda(\alpha_1^\vee + \alpha_2^\vee) + m$ are not integers.  Thus $(\mu,k)$ is a nonresonant point of the parameter space.  

We note $\mu$ also satisfies that $\mu^w$ is $J$-representative for all $w$ since $\mu(\alpha_2^\vee) = \mu(\alpha_1^\vee + \alpha_2^\vee) = \lambda(\alpha_2^\vee) \neq \pm k$ and $\mu(\alpha_1^\vee) = 0 \neq \pm k$ since $k \notin \mathbb{Z}$.  Since $\lambda$ and $\mu$ are both $J$-representative, we can use the invertibility conditions of shift operators $\lambda \mapsto \lambda + \lambda_i$ on $\dI{\lambda}$. 

Thus, in order to resolve the monodromy at $(\lambda,k)$ in terms of $(\mu,k)$ we need the shifts at each step 
\[
  \lambda+ j \lambda_1 \to \lambda + (j+1) \lambda_1
  \text{ for } 0 \leq j \leq m-1
\]
to be invertible. This is illustrated in \autoref{fig-sl3-shift}. 

\begin{figure}[ht]
  \begin{center}
    \begin{tikzpicture}[scale=1.8]
      \clip (-2,-0.5) rectangle (1.1,1.3);
      \draw[dashed, black!20] ($-5*(1,{sqrt(3)})$) -- ($({0},0)+(5,{5*sqrt(3)})$);
      \draw[dashed, black!20] ($-5*(1,-{sqrt(3)})$) -- ($({0},0)+(5,{-5*sqrt(3)})$);
      \draw[dashed, black!20] ($(-4,0)$) -- ($(4,0)$);
      \foreach \x in {-5, -4, -3,-2,-1,1,2,3,4,5} {
        \draw[black!70] ($({\x*sqrt(2/3)},0)-5*(1,{sqrt(3)})$) -- ($({\x*sqrt(2/3)},0)+(5,{5*sqrt(3)}) 											$);
      }
      \foreach \x in {-5, -4, -3,-2,-1,1,2,3,4,5} {
        \draw[black!70] ($({\x*sqrt(2/3)},0)-5*(1,-{sqrt(3)})$) -- ($({\x*sqrt(2/3)},0)+(5
        ,{-5*sqrt(3)})$);
      }
      \foreach \y in {-5, -4, -3,-2,-1,1,2,3,4,5} {
        \draw[black!70] ($(-4,{\y*sqrt(2/3)*0.5*sqrt(3)})$) -- ($(4,{\y*0.5*sqrt(3)*sqrt(2/3)})$);
      }
      \node at (0,0)[circle,fill,inner sep=2pt]{};
      ({sqrt(2/3)},{0})
      \draw[line width=1pt,red,-stealth](0,0)--({sqrt(2/3)},{0}) node[anchor=north east]{$					
        \lambda_1$};
      \draw[orange, line width=2pt, dotted] ($(-4,{4/3*sqrt(2/3)*0.5*sqrt(3)})$) -- ($(4
      ,{4/3*0.5*sqrt(3)*sqrt(2/3)})$);
      \draw[blue, line width=2pt, dashed] ($(-4,{-4/3*sqrt(2/3)*0.5*sqrt(3)})$) -- ($(4
      ,{-4/3*0.5*sqrt(3)*sqrt(2/3)})$);
      \draw[orange, line width=2pt, dotted] 
      ($({4/3*sqrt(2/3)},0)-5*(1,{sqrt(3)})$) -- ($({4/3*sqrt(2/3)},0)+(5,{5*sqrt(3)})$);
      \draw[blue, line width=2pt, dashed] 
      ($({-4/3*sqrt(2/3)},0)-5*(1,{sqrt(3)})$) -- ($({-4/3*sqrt(2/3)},0)+(5,{5*sqrt(3)})$);
      \draw[orange, line width=2pt, dotted] 
      ($({4/3*sqrt(2/3)},0)-5*(1,-{sqrt(3)})$) -- ($({4/3*sqrt(2/3)},0)+(5,{-5*sqrt(3)})$);
      \draw[blue, line width=2pt, dashed] 
      ($({-4/3*sqrt(2/3)},0)-5*(1,-{sqrt(3)})$) -- ($({-4/3*sqrt(2/3)},0)+(5,{-5*sqrt(3)})$);
      \node (a) at ($-2*({sqrt(2/3)},0)+{1/2}*({sqrt(1/6)},{sqrt(1/2)})$) [purple,rectangle,fill,inner sep=2pt]{};
      \node (b) at ($-1*({sqrt(2/3)},0)+{1/2}*({sqrt(1/6)},{sqrt(1/2)})$) [purple,rectangle,fill,inner sep=2pt]{};
      \node (c) at ($0*({sqrt(2/3)},0)+{1/2}*({sqrt(1/6)},{sqrt(1/2)})$) [purple,rectangle,fill,inner sep=2pt]{};
      \draw[line width=1pt,purple,-stealth](a)--(b) node[anchor=south west]{};
      \draw[line width=1pt,purple,-stealth](b)--(c) node[anchor=south west]{};
    \end{tikzpicture}
    \caption{Slice of the parameter space $\mathfrak{h}^* \times \IC$ at $k = 4/3$.  We shift the point $\lambda = -2\lambda_1 + (1/2) \lambda_2$ (a rectangle) as an example of Case 1.  Since we do not hit any of the dashed or dotted lines where shifting by $\lambda_1$ is not invertible, the shifted rectangle has the same monodromy as the non-shifted rectangle. }
    \label{fig-sl3-shift}
  \end{center}
\end{figure}  
 
By \eqref{eqn-lambda1-shift}, these shifts are all invertible except when 
\[
  (\lambda + j \lambda_1)(\alpha_1^\vee) = -k,k \text{ or }   (\lambda + j \lambda_1)(\alpha_1^\vee+\alpha_2^\vee) = -k,k.  \text{ for } 0 \leq j \leq m-1.
\]
We note $(\lambda + j \lambda_1)(\alpha_1^\vee) = -k,k$ is not possible in this case since $k \notin \mathbb{Z}$ and $(\lambda + j \lambda_1)(\alpha_1^\vee)$ is an integer for all $j$.  Thus the only condition is
\[
  x-m+j = -k,k.  \text{ for } 0 \leq j \leq m-1.
\]
We can thus solve for $x$ and find the $\lambda$ which are problematic.  For a fixed $m$ these are
\begin{align*}
  (-m) \lambda_1 + (-k + (m - j))\lambda_2 \text{ for } 0 \leq j \leq m-1, \\
  (-m) \lambda_1 + (k + (m - j))\lambda_2 \text{ for } 0 \leq j \leq m-1.  
\end{align*}
Let $\nu_2 = k \lambda_2$.  Then our problematic points are,
\begin{align*}
  \pm \nu_2  + m (-\lambda_3) + j (-\lambda_2)  \text{ for } 0 \leq j \leq m-1. 
\end{align*}
as shown in \autoref{fig-sl3-bad-pts1}.

\begin{figure}[ht] 
  \newcommand*\rows{10}
  \begin{center}
    \begin{tikzpicture}[scale=1.2]
      \clip (-2.5,-2.5) rectangle (2.5,2.5);
      \draw[dashed, black!20] ($-5*(1,{sqrt(3)})$) -- ($({0},0)+(5,{5*sqrt(3)})$);
      \draw[dashed, black!20] ($-5*(1,-{sqrt(3)})$) -- ($({0},0)+(5,{-5*sqrt(3)})$);
      \draw[dashed, black!20] ($(-4,0)$) -- ($(4,0)$);
      \foreach \x in {-5, -4, -3,-2,-1,1,2,3,4,5} {
        \draw[black!70] ($({\x*sqrt(2/3)},0)-5*(1,{sqrt(3)})$) -- ($({\x*sqrt(2/3)},0)+(5,{5*sqrt(3)}) 											$);
      }
      \foreach \x in {-5, -4, -3,-2,-1,1,2,3,4,5} {
        \draw[black!70] ($({\x*sqrt(2/3)},0)-5*(1,-{sqrt(3)})$) -- ($({\x*sqrt(2/3)},0)+(5
        ,{-5*sqrt(3)})$);
      }
      \foreach \y in {-5, -4, -3,-2,-1,1,2,3,4,5} {
        \draw[black!70] ($(-4,{\y*sqrt(2/3)*0.5*sqrt(3)})$) -- ($(4,{\y*0.5*sqrt(3)*sqrt(2/3)})$);
      }
      \node at (0,0)[circle,fill,inner sep=2pt]{};
      \node (la1) at ({sqrt(2/3)},{0}) {};
      \node (la2) at ({sqrt(1/6)},{sqrt(1/2)}) {};
      \node (la3) at ($(la1)-(la2)$) {};
      \draw[line width=1pt,red,-stealth](0,0)--({sqrt(2/3)},{0}) node[anchor=north east]{$					
        \lambda_1$};
      \draw[orange, line width=2pt, dotted] ($(-4,{4/3*sqrt(2/3)*0.5*sqrt(3)})$) -- ($(4
      ,{4/3*0.5*sqrt(3)*sqrt(2/3)})$);
      \draw[blue, line width=2pt, dashed] ($(-4,{-4/3*sqrt(2/3)*0.5*sqrt(3)})$) -- ($(4
      ,{-4/3*0.5*sqrt(3)*sqrt(2/3)})$);
      \draw[orange, line width=2pt, dotted] 
      ($({4/3*sqrt(2/3)},0)-5*(1,{sqrt(3)})$) -- ($({4/3*sqrt(2/3)},0)+(5,{5*sqrt(3)})$);
      \draw[blue, line width=2pt, dashed] 
      ($({-4/3*sqrt(2/3)},0)-5*(1,{sqrt(3)})$) -- ($({-4/3*sqrt(2/3)},0)+(5,{5*sqrt(3)})$);
      \draw[orange, line width=2pt, dotted] 
      ($({4/3*sqrt(2/3)},0)-5*(1,-{sqrt(3)})$) -- ($({4/3*sqrt(2/3)},0)+(5,{-5*sqrt(3)})$);
      \draw[blue, line width=2pt, dashed] 
      ($({-4/3*sqrt(2/3)},0)-5*(1,-{sqrt(3)})$) -- ($({-4/3*sqrt(2/3)},0)+(5,{-5*sqrt(3)})$);

      \tikzstyle{prob} = [purple!80,regular polygon,regular polygon sides=3,fill,inner sep=1.2pt]   	   
      \tikzstyle{prob2} = [purple!50,regular polygon,regular polygon sides=3,fill,inner sep=1.2pt]   	   
      \node (nu) at (${4/3}*({sqrt(1/6)},{sqrt(1/2)})$) [circle, fill, inner sep=2pt]{};
      \node at (nu) [anchor=south west] {$\nu_2$};
      
      \node at ($(nu) - {1}*(la3) - {0}*(la2) $) [prob]{}; 
      \node at ($(nu) - {2}*(la3) - {0}*(la2) $) [prob]{}; 
      \node at ($(nu) - {2}*(la3) - {1}*(la2) $) [prob]{}; 
      
      \node at ($(nu) - {3}*(la3) - {0}*(la2) $) [prob]{}; 
      \node at ($(nu) - {3}*(la3) - {1}*(la2) $) [prob]{}; 
      \node at ($(nu) - {3}*(la3) - {2}*(la2) $) [prob]{}; 
      
      \node at ($(nu) - {4}*(la3) - {0}*(la2) $) [prob]{}; 
      \node at ($(nu) - {4}*(la3) - {1}*(la2) $) [prob]{}; 
      \node at ($(nu) - {4}*(la3) - {2}*(la2) $) [prob]{}; 
      \node at ($(nu) - {4}*(la3) - {3}*(la2) $) [prob]{};

      \node (mnu) at (${-1}*(nu)$) [circle, fill, inner sep=2pt]{};
      \node at (mnu) [anchor=south west] {$-\nu_2$};

      \node at ($(mnu) - {1}*(la3) - {0}*(la2) $) [prob2]{}; 
      \node at ($(mnu) - {2}*(la3) - {0}*(la2) $) [prob2]{}; 
      \node at ($(mnu) - {2}*(la3) - {1}*(la2) $) [prob2]{}; 
      
      \node at ($(mnu) - {3}*(la3) - {0}*(la2) $) [prob2]{}; 
      \node at ($(mnu) - {3}*(la3) - {1}*(la2) $) [prob2]{}; 
      \node at ($(mnu) - {3}*(la3) - {2}*(la2) $) [prob2]{}; 
      
      \node at ($(mnu) - {4}*(la3) - {0}*(la2) $) [prob2]{}; 
      \node at ($(mnu) - {4}*(la3) - {1}*(la2) $) [prob2]{}; 
      \node at ($(mnu) - {4}*(la3) - {2}*(la2) $) [prob2]{}; 
      \node at ($(mnu) - {4}*(la3) - {3}*(la2) $) [prob2]{};    	   
             
    \end{tikzpicture}
    \caption{Slice of the parameter space $\mathfrak{h}^* \times \IC$ at $k = 4/3$.  Triangles are problematic points.  }
    \label{fig-sl3-bad-pts1}
  \end{center}
\end{figure}


\subsection{Case 1.2}

We now consider points of the form $(\lambda,k)$ where as in Case 1.1, $k \notin \mathbb{Z}$, and additionally $\lambda(\alpha^\vee) \neq \pm k$ for any $\alpha \in R$.  We assume $\lambda$ is resonant and now handle the case $\lambda(\alpha^\vee) \in \mathbb{Z}$ for two or more $\alpha$.  In fact, this implies $\lambda(\alpha^\vee) \in \mathbb{Z}$ for all $\alpha \in R$.  Since $\lambda^w$ is $J$-representative for all $w$, we can choose $\lambda$ such that  $ - m = \lambda(\alpha_1^\vee) \in \mathbb{Z}_{\leq 0}$.   We can then write $\lambda = (-m) \lambda_1 + n \lambda_2$ where $n \in \mathbb{Z}$.  

We start as before and shift $\lambda$ to
\[
  \mu_1 = \lambda + m \lambda_1 = n \lambda_2
\]
on the line $\mu_1(\alpha_1^\vee) = 0$.  As above the invertibility condition is 
\[
  n - m + j \neq -k,k \text{ for } 0 \leq j \leq m-1.
\]
Since $k \notin \mathbb{Z}$, this always holds, and $(\lambda,k)$ and $(\mu_1,k)$ have equivalent monodromy.  We are also including the case of $m=0$, where $\mu_1 = \lambda$. 

However, $(\mu_1,k)$ is itself resonant unless $n=0$ because $\mu_1(\alpha_2^\vee) = n \in \mathbb{Z}$.  If $n \neq 0$, we need to shift further to 
\[
  \mu_2 = \lambda -n \lambda_2 = 0.
\]
By \eqref{eqn-lambda2-shift}, for $n>0$, the invertibility condition is 
\[
  j \neq -k,k  \text{ for } 1 \leq j \leq n.
\]
For $n < 0$, the range of $j$ is $0 \leq j \leq n-1$.    In either case since $k$ is not an integer.  This condition also holds.  Thus $(\lambda,k)$ has and $(\mathbf{0},k)$ have equivalent monodromy.

For example we shift the point $\lambda = - 3\lambda_1 + 2\lambda_2$ as shown in \autoref{fig-case2-shift}.

\begin{figure}[ht] 
  
  \newcommand*\rows{10}
  \begin{center}
    \begin{tikzpicture}[scale=1.6]
      \clip (-2,-0.3) rectangle (1.1,1.9);
      \draw[dashed, black!20] ($-5*(1,{sqrt(3)})$) -- ($({0},0)+(5,{5*sqrt(3)})$);
      \draw[dashed, black!20] ($-5*(1,-{sqrt(3)})$) -- ($({0},0)+(5,{-5*sqrt(3)})$);
      \draw[dashed, black!20] ($(-4,0)$) -- ($(4,0)$);
      \foreach \x in {-5, -4, -3,-2,-1,1,2,3,4,5} {
        \draw[black!70] ($({\x*sqrt(2/3)},0)-5*(1,{sqrt(3)})$) -- ($({\x*sqrt(2/3)},0)+(5,{5*sqrt(3)}) 											$);
      }
      \foreach \x in {-5, -4, -3,-2,-1,1,2,3,4,5} {
        \draw[black!70] ($({\x*sqrt(2/3)},0)-5*(1,-{sqrt(3)})$) -- ($({\x*sqrt(2/3)},0)+(5
        ,{-5*sqrt(3)})$);
      }
      \foreach \y in {-5, -4, -3,-2,-1,1,2,3,4,5} {
        \draw[black!70] ($(-4,{\y*sqrt(2/3)*0.5*sqrt(3)})$) -- ($(4,{\y*0.5*sqrt(3)*sqrt(2/3)})$);
      }
      \node at (0,0)[circle,fill,inner sep=2pt]{};
      ({sqrt(2/3)},{0})

      \draw[orange, line width=2pt, dotted] ($(-4,{4/3*sqrt(2/3)*0.5*sqrt(3)})$) -- ($(4
      ,{4/3*0.5*sqrt(3)*sqrt(2/3)})$);
      \draw[blue, line width=2pt, dashed] ($(-4,{-4/3*sqrt(2/3)*0.5*sqrt(3)})$) -- ($(4
      ,{-4/3*0.5*sqrt(3)*sqrt(2/3)})$);
      \draw[orange, line width=2pt, dotted] 
      ($({4/3*sqrt(2/3)},0)-5*(1,{sqrt(3)})$) -- ($({4/3*sqrt(2/3)},0)+(5,{5*sqrt(3)})$);
      \draw[blue, line width=2pt, dashed] 
      ($({-4/3*sqrt(2/3)},0)-5*(1,{sqrt(3)})$) -- ($({-4/3*sqrt(2/3)},0)+(5,{5*sqrt(3)})$);
      \draw[orange, line width=2pt, dotted] 
      ($({4/3*sqrt(2/3)},0)-5*(1,-{sqrt(3)})$) -- ($({4/3*sqrt(2/3)},0)+(5,{-5*sqrt(3)})$);
      \draw[blue, line width=2pt, dashed] 
      ($({-4/3*sqrt(2/3)},0)-5*(1,-{sqrt(3)})$) -- ($({-4/3*sqrt(2/3)},0)+(5,{-5*sqrt(3)})$);
                
      \node (la1) at ({sqrt(2/3)},{0}) {};
      \node (la2) at ({sqrt(1/6)},{sqrt(1/2)}) {};
      \node (la3) at ($(la1)-(la2)$) {};
              
      \tikzstyle{shsq} = [purple,rectangle,fill,inner sep=2pt];
              
      \node (a) at ($-3*(la1)+2*(la2)$) [shsq]{};
      \node (b) at ($-2*(la1)+2*(la2)$) [shsq]{};
      \node (c) at ($-1*(la1)+2*(la2)$) [shsq]{};
      \node (d) at ($0*(la1)+2*(la2)$) [shsq]{};
      \node (e) at ($0*(la1)+1*(la2)$) [shsq]{};
      \node (f) at ($0*(la1)+0*(la2)$) [shsq]{};
            
      \draw[line width=1pt,purple,-stealth](a)--(b) node[anchor=south west]{};
      \draw[line width=1pt,purple,-stealth](b)--(c) node[anchor=south west]{};
      \draw[line width=1pt,purple,-stealth](c)--(d) node[anchor=south west]{};
      \draw[line width=1pt,purple,-stealth](d)--(e) node[anchor=south west]{};
      \draw[line width=1pt,purple,-stealth](e)--(f) node[anchor=south west]{};	  
    \end{tikzpicture}
    \caption{Slice of the parameter space $\mathfrak{h}^* \times \IC$ at $k = 4/3$.  We shift the point $\lambda = -3\lambda_1+ 2 \lambda_2$ (a rectangle) as an example of the case being considered.  We do not hit any of the dashed or dotted lines, so $(\lambda,k)$ has equivalent monodromy to $(\mathbf{0},k)$. }
    \label{fig-case2-shift}
  \end{center}
\end{figure}


\subsection{Case 1.3}

Let $(\lambda,k)$ have $k \notin \mathbb{Z}$.  The last remaining resonant points to be resolved under these assumptions are those which have
\begin{equation}
  \label{eqn-case103-6lines}
  \lambda(\alpha_1^\vee)  =  \pm k \text{ or }  \lambda(\alpha_2^\vee) =  \pm k \text{ or }  \lambda(\alpha_1^\vee + \alpha_2^\vee) =  \pm k.
\end{equation}  
Assume first that $\lambda$ lies on exactly one of these six lines.  Then we can choose $w$ in the Weyl group such that $\lambda^w(\alpha_1^\vee+ \alpha_2^\vee) = -k$.  

The only two points which are not $J$-representative on this line are $ k\lambda_1 -   2k \lambda_2 $ and $-2k\lambda_1 + k \lambda_2$, however, both are non-resonant.        
Resonance depends only on $\vartheta$, not $\lambda$. So for these two points $\J_\vartheta$ is represented by some $\lambda$ which is also non-resonant. \todo{move this fact to non-res section}  So all the resonant points are $J$-representative.

Resonant points have $\lambda(\alpha_1^\vee) \in \mathbb{Z}_{\neq0}$ or $\lambda(\alpha_2^\vee) \in \mathbb{Z}_{\neq0}$, but cannot have both since this would imply $k \in \mathbb{Z}$.  The shift described in Case 1.1 works without modification for $\lambda(\alpha_1^\vee) \in \mathbb{Z}_{\neq0}$. 

Assume now $-m = \lambda(\alpha_2^\vee) \in \mathbb{Z}_{<0}$.  So $\lambda =  (m-k)\lambda_1 -m \lambda_2$ and we want to shift to
\[
  \mu = \lambda - m \lambda_3 = -k \lambda_1.
\]
Let $\nu_1 = k \lambda_1$.
From \eqref{eqn-lambda3-shift} this shift is not invertible when
\[
  (\lambda - j \lambda_3) (\alpha_1^\vee) = -k,k  \text{ or } (\lambda - j \lambda_3) (\alpha_2^\vee) = k+1,1-k  \text{ for } 1 \leq j \leq m-1 
\]
that is,
\[
  (m-j)-k = -k,k  \text{ or } -m+j = k+1,1-k  \text{ for } 1 \leq j \leq m.
\]
Since $k$ is not an integer and $-m+j$ is, the second equality cannot occur.  The first, however, occurs for $j = m$.    Thus all the points we are considering cannot be shifted to $-\nu_1$.

We could also try to shift along $\lambda_2$ to 
\[
  \mu = \lambda +m\lambda_2
\]     
but this fails the inevitability condition immediately at 
\[
  (\lambda + 0 \lambda_2)(\alpha_1^\vee + \alpha_2^\vee) = -k 
\]
Hence we cannot shift to a non-resonant point for any of the points 
\[
  -\nu_1 + j \lambda_3 \text{ for } 1 \leq j .
\]
as shown in \autoref{case6-cant-shift}. 

\begin{figure}[ht]
  \begin{center}
    
    \begin{tikzpicture}[scale=1.4]
      \clip (-2,-2.5) rectangle (1.8,1.8);
      \draw[dashed, black!20] ($-5*(1,{sqrt(3)})$) -- ($({0},0)+(5,{5*sqrt(3)})$);
      \draw[dashed, black!20] ($-5*(1,-{sqrt(3)})$) -- ($({0},0)+(5,{-5*sqrt(3)})$);
      \draw[dashed, black!20] ($(-4,0)$) -- ($(4,0)$);
      \foreach \x in {-5, -4, -3,-2,-1,1,2,3,4,5} {
        \draw[black!70] ($({\x*sqrt(2/3)},0)-5*(1,{sqrt(3)})$) -- ($({\x*sqrt(2/3)},0)+(5,{5*sqrt(3)}) 											$);
      }
      \foreach \x in {-5, -4, -3,-2,-1,1,2,3,4,5} {
        \draw[black!70] ($({\x*sqrt(2/3)},0)-5*(1,-{sqrt(3)})$) -- ($({\x*sqrt(2/3)},0)+(5
        ,{-5*sqrt(3)})$);
      }
      \foreach \y in {-5, -4, -3,-2,-1,1,2,3,4,5} {
        \draw[black!70] ($(-4,{\y*sqrt(2/3)*0.5*sqrt(3)})$) -- ($(4,{\y*0.5*sqrt(3)*sqrt(2/3)})$);
      }
      \node at (0,0)[circle,fill,inner sep=2pt]{};

      \node (la1) at ({sqrt(2/3)},{0}) {};
      \node (la2) at ({sqrt(1/6)},{sqrt(1/2)}) {};
      \node (la3) at ($(la1)-(la2)$) {};

      \draw[line width=1pt,red,-stealth](0,0)--($1.04*(la3)$) node[anchor=south west]{$
        \lambda_3$};
      \draw[orange, line width=2pt, dotted] ($(-4,{4/3*sqrt(2/3)*0.5*sqrt(3)})$) -- ($(4
      ,{4/3*0.5*sqrt(3)*sqrt(2/3)})$);
      \draw[blue, line width=2pt, dashed] ($(-4,{-4/3*sqrt(2/3)*0.5*sqrt(3)})$) -- ($(4
      ,{-4/3*0.5*sqrt(3)*sqrt(2/3)})$);
      \draw[orange, line width=2pt, dotted] 
      ($({4/3*sqrt(2/3)},0)-5*(1,{sqrt(3)})$) -- ($({4/3*sqrt(2/3)},0)+(5,{5*sqrt(3)})$);
      \draw[blue, line width=2pt, dashed] 
      ($({-4/3*sqrt(2/3)},0)-5*(1,{sqrt(3)})$) -- ($({-4/3*sqrt(2/3)},0)+(5,{5*sqrt(3)})$);
      \draw[orange, line width=2pt, dotted] 
      ($({4/3*sqrt(2/3)},0)-5*(1,-{sqrt(3)})$) -- ($({4/3*sqrt(2/3)},0)+(5,{-5*sqrt(3)})$);
      \draw[blue, line width=2pt, dashed] 
      ($({-4/3*sqrt(2/3)},0)-5*(1,-{sqrt(3)})$) -- ($({-4/3*sqrt(2/3)},0)+(5,{-5*sqrt(3)})$);

      \tikzstyle{prob} = [purple!80,regular polygon,regular polygon sides=3,fill,inner sep=1.2pt]   	   
      \tikzstyle{prob2} = [purple!50,regular polygon,regular polygon sides=3,fill,inner sep=1.2pt]   	   
      \node (nu) at (${4/3}*(la1)$) [circle, fill, inner sep=2.2pt]{};
      \node at (nu) [anchor=south west] {$\nu_1$};
      
      \node (mnu) at (${-1}*(nu)$) [circle, fill, inner sep=2.2pt]{};
      \node at (mnu) [anchor=south west] {$-\nu_1$};

      \node at ($(mnu) + {1}*(la3)$) [prob]{}; 
      \node at ($(mnu) + {2}*(la3)$) [prob]{}; 
      \node at ($(mnu) + {3}*(la3)$) [prob]{}; 
      \node at ($(mnu) + {4}*(la3)$) [prob]{}; 
      \node at ($(mnu) + {5}*(la3)$) [prob]{}; 
      
    \end{tikzpicture}
    \caption{Slice of the parameter space $\mathfrak{h}^* \times \IC$ at $k = 4/3$.  Triangles are the newly found problematic points $\nu_1 + j \lambda_3$ for integers $1 \leq j$. }
    \label{case6-cant-shift}
  \end{center}
\end{figure}

Now assume $m = \lambda(\alpha_2^\vee) \in \mathbb{Z}_{>0}$.  Then $\lambda =  (-m-k)\lambda_1 + m \lambda_2$   In this case we shift to
\[
  \mu = \lambda + m \lambda_3 = -k \lambda_1.
\]
The invertibility condition is
\[
  (-m-j)-k = -k,k  \text{ or } m+j = k+1,1-k  \text{ for } 0 \leq j \leq m-1.
\]
As before the second equality cannot hold.  The first also cannot hold, however, because $-m-j < 0$ so $-m-j-k < -k,k$.   Thus we can shift these points to $-\nu_1$ as desired. 
 We illustrate how we shift, for example, $\lambda=(-2-4/3)\lambda_1 + (2)\lambda_2$ in \autoref{case6-can-shift}.

\begin{figure}[ht]
\begin{center}
\begin{tikzpicture}[scale=1.8]
 		\clip (-2.5,-0.3) rectangle (0.6,1.9);
       \draw[dashed, black!20] ($-5*(1,{sqrt(3)})$) -- ($({0},0)+(5,{5*sqrt(3)})$);
       \draw[dashed, black!20] ($-5*(1,-{sqrt(3)})$) -- ($({0},0)+(5,{-5*sqrt(3)})$);
       \draw[dashed, black!20] ($(-4,0)$) -- ($(4,0)$);
		\foreach \x in {-5, -4, -3,-2,-1,1,2,3,4,5} {
       	\draw[black!70] ($({\x*sqrt(2/3)},0)-5*(1,{sqrt(3)})$) -- ($({\x*sqrt(2/3)},0)+(5,{5*sqrt(3)}) 											$);
   	}
		\foreach \x in {-5, -4, -3,-2,-1,1,2,3,4,5} {
      		\draw[black!70] ($({\x*sqrt(2/3)},0)-5*(1,-{sqrt(3)})$) -- ($({\x*sqrt(2/3)},0)+(5
      									,{-5*sqrt(3)})$);
   	}
    	\foreach \y in {-5, -4, -3,-2,-1,1,2,3,4,5} {
      		\draw[black!70] ($(-4,{\y*sqrt(2/3)*0.5*sqrt(3)})$) -- ($(4,{\y*0.5*sqrt(3)*sqrt(2/3)})$);
   	}
 		\node at (0,0)[circle,fill,inner sep=2pt]{};
		({sqrt(2/3)},{0})
		\draw[orange, line width=2pt, dotted] ($(-4,{4/3*sqrt(2/3)*0.5*sqrt(3)})$) -- ($(4
			,{4/3*0.5*sqrt(3)*sqrt(2/3)})$);
		\draw[blue, line width=2pt, dashed] ($(-4,{-4/3*sqrt(2/3)*0.5*sqrt(3)})$) -- ($(4
			,{-4/3*0.5*sqrt(3)*sqrt(2/3)})$);
	  \draw[orange, line width=2pt, dotted] 
	  			($({4/3*sqrt(2/3)},0)-5*(1,{sqrt(3)})$) -- ($({4/3*sqrt(2/3)},0)+(5,{5*sqrt(3)})$);
	   \draw[blue, line width=2pt, dashed] 
	   			($({-4/3*sqrt(2/3)},0)-5*(1,{sqrt(3)})$) -- ($({-4/3*sqrt(2/3)},0)+(5,{5*sqrt(3)})$);
	   \draw[orange, line width=2pt, dotted] 
	   			($({4/3*sqrt(2/3)},0)-5*(1,-{sqrt(3)})$) -- ($({4/3*sqrt(2/3)},0)+(5,{-5*sqrt(3)})$);
	   \draw[blue, line width=2pt, dashed] 
	   			($({-4/3*sqrt(2/3)},0)-5*(1,-{sqrt(3)})$) -- ($({-4/3*sqrt(2/3)},0)+(5,{-5*sqrt(3)})$);
	   			
	   	\node (la1) at ({sqrt(2/3)},{0}) {};
		\node (la2) at ({sqrt(1/6)},{sqrt(1/2)}) {};
		\node (la3) at ($(la1)-(la2)$) {};
				
		\tikzstyle{shsq} = [purple,rectangle,fill,inner sep=2pt];
				
  	   \node (a) at (${-2-4/3}*(la1)+2*(la2)$) [shsq]{};
	   \node (b) at (${-1-4/3}*(la1)+1*(la2)$) [shsq]{};
	   \node (c) at (${-0-4/3}*(la1)+0*(la2)$) [shsq]{};

	  \node (nu) at (${4/3}*(la1)$) [circle, fill, inner sep=2.2pt]{};
  	   \node at (nu) [anchor=south west] {$\nu_1$};

  	   \node (mnu) at (${-1}*(nu)$) [circle, fill, inner sep=2.2pt]{};
  	   \node at (mnu) [anchor=south west] {$-\nu_1$};	
  	  
  	  \draw[line width=1.4pt,purple,-stealth](a)--(b) node[anchor=south west]{};
	  \draw[line width=1.4pt,purple,-stealth](b)--(c) node[anchor=south west]{};

\end{tikzpicture}
\caption{Slice of the parameter space $\mathfrak{h}^* \times \IC$ at $k = 4/3$.  We shift the point $\lambda = (-2-4/3)\lambda_1 + (2)\lambda_2$ (a rectangle) as an example of the case being considered.  We find $(\lambda,k)$ has equivalent monodromy to $-\nu_1$. }
\label{case6-can-shift}
\end{center}
\end{figure}


\subsection{Case 1.4}  

Let $(\lambda,k)$ have $k \notin \mathbb{Z}$.  We now assume that $\lambda$ lies on more than one of the six lines from \eqref{eqn-case103-6lines}.

Since $k \neq 0$, this means that $\lambda$ lies on exactly two of the lines.  There are exactly 12 such points.  Let $\nu_3 = k \lambda_3$.  Then the points are 
\[
  \lbrace \pm \nu_1, \pm \nu_2, \pm \nu_3,  \nu_1 + \nu_3, \nu_1 + \nu_2, \nu_2 - \nu_3, -\nu_3 - \nu_1, -\nu_1 - \nu_2,  -\nu_2 + \nu_3  \rbrace.
\]
Alternatively, let $\Lambda_{1}$ be the set of non-zero integral weights $\nu$ such that $|\nu(\alpha^\vee)| \leq 1$.  That is, $\Lambda_1 = \lbrace \pm \lambda_1, \pm \lambda_2, \pm \lambda_2 \rbrace$.  Then the 12 points are $(k \Lambda_1) \cup (k R) $ as shown in \autoref{case7}.

\begin{figure}[ht]
  \begin{center}
    \begin{tikzpicture}[scale=1.2]
      \clip (-1.7,-2) rectangle (1.7,2);
      \draw[dashed, black!20] ($-5*(1,{sqrt(3)})$) -- ($({0},0)+(5,{5*sqrt(3)})$);
      \draw[dashed, black!20] ($-5*(1,-{sqrt(3)})$) -- ($({0},0)+(5,{-5*sqrt(3)})$);
      \draw[dashed, black!20] ($(-4,0)$) -- ($(4,0)$);
      \foreach \x in {-5, -4, -3,-2,-1,1,2,3,4,5} {
        \draw[black!70] ($({\x*sqrt(2/3)},0)-5*(1,{sqrt(3)})$) -- ($({\x*sqrt(2/3)},0)+(5,{5*sqrt(3)}) 											$);
      }
      \foreach \x in {-5, -4, -3,-2,-1,1,2,3,4,5} {
        \draw[black!70] ($({\x*sqrt(2/3)},0)-5*(1,-{sqrt(3)})$) -- ($({\x*sqrt(2/3)},0)+(5
        ,{-5*sqrt(3)})$);
      }
      \foreach \y in {-5, -4, -3,-2,-1,1,2,3,4,5} {
        \draw[black!70] ($(-4,{\y*sqrt(2/3)*0.5*sqrt(3)})$) -- ($(4,{\y*0.5*sqrt(3)*sqrt(2/3)})$);
      }
      \draw[orange, line width=2pt, dotted] ($(-4,{4/3*sqrt(2/3)*0.5*sqrt(3)})$) -- ($(4
      ,{4/3*0.5*sqrt(3)*sqrt(2/3)})$);
      \draw[blue, line width=2pt, dashed] ($(-4,{-4/3*sqrt(2/3)*0.5*sqrt(3)})$) -- ($(4
      ,{-4/3*0.5*sqrt(3)*sqrt(2/3)})$);
      \draw[orange, line width=2pt, dotted] 
      ($({4/3*sqrt(2/3)},0)-5*(1,{sqrt(3)})$) -- ($({4/3*sqrt(2/3)},0)+(5,{5*sqrt(3)})$);
      \draw[blue, line width=2pt, dashed] 
      ($({-4/3*sqrt(2/3)},0)-5*(1,{sqrt(3)})$) -- ($({-4/3*sqrt(2/3)},0)+(5,{5*sqrt(3)})$);
      \draw[orange, line width=2pt, dotted] 
      ($({4/3*sqrt(2/3)},0)-5*(1,-{sqrt(3)})$) -- ($({4/3*sqrt(2/3)},0)+(5,{-5*sqrt(3)})$);
      \draw[blue, line width=2pt, dashed] 
      ($({-4/3*sqrt(2/3)},0)-5*(1,-{sqrt(3)})$) -- ($({-4/3*sqrt(2/3)},0)+(5,{-5*sqrt(3)})$);
                
      \node (la1) at ({sqrt(2/3)},{0}) {};
      \node (la2) at ({sqrt(1/6)},{sqrt(1/2)}) {};
      \node (la3) at ($(la1)-(la2)$) {};
              
      \tikzstyle{shsq} = [purple,rectangle,fill,inner sep=2pt];

      \node (nu1) at (${4/3}*(la1)$) [circle, fill, inner sep=2.2pt]{};
      \node at (nu1) [anchor=south west] {$\nu_1$};
      
      \node (nu2) at (${4/3}*(la2)$) [circle, fill, inner sep=2.2pt]{};
      \node at (nu2) [anchor=south west] {$\nu_2$};
      
      \node (nu3) at (${4/3}*(la3)$) [circle, fill, inner sep=2.2pt]{};
      \node at (nu3) [anchor=south west] {$\nu_3$};

      \node (mnu) at (${-1}*(nu1)$) [circle, fill, inner sep=2.2pt]{};
      \node at (mnu) [anchor=south west] {$-\nu_1$};	
      
      \node (mnu2) at (${-1}*(nu2)$) [circle, fill, inner sep=2.2pt]{};
      \node at (mnu2) [anchor=west] {$-\nu_2$};	
      
      \node (mnu3) at (${-1}*(nu3)$) [circle, fill, inner sep=2.2pt]{};
      \node at (mnu3) [anchor=west] {$-\nu_3$};	
      
      \node (a) at ($(nu1)+(nu2)$) [circle, fill, inner sep=2.2pt]{};
      \node (b) at ($(nu1)+(nu3)$) [circle, fill, inner sep=2.2pt]{};
      \node (c) at ($(nu3)-(nu2)$) [circle, fill, inner sep=2.2pt]{};
      \node (d) at (${-1}*(nu2)-(nu1)$) [circle, fill, inner sep=2.2pt]{};
      \node (e) at (${-1}*(nu1)-(nu3)$) [circle, fill, inner sep=2.2pt]{};
      \node (f) at (${-1}*(nu3)+(nu2)$) [circle, fill, inner sep=2.2pt]{};
            
    \end{tikzpicture}
    \caption{Slice at $k = 4/3$.  The 12 points of Case 7. }
    \label{case7}
  \end{center}
\end{figure}

These break up into three $W$-orbits and each orbit contains a unique $J$-representative element, which are forced to use to represent $\J_\vartheta$.  These are
\[
  -\nu_1, -\nu_2, -\nu_1-\nu_2.
\]
None of these, however, is resonant.

\subsection{Summary of Case 1}

Thus, in Cases 1.1--1.4 we have addressed all the resonant points $(\lambda,k)$ with $k \notin \mathbb{Z}$.  The points from Case 1.3 which cannot be resolved were 
\[
  -\nu_2 + j \lambda_3 \text{ for } j \geq 1
\]
and these can be identified by $s_{\alpha_2} s_{\alpha_1}$ with
\[
  \nu_2 - j \lambda_1 \text{ for } j \geq 1.
\]
Doing this proves Case (1) from Theorem \ref{thm:sl3-shift}.  


\subsection{Case 2.1}\label{subsec-sl3-case11}

We now prove Case 2.  As before, we divide into subcases.

When $k=0$, the connection is 
\[
  \nabla_{\lambda_i^\vee} = Z_i \partial_{Z_i} - A_i.
\]
where $A_i = -\lambda_i^\vee$ is a constant in $Z_i$.  There is no term which is singular on the root hyperplanes.
This has fundamental solution  
\[
  \Phi_0(Z) = \prod_i Z_i^{A_i}.
\]
which is by definition the large-volume limit.  Thus when $k=0$ the large volume limit always exists making this case effectively non-resonant.

\subsection{Case 2.2}

We now cover the case of $k \in \mathbb{Z}$ and $k > 0$.  Consider the set
\[
  E_{\alpha_1+\alpha_2} = \lbrace a \nu_1 + b \nu_2\ |\ a,b \geq 1 \rbrace.
\]
We then define the $W$-orbit to be
\[
  E = \bigcup_{w \in W} w\left(E_{\alpha_1+\alpha_2} \right).
\]
Let $\lambda$ be in $E_{\alpha_1+\alpha_2}$.
By \eqref{eqn-sl3-k-shift-inv}, the $k$--shift operator is invertible for all the shifts
\[
  (\lambda, 0) \mapsto (\lambda, 1) \mapsto \ldots \mapsto (\lambda,k-1) \mapsto (\lambda,k)
\]
if $\lambda(\alpha) \neq \pm j$ for all integers $ 0 \leq j \leq k-1$.  However, $\lambda(\alpha) \geq k$.  Thus 
the shifts are invertible and $(\lambda,k)$ has isomorphic monodromy to $(\lambda,0)$.  

For $\lambda \in E$ in general, since the conditions for invertibility of the $k$--shift operator are $W$-invariant, the same shift to $(\lambda,0)$ must also be invertible.

\begin{figure}[ht] 
  \begin{center}
    \begin{tikzpicture}[scale=0.8]
      \clip (-3.2,-3.2) rectangle (3.2,3.2);
      \draw[dashed, black!20] ($-5*(1,{sqrt(3)})$) -- ($({0},0)+(5,{5*sqrt(3)})$);
      \draw[dashed, black!20] ($-5*(1,-{sqrt(3)})$) -- ($({0},0)+(5,{-5*sqrt(3)})$);
      \draw[dashed, black!20] ($(-4,0)$) -- ($(4,0)$);
      \foreach \x in {-5, -4, -3,-2,-1,1,2,3,4,5} {
        \draw[black!70] ($({\x*sqrt(2/3)},0)-5*(1,{sqrt(3)})$) -- ($({\x*sqrt(2/3)},0)+(5,{5*sqrt(3)}) 											$);
      }
      \foreach \x in {-5, -4, -3,-2,-1,1,2,3,4,5} {
        \draw[black!70] ($({\x*sqrt(2/3)},0)-5*(1,-{sqrt(3)})$) -- ($({\x*sqrt(2/3)},0)+(5
        ,{-5*sqrt(3)})$);
      }
      \foreach \y in {-5, -4, -3,-2,-1,1,2,3,4,5} {
        \draw[black!70] ($(-4,{\y*sqrt(2/3)*0.5*sqrt(3)})$) -- ($(4,{\y*0.5*sqrt(3)*sqrt(2/3)})$);
      }

      \node (la1) at ({sqrt(2/3)},{0}) {};
      \node (la2) at ({sqrt(1/6)},{sqrt(1/2)}) {};
      \node (la3) at ($(la1)-(la2)$) {};

      \draw[orange, line width=2pt, dotted] ($(-4,{2*sqrt(2/3)*0.5*sqrt(3)})$) -- ($(4
      ,{2*0.5*sqrt(3)*sqrt(2/3)})$);
      \draw[blue, line width=2pt, dashed] ($(-4,{-2*sqrt(2/3)*0.5*sqrt(3)})$) -- ($(4
      ,{-2*0.5*sqrt(3)*sqrt(2/3)})$);
      \draw[orange, line width=2pt, dotted] 
      ($({2*sqrt(2/3)},0)-5*(1,{sqrt(3)})$) -- ($({2*sqrt(2/3)},0)+(5,{5*sqrt(3)})$);
      \draw[blue, line width=2pt, dashed] 
      ($({-2*sqrt(2/3)},0)-5*(1,{sqrt(3)})$) -- ($({-2*sqrt(2/3)},0)+(5,{5*sqrt(3)})$);
      \draw[orange, line width=2pt, dotted] 
      ($({2*sqrt(2/3)},0)-5*(1,-{sqrt(3)})$) -- ($({2*sqrt(2/3)},0)+(5,{-5*sqrt(3)})$);
      \draw[blue, line width=2pt, dashed] 
      ($({-2*sqrt(2/3)},0)-5*(1,-{sqrt(3)})$) -- ($({-2*sqrt(2/3)},0)+(5,{-5*sqrt(3)})$);
      \draw[fill=black!30] ($4*(la2)+2*(la2)+2*(la1)$) -- ($2*(la2)+2*(la1)$) -- ($4*(la1)+2*(la2)+2*(la1)$);
      \draw[fill=black!30] (${-4}*(la2)-2*(la2)-2*(la1)$) -- (${-2}*(la2)-2*(la1)$) -- (${-4}*(la1)-2*(la2)-2*(la1)$);
      \draw[fill=black!30] ($4*(la3)+2*(la3)+2*(la1)$) -- ($2*(la3)+2*(la1)$) -- ($4*(la1)+2*(la3)+2*(la1)$);
      \draw[fill=black!30] (${-4}*(la3)-2*(la3)-2*(la1)$) -- (${-2}*(la3)-2*(la1)$) -- (${-4}*(la1)-2*(la3)-2*(la1)$);
      \draw[fill=black!30] ($4*(la3)+2*(la3)-2*(la2)$) -- ($2*(la3)-2*(la2)$) -- (${-4}*(la2)+2*(la3)-2*(la2)$);
      \draw[fill=black!30] (${-4}*(la3)-2*(la3)+2*(la2)$) -- (${-2}*(la3)+2*(la2)$) -- (${4}*(la2)-2*(la3)+2*(la2)$);
      
    \end{tikzpicture}
    \caption{Slice of the parameter space $\mathfrak{h}^* \times \IC$ at $k = 2$.  The set $E$ is shaded.  }
    \label{fig-spaceE}
  \end{center}
\end{figure} 

\subsection{Case 2.3}

We now cover the opposite case.  Continue to assume $k \in \mathbb{Z}$ and $k > 0$. Let $I$ be the set
\[
  \lbrace a \nu_1 + b \nu_2\ |\ |a|,|b|,|a+b| < 1 \rbrace.
\]
This set is illustrated in \autoref{fig-spaceI}.
Let $\lambda$ be in $I$.  Then if $\lambda$ is resonant either $\lambda(\alpha_1^\vee)$ or $\lambda(\alpha_2^\vee)$ or $\lambda(\alpha_1^\vee + \alpha_2^\vee)$ or all three are integers.  If $\lambda(\alpha^\vee) \in \mathbb{Z}$ for just one $\alpha$ then we can shift it using $\lambda - \lambda(\alpha^\vee) \lambda_i$ where $i=1$ if $\alpha = \alpha_1$, $i=2$ if $\alpha = \alpha_2$, and $i=1$ or $i=2$ if $\alpha = \alpha_1 + \alpha_2$.   Since $|\lambda(\alpha)| \leq k-1$ we will not meet pass through any point for which any of the three shifts is not invertible, since any shift which reduces $|\lambda(\alpha^\vee)|$ inside of $I$ is invertible.

If all three of $\lambda(\alpha_1^\vee)$, $\lambda(\alpha_2^\vee)$, and $\lambda(\alpha_1^\vee + \alpha_2^\vee)$ are integers then we can shift to $(\mathbf{0},k)$ by first shifting $\lambda(\alpha_1^\vee)$ to 0 and then $\lambda(\alpha_2^\vee)$ to 0.  These shifts are invertible for the same reason as above.

\begin{figure}[ht] 
  \begin{center}
    \begin{tikzpicture}[scale=0.8]
      \clip (-3.2,-3.2) rectangle (3.2,3.2);
      \draw[dashed, black!20] ($-5*(1,{sqrt(3)})$) -- ($({0},0)+(5,{5*sqrt(3)})$);
      \draw[dashed, black!20] ($-5*(1,-{sqrt(3)})$) -- ($({0},0)+(5,{-5*sqrt(3)})$);
      \draw[dashed, black!20] ($(-4,0)$) -- ($(4,0)$);
      \foreach \x in {-5, -4, -3,-2,-1,1,2,3,4,5} {
        \draw[black!70] ($({\x*sqrt(2/3)},0)-5*(1,{sqrt(3)})$) -- ($({\x*sqrt(2/3)},0)+(5,{5*sqrt(3)})$);
      }
      \foreach \x in {-5, -4, -3,-2,-1,1,2,3,4,5} {
        \draw[black!70] ($({\x*sqrt(2/3)},0)-5*(1,-{sqrt(3)})$) -- ($({\x*sqrt(2/3)},0)+(5
        ,{-5*sqrt(3)})$);
      }
      \foreach \y in {-5, -4, -3,-2,-1,1,2,3,4,5} {
        \draw[black!70] ($(-4,{\y*sqrt(2/3)*0.5*sqrt(3)})$) -- ($(4,{\y*0.5*sqrt(3)*sqrt(2/3)})$);
      }

      \node (la1) at ({sqrt(2/3)},{0}) {};
      \node (la2) at ({sqrt(1/6)},{sqrt(1/2)}) {};
      \node (la3) at ($(la1)-(la2)$) {};
          
      \draw[fill=black!30] ($1.99*(la1)$) -- ($1.99*(la2)$) -- ($1.99*(la2)-1.99*(la1)$) -- (${-1.99}*(la1)$) -- (${-1.99}*(la2)$) -- (${1.99}*(la3)$) -- ($1.99*(la1)$);
          
      \draw[orange, line width=2pt, dotted] ($(-4,{2*sqrt(2/3)*0.5*sqrt(3)})$) -- ($(4
      ,{2*0.5*sqrt(3)*sqrt(2/3)})$);
      \draw[blue, line width=2pt, dashed] ($(-4,{-2*sqrt(2/3)*0.5*sqrt(3)})$) -- ($(4
      ,{-2*0.5*sqrt(3)*sqrt(2/3)})$);
      \draw[orange, line width=2pt, dotted] 
      ($({2*sqrt(2/3)},0)-5*(1,{sqrt(3)})$) -- ($({2*sqrt(2/3)},0)+(5,{5*sqrt(3)})$);
      \draw[blue, line width=2pt, dashed] 
      ($({-2*sqrt(2/3)},0)-5*(1,{sqrt(3)})$) -- ($({-2*sqrt(2/3)},0)+(5,{5*sqrt(3)})$);
      \draw[orange, line width=2pt, dotted] 
      ($({2*sqrt(2/3)},0)-5*(1,-{sqrt(3)})$) -- ($({2*sqrt(2/3)},0)+(5,{-5*sqrt(3)})$);
      \draw[blue, line width=2pt, dashed] 
      ($({-2*sqrt(2/3)},0)-5*(1,-{sqrt(3)})$) -- ($({-2*sqrt(2/3)},0)+(5,{-5*sqrt(3)})$);
             
    \end{tikzpicture}
    \caption{Slice of the parameter space $\mathfrak{h}^* \times \IC$ at $k = 2$.  The set $I$ is shaded.  It does not include its boundary.  }
    \label{fig-spaceI}
  \end{center}
\end{figure} 

\subsection{Case 2.4} 

Continue to assume $k \in \mathbb{Z}$ and $k > 0$.  Assume that $n = \lambda(\alpha^\vee) \in \mathbb{Z}$ for only one $\alpha \in\pos$.  There are then two possibilities: 

\begin{enumerate}
  \item $n \in \lbrace m \leq -k \rbrace \cup \lbrace m \geq k \rbrace$,
  \item $n \in \lbrace -k < m < k \rbrace$.
\end{enumerate} 
If (1), we can shift $k$ to $0$.  This requires 
\begin{equation}\label{eqn-bad-k-shift-case12}
  \lambda(\beta^\vee) \neq \pm j \text{ for }0 \leq j \leq k-1, \beta \in \pos.
\end{equation}
For $\beta \in \pos\setminus \lbrace \alpha \rbrace$, since $\lambda(\beta^\vee)$ is not an integer, we must always have \eqref{eqn-bad-k-shift-case12}.    For $\beta = \alpha$, since $|\lambda(\alpha^\vee)| \geq k$ we cannot have $|\lambda(\alpha^\vee)| = j$ for $0 \leq j \leq k-1$ and so \eqref{eqn-bad-k-shift-case12} holds.
  
If (2), then we shift $\lambda(\alpha^\vee)$ to $0$ by shifting $\lambda$ to $\mu = \lambda - n \lambda_i$ where $i=1$ if $\alpha = \alpha_1$, $i=2$ if $\alpha = \alpha_2$, and $i=1$ or $i=2$ if $\alpha = \alpha_1 + \alpha_2$.  By the same argument as in Case 12, since $|\lambda(\alpha^\vee)| \leq k-1$ and we are reducing $|\lambda(\alpha^\vee)|$, the shift by $\lambda_i$ is invertible at all intermediate points between $\lambda$ and $\mu$.  Thus $(\lambda,k)$ and $(\mu,k)$ has equivalent monodromy.  Since $\mu(\alpha^\vee) = 0$ and $\mu(\beta^\vee) \notin \mathbb{Z}$ for $\beta \neq \alpha$, $\mu$ is not resonant.

\subsection{Summary of Case 2}
  
The only points ($\lambda,k$) which are not shifted to a non-resonant point are those which are not in $I$ or $E$ and have $\lambda(\alpha^\vee) \in \mathbb{Z}$ for all $\alpha \in R,$  \begin{equation}\label{eqn-kint-pos-problematic}
    \lambda \in (\mathbb{Z} \lambda_1 + \mathbb{Z} \lambda_2) \setminus (E \cup I). 
  \end{equation}  
We can also parameterize this set as
\[
  \lbrace \lambda = n \lambda_1 + m \lambda_2\  |\ n,m \in \mathbb{Z}, |n|,|m|,|n+m| \geq k,  \rbrace.
\]
 
We can also reduce redundancy and choose a minimal set of $\lambda$ to represent the problematic $\J_\vartheta$ by choosing a fundamental domain for the $W$-action on $\mathfrak{h}^*$.  This gives Case (2) from Theorem \ref{thm:sl3-shift}.

\subsection{Case 3.1}

Assume now that $k \in \mathbb{Z}$ and $k < 0$.  Similar to Case 2.2, we can define a set 
\[
  E_{\alpha_1+\alpha_2} = \lbrace a \nu_1 + b \nu_2\ |\ a,b > 1 \rbrace,
\]
but using a strict inequality. We denote the $W$-orbit as
\[
  E = \bigcup_{w \in W} w\left(E_{\alpha_1+\alpha_2} \right).
\]
Note that unlike Case 12, $E$ is an open set. \todolow{Something is wrong about this treatment of boundary because we can shift these points deeper into the exterior and then shift to $k=0$.  So morally the $k$--shift should allow that to begin with. This should be clear from the $sl_2$ case.}
Let $\lambda$ be in $E_{\alpha_1+\alpha_2}$.
By \eqref{eqn-sl3-k-shift-inv}, the $k$--shift operator is invertible for all the shifts
\[
  (\lambda, -k) \mapsto (\lambda, -k+1) \mapsto \ldots \mapsto (\lambda,-1) \mapsto (\lambda,0)
\]
if $\lambda(\alpha) \neq \pm j$ for all integers $ -k \leq j \leq -1$.  However, $\lambda(\alpha) > k$.  Thus 
the shifts are invertible and $(\lambda,k)$ has isomorphic monodromy to $(\lambda,0)$.  

Just as in Case 12, for $\lambda \in E$, since the conditions for invertibility of the $k$--shift operator are $W$-invariant, the same shift to $(\lambda,0)$ must also be invertible.

\subsection{Case 3.2}

Continue to assume $k \in \mathbb{Z}$ and $k < 0$.  We can enlarge the set $I$ relative to Case 2.3 by including part of the boundary. 
Let $I$ be the set
\[
  \lbrace a \nu_1 + b \nu_2 | -1 \leq a,b,a+b < 1 \rbrace.
\]
Then the argument regarding $\lambda$ in the set $I$ from Case 2.3 goes through without modification.  

\subsection{Case 3.3}

We proceed similar to Case 2.4, but for $k < 0$. We assume $n = \lambda(\alpha^\vee) \in \mathbb{Z}$ for only one $\alpha \in\pos$.  We handle the boundary cases slightly differently. There are three possibilities: 
\begin{enumerate}
  \item $n \in \lbrace m < k \rbrace \cup \lbrace m > -k \rbrace$,
  \item $n = k$,
  \item $n \in \lbrace k < m \leq -k \rbrace$.
\end{enumerate} 
If (1), we can do exactly as in Case 2.4 and shift $k$ to $0$.  This requires 
\begin{equation}\label{eqn-bad-k-shift-case17}
  \lambda(\beta^\vee) \neq \pm j \text{ for } -k \leq j \leq -1, \beta \in \pos.
\end{equation}
For $\beta \in \pos\setminus \lbrace \alpha \rbrace$, since $\lambda(\beta^\vee)$ is not an integer, we must always have \eqref{eqn-bad-k-shift-case12}.    For $\beta = \alpha$, since $|\lambda(\alpha^\vee)| > k$ we cannot have $|\lambda(\alpha^\vee)| = j$ for $-k \leq j \leq -1$ and so \eqref{eqn-bad-k-shift-case17} holds.
  
If (2), then $(\lambda,k)$ is not $J$-representative and we rely on the fact that some other value of in the $W$-orbit of $\lambda$ represents $\J_\vartheta$.
  
If (3), then we shift $\lambda(\alpha^\vee)$ to $0$ by shifting $\lambda$ to $\mu = \lambda - n \lambda_i$ where $i=1$ if $\alpha = \alpha_1$, $i=2$ if $\alpha = \alpha_2$, and $i=1$ or $i=2$ if $\alpha = \alpha_1 + \alpha_2$.  Conditions \eqref{eqn-lambda1-shift}, \eqref{eqn-lambda2-shift}, \eqref{eqn-lambda3-shift} imply that 
\[
  \mu+(k+1)\lambda_i \mapsto \mu+(k+2)\lambda_i \mapsto \ldots \mapsto \mu \mapsto \ldots \mu+(-k-1)\lambda_i  \mapsto \mu+(-k)\lambda_i 
\]
are all invertible shifts. Since $\lambda = \mu + n \lambda_i$ is in this chain, $(\lambda,k)$ and $(\mu,k)$ have equivalent monodromy.  Since $\mu(\alpha^\vee) = 0$ and $\mu(\beta^\vee) \notin \mathbb{Z}$ for $\beta \neq \alpha$, $\mu$ is not resonant.

\subsection{Case 3.4}

Assume $k \in \mathbb{Z}$ and $k < 0$.  We assume $\lambda(\alpha^\vee) \in \mathbb{Z}$ for all $\alpha \in\pos$.  Consider the region defined by $\lambda(\alpha_2^\vee) > -k$, $\lambda(\alpha_1^\vee + \alpha_2^\vee) > -k$, and $-k \leq \lambda(\alpha_1) \leq k$.  

The points with $\lambda(\alpha_1^\vee) = -k$ are not $J$-representative.  These $\vartheta$ are represented by $\lambda^{s_{\alpha_1}}$.  

For the points with $-k < \lambda(\alpha_1^\vee) \leq k$, we can shift as in Case 3.3 to $\mu_1 = \lambda - \lambda(\alpha_1^\vee) \lambda_i$ which has $\mu_1(\alpha_1^\vee) = 0$.  

Now we shift $(\mu_1,k)$ to $(\mu_1,0)$.  The shifts 
\[
  (\mu_1,j) \mapsto (\mu_1,j+1) \text{ for } -k \leq j \leq -1 
\]
are invertible since 
\[
  \mu_1(\alpha_2) = \lambda(\alpha_2) > -k > \pm j,
\]
and
\[
  \mu_1(\alpha_1) = 0 \neq \pm j \text{ for }  -k \leq j \leq -1,
\]
and 
\begin{align*}
  \mu_1(\alpha_1^\vee + \alpha_2^\vee) & = \lambda(\alpha_1^\vee + \alpha_2^\vee) - \lambda(\alpha_1^\vee) \lambda_1( \alpha_1^\vee + \alpha_2^\vee) \\
                                       & = \lambda(\alpha_1^\vee + \alpha_2^\vee) - \lambda(\alpha_1^\vee)                                           
                                       & = \lambda(\alpha_2^\vee) > -k > \pm j.                                                                      
\end{align*}

Thus $(\lambda,k)$ and $(\mu,\mathbf{0})$ have the isomorphic monodromy.  

The points in the other five regions similar to this one but defined by equations with the positive roots permuted are similar.

\subsection{Summary of Case 3}  

After removing the points we can shift to a non-resonant point through Cases 3.1--3.4, we are left with only a few points with cannot be shown to have isomorphic monodromy to a non-resonant point.  These are listed in Theorem \ref{thm:sl3-shift} and illustrated in \autoref{fig-kint-kneg}. 
This completes the proof of Theorem \ref{thm:sl3-shift}.  \qed

\begin{figure}[ht] 
  \begin{center}
    \begin{tikzpicture}[scale=0.7]
      \clip (-3.6,-3.6) rectangle (3.6,3.6);
      \draw[dashed, black!20] ($-5*(1,{sqrt(3)})$) -- ($({0},0)+(5,{5*sqrt(3)})$);
      \draw[dashed, black!20] ($-5*(1,-{sqrt(3)})$) -- ($({0},0)+(5,{-5*sqrt(3)})$);
      \draw[dashed, black!20] ($(-4,0)$) -- ($(4,0)$);
      \foreach \x in {-5, -4, -3,-2,-1,1,2,3,4,5} {
        \draw[black!70] ($({\x*sqrt(2/3)},0)-5*(1,{sqrt(3)})$) -- ($({\x*sqrt(2/3)},0)+(5,{5*sqrt(3)})$);
      }
      \foreach \x in {-5, -4, -3,-2,-1,1,2,3,4,5} {
        \draw[black!70] ($({\x*sqrt(2/3)},0)-5*(1,-{sqrt(3)})$) -- ($({\x*sqrt(2/3)},0)+(5
        ,{-5*sqrt(3)})$);
      }
      \foreach \y in {-5, -4, -3,-2,-1,1,2,3,4,5} {
        \draw[black!70] ($(-4,{\y*sqrt(2/3)*0.5*sqrt(3)})$) -- ($(4,{\y*0.5*sqrt(3)*sqrt(2/3)})$);
      }

      \node (la1) at ({sqrt(2/3)},{0}) {};
      \node (la2) at ({sqrt(1/6)},{sqrt(1/2)}) {};
      \node (la3) at ($(la1)-(la2)$) {};
          
      \draw[blue, line width=2pt, dotted] ($(-4,{2*sqrt(2/3)*0.5*sqrt(3)})$) -- ($(4
      ,{2*0.5*sqrt(3)*sqrt(2/3)})$);
      \draw[orange, line width=2pt, dashed] ($(-4,{-2*sqrt(2/3)*0.5*sqrt(3)})$) -- ($(4
      ,{-2*0.5*sqrt(3)*sqrt(2/3)})$);
      \draw[blue, line width=2pt, dotted] 
      ($({2*sqrt(2/3)},0)-5*(1,{sqrt(3)})$) -- ($({2*sqrt(2/3)},0)+(5,{5*sqrt(3)})$);
      \draw[orange, line width=2pt, dashed] 
      ($({-2*sqrt(2/3)},0)-5*(1,{sqrt(3)})$) -- ($({-2*sqrt(2/3)},0)+(5,{5*sqrt(3)})$);
      \draw[blue, line width=2pt, dotted] 
      ($({2*sqrt(2/3)},0)-5*(1,-{sqrt(3)})$) -- ($({2*sqrt(2/3)},0)+(5,{-5*sqrt(3)})$);
      \draw[orange, line width=2pt, dashed] 
      ($({-2*sqrt(2/3)},0)-5*(1,-{sqrt(3)})$) -- ($({-2*sqrt(2/3)},0)+(5,{-5*sqrt(3)})$);
             
      \tikzstyle{prob} = [purple!80,regular polygon,regular polygon sides=3,fill,inner sep=1.4pt] 
      
      \node at (${-2}*(la1) + {4}*(la3) $) [prob]{}; 
      \node at (${-1}*(la1) + {3}*(la3) $) [prob]{}; 
      \node at (${-2}*(la1) + {3}*(la3) $) [prob]{}; 
      \node at (${-4}*(la1) + {2}*(la3) $) [prob]{}; 
      \node at (${-3}*(la1) + {2}*(la3) $) [prob]{}; 
      \node at (${1}*(la1) + {2}*(la3) $) [prob]{}; 
      \node at (${2}*(la1) + {2}*(la3) $) [prob]{}; 
      \node at (${-3}*(la1) + {1}*(la3) $) [prob]{}; 
      \node at (${2}*(la1) + {1}*(la3) $) [prob]{}; 
      \node at (${-2}*(la1) + {-1}*(la3) $) [prob]{}; 
      \node at (${3}*(la1) + {-1}*(la3) $) [prob]{}; 
      \node at (${-2}*(la1) + {-2}*(la3) $) [prob]{};  
      \node at (${-1}*(la1) + {-2}*(la3) $) [prob]{}; 
      \node at (${3}*(la1) + {-2}*(la3) $) [prob]{}; 
      \node at (${4}*(la1) + {-2}*(la3) $) [prob]{}; 
      \node at (${1}*(la1) + {-3}*(la3) $) [prob]{}; 
      \node at (${2}*(la1) + {-3}*(la3) $) [prob]{}; 
      \node at (${2}*(la1) + {-4}*(la3) $) [prob]{}; 
      
    \end{tikzpicture}
    \caption{Slice of the parameter space $\mathfrak{h}^* \times \IC$ at $k = -2$.  The set of problematic points from \eqref{eqn:thm-sl3-case3} and their $W$-orbits is marked with triangles.  }
    \label{fig-kint-kneg}
  \end{center}
\end{figure}   
}

\section{\Done Monodromy in higher rank}\label{sec:monodromy-higher-rank}

We restrict to type $\sfA_n$. In this section we classify points $(\lambda,k)$ in the parameter space $\frakh^* \times \IC$ for $\Kconn$ which are non-resonant or can be shifted to non-resonant parameters or cannot be shifted to non-resonant parameters.  

\Omit{
\subsection{Non--resonance criterion}

\label{pr:non resonance}
\begin{proposition}
Let $\lambda\in\h^*$, $V=\dI{\lambda}$ or $\J_{q(\lambda)}$, and $\VV$ the corresponding induced
representation of $\Htrig$. The trigonometric KZ connection on $\VV$ is non--resonant if and only if 
for any $1\leq i\leq\lfloor \frac{n}{2}\rfloor$, and $i$--tuple $\alpha_{j_1},\ldots,\alpha_{j_i}$ of pairwise
orthogonal roots, the following holds
\[\lambda(\alpha_{j_1}+\cdots+\alpha_{j_i})\notin\IZ_{\neq 0}\]
\end{proposition}
\begin{proof}
\Omit{
The trigonometric KZ connection has the form 
\[\nabla
= d
- \sum_{\alpha \in \pos} k_\alpha\frac{d\alpha}{1-e^{-\alpha}}\left(1-\tfact{s}_\alpha\right)
+ \tfact{\iota} + \rho_k(\iota) 
\]
where $\iota$ is the $\h$--valued translation--invariant 1--form on $T$ which identifies $T_h T$ and $\h$.
For any positive root $\alpha$, write $\alpha=\sum_{i} m^i_{\alpha} \alpha_i$. Then, in the coordinates
$Z_i=e^{\alpha_i}$, 
\begin{equation}\label{eq:nabla on T}
\nabla= 
d	
-
\sum_{\alpha \in \pos}\sum_{i} k_\alpha  \left(     \frac{Z^{m_\alpha}}{1-Z^{m_\alpha}}   \frac{m_\alpha^i d Z_i}{Z_i} \right)\left( 1-\tfact{s}_\alpha \right)
+
\sum_i
\left(\tfact{(\cow{i})}+\rho_k(\cow{i})\right)\frac{d Z_i}{Z_i}  
\end{equation}
where $Z^{m_\alpha} = \prod_j Z_j^{m_\alpha^j}$. Note that the first term is regular at $Z_i = 0$ since
only terms in which $m_\alpha^i > 0$ appear and thus $Z^{m_\alpha}/Z_i$ is regular. Thus, $\nabla$
is non--resonant at $\{Z_i=0\}$ if the eigenvalues of $\cow{i}$ on $V$ do not differ by non--zero integers.
 }
 
By \eqref{eq:nabla on T}, $\nabla$ is non--resonant at $\{Z_i=0\}$ if the eigenvalues of $\cow{i}$ on
$V$ do not differ by non--zero integers. By Propositions \ref{prop:weights-Ilambda} and \ref{co:weights of K}, this holds if and only if $\lambda
(\sigma\cow{i}-\sigma'\cow{i})\notin\IZ_{\neq 0}$ for any $1\leq i\leq n-1$ and $\sigma,\sigma'\in W=
\SS_n$. Let $\{e_i\}_{i=1}^n$ be the standard basis of $\IC^n$. Then, $\cow{i}=\sum_{a=1}^i e_i-1/n
\sum_{a=1}^n e_i$ so that
\[\sigma\cow{i}-\sigma'\cow{i}=
\sum_{a=1}^i e_{\sigma(i)} - \sum_{a=1}^i e_{\sigma'(i)}=
\sum_{a\in I_+}e_a - \sum_{a\in I_-}e_a=
\sum_{a\in I_+}(e_a - e_{\tau(a)})
\]
where $I_+=\sigma\{1,\ldots,i\}\setminus\sigma'\{1,\ldots,i\}$, $I_-=\sigma'\{1,\ldots,i\}\setminus
\sigma\{1,\ldots,i\}$ and $\tau$ is any chosen bijection $I_+\mapsto I_-$. The claim follows.
\end{proof}

\begin{remark} Write $\lambda=\sum_i\lambda_i\theta_i$, where $\{\theta_i\}$ is the dual basis to $\{e_i\}$.
Then, the above criterion is equivalent to requiring that for any $1\leq i\leq\lfloor \frac{n}{2}\rfloor$ and
disjoint subsets $I_\pm\subset\{1,\ldots,n\}$ of size $i$, the following holds
\[\sum_{j\in I_+}\lambda_j - \sum_{j\in I_-}\lambda_j \notin\IZ_{\neq 0}\]
\end{remark}
}

\subsection{Main Result}

%
Define $B = \lbrace \alpha^\vee_{j_1}+\ldots+\alpha^\vee_{j_i}\ |\ \alpha^\vee_{j} \text{ are orthogonal coroots} \rbrace $.  By Proposition \ref{pr:non resonance} the connection $\nabla$ is resonant for $\lambda$ in
    \begin{align}
    \mathrm{Res} &= \lbrace \lambda\ |\  \lambda(q) \in \IZ_{\neq 0} \text{ for some } q \in B \rbrace 
  \end{align}

Define the following subsets of $\mathfrak{h}^*$, 
  \begin{align}
    E & = \bigcup_{w \in W} w(\lbrace \lambda\ |\ \lambda(\alpha^\vee) \notin \mathbb{R}_{\leq k} \ \forall\ \alpha \in \pos \rbrace)  \nonumber \\
    S & =  \lbrace \lambda\ |\ \lambda(q) \in \mathbb{Z} \text{ for a unique } q
    \in B \rbrace \nonumber
    \end{align}
We then have the following classification. 

\label{thm:sln-shift}
\begin{theorem}  In type $\mathsf{A}_n$, if $\Kconn$ is resonant at $(\vartheta,k)$ and $\lambda \in q^{-1}(\vartheta)$ then if one of the following conditions holds $\Kconn$ has monodromy equivalent to that of non-resonant system: 
\begin{enumerate}
\item $\lambda$ is affine $k$--regular and $\lambda \in S$
\item $k \in \IZ$ and $\lambda \in E$
\item $k \in \IZ$ and $\lambda \in S$.
\end{enumerate}
%
In particular, since condition (1) is generic in $\mathrm{Res}$, the set of points which are resonant and cannot be shifted to a non-resonant point has codimension 2.
\end{theorem}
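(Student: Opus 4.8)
The plan is to reduce resonant parameters to non-resonant ones by repeatedly applying two families of isomorphisms of local systems: the $\lambda$-shift operators $\cT{t_\nu}{\lambda}^{\J}\colon\KL\to\KL[q(\lambda+\nu),k]$ induced (as in the rank 1 lemma) from the affine intertwiners $\cT{w}{\lambda}$ of \autoref{sec:shiftoperatorsinlambda} composed with the maps $\Rnu{\lambda}$ of Theorem \ref{th:det R}; and the $k$-shift operators $\calS_k\colon\KL\to\KL[\vartheta,k+1]$ of Theorem \ref{thm:k-shift-op-invert}. By Proposition \ref{pr:det cT} and Theorem \ref{th:det R}, the $\lambda$-shift $\KL\to\KL[q(\lambda+\nu),k]$ is invertible provided $\lambda(\alpha^\vee)\neq\pm k_\alpha$ for every (affine) coroot $\alpha^\vee\in\apos\cap t_\nu^{-1}\aneg$ and one stays away from the walls $\lambda(\alpha^\vee)=k$ where $\Rnu{\cdot}$ degenerates; by Theorem \ref{thm:k-shift-op-invert}, $\calS_k$ is invertible when $\lambda(\alpha^\vee)\neq k_\alpha$ for all $\alpha\in R$. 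Since isomorphic local systems have isomorphic monodromy, it suffices in each case (1)--(3) to exhibit a chain of such invertible shifts carrying $\lambda$ to a point where, by Proposition \ref{pr:non resonance}, $\Kconn$ is non-resonant.

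The three cases would be handled as follows. For case (1), where $\lambda$ is affine $k$-regular and $\lambda(q)\in\IZ$ for a \emph{unique} $q\in B$: affine $k$-regularity means $\lambda(\alpha^\vee)\pm k_\alpha\notin\IZ$ for all $\alpha$, so by Corollary \ref{cor:k-reg-lambda-shift} \emph{every} $\lambda$-shift $\cT{w}{\lambda}$ is invertible, and since $q$ is a sum of orthogonal simple coroots one can choose $\nu\in P$ with $(\lambda+\nu)(q)=0$ while keeping $(\lambda+\nu)(q')\notin\IZ$ for the other $q'\in B$ (here one uses that $q$ spans a direct summand of $Q^\vee$, so the fundamental weight dual to the relevant direction adjusts $\lambda(q)$ without disturbing the other pairings that were already non-integral); the resulting point lies outside $\mathrm{Res}$. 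For cases (2)--(3), where $k\in\IZ$: now the $k$-shift $\calS_k$ is available, and the key observation is that at $k=0$ the connection has \emph{no} poles along the root hypertori — $\nabla_{\lambda_i^\vee}=Z_i\partial_{Z_i}-\tfact{(\lambda_i^\vee)}$ — so $(\lambda,0)$ is automatically non-resonant (this is the ``elementary computation of the monodromy of $\Kconn$ for $k=0$'' referred to in the introduction). Thus it is enough to shift $k$ down to $0$ through the integer points $k,k-1,\dots,1$; the shift $(\lambda,j)\to(\lambda,j-1)$ is invertible as long as $\lambda(\alpha^\vee)\neq j$ for all $\alpha\in R$, i.e. as long as $\lambda$ avoids finitely many walls. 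For $\lambda\in E$ this is immediate: $E$ is the $W$-orbit of $\{\lambda:\lambda(\alpha^\vee)\notin\bbR_{\leq k}\ \forall\alpha\in\pos\}$, so after a Weyl conjugation $\lambda(\alpha^\vee)>k\geq j$ for all positive $\alpha$ and all $1\leq j\leq k$, hence every intermediate $k$-shift is invertible and $(\lambda,k)\cong(\lambda,0)$. For $\lambda\in S$ with $k\in\IZ$, one first uses $\lambda$-shifts to move the unique integral pairing $\lambda(q)$ to $0$ (as in case (1), but now being careful at the walls $\lambda(\alpha^\vee)=\pm k$, which is why one shifts toward $0$ rather than away from it — one stays inside the region $|\lambda(\alpha^\vee)|<k$ or moves monotonically toward it, where Proposition \ref{pr:det cT} guarantees invertibility), landing at a non-resonant $\lambda'$; alternatively combine with a $k$-shift to $0$.

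For the final codimension-2 claim: the resonant locus $\mathrm{Res}=\{\lambda: \lambda(q)\in\IZ_{\neq 0}\text{ for some }q\in B\}\subset\h^*$ is a countable union of affine hyperplanes $H_{q,m}=\{\lambda(q)=m\}$, hence of pure codimension $1$. Condition (1) holds on $H_{q,m}$ away from (i) the other hyperplanes $H_{q',m'}$ with $q'\neq q$ (or $m'\neq m$), which cut $H_{q,m}$ in codimension $2$, and (ii) the walls $\lambda(\alpha^\vee)=\pm k+\IZ$ destroying affine $k$-regularity, which also meet $H_{q,m}$ in codimension $2$ (in the full parameter space $\h^*\times\IC$, since $k$ is free these are genuine codimension-$2$ conditions). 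Therefore on each irreducible component of $\mathrm{Res}$ the set where none of (1)--(3) applies is contained in a codimension-$2$ subset of $\h^*\times\IC$, which is the assertion. \textbf{The main obstacle} I anticipate is the bookkeeping in case (3) (and the $S$-part of case (1)): one must produce an explicit sequence of weight-lattice shifts realizing $\lambda(q)\to 0$ such that \emph{at every intermediate step} the relevant affine coroots in $\apos\cap t_\nu^{-1}\aneg$ avoid $\pm k$ and one never crosses the degeneration walls $\lambda(\alpha^\vee)=k$ of $\Rnu{\cdot}$ — this is exactly the delicate, case-by-case geometry of alcoves and shift-walls that the rank 1 and (omitted) rank 2 analyses carry out by hand, and making it work uniformly in type $\sfA_n$ is the crux.
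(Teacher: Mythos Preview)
Your overall strategy is exactly that of the paper: use the $\lambda$--shifts of \autoref{sec:shiftoperatorsinlambda} (together with $\Rnu{\lambda}$) and the $k$--shifts of Theorem \ref{thm:k-shift-op-invert} to reach a non-resonant point, with the observation that at $k=0$ the connection has no root-hypertorus poles. Cases (1) and (2) are handled as you describe; the codimension-2 claim is argued just as you do.

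Your treatment of Case (3), however, misses the paper's organizing dichotomy. Writing $n=\lambda(q)$ for the unique $q\in B$, the paper does \emph{not} default to $\lambda$--shifting $\lambda(q)$ to $0$. Instead it splits on the comparison of $|n|$ and $|k|$: when $|n|\geq|k|$ (resp.\ $|n|>|k|$ for $k\leq 0$) one $k$--shifts to $0$, and the invertibility check reduces to noting that, since $R^\vee\subset B$ and $q$ is the unique element of $B$ with integer pairing, $\lambda(\alpha^\vee)$ can only be an integer when $\alpha^\vee=q$, and then $|\lambda(\alpha^\vee)|=|n|\geq|k|>|j|$ for the intermediate $j$; when $|n|<|k|$, $\lambda$ is automatically $k$--regular and one $\lambda$--shifts along the fundamental weight $\lambda_m$ with $|\lambda_m(q)|=1$, checking that the intermediate pairings $|n|-j$ stay in $\{0,\dots,|k|-2\}$ and hence avoid $\pm k$. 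The boundary case $|n|=|k|$, $k\leq 0$ is handled separately. Your heuristic ``$\lambda$--shift toward $0$, staying inside $|\lambda(\alpha^\vee)|<k$'' breaks precisely when $|n|\geq|k|$ and $q\in R^\vee$: the starting point is outside that region and the $\lambda$--shift would have to cross the wall $\lambda(\alpha^\vee)=\pm k$. Your ``alternatively combine with a $k$--shift'' is indeed the fix, but the point is that the $|n|$ vs.\ $|k|$ comparison is what tells you \emph{which} tool to use, and this makes the bookkeeping you flag as the obstacle entirely elementary rather than a delicate alcove analysis.
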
 


Note that, unlike Theorem \ref{th:main 1}, Theorem \ref{thm:sln-shift} does not describe the set of parameters
$(\vartheta,k)$ for which Conjecture \ref{conj:monoconj} holds. Proposition \ref{pr:generic monodromy} shows
that Conjecture \ref{conj:monoconj} holds when $(\vartheta,k)$ lie in a codimension 1 set which includes both
resonant and non-resonant parameters. As in rank 1, we expect in each case of non--resonant, resonant but
equivalent to non-resonant, or neither that there exist parameters where the monodromy representation is 
isomorphic to $K(\Theta)$ and where it is not. Corollary \ref{cor:highrankFGfail} uses Theorem \ref{thm:sln-shift}
to provide a set of examples which are resonant and Conjecture \ref{conj:monoconj} fails.  

\subsection{Proof Overview}

We find an isomorphism from resonant $\Kconn$ to non-resonant $\Kconn[\vartheta',k']$. This isomorphism uses three tools described above,  the shift operators in $\lambda$ and $k$  and the map between $\dI{\lambda}$ and $\J_\vartheta$. 
 We recall now when these operators and maps are invertible.
Define $R_i = \lbrace \beta \in \pos\ |\  \lambda_i(\beta^\vee) \neq 0 \rbrace$. It follows from 
Proposition \ref{pr:det cT}
that the $\lambda$ shift operator is invertible and thus the representation $\dI{\lambda}$ and $\dI{\lambda + \lambda_i}$ are isomorphic when 
  \begin{equation}\label{sln-lambda-shift-invcond}
    \lambda(\alpha^\vee) \neq \pm k \text{ for all } \alpha \in R_i
  \end{equation}
By Theorem \ref{th:det R}, $\dI{\lambda}$ and $\J_\vartheta$ are isomorphic when $\lambda(\alpha^\vee) \neq k_\alpha$ for all $\alpha \in \pos$.   For resonant $(\vartheta,k)$, we try to find $\lambda \in q^{-1}(\vartheta)$ such that $\lambda(\alpha^\vee) \neq k_\alpha$ for all $\alpha \in \pos$ and thus $K_\vartheta$ is isomorphic $\dI{\lambda}$. We then shift $\dI{\lambda}$ to $\dI{\lambda'}$ where $\lambda'$ is not resonant.   
By Theorem \ref{thm:k-shift-op-invert}, the shift operator $k \mapsto k+1$ is invertible when $\lambda$ is $k$--regular.
That is, the monodromy representation of $\Kconn$ is isomorphic to $\Kconn[\vartheta,k+1]$ where both are derived from $\J_\vartheta$ but as a representations of $\mathrm{H^{trig}}_k$ and $\mathrm{H^{trig}}_{k+1}$ respectively.


\subsection{Case 1: $\lambda$ affine $k$--regular, $\lambda \in S$}

Under the assumption of affine $k$--regularity, any $\lambda \in q^{-1}(\vartheta)$ has $\dI{\lambda}$ isomorphic to $\J_\vartheta$ and thus it suffices to consider the system defined by $\cI_\lambda$.  Let $q \in B$ be the unique element such that $\lambda(q) \in \IZ$.  In the case $\lambda(q) = 0$, the connection is non-resonant.  Assume $\lambda(q) \in \IZ_{\neq0}$.  Using the notation $I_+$ and $I_-$ from \ref{pr:non resonance}, write $q = \sum_{i \in I_+} e_i - \sum_{i \in I_-} e_i$ and let $m = \mathrm{min}(I_+ \cup I_-)$.  Then $|\lambda_m(q)| = 1$. Set $\mu = \lambda - \lambda(q)\lambda_m(q) \lambda_m$.  Then $\mu(q) = \lambda(q) - \lambda(q) \lambda_m(q)^2 = 0$.  Since $\mu$ is shifted relative to $\lambda$ by an integral weight, the set of elements of $B$ which pair with $\lambda$ or $\mu$ to give an integer is the same.  That is $B_\lambda = B_\mu = \lbrace q \rbrace$.  Thus $\cI_\mu$ is non-resonant.  Under the assumption of $k$-regularity, the monodromy of $\cI_\mu$ and $\cI_\lambda$ are isomorphic since the $\lambda$--shift operator is invertible.

\begin{example}[The case of $\mathsf{A}_2$.]  
If $l=1,$ then $\lambda = -m \lambda_1 + x \lambda_2$.  The set of $\lambda$ which cannot be shifted to a non-resonant point  thus have $x = \pm k + j$ where $1 \leq j \leq m$.  That is, $\lambda = -m \lambda_1 + (k \pm + j) \lambda_2$ as shown in \autoref{fig-sl3-bad-pts1}.

\begin{figure}[ht] 
  \newcommand*\rows{10}
  \begin{center}
    \begin{tikzpicture}[scale=1]
      \clip (-2.5,-2.5) rectangle (2.5,2.5);
      \draw[dashed, black!20] ($-5*(1,{sqrt(3)})$) -- ($({0},0)+(5,{5*sqrt(3)})$);
      \draw[dashed, black!20] ($-5*(1,-{sqrt(3)})$) -- ($({0},0)+(5,{-5*sqrt(3)})$);
      \draw[dashed, black!20] ($(-4,0)$) -- ($(4,0)$);
      \foreach \x in {-5, -4, -3,-2,-1,1,2,3,4,5} {
        \draw[black!70] ($({\x*sqrt(2/3)},0)-5*(1,{sqrt(3)})$) -- ($({\x*sqrt(2/3)},0)+(5,{5*sqrt(3)}) 											$);
      }
      \foreach \x in {-5, -4, -3,-2,-1,1,2,3,4,5} {
        \draw[black!70] ($({\x*sqrt(2/3)},0)-5*(1,-{sqrt(3)})$) -- ($({\x*sqrt(2/3)},0)+(5
        ,{-5*sqrt(3)})$);
      }
      \foreach \y in {-5, -4, -3,-2,-1,1,2,3,4,5} {
        \draw[black!70] ($(-4,{\y*sqrt(2/3)*0.5*sqrt(3)})$) -- ($(4,{\y*0.5*sqrt(3)*sqrt(2/3)})$);
      }
      \node at (0,0)[circle,fill,inner sep=2pt]{};
      \node (la1) at ({sqrt(2/3)},{0}) {};
      \node (la2) at ({sqrt(1/6)},{sqrt(1/2)}) {};
      \node (la3) at ($(la1)-(la2)$) {};
      \draw[line width=1pt,red,-stealth](0,0)--({sqrt(2/3)},{0}) node[anchor=north east]{$					
        \lambda_1$};
      \draw[orange, line width=2pt, dotted] ($(-4,{4/3*sqrt(2/3)*0.5*sqrt(3)})$) -- ($(4
      ,{4/3*0.5*sqrt(3)*sqrt(2/3)})$);
      \draw[blue, line width=2pt, dashed] ($(-4,{-4/3*sqrt(2/3)*0.5*sqrt(3)})$) -- ($(4
      ,{-4/3*0.5*sqrt(3)*sqrt(2/3)})$);
      \draw[orange, line width=2pt, dotted] 
      ($({4/3*sqrt(2/3)},0)-5*(1,{sqrt(3)})$) -- ($({4/3*sqrt(2/3)},0)+(5,{5*sqrt(3)})$);
      \draw[blue, line width=2pt, dashed] 
      ($({-4/3*sqrt(2/3)},0)-5*(1,{sqrt(3)})$) -- ($({-4/3*sqrt(2/3)},0)+(5,{5*sqrt(3)})$);
      \draw[orange, line width=2pt, dotted] 
      ($({4/3*sqrt(2/3)},0)-5*(1,-{sqrt(3)})$) -- ($({4/3*sqrt(2/3)},0)+(5,{-5*sqrt(3)})$);
      \draw[blue, line width=2pt, dashed] 
      ($({-4/3*sqrt(2/3)},0)-5*(1,-{sqrt(3)})$) -- ($({-4/3*sqrt(2/3)},0)+(5,{-5*sqrt(3)})$);

      \tikzstyle{prob} = [purple!80,regular polygon,regular polygon sides=3,fill,inner sep=1.2pt]   	   
      \tikzstyle{prob2} = [purple!50,regular polygon,regular polygon sides=3,fill,inner sep=1.2pt]   	   
      \node (nu) at (${4/3}*({sqrt(1/6)},{sqrt(1/2)})$) [circle, fill, inner sep=2pt]{};
      \node at (nu) [anchor=south west] {$\nu_2$};
      
      \node at ($(nu) - {1}*(la3) - {0}*(la2) $) [prob]{}; 
      \node at ($(nu) - {2}*(la3) - {0}*(la2) $) [prob]{}; 
      \node at ($(nu) - {2}*(la3) - {1}*(la2) $) [prob]{}; 
      
      \node at ($(nu) - {3}*(la3) - {0}*(la2) $) [prob]{}; 
      \node at ($(nu) - {3}*(la3) - {1}*(la2) $) [prob]{}; 
      \node at ($(nu) - {3}*(la3) - {2}*(la2) $) [prob]{}; 
      
      \node at ($(nu) - {4}*(la3) - {0}*(la2) $) [prob]{}; 
      \node at ($(nu) - {4}*(la3) - {1}*(la2) $) [prob]{}; 
      \node at ($(nu) - {4}*(la3) - {2}*(la2) $) [prob]{}; 
      \node at ($(nu) - {4}*(la3) - {3}*(la2) $) [prob]{};

      \node (mnu) at (${-1}*(nu)$) [circle, fill, inner sep=2pt]{};
      \node at (mnu) [anchor=south west] {$-\nu_2$};

      \node at ($(mnu) - {1}*(la3) - {0}*(la2) $) [prob2]{}; 
      \node at ($(mnu) - {2}*(la3) - {0}*(la2) $) [prob2]{}; 
      \node at ($(mnu) - {2}*(la3) - {1}*(la2) $) [prob2]{}; 
      
      \node at ($(mnu) - {3}*(la3) - {0}*(la2) $) [prob2]{}; 
      \node at ($(mnu) - {3}*(la3) - {1}*(la2) $) [prob2]{}; 
      \node at ($(mnu) - {3}*(la3) - {2}*(la2) $) [prob2]{}; 
      
      \node at ($(mnu) - {4}*(la3) - {0}*(la2) $) [prob2]{}; 
      \node at ($(mnu) - {4}*(la3) - {1}*(la2) $) [prob2]{}; 
      \node at ($(mnu) - {4}*(la3) - {2}*(la2) $) [prob2]{}; 
      \node at ($(mnu) - {4}*(la3) - {3}*(la2) $) [prob2]{};    	   
             
    \end{tikzpicture}
    \caption{Slice of the parameter space $\mathfrak{h}^* \times \IC$ at $k = 4/3$ in type $\mathsf{A}_2$.  Triangles are $\lambda$ which cannot be shifted to non-resonant $\mu$.  }
    \label{fig-sl3-bad-pts1}
  \end{center}
\end{figure}
\end{example}
\subsection{Case 2: $k \in \mathbb{Z}, \lambda \in E,$ and $\lambda$ resonant.}\label{subsec:gen-rank-case-4}

We consider three subcases, when $k = 0$, $k \in \mathbb{Z}_{<0}$, or $k \in \mathbb{Z}_{>0}$.

When $k=0$, the connection is 
  $\Kconnvf[\vartheta,k]{\lambda_i^\vee} = Z_i \partial_{Z_i} - A_i$.
where $A_i = -\lambda_i^\vee$ is a constant in $Z_i$.  There is no term which is singular on the root hyperplanes.
This has fundamental solution  
\[
  \Phi_0(Z) = \prod_i Z_i^{A_i}
\]
which is, by definition, the large-volume limit.  Thus when $k=0$ a canonical solution always exists, making this case effectively non--resonant.


Now, we handle the case in which $k > 0$ and
\[ 
  \lambda \in E_+ = \lbrace \lambda\ |\ \lambda(\alpha^\vee) \notin \mathbb{R}_{\leq k} \ \forall\ \alpha \in \pos \rbrace 
\]
Then by Theorem \ref{thm:k-shift-op-invert}, the shifts
\[
  (\lambda,0) \to (\lambda,1) \to ... \to (\lambda, k)
\]
are all invertible.  So $(\lambda,k)$ and $(\lambda,0)$ have equivalent monodromy.

For $k > 0$ and $\lambda \in E$, then some $\lambda^w \in E_+$. Since the invertibility conditions on the $k$--shift are $W$-invariant the same shifting shows $(\lambda,k)$ and $(\lambda,0)$ have equivalent monodromy. 

Now take $k < 0$ and $\lambda \in E_+$.  Then similar to above, the shifts
\[
  (\lambda,k) \to (\lambda,k+1) \to ... \to (\lambda, 0)
\]
are all invertible.  So $(\lambda,k)$ and $(\lambda,0)$ have equivalent monodromy.   If $\lambda \in E$ then the same shift applies as before. 

%

\begin{figure}[ht] 
  \begin{center}
    \begin{tikzpicture}[scale=0.8]
      \clip (-3.2,-3.2) rectangle (3.2,3.2);
      \draw[dashed, black!20] ($-5*(1,{sqrt(3)})$) -- ($({0},0)+(5,{5*sqrt(3)})$);
      \draw[dashed, black!20] ($-5*(1,-{sqrt(3)})$) -- ($({0},0)+(5,{-5*sqrt(3)})$);
      \draw[dashed, black!20] ($(-4,0)$) -- ($(4,0)$);
      \foreach \x in {-5, -4, -3,-2,-1,1,2,3,4,5} {
        \draw[black!70] ($({\x*sqrt(2/3)},0)-5*(1,{sqrt(3)})$) -- ($({\x*sqrt(2/3)},0)+(5,{5*sqrt(3)}) 											$);
      }
      \foreach \x in {-5, -4, -3,-2,-1,1,2,3,4,5} {
        \draw[black!70] ($({\x*sqrt(2/3)},0)-5*(1,-{sqrt(3)})$) -- ($({\x*sqrt(2/3)},0)+(5
        ,{-5*sqrt(3)})$);
      }
      \foreach \y in {-5, -4, -3,-2,-1,1,2,3,4,5} {
        \draw[black!70] ($(-4,{\y*sqrt(2/3)*0.5*sqrt(3)})$) -- ($(4,{\y*0.5*sqrt(3)*sqrt(2/3)})$);
      }

      \node (la1) at ({sqrt(2/3)},{0}) {};
      \node (la2) at ({sqrt(1/6)},{sqrt(1/2)}) {};
      \node (la3) at ($(la1)-(la2)$) {};

      \draw[orange, line width=2pt, dotted] ($(-4,{2*sqrt(2/3)*0.5*sqrt(3)})$) -- ($(4
      ,{2*0.5*sqrt(3)*sqrt(2/3)})$);
      \draw[blue, line width=2pt, dashed] ($(-4,{-2*sqrt(2/3)*0.5*sqrt(3)})$) -- ($(4
      ,{-2*0.5*sqrt(3)*sqrt(2/3)})$);
      \draw[orange, line width=2pt, dotted] 
      ($({2*sqrt(2/3)},0)-5*(1,{sqrt(3)})$) -- ($({2*sqrt(2/3)},0)+(5,{5*sqrt(3)})$);
      \draw[blue, line width=2pt, dashed] 
      ($({-2*sqrt(2/3)},0)-5*(1,{sqrt(3)})$) -- ($({-2*sqrt(2/3)},0)+(5,{5*sqrt(3)})$);
      \draw[orange, line width=2pt, dotted] 
      ($({2*sqrt(2/3)},0)-5*(1,-{sqrt(3)})$) -- ($({2*sqrt(2/3)},0)+(5,{-5*sqrt(3)})$);
      \draw[blue, line width=2pt, dashed] 
      ($({-2*sqrt(2/3)},0)-5*(1,-{sqrt(3)})$) -- ($({-2*sqrt(2/3)},0)+(5,{-5*sqrt(3)})$);
      \draw[fill=red!30] ($4*(la2)+2*(la2)+2*(la1)$) -- ($2*(la2)+2*(la1)$) -- ($4*(la1)+2*(la2)+2*(la1)$);
      \draw[fill=red!30] (${-4}*(la2)-2*(la2)-2*(la1)$) -- (${-2}*(la2)-2*(la1)$) -- (${-4}*(la1)-2*(la2)-2*(la1)$);
      \draw[fill=red!30] ($4*(la3)+2*(la3)+2*(la1)$) -- ($2*(la3)+2*(la1)$) -- ($4*(la1)+2*(la3)+2*(la1)$);
      \draw[fill=red!30] (${-4}*(la3)-2*(la3)-2*(la1)$) -- (${-2}*(la3)-2*(la1)$) -- (${-4}*(la1)-2*(la3)-2*(la1)$);
      \draw[fill=red!30] ($4*(la3)+2*(la3)-2*(la2)$) -- ($2*(la3)-2*(la2)$) -- (${-4}*(la2)+2*(la3)-2*(la2)$);
      \draw[fill=red!30] (${-4}*(la3)-2*(la3)+2*(la2)$) -- (${-2}*(la3)+2*(la2)$) -- (${4}*(la2)-2*(la3)+2*(la2)$);


    \end{tikzpicture}
    \caption{Slice of the parameter space $\mathfrak{h}^* \times \IC$ in type $\mathsf{A}_2$ at $k = 2$.  The set $E$ is shaded red. 
     }
    \label{fig-spaceE}
  \end{center}
\end{figure}

\subsection{Case 3: $k \in \mathbb{Z}$, $\lambda \in S$.}\label{subsec:gen-rank-case-6}

Let $q \in B$ be the unique element such that $\lambda(q) \in \IZ$.  Set $n = \lambda(q)$. There are four possibilities:
\begin{enumerate}
  \item $|k| \leq |n|, k \geq 0$,
  \item $|k| < |n|, k \leq 0 $,
  \item $|k| > |n|$,
  \item $|k| = |n|, k \leq 0$.
\end{enumerate}

If (1), 
then we can shift $k$ to $0$.  As above, at $k=0$, the connection is effectively non-resonant.  For $0 < k \leq |n|$, in order to shift $(\lambda,0) \mapsto (\lambda,k)$ it is sufficient to show that the $k$--shift operator be invertible at all $(\lambda,j)$ where $0 \leq j < k$.  This holds if and only if $\lambda(\alpha^\vee) \neq \pm j$ for all $\alpha \in R$.  Since $R^\vee \subset B$, then $\lambda(\alpha^\vee)$ can only be an integer if $q = \alpha^\vee$.  However, $|\lambda(\alpha^\vee)| = |n| \geq |k| > |\pm j|$, and so $\lambda(\alpha^\vee) \neq \pm j$.  
 Thus the $k$--shift operator is invertible at all intermediate points and the monodromy representations at $(\lambda,k)$ and $(\lambda,0)$ are isomorphic. 

The subcase (2) is very similar.  We shift $k$ to $0$.  To do so, the $k$--shift operator must be invertible at all $(\lambda,j)$ where $k \leq j < 0$.  In case $q = \alpha^\vee$, then $|\lambda(\alpha^\vee)| = |n| > |k| \geq |\pm j|$, so $\lambda(\alpha^\vee) \neq \pm j$.  

%

If (3), then $\lambda$ is $k$--regular, and thus $\dI{\lambda}$ is isomorphic to $\J_\vartheta$ for all $\lambda \in q^{-1}(\vartheta)$.  By taking $w(\lambda)$ where $w$ interchanges $I_+$ and $I_-$ we can assume $n = \lambda(q) \in \mathbb{Z}_{\geq 0}$.  As in Case 1, chose $\lambda_m$ such that $|\lambda_m(q)|=1$ and set $\mu = \lambda - \lambda(q) \lambda_m(q) \lambda_m$.  If $\lambda_m(q)=1$, this shift is invertible when $(\lambda - j \lambda_m )(\alpha^\vee) \neq \pm k$ for all $\alpha \in(\pos)_m$ and $1 \leq j \leq |n|$.  If $q \notin R^\vee$, then this condition is always satisfied since $k \in \IZ$ and $(\lambda-j\lambda_m)(\alpha^\vee) \notin \IZ$. 
In the case of $q = \alpha^\vee$, we have $\alpha \in (\pos)_m$ and $(\lambda - j \lambda_m )(\alpha^\vee) = |n| - j $ which is bounded $0 \leq  |n|-j \leq |k|-2$
and so $(\lambda - j \lambda_m )(\alpha_m^\vee) \neq \pm k $ for $1 \leq j \leq |n|$.
For $\beta \in\pos\setminus \lbrace \alpha \rbrace$, the value $(\lambda - j \lambda_m)(\beta^\vee)$ is not an integer and thus cannot equal $\pm k$ for any $j$.  
If $\lambda_m(q) = -1$, the argument is similar but we need $(\lambda+j\lambda_m)(\alpha^\vee) \neq \pm k$ for all $\alpha \in (\pos)_m$ and $0 \leq j \leq |n|-1$. 
%

If (4), we can choose $\lambda$ such that $\lambda(q) = -k$.  If $q \notin R^\vee$, then the $k$-shift is invertible and $k$ may be shifted to 0. 
 If $k$=0, we are likewise done.  
If $k < 0$ and $q = \alpha^\vee$, then since $k \in \mathbb{Z}$ and only a single $\alpha \in \pos$ has $\lambda(\alpha^\vee) \in \mathbb{Z}$, this guarantees there is no $\alpha \in \pos$ such that $\lambda(\alpha^\vee) = k$.  Thus by Theorem \ref{th:det R}, $\dI{\lambda}$ is isomorphic to $\J_\vartheta$.
Choosing $\lambda_m$ as before, we can shift $\mu = \lambda + k \lambda_m$ to $\lambda$.  The shift operator is invertible if $(\lambda + j \lambda_m)(\alpha^\vee) \neq \pm k$ for $k \leq j < 0$.  That is $-k + j \neq \pm k$ which holds for $k \leq j <0$.  For other $\beta \in (\pos)_m$, it holds that $(\lambda + j \lambda_m)(\beta^\vee) \neq \pm k$ since $\lambda(\beta^\vee) \notin \mathbb{Z}$.


\omitrobincomment{could add illustration or rank 2 }


\subsection{Failure of Monodromy Conjecture}

We give values of the parameters where the monodromy representation is not isomorphic
to $K(\Theta)$.  We compute explicitly at $k=0$ and by Theorem \ref{th:main 1} we obtain
additional examples where $k \neq 0$. 

\label{cor:highrankFGfail}
\begin{corollary}
Assume that $k \in \mathbb{Z}$ and that $\lambda\in\h^*$ lies in $E$, is regular and such
that $\lambda(\lambda_i^\vee) \in \mathbb{Z}$ for all $i$. Then, Conjecture \ref{conj:monoconj}
does not hold.
\end{corollary}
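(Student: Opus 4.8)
The strategy is to reduce to the case $k=0$ using the $k$--shift operator, compute the monodromy there explicitly, and check directly that the result is not isomorphic to $K(\Theta)$. First I would observe that since $\lambda$ lies in $E$ and is resonant, Theorem~\ref{thm:sln-shift}(2) applies: the monodromy of $\Kconn$ is equivalent to that of a non--resonant system. Tracing through the proof of that theorem in \ref{subsec:gen-rank-case-4}, the equivalence is realised by a chain of $k$--shift operators $(\lambda,0)\to(\lambda,1)\to\cdots\to(\lambda,k)$ (or the reverse if $k<0$), each of which is invertible because $\lambda\in E$ guarantees $\lambda(\alpha^\vee)\ne \pm j$ for the relevant intermediate integers $j$. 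In particular the monodromy representation at $(\vartheta,k)$ is isomorphic, \emph{as a representation of $\Hext$}, to the monodromy at $(\vartheta,0)$ (the $k$--shift operators are $\Htrig$--module maps between the induced covariant representations and hence intertwine the monodromy; the twist by $\eps_\rho$ from Theorem~\ref{thm:k-shift-op-invert}(2) only affects the $\Omega^\vee$--equivariant structure, not the abstract isomorphism class over $\Hext$, and in any case $q=\exp(2\pi\ii k)=1$ at integer $k$, so $K(\Theta)$ at parameter $k$ and at parameter $0$ coincide).

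Next I would carry out the explicit computation at $k=0$. As noted in \ref{subsec:gen-rank-case-4}, when $k=0$ the connection $\Kconn$ has the form $\nabla_{\lambda_i^\vee}=Z_i\partial_{Z_i}-\tfact{(\lambda_i^\vee)}$ with no singular term on the root hypertori, so the fundamental solution is simply $\Phi_0(Z)=\prod_i Z_i^{\tfact{(\lambda_i^\vee)}}$. From this one reads off the monodromy generators of $\Hext$: the generators $\Y_i=\Y^{\lambda_i^\vee}$ act (after the normalisation of \ref{sec:genofHext}, which is trivial at $k=0$) by $\exp(2\pi\ii\,\tfact{(\lambda_i^\vee)})$, which on the covariant representation $\J_\vartheta$ has eigenvalues $\{\exp(2\pi\ii\,w\lambda(\lambda_i^\vee))\}_{w\in W}$ by Proposition~\ref{co:weights of K}; and the $T_i$, being the fibrewise $\tfact{s}_i$ on a solution holomorphic across $\{Z_i=1\}$, act as honest reflections with $T_i^2=\mathrm{id}$, i.e. through the quotient $\Hq\to\IC W$ at $q=1$. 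Since $\lambda(\lambda_i^\vee)\in\IZ$ for all $i$, every $\Y_i$ acts as the \emph{identity}, so the monodromy factors through $\IC W$ acting via the $T_i$; concretely the monodromy representation is the regular representation of $W$ pulled back from the natural $W$--action on $\J_\vartheta\cong\Sh/I_\vartheta$.

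Finally I would compare this with $K(\Theta)$ at $q=1$, $\Theta=\exp_{H^\vee}(\vartheta)$. Because $\lambda(\lambda_i^\vee)\in\IZ$, we have $\Lambda=\exp(\vartheta)$ trivial on $P^\vee$ in the appropriate sense, so $\Theta$ is the identity of $H^\vee/W$ and $\IC_\Theta$ is the trivial character; thus $K(\Theta)=\mathrm{Ind}_{\Hq\otimes\IC[H^\vee]^W}^{\Hext}\IC_{\mathrm{triv}}$, on which $\IC P^\vee$ does \emph{not} act trivially --- indeed by Theorem~\ref{th:Kato} (with $q_\alpha=1$, $e^\alpha(\Lambda)=1$) $K(\Theta)$ is \emph{reducible} but still has $\IC[H^\vee]$--weights filling out $q^{-1}(\mathrm{triv})$, which is a single $W$--orbit but a nontrivial one as an $\IC P^\vee$--module only when... the cleanest contradiction is dimensional/structural: in our monodromy all of $\IC P^\vee$ acts by the identity, whereas in $K(\Theta)$ the subalgebra $\IC P^\vee$ acts through $\IC[H^\vee]/M_\Theta$, which is a $|W|$--dimensional algebra that is \emph{not} $\IC^{|W|}$ with $P^\vee$ acting by $1$ unless $W$ acts trivially on a neighbourhood of $\Theta$ — false for $\vartheta$ regular. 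Hence the two $\Hext$--modules have non--isomorphic restrictions to $\IC P^\vee$, so they are not isomorphic, and Conjecture~\ref{conj:monoconj} fails at $(\vartheta,k)$. Since Theorem~\ref{th:main 1} already exhibits failures in rank $1$, this yields failure in all ranks $n\ge 1$. \textbf{The main obstacle} I anticipate is pinning down precisely the identification of $\Theta$ and of the $\Hext$--module structure of $K(\Theta)$ at the special values $q=1$, $\Lambda$ trivial on $P^\vee$ — i.e. making the ``$K(\Theta)$ has $P^\vee$ acting non-scalarly'' argument airtight rather than hand-wavy — so that the non-isomorphism is genuinely forced; the rest is routine given the earlier results.
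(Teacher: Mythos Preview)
Your overall approach is the paper's: reduce to $k=0$ via the $k$--shift operators (this is exactly Case~2 of Theorem~\ref{thm:sln-shift}), observe that at $k=0$ the connection has no poles on the root hypertori so $T_i=\tfact{s}_i$ and $\Y_i=\exp(2\pi\ii\,\tfact{(\lambda_i^\vee)})$, and then use $\lambda(\lambda_i^\vee)\in\IZ$ together with the regularity of $\lambda$ (hence semisimplicity of $\tfact{(\lambda_i^\vee)}$ on $\J_\vartheta$ by Proposition~\ref{co:weights of K}) to conclude $\Y_i=\mathrm{Id}$ on the monodromy $V$.

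The one genuine wobble is in your final step, exactly where you flag it. Your argument that $\IC P^\vee$ does not act trivially on $K(\Theta)$ appeals to regularity of $\vartheta$, but this is a red herring: the hypothesis $\lambda(\lambda_i^\vee)\in\IZ$ for all $i$ forces $\Lambda=1\in H^\vee$, so $\Theta$ is the image of the identity and is as \emph{non}-regular as possible in $H^\vee/W$. The clean fix is the one you almost state: as a $\IC[H^\vee]$--module, $K(\Theta)\cong\IC[H^\vee]/M_\Theta$ is cyclic of dimension $|W|>1$, so $P^\vee$ cannot act by scalars; done. The paper packages the same contradiction slightly differently: since $T_i=s_i$ on $V$ there is a unique $\Hq$--fixed vector $v$, any morphism $K(\Theta)\to V$ must send $\mathbf{k}_\Theta$ into $\IC v$, and such a map is an isomorphism iff $v$ is $\IC[H^\vee]$--cyclic; but $\Y_i=\mathrm{Id}$ gives $\IC[H^\vee]\cdot v=\IC v$, so no isomorphism exists. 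Either completion works.
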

\begin{proof}
Assume first that $k=0$.  The connection thus has the form $\Kconn = d - \tfact{\iota}-\rho_k(\iota) $.  There are no poles at $1-e^{-\alpha}$ for any $\alpha \in R$, so the operators $T_i = s_i$.   
There is thus a unique vector $v$ fixed by $\Br$.  By Proposition \ref{pr:generic monodromy}, the monodromy representation generically satisfies that $\IC[H^\vee]^W$ acts by character $\Theta$ and so this holds for all values of the parameters $(\lambda,k)$.  There is thus a map from $K(\Theta)$ to the monodromy representation $V$ mapping $\mathbf{k}_\theta \mapsto v$.  This map is an isomorphism if and only if $v$ is cyclic under $\IC[H^\vee]$.  
As in \ref{subsec:gen-rank-case-4}, $\Y_i = e^{2 \pi \ii \lambda_i^\vee}$.  Since $\lambda$ is regular, $\lambda_i^\vee$ act semisimply on $K_\vartheta$ 
with eigenvalues $\{ w \lambda( \lambda_i^\vee) \}_{w \in W} \in \mathbb{Z}$.  By integrality $\Y_i = \mathrm{Id}$.  Consequently, $v$ cannot be cyclic and $V$ is not isomorphic to $K(\Theta)$ or equivalently $K^{-}(\Theta)$.

Now assume $k \in \mathbb{Z}$. Then by Theorem \ref{th:main 1}, the monodromy representation is isomorphic to that at $k=0$ and the same $\lambda$ which reduces to the above case. 
\end{proof} 

\raggedbottom
\pagebreak


\bibliographystyle{amsalpha}
\bibliography{bibliography}

\end{document}